\newtheorem{theo}{Theorem}[section]
\newtheorem{prop}[theo]{Proposition}
\newtheorem{rema}[theo]{Remark}
\newtheorem{lemm}[theo]{Lemma}
\numberwithin{equation}{section}
\newcommand{\Rmnum}[1]{\expandafter\@slowromancap\romannumeral #1@}
\newcounter{wronumber}\setcounter{wronumber}{1}
\begin{document}

\title[Uniform Regularity and Invicid Limit for CNLC]
{Uniform Regularity and Vanishing Viscosity Limit for the Compressible
Nematic Liquid Crystal Flows in Three Dimensional Bounded Domain}

\author[J. Gao]{Jincheng Gao}
\address[Jincheng Gao]{\newline Institute of Applied Physics and Computational Mathematics,
100088, Beijing, P. R. China}
\email{gaojc1998@163.com}

\author[B.Guo]{Boling Guo}
\address[Boling Guo]{\newline Institute of Applied Physics and Computational Mathematics,
100088, Beijing, P. R. China}
\email{gbl@iapcm.ac.cn}

\author[Y. Liu]{Yaqing Liu}
\address[Yaqing Liu]{\newline Institute of Applied Physics and Computational Mathematics,
100088, Beijing, P. R. China}
\email{liuyaqing1981@163.com}
%\thanks{$^\dag$ Corresponding author. \today}

%%%%-----------------------------------------------------------------------

\begin{abstract}
In this paper, we study the uniform regularity and vanishing viscosity limit
for the compressible nematic liquid crystal flows in three dimensional bounded domain.
It is shown that there exists a unique strong solution for the compressible
nematic liquid crystal flows with boundary condition in a finite
time interval which is independent of the viscosity coefficient.
The solutions are uniform bounded in a conormal Sobolev space. Furthermore,
we prove that the density and velocity are uniform bounded in $W^{1, \infty}$,
and the director field is uniform bounded in $W^{3,\infty}$ respectively.
Based on these uniform estimates, one also obtains the convergence rate of
the viscous solutions to the inviscid ones with a rate of convergence.

\bigskip
\noindent {\it \small 2010 Mathematics Subject Classification}: 35Q35, 35B65, 76A15.

\bigskip
\noindent {\it \small Keywords}:
Nematic liquid crystal flows,  Vanishing viscosity limit, Convergence rate,
Conormal Sobolev space.

\end{abstract}

%\date{\today}

\maketitle

%\tableofcontents

%%%%-----------------------------------------------------------------------
%%%%-----------------------------------------------------------------------

\section{Introduction}

\quad In this paper, we investigate the motion of compressible nematic liquid crystal flows, which are governed by the following simplified version of the
Ericksen-Leslie equations as follows
\begin{equation}\label{eq1}
\left\{
\begin{aligned}
&\rho^\varepsilon_t+{\rm div} (\rho^\varepsilon u^{\varepsilon})=0,
& (x, t)\in \Omega \times (0, T),\\
&\rho^\varepsilon u^{\varepsilon}_t
+\rho^\varepsilon u^{\varepsilon}\cdot \nabla u^{\varepsilon}+\nabla p^{\varepsilon}
= \mu \varepsilon \Delta u^{\varepsilon}
+(\mu+\lambda)\varepsilon\nabla {\rm div}u^\varepsilon
-\nabla d^{\varepsilon} \cdot \Delta d^{\varepsilon},
&(x, t)\in \Omega \times (0, T),\\
&d^{\varepsilon}_t+u^{\varepsilon}\cdot \nabla d^{\varepsilon}
=\Delta d^{\varepsilon}+|\nabla d^{\varepsilon}|^2 d^{\varepsilon},
& (x, t)\in \Omega \times (0, T).
\end{aligned}
\right.
\end{equation}
Here $0 < T \le +\infty$ and $\Omega$ is a bounded smooth domain of $\mathbb{R}^3$.
The unknown functions $\rho^\varepsilon(x, t)$,
$u^\varepsilon(x, t)=(u^\varepsilon_1(x,t), u^\varepsilon_2(x,t), u^\varepsilon_3(x,t),)$
and $d^\varepsilon(x,t)=(d_1^\varepsilon(x,t), d_2^\varepsilon(x,t), d_3^\varepsilon(x,t))$
represent the density, velocity field of fluid and
the macroscopic average of the nematic liquid crystal orientation field respectively.
The scalar function $p^\varepsilon=p(\rho^\varepsilon)$
is the pressure function given by $\gamma-$law
$$
p(\rho)=\rho^\gamma \quad{\rm with}\quad \gamma >1.
$$
The viscous coefficients $\mu$ and $\lambda$ satisfy
the physical restrictions
\begin{equation*}
\mu>0, \quad 2\mu+3\lambda >0,
\end{equation*}
where the parameter $\varepsilon>0$ is the inverse of the Reynolds number.
For more results about the compressible Ericksen-Leslie system \eqref{eq1},
the readers can refer to \cite{{Huang-Wang-Wen1},{Huang-Wang-Wen2},{Hu-Wu},{Ding-Huang-Wen-Zi},{Jiang-Jiang-Wang},{Lin-Lai-Wawng},{Schade-Shibata},{Gao-Tao-Yao}} and references therein.
Corresponding to the system \eqref{eq1}, we impose the following Navier-slip type
and Neumann boundary
conditions:
\begin{equation}\label{bc1}
u^\varepsilon \cdot n=0, \quad ((Su^\varepsilon)n)_\tau=-(A u^\varepsilon)_\tau,
\quad {\rm and}
\quad \frac{ \partial d^\varepsilon}{\partial n}=0, \quad {\rm on}~\partial \Omega,
\end{equation}
where $A$ is a given smooth symmetric matrix(see \cite{Gie-Kelliher}),
$n$ is the outward unit vector normal to $\partial \Omega$,
$(A u^\varepsilon)_\tau$ represents the tangential component of $A u^\varepsilon$.
The strain tensor $Su^\varepsilon$ is defined by
\begin{equation*}
Su^\varepsilon=\frac{1}{2}\left((\nabla u^\varepsilon)+(\nabla u^\varepsilon)^t\right).
\end{equation*}
For any smooth solutions $v$, it is easy to check that
$$
(2S(v)n-(\nabla \times v)\times n)_\tau=-(2S(n)v)_\tau,
$$
see \cite{Xiao-Xin1} for detail. Then the boundary condition \eqref{bc1} can
be written in the form of the vorticity as
\begin{equation}\label{bc2}
u^{\varepsilon}\cdot n=0,
\quad n\times (\nabla \times u^{\varepsilon})=[Bu^{\varepsilon}]_\tau,
\quad {\rm and}\quad
\frac{\partial d^{\varepsilon}}{\partial n}=0,
\quad {\rm on}~\partial \Omega,
\end{equation}
where $B=2(A-S(n))$ is symmetric matrix.
Actually, it turns out that the form \eqref{bc2} will be more
convenient than \eqref{bc1} in the energy estimates,
see \cite{{Xiao-Xin},{Wang-Xin-Yong}}.

In this paper, we are interested in the existence of strong solution of \eqref{eq1}
with uniform bounds on an interval of time independent of viscosity
$\varepsilon\in (0, 1]$ and the vanishing
viscosity  limit to the corresponding
invicid nematic liquid crystal flows as $\varepsilon$ vanishes, i.e.
\begin{equation}\label{eq2}
\left\{
\begin{aligned}
&\rho_t+{\rm div} (\rho u)=0,
& (x, t)\in \Omega \times (0, T),\\
&\rho u_t+\rho u\cdot \nabla u+\nabla p= -\nabla d \cdot \Delta d,
&(x, t)\in \Omega \times (0, T),\\
&d_t+u\cdot \nabla d=\Delta d+|\nabla d|^2 d,
& (x, t)\in \Omega \times (0, T).
\end{aligned}
\right.
\end{equation}
with the boundary condition
\begin{equation}\label{bc3}
u\cdot n=0,
\quad {\rm and}\quad
\frac{\partial d}{\partial n}=0,
\quad {\rm on}~\partial \Omega.
\end{equation}

When the density and director field are constant scalar function and
constant vector field respectively, the systems \eqref{eq1} and \eqref{eq2}
are the well-known incompressible Navier-Stokes equations
and incompressible Euler equations respectively.
There is lots of literature on the uniform bounds and the vanishing viscosity
limit for the Navier-Stokes equations without boundaries,
for instance, \cite{{Constantin}, {Constantin-Foias},{Kato}, {Masmoudi}}.
The appearance of boundary gives arise to the time of existence
$T^\varepsilon$ depending on the viscosity coefficient,
and it is difficult to prove that it stays bounded away from zero.
Nevertheless, in
a domain with boundaries, for some special types of Navier-slip boundary conditions,
some uniform $H^3$ (or $W^{2,p}$, with $p$ large enough) estimates and
a uniform time of existence for Navier-Stokes when the viscosity goes to zero
have recently been obtained (see \cite{{Beira1},{Beira2},{Xiao-Xin}}).
It is easy to see that, for these special
boundary conditions, the main part of the boundary layer vanishes, which allows
this uniform control in some limited regularity Sobolev space.
Recently, Masmoudi and Rousset \cite{Masmoudi-Rousset}
established conormal uniform estimates for three-dimensional general
smooth domains with the Naiver-slip boundary condition
and obtained  convergence of the viscous solutions to the inviscid ones
by a compact argument.
Based on the uniform estimates in \cite{Gie-Kelliher},
better convergence with rates have been studied in \cite{Gie-Kelliher}
and \cite{Xiao-Xin2}. In particular, Xiao
and Xin \cite{Xiao-Xin2} have proved the convergence in $L^\infty(0, T ;H^1)$
with a rate of convergence.
Motivated by the work of \cite{Masmoudi-Rousset} and Xin \cite{Xiao-Xin2},
We \cite{Gao-Guo-Xi} investigated the vanishing viscosity limit of incompressible
nematic liquid crystal flows. More precisely, we proved that there exists a unique
strong solution for the incompressible nematic liquid crystal flows
in a finite time interval which is independent of the viscosity coefficient and
obtained the convergence rate of the viscous solutions to the inviscid
ones with a rate of convergence.

For the compressible Navier-Stokes equations, Paddick \cite{Paddick} obtained
uniform estimates for the solutions of the compressible isentropic Navier-Stokes
equations in the 3-D half-space with a Navier boundary condition, which was
improved by Wang et al. \cite{Wang-Xin-Yong} to generalized bounded domain.
Specially, Wang et al. \cite{Wang-Xin-Yong} shown that the boundary layers for
the density must be weaker than the one for the velocity and
established the convergence of the viscous solutions to the inviscid ones.
For more results about the inviscid limit for the compressible
Navier-Stokes equations, the readers can refer to
\cite{{Xin-Yanagisawa},{Wang-Williams}} and the references therein.
Motivated the work of \cite{Gao-Guo-Xi} and \cite{Wang-Xin-Yong}, we hope
the investigate the vanishing viscosity limit for the compressible
nematic liquid crystal flows \eqref{eq1}.

Before stating our main results, we first explain the notations and conventions
used throughout this paper. Similar to \cite{{Masmoudi-Rousset},{Wang-Xin-Yong}},
one assumes that the bounded domain $\Omega \subset \mathbb{R}^3$ has a covering that
\begin{equation}
\Omega \subset \Omega_0 \cup_{k=1}^n \Omega_k,
\end{equation}
where $\overline{\Omega}_0$, and in each $\Omega_k$ there exists a function
$\psi_k$ such that
\begin{equation*}
\begin{aligned}
&\Omega \cap \Omega_k =\{x=(x_1, x_2, x_3)| x_3 > \psi_k(x_1, x_2)\}\cap \Omega_k,\\
&\partial \Omega \cap \Omega_k=\{x_3=\psi_k(x_1, x_2)\}\cap \Omega_k.
\end{aligned}
\end{equation*}
Here, $\Omega$ is said to be $\mathcal{C}^m$ if the functions $\psi_k$
are a $\mathcal{C}^m-$function. To define the conormal Sobolev spaces,
one considers $(Z_k)_{1\le k \le N}$ to be a finite set of generators of vector fields
that are tangential to $\partial \Omega$, and sets
\begin{equation*}
H_{co}^m=\{f\in L^2(\Omega)|Z^I f\in L^2(\Omega), ~ {\rm for}~|I|\le m\},
\end{equation*}
where $I=(k_1,..., k_m)$. The following notations will be used
$$
\begin{aligned}
&\|u\|_m^2=\|u\|_{H^m_{co}}^2=\sum_{j=1}^3 \sum_{|I|\le m}\|Z^I u_j\|_{L^2}^2,\\
&\|u\|_{m, \infty}^2=\sum_{|I|\le m}\|Z^I u\|_{L^\infty}^2,
\end{aligned}
$$
and
$$
\|\nabla Z^m u\|^2=\sum_{|I|=m}\|\nabla Z^I u\|_{L^2}^2.
$$
Noting that by using the covering of $\Omega$, one can always assume that each
vector field $(p^\varepsilon, u^\varepsilon, d^\varepsilon)$
is supported in one of the $\Omega_i$, and moreover,
in $\Omega_0$ the norm $\|\cdot\|_m$ yields a control of the standard $H^m$ norm,
whereas if $\Omega_i \cap \partial \Omega\neq\emptyset$, there is no control of the
normal derivatives.

Since $\partial \Omega$ is given locally by $x_3=\psi(x_1, x_2)$
(we omit the subscript $j$ of notational convenience), it is convenient to
use the coordinates
$$
\Psi:~(y, z)\mapsto (y, \psi(y)+z)=x.
$$
A  basis is thus given by the vector fields $(e_{y^1}, e_{y^2}, e_z)$,
where $e_{y^1}=(1, 0, \partial_1 \psi)^t,~e_{y^2}=(0, 1, \partial_2 \psi)^t$,
and $e_z=(0, 0, -1)^t$. On the boundary, $e_{y^1}$ and $e_{y^2}$ are tangent to
$\partial \Omega$, and in general, $e_z$ is not a normal vector field.
By using this parametrization, one can take as suitable vector fields compactly
supported in $\Omega_j$ in the definition of the $\|\cdot \|_m$ norms
\begin{equation*}
Z_i=\partial_{y^i}=\partial_i+\partial_i \psi \partial_z,~i=1,2,~~
Z_3=\varphi(z)\partial_z,
\end{equation*}
where $\varphi(z)=\frac{z}{1+z}$ is smooth, supported in $\mathbb{R}_+$
with the property $\varphi(0)=0, \varphi'(0)>0, \varphi(z)>0$ for $z>0$.
It is easy to check that
\begin{equation}\label{pc}
Z_k Z_j=Z_j Z_k,~~j, k =1,2,3,
\end{equation}
and
$$
\partial_z Z_i=Z_i \partial_z,~i=1,2;\quad \partial_z Z_3\neq Z_3 \partial_z.
$$
We shall still denote by $\partial_j,~j=1,2,3,$ or
$\nabla$ the derivatives in the physical space. The coordinates of a vector field
$u$ in the basis $(e_{y^1}, e_{y^2}, e_z)$ will be denoted by $u^i$, and thus
$$
u=u^1 e_{y^1}+u^2 e_{y^2}+u^3 e_{z}.
$$
We shall denote by $u_j$ the coordinates in the standard basis of $\mathbb{R}^3$,
i.e, $u=u_1 e_1+u_2 e_2+u_3 e_3$. Denote by $n$ the unit outward normal in the
physical space which is given locally by
\begin{equation}
n(x)\equiv n(\Psi(y,z))=\frac{1}{\sqrt{1+|\nabla \psi(y)|^2}}
\left(
\begin{array}{c}
\partial_1 \psi(y)\\
\partial_2 \psi(y)\\
-1
\end{array}
\right)
\triangleq \frac{-N(y)}{\sqrt{1+|\nabla \psi(y)|^2}},
\end{equation}
and by $\Pi$ the orthogonal projection
\begin{equation}
\Pi u\equiv\Pi(\Psi(y,z))u=u-[u\cdot n(\Psi(y,z))]n(\Psi(y,z)),
\end{equation}
which gives the orthogonal projection on to the tangent space of the boundary.
Note that $n$ and $\Pi$ are defined in the whole $\Omega_k$ and do not depend on $z$.
For later use and notational convenience, set
\begin{equation*}
\mathcal{Z}^\alpha=\partial_t^{\alpha_0}Z^{\alpha_1}
=\partial_t^{\alpha_0}Z_1^{\alpha_{11}}Z_2^{\alpha_{12}}Z_3^{\alpha_{13}},
\end{equation*}
where $\alpha, \alpha_0$ and  $\alpha_1$ are the differential multi-indices with
$\alpha=(\alpha_0, \alpha_1), \alpha_1=(\alpha_{11}, \alpha_{12}, \alpha_{13})$
and we also use the notation
\begin{equation}\label{sdef1}
\|f(t)\|_{\mathcal{H}^{m}}^2
=\|f(t)\|_{\mathcal{H}^{m}}^2
=\sum_{|\alpha|\le m}\|\mathcal{Z}^\alpha f(t)\|_{L^2_x}^2,
\end{equation}
and
\begin{equation}\label{sdef2}
\|f(t)\|_{\mathcal{H}^{k,\infty}}
=\sum_{|\alpha|\le k}\|\mathcal{Z}^\alpha f(t)\|_{L^\infty_x}
\end{equation}
for smooth space-time function $f(x,t)$.
Throughout this paper, the positive generic constants that are independent
of $\varepsilon$ are denoted by $c, C$.
Denote by $C_k$ a positive constant independent of $\varepsilon \in (0, 1]$
which depends only on the $\mathcal{C}^k-$norm of the functions $\psi_j,~j=1,...,n$.
Here, $\|\cdot\|_{L^2}$ denotes the standard $L^2(\Omega; dx)$ norm,
and $\|\cdot\|_{H^m}(m=1,2,3,...)$ denotes the Sobolev $H^m(\Omega, dx)$ norm.
The notation $|\cdot|_{H^m}$ will be used for the standard Sobolev norm
of functions defined on $\partial \Omega$. Note that this norm involves only
tangential derivatives. $P(\cdot)$ denotes a polynomial function.

Since the boundary layers may appear in the presence of physical boundaries,
in order to obtain the uniform estimates for solutions to the nematic liquid
crystal flows with Navier-slip and Neumann boundary conditions, we needs to find a
suitable functional space.
In the spirit of Wang et al.\cite{Wang-Xin-Yong},
we also investigate the solutions of the nematic liquid crystal flows
in Conormal Sobolev space.
Hence, the functional space should include some information for the
direction field $d$. On the other hand, due to the nonlinear higher
order derivatives term $\nabla d\cdot \Delta d$, one should control this term
by using the dissipative term $\Delta d$ on the right hand side of the equation \eqref{eq1}$_2$
which involving the time derivatives term $d_t$. Hence, we also include some
information involving the time derivatives in the functional space.
Therefore, we define the functional space $X^\varepsilon_m(T)$ for a pair of function
$(u, p, d)(x, t)$ as follows
\begin{equation}
X^\varepsilon_m(T)=\{(p, u, d)\in L^\infty([0, T], L^2);
       ~\underset{0\le t\le T}{\rm esssup}\|(p, u, d)(t)\|_{X^\varepsilon_m}<+\infty\},
\end{equation}
where the norm $\|(\cdot, \cdot)\|_{X^\varepsilon_m}$ is given by
\begin{equation}
\begin{aligned}
\|(p, u, d)(t)\|_{X^\varepsilon_m}
=
&\|(u, p)(t)\|_{\mathcal{H}^{m}}^2+\|d(t)\|_{L^2}^2
+\|\nabla d(t)\|_{\mathcal{H}^{m}}^2+\|(\nabla u, \Delta d)(t)\|_{\mathcal{H}^{m-1}}^2\\
&+\|\nabla u(t)\|_{\mathcal{H}^{1,\infty}}^2
+\sum_{k=0}^{m-2}\|\partial_t^k \nabla p(t)\|_{m-1-k}^2
+\varepsilon\|\nabla \partial_t^{m-1} p(t)\|_{L^2}^2\\
&+\|\Delta p(t)\|_{\mathcal{H}^1}^2+\varepsilon\|\Delta p(t)\|_{\mathcal{H}^2}^2.
\end{aligned}
\end{equation}
In the present paper, we supplement the nematic liquid crystal flows system \eqref{eq1}
with initial data
\begin{equation}\label{ID}
(p^\varepsilon, u^\varepsilon, d^\varepsilon)(x, 0)
=(p^\varepsilon_0, u^\varepsilon_0, d^\varepsilon_0)(x),
\end{equation}
such that
\begin{equation}
0< \frac{1}{\hat{C}_0}\le \rho^\varepsilon_0\le \hat{C}_0<\infty,
\end{equation}
and
\begin{equation}\label{IDB}
\begin{aligned}
&\underset{0< \varepsilon \le 1}{\sup}
\|(p^\varepsilon_0, u^\varepsilon_0, d^\varepsilon_0)\|_{X^\varepsilon_m}\\
&=
\underset{0<\varepsilon \le 1}{\sup}\{\|(u_0^\varepsilon, p_0^\varepsilon)\|_{\mathcal{H}^{m}}^2
       +\|d_0^\varepsilon\|_{L^2}^2
       +\|\nabla d_0^\varepsilon\|_{\mathcal{H}^{m}}^2
       +\|\nabla u_0^\varepsilon\|_{\mathcal{H}^{m-1}}^2
       +\sum_{k=0}^{m-2}\|\partial_t^k \nabla p_0^\varepsilon\|_{m-1-k}^2\\
&\quad  \quad \quad \quad
+\varepsilon \|\nabla \partial_t^{m-1}p_0^\varepsilon\|_{L^2}^2
+\|\Delta p_0^\varepsilon\|_{\mathcal{H}^1}^2
+\varepsilon\|\Delta p_0^\varepsilon\|_{\mathcal{H}^2}^2
+\|\Delta d_0^\varepsilon\|_{\mathcal{H}^{m-1}}^2
       +\|\nabla u_0^\varepsilon\|_{\mathcal{H}^{1,\infty}}^2\}\le \widetilde{C}_0,
\end{aligned}
\end{equation}
where $\widetilde{C}_0$ is a positive constant independent of $\varepsilon \in (0,1]$,
and the time derivatives of initial data are defined through the equation \eqref{eq1}.
Thus, the initial data $(\rho_0^\varepsilon, u_0^\varepsilon, d_0^\varepsilon)$
is assumed to have a higher space regularity and compatibilities.
Notice that the a priori estimates in Theorem  \ref{Theoream3.1}
below are obtained in the case that the
approximate solution is sufficiently smooth up to the boundary, and therefore,
in order to obtain a selfconstained result, one needs to assume the approximated
initial data satisfies the boundary compatibilities condition \eqref{bc2}.
For the initial data $(\rho_0^\varepsilon, u^\varepsilon_0, d^\varepsilon_0)$ satisfying
\eqref{ID}, it is not clear if there exists an approximate sequences
$(\rho^{\varepsilon,\delta}_0, u^{\varepsilon,\delta}_0, d^{\varepsilon,\delta}_0)$
($\delta$ being a regularization parameter) which satisfy the boundary
compatibilities and $\|(p^{\varepsilon,\delta}_0-p^\varepsilon_0, u^{\varepsilon,\delta}_0-u^\varepsilon_0, d^{\varepsilon,\delta}_0-d^\varepsilon_0)\|_{X_m^\varepsilon}\rightarrow 0$ as
$\delta \rightarrow 0$. Therefore, we set
\begin{equation}
\begin{aligned}
X_{n,m}^{\varepsilon,ap}
=\left\{(p, u, d)\in [H^{4m}(\Omega)]^2\times H^{4(m+1)}(\Omega)\right.
&|\partial_t^k p, \partial_t^k u, \partial_t^k d,
k=1,...,m{\rm~ are~ defined }\\
&{\rm ~through ~the~ equations}~\eqref{eq1}~
{\rm and}~\partial_t^k u,\\
&~~\partial_t^k \nabla d, k=0,...,m-1,{\rm satisfy~the}\\
&{\rm ~boundary~compatibility~condition}\}.
\end{aligned}
\end{equation}
and
\begin{equation}
X_{n,m}^\varepsilon={\rm~ the ~closure~ of~}X_{n,m}^{\varepsilon, ap}~{\rm in~ the~ norm~}\|(\cdot,\cdot)\|_{X_m^\varepsilon}.
\end{equation}

Now, we state the first results concerning the uniform regularity for the nematic
liquid crystal flows \eqref{eq1}, \eqref{bc2} and \eqref{ID} as follows.

\begin{theo}[Uniform Regularity]\label{Theorem1.1}
Let $m$ be an integer satisfying $m \ge 6$, $\Omega$ be a $\mathcal{C}^{m+2}$ domain,
and $A\in C^{m+1}(\partial \Omega)$. Consider the initial data
$(p_0^\varepsilon, u_0^\varepsilon, d_0^\varepsilon)\in X_{n,m}^\varepsilon$
satisfy \eqref{IDB} and $|d^\varepsilon_0|=1$
in $\overline{\Omega}$.
Then, there exists a time $T_0>0$ and $\widetilde{C}_1>0$ independent of
$\varepsilon \in (0, 1]$, such that there exits a unique solution
of \eqref{eq1}, \eqref{bc2} and \eqref{ID} which is defined on $[0, T_0]$
and satisfies the estimates
\begin{equation}\label{111}
\begin{aligned}
&\underset{0\le t \le T_0}{\sup}
(\|d^\varepsilon(t)\|_{L^2}^2
+\|(u^\varepsilon, p^\varepsilon, \nabla d^\varepsilon)(t)\|_{\mathcal{H}^{m}}^2
+\|(\nabla u^\varepsilon, \Delta d^\varepsilon)(t)\|_{\mathcal{H}^{m-1}}^2
+\|\nabla u^\varepsilon(t)\|_{\mathcal{H}^{1,\infty}}^2)\\
&\quad
+\underset{0\le t \le T_0}{\sup}(
\sum_{k=0}^{m-2}\|\partial_t^k \nabla p^\varepsilon(t)\|_{m-1-k}^2
+\varepsilon \|\partial_t^{m-1}\nabla p^\varepsilon(t)\|_{L^2}^2
+\|\Delta p^\varepsilon (t)\|_{\mathcal{H}^1}^2
+\varepsilon\|\Delta p^\varepsilon (t)\|_{\mathcal{H}^2}^2)\\
&\quad +\int_0^{T_0}(\|\nabla \partial_t^{m-1}p^\varepsilon(t)\|_{L^2}^2
+\|\Delta p^\varepsilon(t)\|_{\mathcal{H}^2}^2)dt
+\varepsilon\int_0^{T_0} \|\nabla u^\varepsilon (t)\|_{\mathcal{H}^m}^2 dt\\
&\quad
+\varepsilon^2 \int_0^{T_0}\|\nabla^2 \partial_t^{m-1}u^\varepsilon(t)\|_{L^2}^2 dt
+\varepsilon\sum_{k=0}^{m-2}\int_0^{T_0}\|\nabla^2 \partial_t^{k}u^\varepsilon(t)\|_{m-k-1}^2dt\\
&\quad +\int_0^{T_0} \|\Delta d^\varepsilon\|_{\mathcal{H}^{m}}^2dt
+\int_0^{T_0} \|\nabla \Delta d^\varepsilon\|_{\mathcal{H}^{m-1}}^2 dt
\le \widetilde{C}_1,
\end{aligned}
\end{equation}
and
\begin{equation}
\frac{1}{2\widehat{C}_0}\le \rho^\varepsilon(t)\le 2\widehat{C}_0,
\quad t \in [0, T_0],
\end{equation}
where $\widetilde{C}_1$ depends only on $\widehat{C}_0, \widetilde{C}_0$ and $C_{m+2}$.
\end{theo}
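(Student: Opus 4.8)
The plan is to obtain the theorem by combining a local-in-time existence result for each fixed viscosity $\varepsilon\in(0,1]$ with the uniform-in-$\varepsilon$ a priori estimates of Theorem~\ref{Theoream3.1}, and then running a continuation argument so that the life-span $T_0$ and the bound $\widetilde C_1$ can be chosen independent of $\varepsilon$.

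First I would prove that, for each fixed $\varepsilon>0$, the problem \eqref{eq1}, \eqref{bc2}, \eqref{ID} admits a unique local solution on some interval $[0,T^\varepsilon]$ with $(p^\varepsilon,u^\varepsilon,d^\varepsilon)\in X^\varepsilon_m(T^\varepsilon)$ and $\tfrac{1}{2\widehat C_0}\le\rho^\varepsilon\le 2\widehat C_0$. For fixed $\varepsilon$ the momentum equation is uniformly parabolic and the $d$-equation is a semilinear heat equation, so this follows from a standard linearization--iteration scheme: freeze $(\rho,u)$ in the coefficients, solve the resulting linear parabolic problems for $u$ and $d$ together with the transport equation for $\rho$, estimate the iterates in $X^\varepsilon_m$ using elliptic and parabolic regularity together with the compatibility conditions built into $X_{n,m}^\varepsilon$, and pass to the limit via a contraction in a lower-order norm. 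The pointwise bounds $\tfrac1{\widehat C_0}\le\rho^\varepsilon_0\le\widehat C_0$ propagate because the continuity equation reads $\partial_t\log\rho^\varepsilon+u^\varepsilon\cdot\nabla\log\rho^\varepsilon=-\mathrm{div}\,u^\varepsilon$, a transport equation. In the same step one checks that $|d^\varepsilon|\equiv1$ is preserved: $w:=|d^\varepsilon|^2-1$ solves $w_t+u^\varepsilon\cdot\nabla w-\Delta w=2|\nabla d^\varepsilon|^2 w$ with $\partial_n w=0$ and $w|_{t=0}=0$, hence $w\equiv0$ by a Gr\"onwall estimate on $\|w(t)\|_{L^2}^2$.

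Next, the core input is Theorem~\ref{Theoream3.1}: writing $N^\varepsilon(t)$ for the supremum over $[0,t]$ of $\|(p^\varepsilon,u^\varepsilon,d^\varepsilon)(s)\|_{X^\varepsilon_m}$ plus the dissipation integrals appearing in \eqref{111}, the a priori estimate has the form $N^\varepsilon(t)\le \widetilde C_0'+t\,Q\big(N^\varepsilon(t)\big)$ for some continuous increasing $Q$ and some $\widetilde C_0'$ depending only on $\widehat C_0,\widetilde C_0,C_{m+2}$, uniformly in $\varepsilon\in(0,1]$, with the density staying in $[\tfrac1{2\widehat C_0},2\widehat C_0]$ so long as $N^\varepsilon$ stays bounded. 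A standard continuity argument then produces $T_0>0$ and $\widetilde C_1>0$, independent of $\varepsilon$, with $T^\varepsilon>T_0$ and $N^\varepsilon(T_0)\le\widetilde C_1$, which is exactly \eqref{111}; uniqueness on $[0,T_0]$ follows from an energy estimate for the difference of two solutions in a low-order conormal norm, the coefficients being controlled by the already-established $X^\varepsilon_m$ bounds.

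The genuinely hard part is Theorem~\ref{Theoream3.1} itself, whose proof I would split into: (i) conormal energy estimates for $\mathcal Z^\alpha(u^\varepsilon,p^\varepsilon)$, $|\alpha|\le m$, with the pressure treated through the effective equation obtained by combining the continuity and momentum equations, where one must show the density/pressure boundary layer is \emph{weaker} than the velocity layer so that $\varepsilon\|\nabla\partial_t^{m-1}p^\varepsilon\|_{L^2}^2$ and $\varepsilon\|\Delta p^\varepsilon\|_{\mathcal H^2}^2$ stay $\varepsilon$-uniformly bounded; (ii) maximal-regularity estimates for $d^\varepsilon$ and its conormal and time derivatives, used to absorb the coupling term $\nabla d^\varepsilon\cdot\Delta d^\varepsilon$ in the momentum equation, which is precisely why the norm $\|(\cdot,\cdot,\cdot)\|_{X^\varepsilon_m}$ carries $\|\nabla d\|_{\mathcal H^m}$, $\|(\nabla u,\Delta d)\|_{\mathcal H^{m-1}}$ and time-derivative information for $d$; (iii) the uniform $W^{1,\infty}$ bound on $\nabla u^\varepsilon$, obtained by decomposing $\nabla u^\varepsilon$ into its conormal part (controlled by (i) and a Sobolev embedding in $H_{co}^m$, using $m\ge6$) and a normal derivative, the latter recovered from $\mathrm{div}\,u^\varepsilon$ via the pressure equation and from the vorticity $\omega^\varepsilon=\nabla\times u^\varepsilon$, which satisfies a convection--diffusion equation with the \emph{favourable} Navier-slip boundary condition $n\times\omega^\varepsilon=[Bu^\varepsilon]_\tau$; and (iv) the interior and boundary parabolic estimates giving the $W^{3,\infty}$ bound on $d^\varepsilon$. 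The main obstacle is to close (i)--(iv) \emph{simultaneously}: one must arrange the hierarchy of norms in $X^\varepsilon_m$ so that each top-order quantity is absorbed by a dissipative term with an $\varepsilon$-independent constant, while the boundary terms generated by integrating by parts against $Z_3=\varphi(z)\partial_z$ and by the Navier-slip condition are controlled; it is here that the precise definition of $X^\varepsilon_m$, the restriction $m\ge6$, and the regularity $\Omega\in\mathcal C^{m+2}$, $A\in C^{m+1}(\partial\Omega)$ are used.
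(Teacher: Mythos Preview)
Your proposal is correct and follows essentially the same route as the paper: local existence for each fixed $\varepsilon$ via a linearization--iteration scheme, the uniform a priori estimate of Theorem~\ref{Theoream3.1} in the form $N_m(t)\le C\{P(N_m(0))+P(N_m(t))\int_0^t P(N_m(\tau))\,d\tau\}$, and a continuation argument to reach a common $T_0$. The only point you gloss over is that, because $X_{n,m}^\varepsilon$ is defined as a closure, the paper first regularizes the initial data by a parameter $\delta$ to land in $X_{n,m}^{\varepsilon,ap}$ (so the a priori estimates apply to genuinely smooth solutions), and then passes $\delta\to0$ by a compactness argument before running the extension to $[0,T_0]$.
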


\begin{rema}
For $(p_0^\varepsilon, u_0^\varepsilon, d_0^\varepsilon)\in X^\varepsilon_{n,m}$,
it must hold that $u_0^\varepsilon \cdot n|_{\partial \Omega}=0$,
$((Su_0^\varepsilon)n)_\tau|_{\partial \Omega}=-(A u_0^\varepsilon)_\tau|_{\partial \Omega}$,
and $n\cdot \nabla d_0^\varepsilon|_{\partial \Omega}=0$
in the trace sense for every fixed $\varepsilon \in (0, 1]$.
\end{rema}

The main steps of the proof of Theorem \ref{Theorem1.1} are the following.
First, we obtained a conormal energy estimates for
$(p^\varepsilon, u^\varepsilon, \nabla d^\epsilon)$
in $\mathcal{H}^{m}-$norm.
The second step is to give the estimate for $\|\partial_n u^\varepsilon\|_{\mathcal{H}^{m-1}}$.
In order to obtain this estimate by an energy method, $\partial_n u^\varepsilon$
is not a convenient quantity because it does not vanish on the boundary.
Similar to  Wang et al.\cite{Wang-Xin-Yong}, $\partial_n u^\varepsilon$ can be controlled
by $\partial_n u^\varepsilon \cdot n(~{\rm or~}{\rm div} u^\varepsilon)$ and
$(\partial_n u)_\tau$.
In order to give the estimate for $(\partial_n u^\varepsilon)_\tau$, one choose
the convenient quantity $\eta=w^\varepsilon \times n+(Bu^\varepsilon)_\tau$
with a homogeneous Dirichlet boundary conditions.
The third step is to give the estimates for $\Delta d^\varepsilon$
and ${\rm div}u^\varepsilon$.
Indeed, it is easy to obtain the estimate for the quantity $\Delta d^\varepsilon$
since there exists a dissipative
term $\Delta d^\varepsilon$ on the right-hand side of \eqref{eq1}$_3$.
In the spirit of Wang et al. \cite{Wang-Xin-Yong}, we obtain a control of
$\sum_{j=0}^{m-2}\|\partial_t^j({\rm div}u^\varepsilon, \nabla p^\varepsilon)\|_{m-1-j}^2$
at the cost that the term
$\int_0^t \|\nabla \mathcal{Z}^{m-2}{\rm div}u^\varepsilon\|_{L^2}^2 d\tau$
appears in the right-hand side of the inequality.
Following the idea as Wang et al. \cite{Wang-Xin-Yong}, we can
obtain the uniform estimates for
$\int_0^t \|\partial_t^{m-1}\nabla p^\varepsilon\|_{L^2}^2 d\tau$
and get a control of $\|\partial_t^{m-1}{\rm div}u^\varepsilon\|_{L^2}^2$
in terms of
$\sum_{j=0}^{m-2}\|\partial_t^j(\nabla u^\varepsilon, \nabla p^\varepsilon)\|_{m-1-j}^2$
and $\|(p^\varepsilon, u^\varepsilon)\|_{\mathcal{H}^m}^2$.
The fourth step is to estimate $\|\Delta d^\varepsilon\|_{W^{1,\infty}}$.
Indeed, this estimate is easy to obtain since there exists a dissipation term $\Delta d^\varepsilon$ on the right-hand side of \eqref{eq1}$_3$.
The fifth step is to estimate $\|\nabla u^\varepsilon\|_{\mathcal{H}^{1,\infty}}$.
In fact, it suffices to estimate $\|(\partial_n u^\varepsilon)_\tau\|_{\mathcal{H}^{1,\infty}}$
since the other terms can be estimated by the Sobolev embedding.
We choose an equivalent quantity such that it satisfies a homogeneous Dirichlet
condition and solves a convection-diffusion equation at the leading order.
The last step is to obtain the uniform estimate of
$\|\Delta p^\varepsilon\|_{\mathcal{H}^1}$, which gives a control of
$\|\nabla p^\varepsilon\|_{\mathcal{H}^{1,\infty}}$ from Proposition \ref{prop2.3}.
Then Theorem \ref{Theorem1.1} can be proved by these a priori estimates and
a classical iteration method.

Next, we hope to prove the vanishing viscosity limit with
rates of convergence, which can be stated as follows.

\begin{theo}[Inviscid Limit]\label{Theorem1.2}
Let $(\rho, u, d)(t)\in L^\infty(0, T_1; H^3\times H^3 \times H^4)$
be the smooth solution to the equation
\eqref{eq2} and boundary condition \eqref{bc2}
with initial data $(\rho_0, u_0, d_0)$ satisfying
\begin{equation}\label{121}
(\rho_0, u_0, d_0)\in (H^3 \times H^3 \times H^4)
\cap X_{n,m}^\varepsilon~{\rm with}~m \ge 6.
\end{equation}
Let $(\rho^\varepsilon, u^\varepsilon, d^\varepsilon)(t)$
be the solution to the initial boundary
value problem of the nematic liquid crystal flows \eqref{eq1}, \eqref{bc1}
with initial data $(\rho_0, u_0, d_0)$ satisfying \eqref{121}.
Then, there exists $T_2={\rm min}\{T_0, T_1\}>0$, which is independent of
$\varepsilon>0$, such that
\begin{equation}\label{122}
\|(\rho^\varepsilon-\rho, u^\varepsilon-u)(t)\|_{L^2}^2+\|(d^\varepsilon-d)(t)\|_{H^1}^2
\le C\varepsilon^{\frac{3}{2}}, \quad t\in [0, T_2]
\end{equation}
\begin{equation}\label{123}
\|(\rho^\varepsilon-\rho, u^\varepsilon-u)(t)\|_{H^1}^2
\le C\varepsilon^{\frac{1}{6}},\quad
\|(d^\varepsilon-d)(t)\|_{H^2}^2\le C\varepsilon^{\frac{1}{2}}, \quad t\in [0, T_2]
\end{equation}
and
\begin{equation}\label{124}
\|(\rho^\varepsilon-\rho, u^\varepsilon-u)\|_{L^\infty(0, T_2; L^\infty(\Omega))}
+\|(d^\varepsilon-d)\|_{L^\infty(0,T_2; W^{1,\infty}(\Omega))}
\le C \varepsilon^{\frac{3}{10}},
\end{equation}
which $C$ depends only on the norm $\|(\rho_0, u_0)\|_{H^3},
\|d_0\|_{H^4}$
and $\|(p(\rho_0), u_0, d_0)\|_{X_{m,n}^\varepsilon}$.
\end{theo}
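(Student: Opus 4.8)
The plan is to compare the two solutions directly by an energy estimate on their difference. Write $(\tilde\rho,\tilde u,\tilde d)=(\rho^\varepsilon-\rho,u^\varepsilon-u,d^\varepsilon-d)$ and, since it is better adapted to the isentropic structure, also $\tilde p=p(\rho^\varepsilon)-p(\rho)$; by the hypothesis on the initial data these all vanish at $t=0$. Subtracting \eqref{eq1} from \eqref{eq2} (using $p^\varepsilon_t+u^\varepsilon\cdot\nabla p^\varepsilon+\gamma p^\varepsilon\,{\rm div}\,u^\varepsilon=0$ for the pressure) gives a transport equation for $\tilde p$ (and $\tilde\rho$), a damped transport equation for $\tilde u$ with forcing $\mu\varepsilon\Delta u^\varepsilon+(\mu+\lambda)\varepsilon\nabla{\rm div}\,u^\varepsilon$, the pressure gradient $\nabla\tilde p$, the linearised director stress $\nabla d^\varepsilon\cdot\Delta d^\varepsilon-\nabla d\cdot\Delta d$, and quasilinear terms from the $\rho$– and $u$–differences, and a genuinely parabolic equation for $\tilde d$ with forcing $\tilde u\cdot\nabla d$ and $|\nabla d^\varepsilon|^2d^\varepsilon-|\nabla d|^2d$. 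The structural facts to exploit are that the $\tilde d$–equation keeps the full Laplacian (no $\varepsilon$) and that on $\partial\Omega$ one has $\tilde u\cdot n=0$ and $\partial_n\tilde d=0$.

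For \eqref{122} I would run the basic estimate: test the $\tilde u$–equation with $\tilde u$, the $\tilde p$–equation with $\tilde p/(\gamma p^\varepsilon)$, and the $\tilde d$–equation with $\tilde d$ and with $-\Delta\tilde d$, then add. Using ${\rm div}(\rho^\varepsilon u^\varepsilon)=0$ for the time-derivative terms and $\tilde u\cdot n|_{\partial\Omega}=0$, the contributions $\int\nabla\tilde p\cdot\tilde u$ and $\int\tilde p\,{\rm div}\,\tilde u$ cancel, which is exactly what makes the compressible coupling close (the weight-derivative term is absorbed by the uniform $\|\partial_t p^\varepsilon\|_{L^\infty}$ bound). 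For the viscous forcing I would not integrate $\varepsilon\int\Delta u^\varepsilon\cdot\tilde u$ by parts directly; instead split $u^\varepsilon=\tilde u+u$: the $\tilde u$–part gives the good dissipation $-\mu\varepsilon\|\nabla\tilde u\|_{L^2}^2-(\mu+\lambda)\varepsilon\|{\rm div}\,\tilde u\|_{L^2}^2$ plus a boundary term, and the $u$–part is $O(\varepsilon)$ in $L^2$ against $\tilde u$, hence $O(\varepsilon^2)$. The delicate term is $\mu\varepsilon\int_{\partial\Omega}\partial_n\tilde u\cdot\tilde u$: since $\tilde u$ is tangential on $\partial\Omega$, $(\partial_nu^\varepsilon)_\tau$ is controlled by \eqref{bc1} and the conormal norms of $u^\varepsilon$, and $(\partial_nu)_\tau$ by the $H^3$–regularity of $u$, so this is $\le C\varepsilon|\tilde u|_{L^2(\partial\Omega)}$, and the trace inequality $|\tilde u|_{L^2(\partial\Omega)}^2\le C\|\tilde u\|_{L^2}(\|\nabla\tilde u\|_{L^2}+\|\tilde u\|_{L^2})$ bounds it by $\delta\varepsilon\|\nabla\tilde u\|_{L^2}^2+C\varepsilon\|\tilde u\|_{L^2}^{2/3}+C\varepsilon^2$. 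All director and quasilinear terms are handled by the uniform $L^\infty$ and conormal bounds of Theorem \ref{Theorem1.1} (in particular the $W^{3,\infty}$ control of $d^\varepsilon$ and $|d^\varepsilon|=1$), producing $C\mathcal E+\delta(\varepsilon\|\nabla\tilde u\|^2+\|\Delta\tilde d\|^2)$ with $\mathcal E\sim\|\tilde p\|_{L^2}^2+\|\tilde u\|_{L^2}^2+\|\tilde d\|_{H^1}^2$. Absorbing the $\delta$–terms gives $\tfrac{d}{dt}\mathcal E\le C\mathcal E+C\varepsilon\mathcal E^{1/3}+C\varepsilon^2$, and since $\varepsilon\mathcal E^{1/3}\le\tfrac13C\mathcal E+C\varepsilon^{3/2}$ by Young, Gronwall with $\mathcal E(0)=0$ yields $\mathcal E(t)\le C\varepsilon^{3/2}$ on $[0,T_2]$, $T_2=\min\{T_0,T_1\}$, which is \eqref{122}; as a by-product $\int_0^{T_2}(\|\nabla\tilde u\|_{L^2}^2+\|\Delta\tilde d\|_{L^2}^2)\,dt\le C\varepsilon^{1/2}$.

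The bounds \eqref{123} are then obtained without a higher-order energy argument for $\tilde u$ (which fails because $\varepsilon\nabla\Delta u^\varepsilon$ and its boundary terms are too singular in the $\sqrt\varepsilon$ layer). For $\tilde d$, differentiating its equation and testing with $\Delta\tilde d$ (using $\partial_n\tilde d=\partial_n\tilde d_t=0$ to kill boundary terms) leaves $\|\nabla F\|_{L^2}$ on the right, where $F$ contains only first derivatives of $\tilde u$, so the by-product bound $\int_0^{T_2}\|\nabla\tilde u\|_{L^2}^2\,dt\le C\varepsilon^{1/2}$ gives $\|\tilde d\|_{H^2}^2\le C\varepsilon^{1/2}$. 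For $(\tilde\rho,\tilde u)$ in $H^1$ I would interpolate the $L^2$–bound \eqref{122} against the uniform higher conormal/Sobolev bounds of Theorem \ref{Theorem1.1} together with the control of the normal second derivatives of $u^\varepsilon$ through the momentum equation, the exponent $1/6$ coming from balancing the $\varepsilon^{3/2}$–smallness against this (mildly $\varepsilon$–degenerate) regularity. Finally \eqref{124} follows from the Gagliardo–Nirenberg inequality on the bounded domain $\|f\|_{L^\infty}\le C\|f\|_{L^2}^{2/5}\|\nabla f\|_{L^\infty}^{3/5}+C\|f\|_{L^2}$ applied to $\tilde u$ and to $\nabla\tilde d$: inserting $\|\tilde u\|_{L^2},\|\nabla\tilde d\|_{L^2}\le C\varepsilon^{3/4}$ from \eqref{122} and the uniform bounds on $\|\nabla u^\varepsilon\|_{L^\infty}$ and $\|\nabla^2d^\varepsilon\|_{L^\infty}$ from Theorem \ref{Theorem1.1} (and smoothness of $(u,d)$) gives the rate $\varepsilon^{3/10}$.

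I expect the main obstacle to be the boundary contribution in the velocity estimate: it is the only place where the $\sqrt\varepsilon$ boundary layer of $u^\varepsilon$ genuinely enters, it is responsible for the non-optimal power $\varepsilon^{3/2}$ (rather than $\varepsilon^2$), and it makes the differential inequality nonlinear in $\mathcal E$, so one must check that the trace/interpolation losses are exactly compensated by the $\varepsilon$–weighted dissipation. A secondary technical point is carrying the director nonlinearities $\nabla d\cdot\Delta d$ and $|\nabla d|^2d$ through all the estimates using only the regularity that Theorem \ref{Theorem1.1} provides.
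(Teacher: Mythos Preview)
Your treatment of \eqref{122} and \eqref{124}, and of the $H^2$-bound on $\tilde d$ in \eqref{123}, is essentially the paper's argument: the same energy pairing at the $L^2$ level, the same boundary-trace mechanism producing the $\varepsilon^{3/2}$ rate, the same differentiated director estimate fed by $\int_0^t\|\nabla\tilde u\|_{L^2}^2\,d\tau\le C\varepsilon^{1/2}$, and the same Gagliardo--Nirenberg interpolation $\|f\|_{L^\infty}\le C\|f\|_{L^2}^{2/5}\|f\|_{W^{1,\infty}}^{3/5}$ for the $L^\infty$ rates.

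There is, however, a genuine gap in your plan for the $H^1$-bound on $(\tilde\rho,\tilde u)$ in \eqref{123}. You assert that a higher-order energy argument for $\tilde u$ ``fails'' and propose to obtain $\varepsilon^{1/6}$ by interpolating the $L^2$-smallness against uniform higher norms. But no such interpolation is available: Theorem~\ref{Theorem1.1} gives only $\varepsilon\int_0^{T_0}\|\nabla^2 u^\varepsilon\|_{\mathcal H^{m-1}}^2\,dt\le C$, not a pointwise-in-time $H^2$ bound, and interpolating $\|\nabla\tilde u\|_{L^2}$ between $\|\tilde u\|_{L^2}$ and $\|\nabla\tilde u\|_{L^\infty}$ gives no decay at all. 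The paper in fact \emph{does} run a higher-order energy argument, and it does not fail: one multiplies the $\tilde u$-equation separately by $\nabla\,{\rm div}\,\tilde u$ and by $\nabla\times(\nabla\times\tilde u)$, obtaining closed estimates for $\|{\rm div}\,\tilde u\|_{L^2}^2+\|\nabla\tilde p\|_{L^2}^2$ and for $\|\nabla\times\tilde u\|_{L^2}^2$, and then recovers $\|\tilde u\|_{H^1}$ via Proposition~\ref{prop2.1}. The boundary terms you are worried about are handled by integrating by parts \emph{back} to a volume integral using the Navier condition (so $\varepsilon\nabla\Delta u^\varepsilon$ never has to be bounded directly). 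The exponent $1/6$ is not an interpolation artifact: it arises from the director--velocity coupling in the curl estimate, specifically from the boundary trace
\[
|\Delta\tilde d|_{L^2(\partial\Omega)}\le C\|\Delta\tilde d\|_{H^1}^{1/2}\|\Delta\tilde d\|_{L^2}^{1/2}
\le \delta\|\nabla\Delta\tilde d\|_{L^2}^2+C_\delta\|\Delta\tilde d\|_{L^2}^{2/3}
\le \delta\|\nabla\Delta\tilde d\|_{L^2}^2+C_\delta\,\varepsilon^{1/6},
\]
using $\|\Delta\tilde d\|_{L^2}^2\le C\varepsilon^{1/2}$ from the previous step. Without this div/curl energy argument your proof of \eqref{123} for $(\tilde\rho,\tilde u)$ is incomplete.
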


%\begin{rema}
%The multiscale analysis implies that the convergence should be order
%$\varepsilon^{\frac{1}{2}}$ in $L^\infty(\Omega \times [0, T])$,
%yet the justification of this rate is still open.
%\end{rema}

The rest of the paper is organized as follows:
In section \ref{Preliminary}, we collect some inequalities that will be used later.
In section \ref{estimates}, the a priori estimates in Theorem \ref{Theoream3.1}
are proved. By using these a priori estimates, one give the proof for the
Theorem \ref{Theorem1.1} in section \ref{Proof1}.
Based on the uniform estimates obtained in Theorem \ref{Theorem1.1},
we establish the convergence rate for the solutions from \eqref{eq1} to \eqref{eq2}
and complete the proof for Theorem \ref{Theorem1.2}.

\section{Preliminaries}\label{Preliminary}
\quad The following lemma \cite{{Xiao-Xin},{Temam}} allows us to control
the $H^m(\Omega)$-norm
of a vector valued function $u$ by its $H^{m-1}-$norm of $\nabla \times u$
and ${\rm div}u$, together with the $H^{m-\frac{1}{2}}(\partial \Omega)$
of $u\cdot n$.

\begin{prop}\label{prop2.1}
Let $m \in \mathbb{N}_+$ be an integer. Let $u\in H^m$ be a vector-valued function.
Then, there exists a constant $C>0$ independent of $u$, such that
\begin{equation}\label{21}
\|u\|_{H^m}\le C(\|\nabla \times u\|_{H^{m-1}}+\|{\rm div}u\|_{H^{m-1}}
                 +\|u\|_{H^{m-1}}+|u\cdot n|_{H^{m-\frac{1}{2}}(\partial \Omega)}),
\end{equation}
and
\begin{equation}\label{22}
\|u\|_{H^m}\le C(\|\nabla \times u\|_{H^{m-1}}+\|{\rm div}u\|_{H^{m-1}}
                 +\|u\|_{H^{m-1}}+|n \times u|_{H^{m-\frac{1}{2}}(\partial \Omega)}).
\end{equation}
\end{prop}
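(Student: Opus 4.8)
The plan is to prove \eqref{21} and \eqref{22} by induction on $m$, reducing at each step to the classical $m=1$ case, which is the well-known Hodge-type or div-curl estimate: a vector field $u \in L^2(\Omega)$ with $\nabla \times u, \operatorname{div} u \in L^2(\Omega)$ and $u \cdot n \in H^{1/2}(\partial\Omega)$ (respectively $n \times u \in H^{1/2}(\partial\Omega)$) belongs to $H^1(\Omega)$ with the corresponding bound. This is standard (see \cite{Temam}), and I would quote it rather than reprove it. The two versions \eqref{21} and \eqref{22} are dual to each other via the identity relating the normal and tangential traces; the paper already references \cite{Xiao-Xin} for this.

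For the inductive step, suppose the estimate holds up to order $m-1$ and let $u \in H^m$. The idea is to differentiate along the boundary: cover $\partial\Omega$ by the charts $\Omega_k$ introduced above, use a partition of unity, and apply a tangential derivative $Z = Z_i$ ($i=1,2$, say, with $Z_3$ treated separately since it degenerates at the boundary) to the system. One uses that $\nabla \times (Zu)$ and $\operatorname{div}(Zu)$ differ from $Z(\nabla \times u)$ and $Z(\operatorname{div} u)$ by commutator terms involving only $\nabla u$ contracted with derivatives of $\psi_k$; these are bounded by $C_{m}\|u\|_{H^{m-1}}$ after $|I| = m-1$ derivatives. Applying the $m=1$ estimate to $Z^I u$ for $|I| = m-1$ tangential, plus interior elliptic regularity in $\Omega_0$ where all derivatives are controlled, yields control of $\|u\|_{H^m}$ in terms of $\|\nabla \times u\|_{H^{m-1}}$, $\|\operatorname{div} u\|_{H^{m-1}}$, the lower-order norm $\|u\|_{H^{m-1}}$, and the boundary term. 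The boundary trace $|u \cdot n|_{H^{m-1/2}(\partial\Omega)}$ (resp. $|n\times u|_{H^{m-1/2}(\partial\Omega)}$) is exactly what is produced by taking $m-1$ tangential derivatives of the trace and applying the trace theorem, since those norms on $\partial\Omega$ involve only tangential derivatives.

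The main obstacle is bookkeeping near the boundary: controlling the full gradient $\nabla u$ (including the normal derivative $\partial_n u$) rather than just tangential derivatives. The trick is that from $\nabla \times u$ and $\operatorname{div} u$ one recovers $\partial_n(u \cdot n)$ and the normal derivatives of the tangential components algebraically, modulo tangential derivatives of $u$ and curvature terms: schematically, knowing all tangential derivatives plus $\operatorname{div} u$ and all components of $\nabla \times u$ determines $\partial_3 u$ in the chart coordinates. This is precisely the mechanism of the $m=1$ lemma, and the induction simply propagates it; the care needed is to make sure the commutators $[Z^I, \nabla\times]$ and $[Z^I,\operatorname{div}]$ and $[Z^I, \gamma_0]$ (trace) only cost lower-order norms with constants depending on $C_m$, which follows because each commutator produces at least one factor of $\nabla\psi_k$ or its derivatives and one fewer derivative on $u$. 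I would organize the write-up as: (i) state the $m=1$ base case with reference; (ii) set up charts and partition of unity; (iii) do the commutator estimate; (iv) close the induction, deriving \eqref{22} from \eqref{21} (or running the parallel argument) using $(2S(v)n - (\nabla\times v)\times n)_\tau = -(2S(n)v)_\tau$ to swap normal and tangential boundary data.
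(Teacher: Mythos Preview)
The paper does not prove Proposition~\ref{prop2.1}: it simply records it as a known result and cites \cite{Xiao-Xin,Temam}. So there is nothing to compare against at the level of argument; your sketch is more than the paper offers.

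That said, your outline is the standard route and is essentially correct. A couple of small remarks if you intend to write it out in full. First, in the inductive step you apply the $m=1$ estimate to $Z^I u$ and need the boundary term $|(Z^I u)\cdot n|_{H^{1/2}}$; since $n$ is tangentially smooth and $Z^I$ commutes with restriction to $\partial\Omega$, this is controlled by $|u\cdot n|_{H^{m-1/2}(\partial\Omega)}$ plus lower-order terms from $[Z^I,n\cdot]$, exactly as you say. Second, your parenthetical about $Z_3$ is a bit loose: one does not actually need $Z_3$ in this induction, because the missing normal derivative is recovered \emph{algebraically} from $\operatorname{div}u$ and $\nabla\times u$ at each step (the mechanism you describe in the last paragraph), so the full $H^m$ control comes from $m-1$ purely tangential derivatives $Z_1,Z_2$ plus the div--curl data. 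Finally, deriving \eqref{22} from \eqref{21} via the $S(n)$ identity is not quite automatic; the cleaner and more common route is to run the parallel argument with the base case for the tangential trace $n\times u$ (also in \cite{Temam}), which is what the references do.
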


In this paper, one repeatedly use the Gagliardo-Nirenberg-Morser type inequality,
whose proof can be find in \cite{Gues}. First, define the space
\begin{equation}
W^m(\Omega \times [0, T])=\{f(x,t)\in L^2(\Omega \times [0, T])
|\mathcal{Z}^\alpha f \in L^2(\Omega \times [0, T]), |\alpha|\le m\}.
\end{equation}
Then, the Gagliardo-Nirenberg-Morser type inequality  can be stated as follows:
\begin{prop}\label{prop2.2}
For $u, v \in L^\infty(\Omega \times [0, T])\cap \mathcal{W}^m(\Omega \times [0, T])$
with $m \in \mathbb{N}_+$ an integer, it holds that
\begin{equation}\label{23}
\int_0^t \|(\mathcal{Z}^\beta u \mathcal{Z}^\gamma v)(\tau)\|_{L^2}^2 d\tau
\lesssim \|u\|_{L^\infty_{t,x}}^2\int_0^t \|v(\tau)\|_{\mathcal{H}^m}^2 d\tau
+\|v\|_{L^\infty_{t,x}}^2\int_0^t \|u(\tau)\|_{\mathcal{H}^m}^2 d\tau,
~|\beta|+|\gamma|=m.
\end{equation}
\end{prop}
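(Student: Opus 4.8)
\medskip

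\noindent\textit{Proof plan.} Inequality \eqref{23} is the conormal counterpart of the classical Gagliardo--Nirenberg--Moser product estimate, and the plan is to deduce it from the single-function interpolation inequality for iterated derivatives in the vector fields $\partial_t,Z_1,Z_2,Z_3$, together with Hölder's and Young's inequalities; the only genuinely analytic ingredient is that interpolation inequality, which I would quote from \cite{Gues}. Set $Q_t:=\Omega\times[0,t]$, so that the left-hand side of \eqref{23} equals $\|\mathcal{Z}^\beta u\,\mathcal{Z}^\gamma v\|_{L^2(Q_t)}^2$, while $\int_0^t\|w(\tau)\|_{\mathcal{H}^m}^2\,d\tau=\sum_{|\alpha|\le m}\|\mathcal{Z}^\alpha w\|_{L^2(Q_t)}^2$, and by approximation it suffices to argue for smooth $u,v$. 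First I would dispose of the endpoint cases: if $|\beta|=0$ then $\mathcal{Z}^\beta u=u$ and $|\gamma|=m$, so that
\begin{equation*}
\|u\,\mathcal{Z}^\gamma v\|_{L^2(Q_t)}\le\|u\|_{L^\infty_{t,x}}\,\|\mathcal{Z}^\gamma v\|_{L^2(Q_t)}\le\|u\|_{L^\infty_{t,x}}\Big(\int_0^t\|v(\tau)\|_{\mathcal{H}^m}^2\,d\tau\Big)^{1/2},
\end{equation*}
and squaring gives \eqref{23}; the case $|\gamma|=0$ is symmetric. Hence one may assume $1\le|\beta|=j\le m-1$, and consequently $1\le|\gamma|=m-j\le m-1$.

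For such $j$, since $\tfrac{j}{2m}+\tfrac{m-j}{2m}=\tfrac12$, Hölder's inequality on $Q_t$ gives
\begin{equation*}
\|\mathcal{Z}^\beta u\,\mathcal{Z}^\gamma v\|_{L^2(Q_t)}\le\|\mathcal{Z}^\beta u\|_{L^{2m/j}(Q_t)}\,\|\mathcal{Z}^\gamma v\|_{L^{2m/(m-j)}(Q_t)}.
\end{equation*}
Next I would invoke the conormal Gagliardo--Nirenberg inequality of \cite{Gues}: localizing through the covering $\Omega\subset\Omega_0\cup\bigcup_k\Omega_k$ and the coordinate maps $\Psi$ (on the interior patch this is the Euclidean inequality in $\mathbb{R}^4$, while on the boundary patches it is its version adapted to the degenerate field $Z_3=\varphi(z)\partial_z$), one has, for every $1\le k\le m$ and $|\delta|=k$,
\begin{equation*}
\|\mathcal{Z}^\delta w\|_{L^{2m/k}(Q_t)}\lesssim\|w\|_{L^\infty(Q_t)}^{\,1-\frac{k}{m}}\Big(\int_0^t\|w(\tau)\|_{\mathcal{H}^m}^2\,d\tau\Big)^{\frac{k}{2m}}.
\end{equation*}
Applying this with $(w,\delta,k)=(u,\beta,j)$ and $(w,\delta,k)=(v,\gamma,m-j)$, and abbreviating $A:=\|u\|_{L^\infty_{t,x}}^2\int_0^t\|v(\tau)\|_{\mathcal{H}^m}^2\,d\tau$ and $B:=\|v\|_{L^\infty_{t,x}}^2\int_0^t\|u(\tau)\|_{\mathcal{H}^m}^2\,d\tau$, the two displays combine, after squaring, to
\begin{equation*}
\|\mathcal{Z}^\beta u\,\mathcal{Z}^\gamma v\|_{L^2(Q_t)}^2\lesssim A^{\frac{m-j}{m}}\,B^{\frac{j}{m}}\le\frac{m-j}{m}A+\frac{j}{m}B\le A+B,
\end{equation*}
where the middle step is Young's inequality with exponents $\tfrac{m}{m-j}$ and $\tfrac{m}{j}$. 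This is precisely \eqref{23}, with implied constant depending only on $m$ and on $\Omega$.

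The elementary steps above present no difficulty; the entire content of the proposition is the conormal Gagliardo--Nirenberg inequality used in the second paragraph, and that is where I expect the real work to lie. Away from $\partial\Omega$ the frame $\partial_t,Z_1,Z_2,Z_3$ is non-degenerate, so after the change of variables the estimate reduces to the classical Gagliardo--Nirenberg inequality; but near $\partial\Omega$ the field $Z_3=\varphi(z)\partial_z$ vanishes, and one must rely on the anisotropic (degenerate) interpolation inequality in the spirit of \cite{Gues}. Once that local estimate is granted, a partition of unity subordinate to $\{\Omega_k\}$ together with the maps $\Psi$ glues the pieces into the global bound, with constants depending only on $m$ and on $\Omega$. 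Thus the ``hard part'' is not a new estimate but the geometric bookkeeping behind the quoted interpolation inequality.
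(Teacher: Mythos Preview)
Your proposal is correct and follows the standard route to such product estimates: endpoint cases by direct $L^\infty$ extraction, the intermediate cases by H\"older with exponents $2m/j$ and $2m/(m-j)$, the single-function conormal Gagliardo--Nirenberg interpolation from \cite{Gues}, and then Young's inequality to pass from $A^{(m-j)/m}B^{j/m}$ to $A+B$. The paper itself does not supply a proof of this proposition but simply cites \cite{Gues}, so your argument is exactly in line with what the paper intends; the only substantive input is the degenerate interpolation inequality near the boundary, which you correctly attribute to that reference.
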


We also need the following anisotropic Sobolev embedding and trace theorems,
refer to \cite{{Wang-Xin-Yong}}.
\begin{prop}\label{prop2.3}
Let $m_1 \ge 0, m_2 \ge 0$ be integers and $f\in H_{co}^{m_1}(\Omega)\cap H_{co}^{m_2}(\Omega)$
and $\nabla f\in H_{co}^{m_2}(\Omega)$.\\
$(1)$ The following anisotropic Sobolev embedding holds:
\begin{equation}\label{24}
\|f\|_{L^\infty}^2\le C(\|\nabla f\|_{H^{m_2}_{co}}+\|f\|_{H^{m_2}_{co}})
\cdot \|f\|_{H^{m_1}_{co}},
\end{equation}
provided $m_1+m_2 \ge 3$.\\
$(2)$The following trace estimate holds:\\
\begin{equation}\label{25}
|f|_{H^s(\partial \Omega)}^2 \le  C(\|\nabla f\|_{H^{m_2}_{co}}+\|f\|_{H^{m_2}_{co}})
\cdot \|f\|_{H^{m_1}_{co}},
\end{equation}
provided $m_1+m_2 \ge 2s \ge 0$.
\end{prop}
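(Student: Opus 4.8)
\emph{Proof idea.} Both estimates are local, so the plan is to fix the finite covering $\Omega \subset \Omega_0 \cup \bigcup_{k=1}^n \Omega_k$ together with a subordinate partition of unity $\{\chi_k\}_{k=0}^n$, write $f=\sum_k \chi_k f$, and treat each piece separately. On the interior patch $\Omega_0$ (where $\overline{\Omega}_0 \subset \Omega$) the conormal norm $\|\cdot\|_m$ controls the full $H^m(\Omega)$-norm and $\partial\Omega\cap\Omega_0=\emptyset$, so \eqref{24}--\eqref{25} reduce to the classical embedding $H^2(\mathbb R^3)\hookrightarrow L^\infty$ and the classical trace theorem. On each boundary patch $\Omega_k$ one uses the flattening diffeomorphism $\Psi_k:(y,z)\mapsto(y,\psi_k(y)+z)$; under $\Psi_k$ the conormal fields become $\partial_{y^1},\partial_{y^2}$ and $Z_3=\varphi(z)\partial_z$, and since $\Omega$ is smooth enough the change of variables together with the commutators $[\chi_k,\mathcal Z^\alpha]$ only produces equivalent conormal norms and lower-order terms, with constants controlled by the $\mathcal C^m$-norms of the $\psi_j$. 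Hence it suffices to prove \eqref{24}--\eqref{25} for $f$ compactly supported in $\overline{\mathbb R^3_+}$, with tangential fields $\partial_{y^i}=Z_i$ ($i=1,2$) and genuine normal derivative $\partial_z$. A useful point is that, after localization, the Poincaré inequality on the support gives $\|Z_{\mathrm{tan}}^{\,j}f\|_{L^2}\lesssim\|\nabla Z_{\mathrm{tan}}^{\,j}f\|_{L^2}\le\|\nabla f\|_{H^{j}_{co}}$, which allows a missing tangential derivative of $f$ to be absorbed into the $\|\nabla f\|$-term.

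For \eqref{24}, the engine is a one–dimensional estimate in $z$ composed with a two–dimensional estimate in $y$. Pointwise in $y$, the fundamental theorem of calculus yields $|f(y,z)|^2=-\int_z^\infty\partial_s|f(y,s)|^2\,\d s\le 2\|f(y,\cdot)\|_{L^2_z}\|\partial_z f(y,\cdot)\|_{L^2_z}$, hence $\|f\|_{L^\infty(\mathbb R^3_+)}^2\lesssim\|f\|_{L^\infty_yL^2_z}\,\|\partial_z f\|_{L^\infty_yL^2_z}$. Next, for a function $w$ on $\mathbb R^3_+$ the planar Gagliardo–Nirenberg inequality (applied to $y\mapsto\|w(y,\cdot)\|_{L^2_z}^2$, using $W^{2,1}(\mathbb R^2)\hookrightarrow L^\infty$ and interpolation) gives bounds of the type $\|w\|_{L^\infty_yL^2_z}^2\lesssim\|w\|_{L^2}\bigl(\|w\|_{L^2}+\|Z_{\mathrm{tan}}^{2}w\|_{L^2}\bigr)$, and more generally with the two tangential orders distributed as $a+b\ge 2$. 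Applying this with $w=f$ and with $w=\partial_z f$, using the commutation $\partial_z Z_i=Z_i\partial_z$ so that $Z_{\mathrm{tan}}^{\,\ell}\partial_z f=\partial_z Z_{\mathrm{tan}}^{\,\ell}f$ is controlled by $\|\nabla f\|_{H^{\ell}_{co}}$, and distributing the available conormal orders among the factors (charging the one genuine normal derivative and the shortfall in the $f$-factor, when $m_1$ is small, to the $\|\nabla f\|_{H^{m_2}_{co}}$-term via the Poincaré remark), one arrives, after interpolation and summation over the finitely many multi-indices, at $\|f\|_{L^\infty}^2\lesssim(\|\nabla f\|_{H^{m_2}_{co}}+\|f\|_{H^{m_2}_{co}})\,\|f\|_{H^{m_1}_{co}}$; the hypothesis $m_1+m_2\ge3$ is exactly what makes the total effective order $m_1+(m_2+1)\ge4$ sufficient for the embedding to close.

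For \eqref{25}, in a flattened patch the trace onto $\partial\Omega$ is the trace at $z=0$ of a purely tangential norm. Writing a tangential Fourier transform and using $|\widehat f(\xi,0)|^2=-\int_0^\infty\partial_z|\widehat f(\xi,z)|^2\,\d z\le\int_0^\infty\bigl((1+|\xi|^2)^{1/2}|\widehat f|^2+(1+|\xi|^2)^{-1/2}|\partial_z\widehat f|^2\bigr)\,\d z$, one obtains
\[
|f(\cdot,0)|_{H^s(\mathbb R^2)}^2\lesssim\|f\|_{H^{s+\frac12}_{\mathrm{tan}}L^2_z}^2+\|\partial_z f\|_{H^{s-\frac12}_{\mathrm{tan}}L^2_z}^2.
\]
Interpolating each right-hand term tangentially between $L^2$ and a tangential space of order $m_1+m_2$ (resp. $m_1+m_2+1$, when the extra normal derivative is present) and again routing $\partial_z f$ through $\|\nabla f\|_{H^{m_2}_{co}}$ via $\partial_z Z_i=Z_i\partial_z$, the condition $m_1+m_2\ge2s$ is precisely what permits the splittings $a+b=2s$ and $a'+b'=2s-1$ with $a\le m_1$ and the companion order $\le m_2$; summing over patches gives $|f|_{H^s(\partial\Omega)}^2\lesssim(\|\nabla f\|_{H^{m_2}_{co}}+\|f\|_{H^{m_2}_{co}})\,\|f\|_{H^{m_1}_{co}}$.

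The main obstacle — and the reason the hypothesis involves $\nabla f\in H^{m_2}_{co}$ rather than merely $f\in H^{m_2}_{co}$ — is the degeneracy $\varphi(0)=0$ of $Z_3=\varphi(z)\partial_z$: conormal derivatives carry no information in the normal direction near $z=0$, so the classical $H^2\hookrightarrow L^\infty$ argument cannot be imitated using only $\mathcal Z^\alpha f$. One genuine normal derivative $\partial_z f$ (bounded by $\nabla f$) must be spent, and the delicate point is the bookkeeping that keeps everything else tangential and distributes the $m_1+m_2$ available conormal orders over the two factors so that the exponents combine to exactly $(\cdot)^1(\cdot)^1$ — with the constants uniform over the finitely many patches and depending only on the $\mathcal C^m$-norms of the $\psi_j$. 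This is carried out in the spirit of the Gagliardo–Nirenberg–Moser estimates of \cite{Gues} and the anisotropic embeddings of \cite{Masmoudi-Rousset,Wang-Xin-Yong}.
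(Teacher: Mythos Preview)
The paper does not actually prove Proposition~\ref{prop2.3}; it simply records the statement and refers the reader to \cite{Wang-Xin-Yong} (and implicitly to \cite{Masmoudi-Rousset}, where the anisotropic embedding first appears in this form). Your sketch is a faithful outline of the argument given in those references: localization to a flattened half-space, one genuine normal derivative via the one-dimensional Agmon/FTC inequality in $z$, and then tangential Sobolev/Gagliardo--Nirenberg interpolation in $y$ to distribute the remaining $m_1+m_2$ conormal orders between the two factors; the trace case is handled the same way with the standard $|\widehat f(\xi,0)|^2\le\int_0^\infty(\langle\xi\rangle|\widehat f|^2+\langle\xi\rangle^{-1}|\partial_z\widehat f|^2)\,\d z$ identity. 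So there is no discrepancy to report --- your approach \emph{is} the one being cited.
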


\section{A priori estimates}\label{estimates}

\quad The aim of this section is to prove the following a priori estimates,
which are crucial to prove Theorem \eqref{Theorem1.1}.
For notational convenience, we drop the superscript $\varepsilon$ throughout
this section.

\begin{theo}[a priori estimates]\label{Theoream3.1}
Let $m$ be an integer satisfying $m \ge 6$, $\Omega$ be a
$C^{m+2}$ domain, and $A\in \mathcal{C}^{m+1}(\partial \Omega)$.
For sufficiently smooth solutions defined on $[0, T]$ of
\eqref{eq1} and \eqref{bc1}, then it holds that
\begin{equation}\label{31a}
|\rho(x, 0)|{\rm exp}\left(-\int_0^t \|{\rm div}u(\tau)\|_{L^\infty}d\tau\right)
\le \rho(x,t) \le |\rho(x, 0)|{\rm exp}\left(\int_0^t \|{\rm div}u(\tau)\|_{L^\infty}d\tau\right),
\end{equation}
for $(x, t)\in \Omega \times [0, T]$. In addition, if
\begin{equation}\label{31b}
0< c_0 \le \rho(x,t) \le \frac{1}{c_0}<\infty,
\quad (x, t)\in \Omega \times [0, T],
\end{equation}
where $c_0$ is any given small positive constant, then the following
a priori estimate holds
\begin{equation}\label{31c}
\begin{aligned}
&N_m(t)
+\int_0^t(\|\nabla \partial_t^{m-1}p(\tau)\|_{L^2}^2
+\|\Delta p(\tau)\|_{\mathcal{H}^2}^2)d\tau
+\varepsilon \int_0^t \|\nabla u(\tau)\|_{\mathcal{H}^{m}}^2 d\tau\\
&\quad \quad
+\varepsilon \sum_{k=0}^{m-2}\int_0^t \|\nabla^2 \partial_t^k u(\tau)\|_{m-1-k}^2d\tau
+\varepsilon^2 \int_0^t \|\nabla^2 \partial_t^{m-1}u(\tau)\|_{L^2}^2 d\tau\\
&\quad \quad
+\int_0^t \|\Delta d(\tau)\|_{\mathcal{H}^{m}}^2d\tau
+\int_0^t\|\nabla \Delta d(\tau)\|_{\mathcal{H}^{m-1}}^2d\tau\\
&
\le \widetilde{C}_2C_{m+2}\left\{P(N_m(0))+P(N_m(t))\int_0^t P(N_m(\tau)) d\tau\right\},
\quad \forall t\in [0, T],
\end{aligned}
\end{equation}
where $\widetilde{C}_2$ depends only on $\frac{1}{c_0},$ $P(\cdot)$
is a polynomial, and
\begin{equation}\label{def1}
\begin{aligned}
N_m(t)\triangleq
\underset{0\le \tau \le t}{\sup}
&\left\{1+\|(p,u)(\tau)\|_{\mathcal{H}^{m}}^2
       +\|d(\tau)\|_{L^2}^2
       +\|\nabla d(\tau)\|_{\mathcal{H}^{m}}^2
       +\|\nabla u(\tau)\|_{\mathcal{H}^{m-1}}^2\right.\\
&\quad +\|\Delta d(\tau)\|_{\mathcal{H}^{m-1}}^2
       +\sum_{k=0}^{m-2}\|\partial_t^k \nabla p(\tau)\|_{m-1-k}^2
       +\varepsilon\|\nabla \partial_t^{m-1}p(\tau)\|_{L^2}^2\\
&\quad \left.+\|\Delta p(\tau)\|_{\mathcal{H}^1}^2
       +\varepsilon\|\Delta p(\tau)\|_{\mathcal{H}^2}^2
       +\|\nabla u(\tau)\|_{\mathcal{H}^{1,\infty}}^2\right\}.
\end{aligned}
\end{equation}
\end{theo}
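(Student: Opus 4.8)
The plan is to establish the a priori estimate \eqref{31c} by a sequence of conormal energy estimates, following the scheme outlined in the introduction after the statement of Theorem~\ref{Theorem1.1}, with the new difficulty being the coupling through the director field $d$ and the term $\nabla d\cdot\Delta d$.

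\medskip
\noindent\textbf{Step 1: maximum principle for the density.}
The bound \eqref{31a} follows directly from the continuity equation $\rho_t+u\cdot\nabla\rho+\rho\,\mathrm{div}\,u=0$ by the method of characteristics: along the flow $\dot X=u(X,t)$ one has $\frac{d}{dt}\rho(X(t),t)=-\rho\,\mathrm{div}\,u$, hence $\rho(X(t),t)=\rho(X(0),0)\exp(-\int_0^t\mathrm{div}\,u\,d\tau)$, and Gronwall-type bounds give \eqref{31a}. Under \eqref{31b} we may then freely divide by $\rho$ and treat $p=\rho^\gamma$ and $\rho$ as equivalent quantities in the estimates, with constants depending on $1/c_0$.

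\medskip
\noindent\textbf{Step 2: conormal energy estimates for $(p,u,\nabla d)$ in $\mathcal H^m$.}
Applying $\mathcal Z^\alpha$, $|\alpha|\le m$, to \eqref{eq1} and testing against $(\mathcal Z^\alpha p/(\gamma p),\mathcal Z^\alpha u)$ and testing the $d$-equation (differentiated once more in space) against $\mathcal Z^\alpha\Delta d$, one obtains the basic energy identity. The key points: the commutators $[\mathcal Z^\alpha,\rho]u_t$, $[\mathcal Z^\alpha,u\cdot\nabla]$, and the pressure commutators are controlled by Proposition~\ref{prop2.2} (Gagliardo--Nirenberg--Moser) in terms of $N_m$; the boundary terms produced by integration by parts in the viscous term $\mu\varepsilon\Delta u+(\mu+\lambda)\varepsilon\nabla\,\mathrm{div}\,u$ are handled using the vorticity form \eqref{bc2} of the boundary condition, which converts $((Su)n)_\tau$ into $n\times(\nabla\times u)=[Bu]_\tau$, leaving only lower-order tangential terms; the coupling term $\nabla d\cdot\Delta d$ is paired so that its worst contribution is absorbed by the dissipation $\int_0^t\|\Delta d\|_{\mathcal H^m}^2\,d\tau$ on the left after using $\|\nabla d\|_{\mathcal H^m}$-control and the anisotropic embedding \eqref{24}. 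This yields control of $\|(p,u,\nabla d)(t)\|_{\mathcal H^m}^2$, $\varepsilon\int_0^t\|\nabla u\|_{\mathcal H^m}^2$ and $\int_0^t\|\Delta d\|_{\mathcal H^m}^2+\|\nabla\Delta d\|_{\mathcal H^{m-1}}^2$, modulo the normal-derivative terms that are not yet closed.

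\medskip
\noindent\textbf{Step 3: normal derivatives of $u$.}
Since $\partial_n u$ does not vanish on $\partial\Omega$, decompose it into $\partial_n u\cdot n$ (equivalently $\mathrm{div}\,u$ modulo tangential derivatives) and $(\partial_n u)_\tau$. For the tangential part introduce $\eta=\omega\times n+(Bu)_\tau$ with $\omega=\nabla\times u$, which satisfies a homogeneous Dirichlet condition by \eqref{bc2} and solves a convection--diffusion equation $\varepsilon\mu\Delta\eta = \rho(\partial_t+u\cdot\nabla)\eta+\cdots$ at leading order; energy estimates on $\mathcal Z^{m-1}\eta$ then give $\|(\partial_n u)_\tau\|_{\mathcal H^{m-1}}$ and, with $\mathcal H^{1,\infty}$-level estimates plus the anisotropic embedding \eqref{24}, the pointwise bound $\|\nabla u\|_{\mathcal H^{1,\infty}}$ (Step~5 of the outline). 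For the normal part one uses the momentum equation to express $\varepsilon\nabla\,\mathrm{div}\,u$ in terms of $\nabla p$, $\rho u_t$, $\rho u\cdot\nabla u$ and $\nabla d\cdot\Delta d$, and applies Proposition~\ref{prop2.1} to recover full $\nabla u$ control from $\nabla\times u$, $\mathrm{div}\,u$ and $u\cdot n|_{\partial\Omega}=0$.

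\medskip
\noindent\textbf{Step 4: pressure and $\mathrm{div}\,u$ estimates.}
Here lies the main obstacle. Differentiating \eqref{eq1}$_1$ and \eqref{eq1}$_2$ in $\partial_t^k$ and using the elliptic structure $\varepsilon(2\mu+\lambda)\Delta\,\mathrm{div}\,u=\mathrm{div}(\rho u_t+\rho u\cdot\nabla u+\nabla p+\nabla d\cdot\Delta d)$ together with $p_t+u\cdot\nabla p+\gamma p\,\mathrm{div}\,u=0$, one derives, following Wang--Xin--Yong, a control of $\sum_{k=0}^{m-2}\|\partial_t^k(\mathrm{div}\,u,\nabla p)\|_{m-1-k}^2$ at the cost of a term $\int_0^t\|\nabla\mathcal Z^{m-2}\mathrm{div}\,u\|_{L^2}^2$, which is then absorbed via the parabolic estimate $\varepsilon\sum_k\int_0^t\|\nabla^2\partial_t^k u\|_{m-1-k}^2$; a separate estimate for $\int_0^t\|\partial_t^{m-1}\nabla p\|_{L^2}^2$ and for $\|\partial_t^{m-1}\mathrm{div}\,u\|_{L^2}^2$ closes the top-order time-derivative loop, and finally the equation $\varepsilon(2\mu+\lambda)\Delta p = \gamma p\,(\text{\dots}) + \dots$ gives $\|\Delta p\|_{\mathcal H^1}^2+\varepsilon\|\Delta p\|_{\mathcal H^2}^2$, hence $\|\nabla p\|_{\mathcal H^{1,\infty}}$ by \eqref{24}. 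The delicate points are tracking the weak density boundary layer (as in \cite{Wang-Xin-Yong}) and making sure the additional forcing $\nabla d\cdot\Delta d$ in the momentum equation is estimated by $\|\nabla d\|_{\mathcal H^m}\|\Delta d\|_{\mathcal H^{m-1}}$-type products that are already controlled by $N_m$ and the $\Delta d$-dissipation.

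\medskip
\noindent\textbf{Step 5: the $d$-field higher regularity.}
Because \eqref{eq1}$_3$ is a genuine parabolic equation for $d$ with forcing $|\nabla d|^2 d - u\cdot\nabla d$, testing $\mathcal Z^\alpha$ of the equation against $\mathcal Z^\alpha\Delta d$ and $\mathcal Z^\alpha\partial_t d$, using $|d|=1$ (propagated by the equation, as in the incompressible case \cite{Gao-Guo-Xi}) to bound $|\nabla d|^2 d$, yields $\int_0^t\|\Delta d\|_{\mathcal H^m}^2+\|\nabla\Delta d\|_{\mathcal H^{m-1}}^2$ and the pointwise $\|\Delta d\|_{W^{1,\infty}}$ bound by \eqref{24}; this is the easiest step since the dissipation is non-degenerate and $\varepsilon$-independent.

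\medskip
\noindent\textbf{Closing the estimate.}
Summing Steps 2--5, all normal-derivative and pressure terms on the right that are not multiplied by a smallness factor are either absorbed into the left-hand side dissipation or bounded by $P(N_m(t))\int_0^t P(N_m(\tau))\,d\tau$; the initial-data contribution collects into $P(N_m(0))$. The constant acquires its dependence on $1/c_0$ through Step~1 and on $C_{m+2}$ through the coefficients $\partial_i\psi$, $\varphi$ and the matrix $B$ appearing in every integration by parts, giving \eqref{31c}. I expect Step~4 — the simultaneous control of the pressure, its time derivatives, and $\mathrm{div}\,u$ with the correct powers of $\varepsilon$ and the extra liquid-crystal forcing — to be the principal technical hurdle.
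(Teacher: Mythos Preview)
Your outline follows the paper's strategy essentially point by point: Step~1 is exactly \eqref{31a}; Step~2 is Lemma~\ref{lemma3.3}; Step~3 (the quantity $\eta=\omega\times n+(Bu)_\tau$) is Lemmas~\ref{lemma3.4}--\ref{lemma3.5}; Step~4 is the block Lemmas~\ref{lemma3.8}--\ref{lemma3.13} and Lemma~\ref{lemma3.18}; Step~5 is Lemmas~\ref{lemma3.3-1}, \ref{lemma3.4-1}, \ref{lemma3.14} together with \eqref{31507}. Your identification of the pressure/$\mathrm{div}\,u$ loop as the hardest part is also accurate.

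There is, however, one genuine gap in Step~3. You write that the pointwise bound $\|\nabla u\|_{\mathcal H^{1,\infty}}$ follows from ``$\mathcal H^{1,\infty}$-level estimates plus the anisotropic embedding \eqref{24}''. This does not work uniformly in $\varepsilon$: applying \eqref{24} to $\mathcal Z^\alpha\nabla u$ (or to $\eta$) requires a conormal bound on $\nabla(\mathcal Z^\alpha\nabla u)$, i.e.\ on $\nabla^2 u$, and no such bound is available in $N_m(t)$ --- only $\varepsilon\int_0^t\|\nabla^2 u\|^2$-type quantities are controlled. The paper circumvents this by \emph{not} using a Sobolev embedding for this term. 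Instead (Lemma~\ref{lemma3.17}) one passes to normal geodesic coordinates near $\partial\Omega$, rewrites the $\eta$-equation as a one-dimensional-in-$z$ convection--diffusion problem with frozen tangential coefficients (equation \eqref{31718}), and invokes the pointwise parabolic bound of Lemma~\ref{lemma3.16}, which is a direct $L^\infty$ estimate (of maximum-principle type) that does not cost a normal derivative. The forcing in that equation picks up the liquid-crystal term $\nabla\times(\nabla d\cdot\Delta d)$, whose $\mathcal H^{1,\infty}$ norm is controlled via \eqref{31507}, and an $\varepsilon^2\|\nabla^2 u\|_{\mathcal H^4}^2$ contribution that is harmless after time integration. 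You should replace the embedding argument by this step; the rest of your plan then closes exactly as in Subsection~3.6 of the paper.
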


Throughout this section, we shall work on the interval of time
$[0, T]$ such that $c_0 \le \rho(x, t)\le \frac{1}{c_0}$.
Furthermore, we point out that the generic constant $C$
may depend on $\frac{1}{c_0}$ in this section.
Since the proof of Theorem \ref{Theoream3.1} is quite lengthy and involved,
we divide the proof into the following several subsections.

\subsection{Conormal Energy Estimates for $\rho, u$ and $\nabla d$}

\quad For any smooth function $f$, notice that
\begin{equation*}
\Delta f=\nabla {\rm div}f-\nabla \times (\nabla \times f),
\end{equation*}
and then $\eqref{eq1}_1$ can be written as
\begin{equation}\label{eq1-1}
\rho u_t+\rho u\cdot \nabla u+\nabla p
=-\mu \varepsilon \nabla \times (\nabla \times u)
+(2\mu+\lambda)\varepsilon \nabla {\rm div}u-\nabla d\cdot \Delta d.
\end{equation}

In this subsection, we first give the basic a priori $L^2$ estimate which holds
for \eqref{eq1} and \eqref{bc2}.

\begin{lemm}\label{lemma3.2}
For a smooth solution to \eqref{eq1} and \eqref{bc2}, it holds that
for $\varepsilon \in (0, 1]$
\begin{equation}\label{311}
\begin{aligned}
&\int (\frac{1}{2}\rho|u|^2+\frac{\gamma}{\gamma-1}\rho^\gamma
+\frac{1}{2}|\nabla d|^2) dx
+c_1 \varepsilon \int_0^t \|\nabla  u\|_{L^2}^2 d\tau
+\int_0^t \|\Delta d\|_{L^2}^2  d\tau\\
&\le \int (\frac{1}{2}\rho_0|u_0|^2+\frac{\gamma}{\gamma-1}\rho_0^\gamma
+\frac{1}{2}|\nabla d_0|^2) dx
+\|\nabla d\|_{L^\infty}^2 \int_0^t\|\nabla d\|_{L^2}^2 d\tau
+C_2 \int_0^t \|u\|_{L^2}^2 d\tau.
\end{aligned}
\end{equation}
\end{lemm}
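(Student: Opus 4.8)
The plan is to prove \eqref{311} by the classical basic energy method: multiply the momentum equation \eqref{eq1-1} by $u$, multiply the director equation \eqref{eq1}$_3$ by $-\Delta d$, integrate over $\Omega$, add the two resulting identities, and exploit the exact cancellation of the coupling terms $\int_\Omega(u\cdot\nabla d)\cdot\Delta d\,\d x$ that appear in both. In the velocity identity, using the continuity equation \eqref{eq1}$_1$ the inertial part $\int_\Omega(\rho u_t+\rho u\cdot\nabla u)\cdot u\,\d x$ becomes $\frac{\d}{\d t}\int_\Omega\frac12\rho|u|^2\,\d x$, and, after multiplying \eqref{eq1}$_1$ by $\frac{\gamma}{\gamma-1}\rho^{\gamma-1}$ and integrating by parts (legitimate since $u\cdot n=0$ on $\partial\Omega$), the pressure work $\int_\Omega\nabla p\cdot u\,\d x$ becomes the time derivative of the internal-energy term $\int_\Omega\frac{\gamma}{\gamma-1}\rho^\gamma\,\d x$.

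For the viscous terms I would use $\Delta u=\nabla\,{\rm div}\,u-\nabla\times(\nabla\times u)$ and integrate by parts: the $\nabla\,{\rm div}\,u$ contribution gives $-(2\mu+\lambda)\varepsilon\|{\rm div}\,u\|_{L^2}^2$ with no boundary term (because $u\cdot n=0$), and $-\mu\varepsilon\int_\Omega\nabla\times(\nabla\times u)\cdot u\,\d x=-\mu\varepsilon\|\nabla\times u\|_{L^2}^2+\mu\varepsilon\int_{\partial\Omega}[Bu]_\tau\cdot u\,\d S$ by the vorticity form \eqref{bc2} of the boundary condition. The boundary integral is estimated by $C\varepsilon|u|_{L^2(\partial\Omega)}^2\le C\varepsilon\|u\|_{L^2}\|u\|_{H^1}$ (trace theorem, cf.\ Proposition \ref{prop2.3}), and Young's inequality together with $\varepsilon\le1$ turns it into $\tfrac14 c_1\varepsilon\|\nabla u\|_{L^2}^2+C_2\|u\|_{L^2}^2$. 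Then Proposition \ref{prop2.1} with $m=1$ (valid because $u\cdot n=0$) gives $\|\nabla u\|_{L^2}^2\le C(\|\nabla\times u\|_{L^2}^2+\|{\rm div}\,u\|_{L^2}^2+\|u\|_{L^2}^2)$, which converts the dissipation into $c_1\varepsilon\|\nabla u\|_{L^2}^2$ modulo another $C_2\|u\|_{L^2}^2$. The coupling term on the right of the velocity identity is kept as $-\int_\Omega(u\cdot\nabla d)\cdot\Delta d\,\d x$, using that $(\nabla d\cdot\Delta d)\cdot u=(u\cdot\nabla d)\cdot\Delta d$.

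In the director identity, since $\p d/\p n=0$ on $\partial\Omega$ for all $t$ (hence $\p d_t/\p n=0$), one gets $-\int_\Omega d_t\cdot\Delta d\,\d x=\frac{\d}{\d t}\int_\Omega\frac12|\nabla d|^2\,\d x$; the right-hand side of \eqref{eq1}$_3$ produces $-\|\Delta d\|_{L^2}^2$ on the left, a transport term $-\int_\Omega(u\cdot\nabla d)\cdot\Delta d\,\d x$, and the Ericksen term $-\int_\Omega|\nabla d|^2 d\cdot\Delta d\,\d x$. Adding this to the velocity identity cancels the two transport/coupling terms, and the Ericksen term is bounded by Cauchy--Schwarz and Young by $\tfrac12\|\Delta d\|_{L^2}^2+C\|d\|_{L^\infty}^2\|\nabla d\|_{L^\infty}^2\|\nabla d\|_{L^2}^2$, the first piece absorbed on the left and the second, using that $\|d\|_{L^\infty}$ is bounded (here $|d|\equiv1$), yielding $\|\nabla d\|_{L^\infty}^2\int_0^t\|\nabla d\|_{L^2}^2\,\d\tau$ after integrating in time. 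Summing the two identities and integrating over $[0,t]$ then gives \eqref{311}.

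The point requiring the most care is the viscous boundary term $\mu\varepsilon\int_{\partial\Omega}[Bu]_\tau\cdot u\,\d S$: one must use the precise tangential structure of the Navier-slip condition \eqref{bc2} so that only this term survives the integration by parts, and the factor $\varepsilon$ must be tracked so that a genuine positive fraction of $\varepsilon\|\nabla u\|_{L^2}^2$ is retained on the left while the residual $\varepsilon\|u\|_{L^2}^2$ is downgraded to $\|u\|_{L^2}^2$ via $\varepsilon\le1$ rather than absorbed circularly; the cancellation of the coupling terms and the crude, non-structural bound for the Ericksen nonlinearity are the other details to keep straight.
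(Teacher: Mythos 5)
Your proof is correct and follows essentially the same approach as the paper: multiply the momentum equation by $u$, the director equation by $\Delta d$ (up to sign), add so the transport--coupling terms $\int(u\cdot\nabla d)\cdot\Delta d$ cancel, convert the pressure work to a time derivative via the continuity equation, and handle the Navier-slip boundary term with the trace inequality and Proposition~\ref{prop2.1}. The only difference is that the paper first derives $|d|\equiv 1$ and then uses the pointwise identity $d\cdot\Delta d=-|\nabla d|^2$ so the Ericksen term becomes exactly $\int|\nabla d|^4\,dx$ with no loss, whereas your Cauchy--Schwarz/Young treatment gives up half of the $\|\Delta d\|_{L^2}^2$ dissipation---harmless for the argument, though it yields a slightly smaller constant than the one stated in \eqref{311}.
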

\begin{proof}
Multiplying \eqref{eq1}$_2$ by $d$, one arrives at
\begin{equation*}
\frac{d}{dt}\frac{1}{2}\int (|d|^2-1)dx
+\int u\cdot \nabla (|d|^2-1)dx
=\int \Delta d \cdot d ~dx+\int |\nabla d|^2 |d|^2 dx,
\end{equation*}
which, integrating by part and applying the boundary condition \eqref{bc2}, yields that
\begin{equation}\label{312-1}
\frac{d}{dt}\int(|d|^2-1)dx+2\int (|d|^2-1)(|\nabla d|^2-{\rm div}u) dx=0.
\end{equation}
In view of the Gr\"{o}nwall inequality, one deduces from the identity \eqref{312-1} that
\begin{equation}
|d|=1\quad {\rm in}~\overline{\Omega}.
\end{equation}
Multiplying \eqref{eq1-1} by $u$, integrating by parts
and applying the boundary condition \eqref{bc2}, we find
\begin{equation}\label{312}
\begin{aligned}
&\frac{1}{2} \frac{d}{dt}\int \rho |u|^2 dx+\int \nabla p \cdot u~ dx
+\mu \varepsilon \int \nabla \times (\nabla \times u)\cdot u ~dx\\
&=(2\mu+\lambda)\varepsilon \int \nabla {\rm div}u \cdot u~dx
-\int (u \cdot \nabla) d \cdot \Delta d ~dx.
\end{aligned}
\end{equation}
By virtue of the equation \eqref{eq1}$_1$, one deduces that
\begin{equation}
\int \nabla p\cdot u~dx
=\frac{\gamma}{\gamma-1}\int \nabla(\rho^{\gamma-1})\cdot \rho u dx
=\frac{\gamma}{\gamma-1}\int \rho^{\gamma-1}\rho_t dx
=\frac{d}{dt}\frac{\gamma}{\gamma-1} \int \rho^\gamma dx.
\end{equation}
Integrating by part and applying the boundary condition \eqref{bc2}, we get
\begin{equation*}
\begin{aligned}
\int \nabla \times (\nabla \times u)u ~dx
&=\int_{\partial \Omega}n\times (\nabla \times u)\cdot u~ d\sigma
+\int |\nabla \times u|^2 dx \\
&=\int_{\partial \Omega}[Bu]_\tau \cdot  u_\tau~ d\sigma
+\int |\nabla \times u|^2 dx,
\end{aligned}
\end{equation*}
and
\begin{equation}
\int \nabla {\rm div}u\cdot u ~dx
=\int_{\partial \Omega}({\rm div}u)u\cdot n~d\sigma
  -\int |{\rm div}u|^2 dx
=-\int |{\rm div}u|^2 dx.
\end{equation}
which, together with \eqref{312}, gives directly
\begin{equation}\label{313}
\begin{aligned}
&\frac{d}{dt}\int(\frac{1}{2}  \rho |u|^2+\frac{\gamma}{\gamma-1}\rho^\gamma) dx
+\mu \varepsilon \int |\nabla \times u|^2 dx
+(2\mu+\lambda)\varepsilon\int |{\rm div}u|^2 dx\\
&=-\varepsilon\int_{\partial \Omega}[Bu]_\tau \cdot  u_\tau~ d\sigma
-\int (u \cdot \nabla) d \cdot \Delta d ~dx.
\end{aligned}
\end{equation}
Multiplying \eqref{eq1}$_3$ by $\Delta d$, one arrives at
\begin{equation}\label{314}
\int(d_t+u \cdot \nabla d)\cdot \Delta d ~dx
=\int |\Delta d|^2 dx+\int |\nabla d|^2 d \cdot \Delta d ~dx.
\end{equation}
Integration by part and application of boundary condition \eqref{bc2} yield directly
\begin{equation}\label{315}
\int d_t \cdot \Delta d ~ dx
=\int_{\partial \Omega}d_t \cdot \frac{\partial d}{\partial n}d\sigma
-\frac{1}{2}\frac{d}{dt}\int |\nabla d|^2 dx
=-\frac{1}{2}\frac{d}{dt}\int |\nabla d|^2 dx.
\end{equation}
By virtue of the basic fact $|d|=1$, we find $\Delta d \cdot d=-|\nabla d|^2$.
Then, the combination of \eqref{314} and \eqref{315} gives
\begin{equation*}
\frac{1}{2}\frac{d}{dt}\int |\nabla d|^2 dx+\int |\Delta d|^2 dx
=\int (u \cdot \nabla) d\cdot \Delta d ~dx+\int |\nabla d|^4 dx,
\end{equation*}
which, together with \eqref{313}, yields directly
\begin{equation}\label{316}
\begin{aligned}
&\frac{d}{dt}\int(\frac{1}{2}  \rho |u|^2+\frac{\gamma}{\gamma-1}\rho^\gamma
+\frac{1}{2} |\nabla d|^2) dx
+\int |\Delta d|^2 dx\\
&\quad +\mu \varepsilon \int |\nabla \times u|^2 dx
+(2\mu+\lambda)\varepsilon\int |{\rm div}u|^2 dx\\
&=-\varepsilon\int_{\partial \Omega}[Bu]_\tau \cdot  u_\tau~ d\sigma
+\int |\nabla d|^4dx.
\end{aligned}
\end{equation}
The trace theorem in Proposition \ref{prop2.3} implies
\begin{equation}\label{317}
|u|_{L^2(\partial \Omega)}^2\le \delta \|\nabla u\|_{L^2}^2+C_\delta \|u\|_{L^2}^2.
\end{equation}
The application of Proposition \ref{prop2.1} gives immediately
\begin{equation}\label{318}
\begin{aligned}
&\mu\|\nabla \times u\|_{L^2}^2
+(2\mu+\lambda)\|{\rm div}u\|_{L^2}^2\\
&\ge \min\{\mu, 2\mu+\lambda\}(\|\nabla \times u\|_{L^2}^2
+\|{\rm div}u\|_{L^2}^2)\\
&\ge 2c_1 \|\nabla u\|_{L^2}^2-C\|u\|_{L^2}^2.
\end{aligned}
\end{equation}
Substituting \eqref{317} and \eqref{318} into \eqref{316}
and choosing $\delta $ small enough, one arrives at
\begin{equation*}
\begin{aligned}
&\frac{d}{dt}\int (\frac{1}{2}\rho|u|^2+\frac{\gamma}{\gamma-1}\rho^\gamma
+\frac{1}{2}|\nabla d|^2) dx
+c_1 \varepsilon \int |\nabla  u|^2 dx
+\int |\Delta d|^2 dx\\
&\le \int |\nabla d|^4 dx
+C_2 \int  |u|^2 dx,
\end{aligned}
\end{equation*}
which, integrating over $[0, t]$, yields
\begin{equation*}
\begin{aligned}
&\int (\frac{1}{2}\rho|u|^2+\frac{\gamma}{\gamma-1}\rho^\gamma
+\frac{1}{2}|\nabla d|^2) dx
+c_1 \varepsilon \int_0^t \int |\nabla  u|^2 dx d\tau
+\int_0^t \int |\Delta d|^2 dx d\tau\\
&\le \int (\frac{1}{2}\rho_0|u_0|^2+\frac{\gamma}{\gamma-1}\rho_0^\gamma
+\frac{1}{2}|\nabla d_0|^2) dx
+\|\nabla d\|_{L^\infty}^2\int_0^t\int |\nabla d|^2 dxd\tau
+C_2 \int_0^t \int  |u|^2 dxd\tau,
\end{aligned}
\end{equation*}
Therefore, we complete the proof of Lemma \ref{lemma3.2}.
\end{proof}

However, the above basic energy estimation is insufficient to get the
vanishing viscosity limit. Some conormal derivative estimates are needed.
Let
\begin{equation}\label{def2}
Q(t)\triangleq \underset{0\le \tau \le t}{\sup}
\{\|(\nabla p, \nabla u)\|_{\mathcal{H}^{1,\infty}}^2
+\|(p, u, p_t, u_t)\|_{L^\infty}^2+\|d_t\|_{W^{1,\infty}}^2
+\|\nabla d\|_{W^{1,\infty}}^2+\|\nabla \Delta d\|_{L^\infty}^2\}.
\end{equation}
and
\begin{equation}\label{def3}
\begin{aligned}
\Lambda_m(t)
&\triangleq \|(p, u, \nabla d)(t)\|_{\mathcal{H}^m}^2
+\|(\nabla u, \Delta d)(t)\|_{\mathcal{H}^{m-1}}^2
+\|\nabla u(\tau)\|_{\mathcal{H}^{1,\infty}}^2\\
&\quad +\sum_{k=0}^{m-2}\|\nabla \partial_t^k p(t)\|_{m-1-k}^2
+\varepsilon \|\nabla \partial_t^{m-1}p(t)\|_{L^2}^2.
\end{aligned}
\end{equation}

\begin{lemm}\label{lemma3.3}
For $m\in \mathbb{N}^+$ and a smooth solution to \eqref{eq1} and \eqref{bc2}, it holds that
for $\varepsilon \in (0, 1]$,
\begin{equation}\label{331}
\begin{aligned}
&\underset{0\le \tau \le t}{\sup}\|(u, p, \nabla d)(\tau)\|_{\mathcal{H}^{m}}^2
+C\varepsilon \int_0^t \|\nabla u(\tau)\|_{\mathcal{H}^{m}}^2 d\tau
+\int_0^t \|\Delta d(\tau)\|_{\mathcal{H}^{m}}^2 d\tau\\
&\le C_{m+2}\left\{\|(u_0, p_0, \nabla d_0)\|_{\mathcal{H}^{m}}^2
     +\delta \int_0^t \|\nabla \Delta d(\tau)\|_{\mathcal{H}^{m-1}}^2d\tau
     +\delta \varepsilon^2 \int_0^t\|\nabla^2 u(\tau)\|_{\mathcal{H}^{m-1}}^2 d\tau\right.\\
&\quad \quad  \quad \quad \quad \quad
   \left. +\delta \int_0^t \|\nabla \partial_t^{m-1}p(\tau)\|_{L^2}^2 d\tau
+C_{\delta}[1+P(Q(t))]\int_0^t \Lambda_m(\tau)d\tau\right\},
\end{aligned}
\end{equation}
where $\delta$ is a small constant which will be chosen late,
$C_\delta$ is a polynomial function of $\frac{1}{\delta}$,
and the generic positive constant $C>0$ depends on $\mu$ and $\lambda$.
\end{lemm}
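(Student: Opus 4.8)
The plan is to run a conormal energy estimate of order $m$ on a symmetrized form of \eqref{eq1}. Since $p=\rho^\gamma$, the continuity equation gives $\frac{1}{\gamma p}\bigl(p_t+u\cdot\nabla p\bigr)+{\rm div}\,u=0$; I keep the momentum equation in the vorticity form \eqref{eq1-1} and the director equation as $d_t+u\cdot\nabla d=\Delta d+|\nabla d|^2 d$. For each multi-index $\alpha$ with $|\alpha|\le m$ I apply $\mathcal{Z}^\alpha$ to this system. The principal parts commute with $\mathcal{Z}^\alpha$ (recall $\partial_t$ and the $Z_i$ commute since $\psi$ is time independent, and \eqref{pc}), while every commutator $[\mathcal{Z}^\alpha,\cdot]$ is a finite sum of products of $\mathcal{Z}^\beta$-derivatives, $|\beta|\le m$, of the coefficients $\rho,u,\frac{1}{\gamma p}$ and of $(p,u,\nabla d)$; by the Gagliardo--Nirenberg--Moser inequality (Proposition \ref{prop2.2}) and $c_0\le\rho\le 1/c_0$, these are controlled after integration in time by $C_\delta[1+P(Q(t))]\int_0^t\Lambda_m(\tau)\,d\tau$ with $Q,\Lambda_m$ as in \eqref{def2}--\eqref{def3}.

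Next I test the $\mathcal{Z}^\alpha$-momentum equation against $\mathcal{Z}^\alpha u$ and the $\mathcal{Z}^\alpha$-pressure equation against $\mathcal{Z}^\alpha p$, mimicking Lemma \ref{lemma3.2} at order $m$. Using the continuity equation, the convective terms produce exactly $\frac12\frac{d}{dt}\int\rho|\mathcal{Z}^\alpha u|^2\,dx$ and $\frac12\frac{d}{dt}\int\frac{1}{\gamma p}|\mathcal{Z}^\alpha p|^2\,dx$ (the extra terms cancel because $\rho_t+{\rm div}(\rho u)=0$ and $u\cdot n=0$), and the pressure--velocity coupling $\int\nabla\mathcal{Z}^\alpha p\cdot\mathcal{Z}^\alpha u+\int\mathcal{Z}^\alpha p\,{\rm div}\,\mathcal{Z}^\alpha u$ cancels up to a commutator term and a boundary term which is harmless since $u\cdot n=0$ forces $\mathcal{Z}^\alpha u\cdot n=-u\cdot\mathcal{Z}^\alpha n+\cdots$ on $\partial\Omega$ (then controlled via the trace estimate in Proposition \ref{prop2.3}). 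Written through \eqref{eq1-1}, the viscous contribution gives the good terms $-\mu\varepsilon\|\nabla\times\mathcal{Z}^\alpha u\|_{L^2}^2-(2\mu+\lambda)\varepsilon\|{\rm div}\,\mathcal{Z}^\alpha u\|_{L^2}^2$ plus the boundary term $\varepsilon\int_{\partial\Omega}\bigl(n\times(\nabla\times\mathcal{Z}^\alpha u)\bigr)\cdot\mathcal{Z}^\alpha u\,d\sigma$; here I commute $\mathcal{Z}^\alpha$ into the Navier condition $n\times(\nabla\times u)=[Bu]_\tau$, estimate the resulting trace by Proposition \ref{prop2.3}, and use Young's inequality to absorb a part into $\varepsilon\|\nabla\mathcal{Z}^\alpha u\|_{L^2}^2$, leaving the admissible remainders $\delta\varepsilon^2\int_0^t\|\nabla^2 u\|_{\mathcal{H}^{m-1}}^2\,d\tau$ and $\delta\int_0^t\|\nabla\partial_t^{m-1}p\|_{L^2}^2\,d\tau$ (the latter appearing in the $\alpha_0=m$ case, where ${\rm div}\,\partial_t^{m-1}u$ is not directly controlled by $\Lambda_m$). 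The nonlinear coupling $\mathcal{Z}^\alpha(\nabla d\cdot\Delta d)$ tested against $\mathcal{Z}^\alpha u$ is handled by Proposition \ref{prop2.2}: $\int_0^t\|\mathcal{Z}^\alpha(\nabla d\cdot\Delta d)\|_{L^2}\|\mathcal{Z}^\alpha u\|_{L^2}\,d\tau\lesssim\int_0^t\bigl(\|\nabla d\|_{L^\infty}\|\Delta d\|_{\mathcal{H}^m}+\|\Delta d\|_{L^\infty}\|\nabla d\|_{\mathcal{H}^m}\bigr)\|u\|_{\mathcal{H}^m}\,d\tau$, and Young's inequality peels off $\delta\int_0^t\|\Delta d\|_{\mathcal{H}^m}^2\,d\tau$ (absorbed into the left side) leaving $C_\delta P(Q(t))\int_0^t\Lambda_m$.

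For the director field I test the $\mathcal{Z}^\alpha$-director equation against $-\mathcal{Z}^\alpha\Delta d$. Integrating by parts and commuting $\mathcal{Z}^\alpha$ through the Neumann condition $\partial_n d=0$ yields $\frac12\frac{d}{dt}\|\nabla\mathcal{Z}^\alpha d\|_{L^2}^2+\|\mathcal{Z}^\alpha\Delta d\|_{L^2}^2$ up to boundary commutators; the interior commutator $[\mathcal{Z}^\alpha,\Delta]d$ forces the admissible remainder $\delta\int_0^t\|\nabla\Delta d\|_{\mathcal{H}^{m-1}}^2\,d\tau$, and the terms $\mathcal{Z}^\alpha(u\cdot\nabla d)$ and $\mathcal{Z}^\alpha(|\nabla d|^2d)$ (using $|d|=1$, so $d\in L^\infty$) are controlled by Proposition \ref{prop2.2} and Young's inequality, producing $\delta$-terms that are absorbed and a $C_\delta P(Q(t))\int_0^t\Lambda_m$ contribution. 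Finally I sum over $|\alpha|\le m$, add the $L^2$ estimate of Lemma \ref{lemma3.2}, integrate on $[0,t]$, and use Proposition \ref{prop2.1} to upgrade $\varepsilon\int_0^t\bigl(\|\nabla\times\mathcal{Z}^\alpha u\|_{L^2}^2+\|{\rm div}\,\mathcal{Z}^\alpha u\|_{L^2}^2\bigr)\,d\tau$ into $\varepsilon\int_0^t\|\nabla u\|_{\mathcal{H}^m}^2\,d\tau$ (the term $|\mathcal{Z}^\alpha u\cdot n|_{H^{1/2}(\partial\Omega)}$ being controlled exactly as above), which is \eqref{331}; the constant depends on $\mu,\lambda,1/c_0$ and $C_{m+2}$ because $\Omega\in\mathcal{C}^{m+2}$ and $A\in\mathcal{C}^{m+1}(\partial\Omega)$.

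I expect the main obstacle to be the systematic treatment of the viscous boundary contributions: after applying $\mathcal{Z}^\alpha$, the term $\varepsilon\int_{\partial\Omega}\bigl(n\times(\nabla\times\mathcal{Z}^\alpha u)\bigr)\cdot\mathcal{Z}^\alpha u\,d\sigma$ is not literally given by the Navier condition, and commuting $\mathcal{Z}^\alpha$ past $n\times(\nabla\times\cdot)$ creates top-conormal-order boundary terms whose control needs the sharp anisotropic trace estimate of Proposition \ref{prop2.3} together with precise bookkeeping of the powers of $\varepsilon$ — it is this bookkeeping that generates the remainders $\delta\varepsilon^2\int_0^t\|\nabla^2u\|_{\mathcal{H}^{m-1}}^2\,d\tau$ and $\delta\int_0^t\|\nabla\partial_t^{m-1}p\|_{L^2}^2\,d\tau$, which cannot be absorbed within this lemma and must be retained on the right-hand side of \eqref{331} for the later global closing argument.
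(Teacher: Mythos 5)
Your proposal follows essentially the same route as the paper: apply $\mathcal{Z}^\alpha$ at order $m$ to the vorticity form \eqref{eq1-1} and test against $\mathcal{Z}^\alpha u$, convert the pressure--velocity coupling to a $\frac{d}{dt}\int\frac{1}{2\gamma p}|\mathcal{Z}^\alpha p|^2$ via the transport equation, handle the Navier boundary term and the viscous commutators as in Wang--Xin--Yong, treat $\mathcal{Z}^\alpha(\nabla d\cdot\Delta d)$ by Proposition~\ref{prop2.2} and Young, estimate the director field by a conormal energy paired with a dissipation in $\|\Delta d\|_{\mathcal{H}^m}$, and close with Proposition~\ref{prop2.1} and elliptic regularity. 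This is the paper's argument.

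Two small points of divergence are worth flagging. First, on the director equation you test $\mathcal{Z}^\alpha$ of \eqref{eq1}$_3$ against $-\mathcal{Z}^\alpha\Delta d$, obtaining the energy $\|\nabla\mathcal{Z}^\alpha d\|_{L^2}^2$ and dissipation $\|\mathcal{Z}^\alpha\Delta d\|_{L^2}^2$, whereas the paper applies $\mathcal{Z}^\alpha\nabla$ to the equation and tests against $\mathcal{Z}^\alpha\nabla d$, producing the energy $\|\mathcal{Z}^\alpha\nabla d\|_{L^2}^2$ and dissipation $\|\mathrm{div}(\mathcal{Z}^\alpha\nabla d)\|_{L^2}^2$; the two bookkeepings differ by commutators $[\mathcal{Z}^\alpha,\nabla]d$ and lead to slightly different interior and boundary commutator terms, but both are viable. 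Second, in your final paragraph you attribute both $\delta\int_0^t\|\nabla\partial_t^{m-1}p\|_{L^2}^2$ and the $\nabla\Delta d$ remainder to the viscous boundary bookkeeping / the interior commutator $[\mathcal{Z}^\alpha,\Delta]d$; in fact, as you yourself correctly note earlier in the proposal, $\delta\int_0^t\|\nabla\partial_t^{m-1}p\|_{L^2}^2$ comes from the pressure--velocity coupling (the commutator $[\mathcal{Z}^\alpha,\nabla]p$ producing $\|\nabla p\|_{\mathcal{H}^{m-1}}^2$, whose time-top piece is not in $\Lambda_m$), while in the paper's director estimate the $\nabla\Delta d$ remainder is generated by the commutator $[\mathcal{Z}^\alpha,\nabla]\Delta d$ and the boundary trace estimate on $\mathcal{Z}^\alpha\Delta d$, not by $[\mathcal{Z}^\alpha,\Delta]d$, which only produces $\|\nabla^2 d\|_{\mathcal{H}^{m-1}}$ and is already absorbed by elliptic regularity into $\Lambda_m$. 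These are attribution slips rather than gaps, since you correctly retain both remainders on the right-hand side of \eqref{331}.
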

\begin{proof}
The case for $m=0$ is already proved in Lemma \ref{lemma3.2}. Assume that
\eqref{331} is proved for $k=m-1$. We shall prove that is holds for $k=m \ge 1$.
Applying the operator $\mathcal{Z}^\alpha(|\alpha_0|+|\alpha_1|=m)$
to the equation \eqref{eq1-1}, we find
\begin{equation}\label{332}
\begin{aligned}
&\rho \mathcal{Z}^\alpha u_t+\rho u\cdot \nabla \mathcal{Z}^\alpha u
+\mathcal{Z}^\alpha \nabla p\\
&=-\mu \varepsilon \mathcal{Z}^\alpha \nabla \times (\nabla \times u)
+(2\mu+\lambda)\varepsilon \mathcal{Z}^\alpha \nabla {\rm div}u
-\mathcal{Z}^\alpha(\nabla d \cdot \Delta d)
+\mathcal{C}_1^\alpha+\mathcal{C}_2^\alpha,
\end{aligned}
\end{equation}
where
\begin{equation*}
\mathcal{C}_1^\alpha=-[\mathcal{Z}^\alpha, \rho ]u_t, \quad
\mathcal{C}_2^\alpha=-[\mathcal{Z}^\alpha, \rho u\cdot \nabla]u.
\end{equation*}
Multiplying \eqref{332} by $\mathcal{Z}^\alpha u$, we obtain
\begin{equation}\label{333}
\begin{aligned}
&\frac{1}{2}\frac{d}{dt}\int \rho |\mathcal{Z}^\alpha u|^2 dx
+\int \mathcal{Z}^\alpha \nabla p \cdot \mathcal{Z}^\alpha u ~dx\\
&=-\mu \varepsilon \int \mathcal{Z}^\alpha \nabla \times(\nabla \times u)\cdot \mathcal{Z}^\alpha u ~dx
-(2\mu+\lambda) \varepsilon \int \mathcal{Z}^\alpha \nabla {\rm div} u\cdot \mathcal{Z}^\alpha u ~dx\\
&\quad -\int \mathcal{Z}^\alpha(\nabla d \cdot \Delta d)\cdot \mathcal{Z}^\alpha u~dx
+\int \mathcal{C}_1^\alpha \cdot \mathcal{Z}^\alpha u ~dx
+\int \mathcal{C}_2^\alpha \cdot \mathcal{Z}^\alpha u ~dx.
\end{aligned}
\end{equation}
Using the same argument as Lemma 3.4 of \cite{Wang-Xin-Yong},
one can obtain the following estimates
\begin{equation}\label{334}
\begin{aligned}
&-\varepsilon\int \mathcal{Z}^\alpha \nabla \times(\nabla \times u)
\cdot\mathcal{Z}^\alpha u ~dx\\
&\le -\frac{3\varepsilon}{4}\|\nabla \times \mathcal{Z}^\alpha u\|_{L^2}^2
+\delta \varepsilon^2 \|\nabla^2 u\|_{\mathcal{H}^{m-1}}^2
+C_\delta C_{m+2}(\|\nabla u\|_{\mathcal{H}^{m-1}}^2+\|u\|_{\mathcal{H}^m}^2)
\end{aligned}
\end{equation}
and
\begin{equation}\label{335}
\begin{aligned}
&\varepsilon\int \mathcal{Z}^\alpha \nabla{\rm div} u
\cdot\mathcal{Z}^\alpha u ~dx\\
&\le -\frac{3\varepsilon}{4}\|{\rm div} \mathcal{Z}^\alpha u\|_{L^2}^2
+\delta \varepsilon^2 \|\nabla^2 u\|_{\mathcal{H}^{m-1}}^2
+C_\delta C_{m+2}(\|\nabla u\|_{\mathcal{H}^{m-1}}^2+\|u\|_{\mathcal{H}^m}^2).
\end{aligned}
\end{equation}
On the other hand, it follows from Proposition \ref{prop2.1} that
\begin{equation}\label{336}
\begin{aligned}
2c_1\|\nabla \mathcal{Z}^\alpha u\|_{L^2}^2
&\le (\mu\|\nabla \times \mathcal{Z}^\alpha u\|_{L^2}^2
+(2\mu+\lambda)\|{\rm div}\mathcal{Z}^\alpha u\|_{L^2}^2
+\|\mathcal{Z}^\alpha u\|_{L^2}^2
+|\mathcal{Z}^\alpha u\cdot n|_{H^{\frac{1}{2}}(\partial\Omega)})\\
&\le (\mu\|\nabla \times \mathcal{Z}^\alpha u\|_{L^2}^2
+(2\mu+\lambda)\|{\rm div}\mathcal{Z}^\alpha u\|_{L^2}^2)
+C_{m+2}(\|u\|_{\mathcal{H}^m}^2+\|\nabla u\|_{\mathcal{H}^{m-1}}^2),
\end{aligned}
\end{equation}
where we have using the fact
$$|\mathcal{Z}^\alpha u\cdot n|_{H^{\frac{1}{2}}(\partial\Omega)}
\le C_{m+2}(\|u\|_{\mathcal{H}^m}^2+\|\nabla u\|_{\mathcal{H}^{m-1}}^2).$$
Substituting \eqref{334}-\eqref{336} into \eqref{333} and integrating
the resulting inequality over $[0, t]$, we find
\begin{equation}\label{337}
\begin{aligned}
&\frac{1}{2}\int \rho|\mathcal{Z}^\alpha u(t)|^2 dx
+\frac{3c_1\varepsilon}{2}\int_0^t \int |\nabla \mathcal{Z}^\alpha u|^2 dx d\tau
+\int_0^t \int \mathcal{Z}^\alpha \nabla p\cdot \mathcal{Z}^\alpha u~dxd\tau\\
&\le \frac{1}{2}\int \rho_0|\mathcal{Z}^\alpha u_0|^2 dx
     +C\delta_1 \varepsilon \int_0^t\|\nabla u\|_{\mathcal{H}^{m}}^2 d\tau
      +C\delta \varepsilon^2 \int_0^t\|\nabla^2 u\|_{\mathcal{H}^{m-1}}^2d\tau\\
&\quad \quad +C_\delta C_{m+2}\int_0^t
(\|\nabla u\|_{\mathcal{H}^{m-1}}^2+\|u\|_{\mathcal{H}^{m}}^2)d\tau
+\int_0^t \int \mathcal{C}_1^\alpha \cdot \mathcal{Z}^\alpha u ~dxd\tau\\
&\quad \quad
+\int_0^t \int \mathcal{C}_2^\alpha \cdot \mathcal{Z}^\alpha u ~dxd\tau
-\int_0^t\int \mathcal{Z}^\alpha(\nabla d \cdot \Delta d)\cdot \mathcal{Z}^\alpha u~dxd\tau.
\end{aligned}
\end{equation}
Applying the transport equation \eqref{eq1}$_1$, we follow the same argument
as Lemma 3.4 of \cite{Wang-Xin-Yong} to obtain
\begin{equation}\label{338}
\begin{aligned}
-\int \mathcal{Z}^\alpha \nabla p \cdot \mathcal{Z}^\alpha u ~dx
&\le -\int \frac{1}{2\gamma p}|\mathcal{Z}^\alpha p|^2 dx
     +\int \frac{1}{2\gamma p_0}|\mathcal{Z}^\alpha p_0|^2 dx
     +C\delta \int_0^t \|\nabla p\|_{\mathcal{H}^{m-1}}^2 dx\\
&\quad +C_\delta[1+P(Q(t))]
\int (\|(p,u)\|_{\mathcal{H}^{m}}^2+\|\nabla u\|_{\mathcal{H}^{m-1}}^2)d\tau.
\end{aligned}
\end{equation}
In view of the Proposition \ref{prop2.2}, we obtain
\begin{equation*}
\begin{aligned}
\int_0^t \|\mathcal{Z}^\alpha(\nabla d\cdot \Delta d)\|_{L^2}^2 d\tau
\le C\|\nabla d\|_{L^\infty_{x,t}}^2\int_0^t \|\Delta d\|_{\mathcal{H}^{m}}^2d\tau
    +C\|\Delta d\|_{L^\infty_{x,t}}^2\int_0^t \|\nabla d\|_{\mathcal{H}^{m}}^2d\tau
\end{aligned}
\end{equation*}
which, by using the Cauchy inequality, yields directly
\begin{equation}\label{339}
\begin{aligned}
&\left|-\int_0^t \int \mathcal{Z}^\alpha(\nabla d \cdot \Delta d)\cdot \mathcal{Z}^\alpha u~dxd\tau\right|\\
&\le \delta_1 \int_0^t \|\Delta d\|_{\mathcal{H}^{m}}^2d\tau
+C_{\delta_1}(\|\nabla d\|_{L^\infty_{x,t}}^2+\|\Delta d\|_{L^\infty_{x,t}}^2)
\int_0^t(\|u\|_{\mathcal{H}^{m}}^2+\|\nabla d\|_{\mathcal{H}^{m}}^2)d\tau.
\end{aligned}
\end{equation}
Similarly, it is easy to deduce that(or see Lemma 3.4 of \cite{Wang-Xin-Yong})
\begin{equation}\label{3310}
\int_0^t \int \mathcal{C}_1^\alpha \cdot \mathcal{Z}^\alpha u ~dxd\tau
\le C[1+P(Q(t))]\int_0^t \|(p, u)\|_{\mathcal{H}^m}^2d\tau
\end{equation}
and
\begin{equation}\label{3310-1}
\int_0^t \int \mathcal{C}_2^\alpha \cdot \mathcal{Z}^\alpha u ~dxd\tau
\le C[1+P(Q(t))]\int_0^t (\|(p, u)\|_{\mathcal{H}^m}^2
+\|\nabla u\|_{\mathcal{H}^{m-1}}^2)d\tau.
\end{equation}
Substituting \eqref{338}-\eqref{3310-1} into \eqref{337}, one attains
\begin{equation}\label{3311}
\begin{aligned}
&\frac{1}{2}\int \rho|\mathcal{Z}^\alpha u|^2 dx
+\int \frac{1}{2\gamma p}|\mathcal{Z}^\alpha p|^2 dx
+\frac{3c_1\varepsilon}{2}\int_0^t \int |\nabla \mathcal{Z}^\alpha u|^2 dx d\tau\\
&\le \frac{1}{2}\int \rho_0|\mathcal{Z}^\alpha u_0|^2 dx
      +\int \frac{1}{2\gamma p_0}|\mathcal{Z}^\alpha p_0|^2 dx
     +C\delta_1 \varepsilon \int_0^t\|\nabla u\|_{\mathcal{H}^{m}}^2 d\tau
      +C\delta \varepsilon^2 \int_0^t\|\nabla^2 u\|_{\mathcal{H}^{m-1}}^2d\tau\\
&\quad \quad +C_\delta C_{m+2}[1+P(Q(t))]
\int_0^t(\|\nabla u\|_{\mathcal{H}^{m-1}}^2
+\|(p,u,\nabla d)\|_{\mathcal{H}^{m}}^2)d\tau.
\end{aligned}
\end{equation}
Applying the operator $\mathcal{Z}^\alpha \nabla(|\alpha_0|+|\alpha_1|=m$)
to the equation \eqref{eq1}$_3$, we find
\begin{equation}\label{3312}
\mathcal{Z}^\alpha \nabla d_t -\mathcal{Z}^\alpha \nabla \Delta d
=-\mathcal{Z}^\alpha \nabla(u\cdot \nabla d)
 +\mathcal{Z}^\alpha \nabla(|\nabla d|^2 d).
\end{equation}
Multiplying \eqref{3312} by $\mathcal{Z}^\alpha \nabla d$,
it is easy to deduce that
\begin{equation}\label{3313}
\begin{aligned}
&\frac{1}{2}\frac{d}{dt}\int |\mathcal{Z}^\alpha \nabla d|^2 dx
-\int \mathcal{Z}^\alpha \nabla \Delta d \cdot \mathcal{Z}^\alpha \nabla d~dx\\
&=-\int \mathcal{Z}^\alpha \nabla(u\cdot \nabla d)\cdot \mathcal{Z}^\alpha \nabla d~ dx
+\int \mathcal{Z}^\alpha \nabla(|\nabla d|^2 d)\cdot \mathcal{Z}^\alpha \nabla d ~dx.
\end{aligned}
\end{equation}
Integrating by part, it is easy to check that
\begin{equation*}
\begin{aligned}
&-\int \mathcal{Z}^\alpha \nabla \Delta d \cdot \mathcal{Z}^\alpha \nabla d~dx\\
&=-\int \nabla \mathcal{Z}^\alpha  \Delta d \cdot \mathcal{Z}^\alpha \nabla d~dx
  -\int [\mathcal{Z}^\alpha, \nabla] \Delta d \cdot \mathcal{Z}^\alpha \nabla d~dx\\
&=-\int_{\partial \Omega} \mathcal{Z}^\alpha  \Delta d \cdot \mathcal{Z}^\alpha \nabla d
    \cdot n~d\sigma
  +\int \mathcal{Z}^\alpha  \Delta d \cdot {\rm div}(\mathcal{Z}^\alpha \nabla d)~dx
  -\int [\mathcal{Z}^\alpha, \nabla] \Delta d \cdot \mathcal{Z}^\alpha \nabla d~dx\\
&=-\int_{\partial \Omega} \mathcal{Z}^\alpha  \Delta d \cdot \mathcal{Z}^\alpha \nabla d
    \cdot n~d\sigma
  +\int |{\rm div}(\mathcal{Z}^\alpha \nabla d)|^2dx
+\int [\mathcal{Z}^\alpha, {\rm div}]\nabla d \cdot {\rm div}(\mathcal{Z}^\alpha \nabla d)~dx\\
&\quad -\int [\mathcal{Z}^\alpha, \nabla] \Delta d \cdot \mathcal{Z}^\alpha \nabla d~dx.
\end{aligned}
\end{equation*}
This, together with \eqref{3313}, reads
\begin{equation}\label{3314}
\begin{aligned}
&\frac{1}{2}\int |\mathcal{Z}^\alpha \nabla d(t)|^2 dx
+\int_0^t \int |{\rm div}(\mathcal{Z}^\alpha \nabla d)|^2dx\\
&=\frac{1}{2}\int |\mathcal{Z}^\alpha \nabla d_0|^2 dx
-\int_0^t \!\!\int \mathcal{Z}^\alpha \nabla(u\cdot \nabla d)\cdot \mathcal{Z}^\alpha \nabla d~ dxd\tau\\
&\quad
+\int_0^t\!\!\int \mathcal{Z}^\alpha \nabla(|\nabla d|^2 d)\cdot \mathcal{Z}^\alpha \nabla d ~dxd\tau
-\int_0^t\int [\mathcal{Z}^\alpha, {\rm div}]\nabla d \cdot {\rm div}(\mathcal{Z}^\alpha \nabla d)~dxd\tau\\
&\quad
+\int_0^t\int [\mathcal{Z}^\alpha, \nabla] \Delta d \cdot \mathcal{Z}^\alpha \nabla d~dxd\tau
+\int_0^t\int_{\partial \Omega} \mathcal{Z}^\alpha  \Delta d \cdot \mathcal{Z}^\alpha \nabla d\cdot n~d\sigma d\tau\\
&\triangleq I_1+I_2+I_3+I_4+I_5+I_6.
\end{aligned}
\end{equation}
{\bf {Deal with the term $I_2$}}.
Integrating by part, one arrives at
\begin{equation}\label{3315}
\begin{aligned}
I_2
&=-\int_0^t\int  \nabla \mathcal{Z}^\alpha(u\cdot \nabla d)\cdot \mathcal{Z}^\alpha \nabla d~ dxd\tau
  -\int_0^t\int  [\mathcal{Z}^\alpha, \nabla](u\cdot \nabla d)\cdot \mathcal{Z}^\alpha \nabla d~ dxd\tau\\
&=-\int_0^t\int_{\partial \Omega } \mathcal{Z}^\alpha(u\cdot \nabla d)\cdot
    \mathcal{Z}^\alpha \nabla d\cdot n~ d\sigma d\tau
-\int_0^t\int \mathcal{Z}^\alpha(u\cdot \nabla d)\cdot{\rm div} (\mathcal{Z}^\alpha \nabla d)~ dxd\tau\\
&\quad -\int_0^t\int[\mathcal{Z}^\alpha, \nabla](u\cdot \nabla d)\cdot \mathcal{Z}^\alpha \nabla d~ dxd\tau.
\end{aligned}
\end{equation}
To estimate the boundary term on the right hand side of \eqref{3315}.
If $|\alpha_0|=m$, we apply the boundary condition \eqref{bc2} to deduce that
$$
-\int_0^t\int_{\partial \Omega } \mathcal{Z}^\alpha(u\cdot \nabla d)\cdot
    \mathcal{Z}^\alpha \nabla d\cdot n~ d\sigma d\tau=0.
$$
If $|\alpha_{13}|\neq 0$, the proposition of \eqref{pc} implies
$\mathcal{Z}^\alpha \nabla d=0$ on the boundary. Then, one arrives at
$$
-\int_0^t\int_{\partial \Omega } \mathcal{Z}^\alpha(u\cdot \nabla d)\cdot
    \mathcal{Z}^\alpha \nabla d\cdot n~ d\sigma d\tau=0.
$$
Hence, we deal with the case of $|\alpha_{13}|=0$ and $|\alpha_0|\le m-1$.
For $|\beta|=m-1-\alpha_0(|\alpha_0|\le m-1)$,
we integrating by part along the boundary to deduce that
\begin{equation}\label{3316}
-\int_0^t\int_{\partial \Omega } \mathcal{Z}^\alpha(u\cdot \nabla d)\cdot
    \mathcal{Z}^\alpha \nabla d\cdot n~ d\sigma d\tau
\le \int_0^t |\partial_t^{\alpha_0}Z_y^\beta (u\cdot \nabla d)|_{L^2{(\partial \Omega)}}
      |\mathcal{Z}^\alpha \nabla d\cdot n|_{H^1(\partial \Omega)}d\tau.
\end{equation}
Applying the trace theorem in Proposition \ref{prop2.3}
and the Proposition \ref{prop2.2}, one arrives at
\begin{equation}\label{3317}
\begin{aligned}
&\int_0^t |\partial_t^{\alpha_0}Z_y^\beta (u\cdot \nabla d)|_{L^2{(\partial \Omega)}}^2d\tau\\
&\le C\int_0^t (\|\nabla \partial_t^{\alpha_0} (u\cdot \nabla d)\|_{m-1-\alpha_0}^2
              +\| \partial_t^{\alpha_0} (u\cdot \nabla d)\|_{m-1-\alpha_0}^2)d\tau\\
&\le C\int_0^t(\|\nabla(u\cdot \nabla d)\|_{\mathcal{H}^{m-1}}^2
              +\|u\cdot \nabla d\|_{\mathcal{H}^{m-1}}^2)d\tau\\
&\le CQ(t)\int_0^t(\|(u,\nabla d)\|_{\mathcal{H}^{m-1}}^2
     +\|\nabla(u,\nabla d)\|_{\mathcal{H}^{m-1}}^2)d\tau.
\end{aligned}
\end{equation}
With the help of boundary condition \eqref{bc2}
and trace theorem in Proposition \ref{prop2.3}, we find
\begin{equation}\label{3318}
\int_0^t |\mathcal{Z}^\alpha \nabla d\cdot n|_{H^1(\partial \Omega)}^2d\tau
\le C_{m+2}\int_0^t (\|\nabla^2 d\|_{\mathcal{H}^{m}}^2
+\|\nabla d\|_{\mathcal{H}^{m}}^2)d\tau.
\end{equation}
The combination of \eqref{3316}-\eqref{3318} and Cauchy inequality, it is easy to deduce that
\begin{equation}\label{3319}
\begin{aligned}
&-\int_0^t\int_{\partial \Omega } \mathcal{Z}^\alpha(u\cdot \nabla d)\cdot
    \mathcal{Z}^\alpha \nabla d\cdot n~ d\sigma d\tau\\
&\le \delta_1  \!\int_0^t \! \|\nabla^2 d\|_{\mathcal{H}^{m}}^2 d\tau
     +C_{\delta_1}C_{m+2}(1+Q(t))
     \!\ \! \int_0^t \! \!(\|(u,\nabla d)\|_{\mathcal{H}^{m}}^2+\|\nabla(u,\nabla d)\|_{\mathcal{H}^{m-1}}^2)d\tau.
\end{aligned}
\end{equation}
Applying the Young inequality and the Proposition \ref{prop2.2},
one arrives at
\begin{equation}\label{3320}
\begin{aligned}
&-\int_0^t\int \mathcal{Z}^\alpha(u\cdot \nabla d)\cdot{\rm div} (\mathcal{Z}^\alpha \nabla d)~ dxd\tau\\
&\le \delta_1\int_0^t\|{\rm div} (\mathcal{Z}^\alpha \nabla d)\|_{L^2}^2d\tau
     +C_{\delta_1} \|u\|_{L^\infty_{x,t}}^2\int_0^t \|\nabla d\|_{\mathcal{H}^{m}}^2 d\tau
+C_{\delta_1} \|\nabla d\|_{L^\infty_{x,t}}^2\int_0^t \|u\|_{\mathcal{H}^{m}}^2 d\tau\\
&\le \delta_1\int_0^t\|{\rm div} (\mathcal{Z}^\alpha \nabla d)\|_{L^2}^2d\tau
     +C_{\delta_1} C_1 Q(t)
     \int_0^t (\|u\|_{\mathcal{H}^{m}}^2
      +\|\nabla d\|_{\mathcal{H}^{m}}^2) d\tau\\
\end{aligned}
\end{equation}
and
\begin{equation}\label{3321}
\begin{aligned}
&-\int_0^t\int[\mathcal{Z}^\alpha, \nabla](u\cdot \nabla d)\cdot \mathcal{Z}^\alpha \nabla d~ dxd\tau\\
&\le  \sum_{|\beta|\le m-1}\int_0^t \|\mathcal{Z}^\beta(\nabla u\cdot \nabla d+
        u\cdot \nabla^2 d)\|_{L^2} \|\mathcal{Z}^\alpha \nabla d\|_{L^2}dx\\
&\le C\int_0^t \|\nabla d\|_{\mathcal{H}^{m}}^2 d\tau
      +C(\|\nabla u\|_{L^\infty_{x,t}}^2+\|\nabla d\|_{L^\infty_{x,t}}^2)
      \int_0^t (\|\nabla u\|_{\mathcal{H}^{m-1}}^2+\|\nabla d\|_{\mathcal{H}^{m-1}}^2)d\tau\\
&\quad  +C(\|u\|_{L^\infty_{x,t}}^2+\|\nabla^2 d\|_{L^\infty_{x,t}}^2)
      \int_0^t (\|u\|_{\mathcal{H}^{m-1}}^2+\|\nabla^2 d\|_{\mathcal{H}^{m-1}}^2)d\tau\\
&\le C(1+Q(t))
      \int_0^t (\|u\|_{\mathcal{H}^{m-1}}^2+\|\nabla u\|_{\mathcal{H}^{m-1}}^2
       +\|\nabla d\|_{\mathcal{H}^{m}}^2
      +\|\nabla^2 d\|_{\mathcal{H}^{m-1}}^2)d\tau.
\end{aligned}
\end{equation}
Substituting \eqref{3319}-\eqref{3321} into \eqref{3315}, we obtain
\begin{equation}\label{3322}
|I_2|
\le \delta_1 \int_0^t \|\nabla^2 d\|_{\mathcal{H}^{m}}^2 d\tau
     +C_{\delta_1}C_1(1+Q(t))
     \int_0^t(\|(u,\nabla d)\|_{\mathcal{H}^{m}}^2+\|\nabla(u,\nabla d)\|_{\mathcal{H}^{m-1}}^2)d\tau.
\end{equation}
{\bf {Deal with the term $I_3$}}.
Indeed, by integrating by part, one arrives at
\begin{equation}\label{3323}
\begin{aligned}
I_3
&=\int_0^t\int \nabla \mathcal{Z}^\alpha (|\nabla d|^2 d)\cdot \mathcal{Z}^\alpha \nabla d ~dxd\tau
+\int_0^t\int [\mathcal{Z}^\alpha, \nabla ](|\nabla d|^2 d)\cdot \mathcal{Z}^\alpha \nabla d ~dxd\tau\\
&=
-\int_0^t\int \mathcal{Z}^\alpha (|\nabla d|^2 d)\cdot {\rm div}(\mathcal{Z}^\alpha \nabla d) ~dxd\tau
+\int_0^t\int [\mathcal{Z}^\alpha, \nabla ](|\nabla d|^2 d)\cdot \mathcal{Z}^\alpha \nabla d ~dxd\tau\\
&\quad
+\int_0^t\int_{\partial \Omega} \mathcal{Z}^\alpha (|\nabla d|^2 d)
  \cdot \mathcal{Z}^\alpha \nabla d \cdot n~d\sigma d\tau.
\end{aligned}
\end{equation}
It is easy to deduce that
\begin{equation}\label{3324}
\begin{aligned}
&-\int_0^t\int \mathcal{Z}^\alpha (|\nabla d|^2 d)\cdot {\rm div}(\mathcal{Z}^\alpha \nabla d) ~dxd\tau\\
&=-\underset{|\beta| \ge 1}{\sum}\int_0^t\int\mathcal{Z}^\gamma(|\nabla d|^2)\mathcal{Z}^\beta d\cdot {\rm div}(\mathcal{Z}^\alpha \nabla d) ~dxd\tau\\
&\quad
-\int_0^t\int \mathcal{Z}^\alpha(|\nabla d|^2)d \cdot {\rm div}(\mathcal{Z}^\alpha \nabla d) ~dxd\tau.
\end{aligned}
\end{equation}
By virtue of the Proposition \ref{prop2.2}, we obtain
\begin{equation}\label{3325}
\begin{aligned}
&\underset{|\beta| \ge 1}{\sum}\int_0^t
\|\mathcal{Z}^\gamma(|\nabla d|^2)\mathcal{Z}^\beta d\|_{L^2}^2d\tau\\
&\le  \|\mathcal{Z}d\|_{L^\infty_{x,t}}^2\int_0^t \||\nabla d|^2\|_{\mathcal{H}^{m-1}}^2 d\tau
      +\||\nabla d|^2\|_{L^\infty_{x,t}}^2
      \int_0^t \| \mathcal{Z} d\|_{\mathcal{H}^{m-1}}^2 d\tau\\
&\le \|\mathcal{Z}d\|_{L^\infty}^2\|\nabla d\|_{L^\infty}^2
     \int_0^t \|\nabla d\|_{\mathcal{H}^{m-1}}^2 d\tau
      +\|\nabla d\|_{L^\infty}^4 \int_0^t \|\mathcal{Z}d\|_{\mathcal{H}^{m-1}}^2d\tau\\
&\le \|\nabla d\|_{L^\infty}^4 \int_0^t \|\partial_td\|_{\mathcal{H}^{m-1}}^2d\tau
     +C_1(1+P(Q(t)))\int_0^t \Lambda_m(\tau)d\tau.
\end{aligned}
\end{equation}
By virtue of equation \eqref{eq1}$_2$, we find
\begin{equation}\label{3326}
\begin{aligned}
\int_0^t \|\partial_td\|_{\mathcal{H}^{m-1}}^2d\tau
&\le \|u\|_{L^\infty}^2\int_0^t \|\nabla d\|_{\mathcal{H}^{m-1}}^2d\tau
     +\|\nabla d\|_{L^\infty}^2\int_0^t \|u\|_{\mathcal{H}^{m-1}}^2d\tau\\
&\quad +\int_0^t \|\Delta d\|_{\mathcal{H}^{m-1}}^2d\tau
       +\int_0^t \||\nabla d|^2 d\|_{\mathcal{H}^{m-1}}^2 d\tau.
\end{aligned}
\end{equation}
By virtue of the Proposition \ref{prop2.2}, we obtain
\begin{equation}\label{3327}
\begin{aligned}
\int_0^t \||\nabla d|^2 d\|_{\mathcal{H}^{m-1}}^2 d\tau
&\le \sum_{|\gamma|\ge 1,|\beta|+|\gamma|\le m-1}
     \int_0^t \|\mathcal{Z}^\beta(|\nabla d|^2)\mathcal{Z}^\gamma d\|_{L^2}^2 d\tau
     +\int_0^t \||\nabla d|^2\|_{\mathcal{H}^{m-1}}^2d\tau\\
&\lesssim \|\mathcal{Z}d\|_{L^\infty}^2 \|\nabla d\|_{L^\infty}^2
     \int_0^t \|\nabla d\|_{\mathcal{H}^{m-2}}^2d\tau
     +\|\nabla d\|_{L^\infty}^4\int_0^t \|\nabla d\|_{\mathcal{H}^{m-2}}^2 d\tau\\
&\quad +\|\nabla d\|_{L^\infty}^4\int_0^t \|\partial_t d\|_{\mathcal{H}^{m-2}}^2 d\tau
       +\|\nabla d\|_{L^\infty}^2\int_0^t \|\nabla d\|_{\mathcal{H}^{m-1}}^2 d\tau.
\end{aligned}
\end{equation}
Substituting \eqref{3327} into \eqref{3326}, one arrives at immediately
\begin{equation}\label{3328}
\begin{aligned}
\int_0^t \|d_t\|_{\mathcal{H}^{m-1}}^2d\tau
&\lesssim \|\nabla d\|_{L^\infty}^4\int_0^t \|\partial_t d\|_{\mathcal{H}^{m-2}}^2 d\tau
    +\int_0^t \|\Delta d\|_{\mathcal{H}^{m-1}}^2d\tau\\
&\quad    +C_1(1+P(Q(t)))\int_0^t (\|u\|_{\mathcal{H}^{m-1}}^2
    +\|\nabla d\|_{\mathcal{H}^{m-1}}^2)d\tau.
\end{aligned}
\end{equation}
On the other hand, it is easy to deduce that
\begin{equation}\label{3329}
\int_0^t \|d_t\|_{L^2}^2 d\tau
\lesssim \int_0^t \|\Delta d\|_{L^2}^2 d\tau+(1+\|\nabla d\|_{L^\infty}^2)
    \int_0^t (\|u\|_{L^2}^2+\|\nabla d\|_{L^2}^2)d\tau.
\end{equation}
The combination  of  \eqref{3328} and \eqref{3329} yields directly
\begin{equation}\label{3330}
\begin{aligned}
\int_0^t  \|d_t\|_{\mathcal{H}^{m-1}}^2 d\tau
&\le C_1(1+P(Q(t)))\int_0^t\!\! \|\Delta d\|_{\mathcal{H}^{m-1}}^2 d\tau\\
&\quad +C_1(1+P(Q(t)))\int_0^t \!\!(\|u\|_{\mathcal{H}^{m-1}}^2
    +\|\nabla d\|_{\mathcal{H}^{m-1}}^2)d\tau,
\end{aligned}
\end{equation}
which, together with \eqref{3325}, gives directly
\begin{equation}\label{3331}
\begin{aligned}
&\left|-\underset{|\beta| \ge 1}{\sum}\int_0^t\int\mathcal{Z}^\gamma(|\nabla d|^2)\mathcal{Z}^\beta d\cdot {\rm div}(\mathcal{Z}^\alpha \nabla d) ~dxd\tau\right|\\
&\le \delta_1 \int_0^t\|{\rm div}(\mathcal{Z}^\alpha \nabla d)\|_{L^2}d\tau
+C_{\delta_1}C_1(1+P(Q(t)))\int_0^t \Lambda_m(\tau)d\tau.
\end{aligned}
\end{equation}
In view of the Proposition \ref{prop2.2} and Cauchy inequality, we obtain
\begin{equation}\label{3332}
\begin{aligned}
&\left|-\int_0^t\int \mathcal{Z}^\alpha(|\nabla d|^2)d \cdot {\rm div}(\mathcal{Z}^\alpha \nabla d) ~dxd\tau\right|\\
&\le \delta_1 \int_0^t\|{\rm div}(\mathcal{Z}^\alpha \nabla d)\|_{L^2}d\tau
      +C_{\delta_1}\|\nabla d\|_{L^\infty}^2\int_0^t \|\nabla d\|_{\mathcal{H}^{m}}^2d\tau.
\end{aligned}
\end{equation}
Then combination of  \eqref{3331} and \eqref{3332} yields immediately
\begin{equation}\label{3333}
\begin{aligned}
&-\int_0^t\int \mathcal{Z}^\alpha (|\nabla d|^2 d)\cdot {\rm div}(\mathcal{Z}^\alpha \nabla d) ~dxd\tau\\
&\le \delta_1 \int_0^t\|{\rm div}(\mathcal{Z}^\alpha \nabla d)\|_{L^2}d\tau
+C_1C_{\delta_1}(1+P(Q(t)))\int_0^t \Lambda_m(\tau)d\tau.
\end{aligned}
\end{equation}
On the other hand, we find that
\begin{equation}\label{3334}
\begin{aligned}
&\int_0^t \int [\mathcal{Z}^\alpha, \nabla ](|\nabla d|^2 d)\cdot \mathcal{Z}^\alpha \nabla d ~dxd\tau\\
&=\sum_{|\beta|\le m-1}\int_0^t \int
   d \cdot \mathcal{Z}^{\beta} (\nabla d\cdot \nabla^2 d)
   \cdot \mathcal{Z}^\alpha \nabla d ~dxd\tau\\
&\quad +\sum_{|\beta|+|\gamma|\le m-1}^{|\beta|\ge 1|}\int_0^t\int
   \mathcal{Z}^\beta d \cdot \mathcal{Z}^\gamma (\nabla d\cdot \nabla^2 d)
   \cdot \mathcal{Z}^\alpha \nabla d ~dxd\tau\\
&\quad  +\underset{|\beta|+|\gamma|\le m-1}{\sum} \int_0^t\int
   \mathcal{Z}^\beta (|\nabla d|^2)\mathcal{Z}^\gamma \nabla d
   \cdot \mathcal{Z}^\alpha \nabla d ~dxd\tau\\
&=I\!I_1+I\!I_2+I\!I_3.
\end{aligned}
\end{equation}
In view of the Proposition \ref{prop2.2}, we find
\begin{equation}\label{3335}
I\!I_1\lesssim
\|\nabla d\|_{L^\infty_{x,t}}^2\int_0^t \|\nabla^2 d\|_{\mathcal{H}^{m-1}}^2d\tau
+\|\nabla^2 d\|_{L^\infty_{x,t}}^2\int_0^t \|\nabla d\|_{\mathcal{H}^{m-1}}^2d\tau
+\int_0^t \|\nabla d\|_{\mathcal{H}^{m}}^2 d\tau.
\end{equation}
and
\begin{equation}\label{3336}
\begin{aligned}
I\!I_2
&\lesssim  \|\nabla d\|_{W^{1,\infty}_{x,t}}^4
       \int_0^t \|\mathcal{Z} d\|_{\mathcal{H}^{m-2}}^2d\tau
      +\|\mathcal{Z} d\|_{L^\infty_{x,t}}^2\|\nabla d\|_{L^\infty}^2
        \int_0^t \|\nabla^2 d\|_{\mathcal{H}^{m-2}}^2d\tau\\
&\quad +\|\mathcal{Z} d\|_{L^\infty_{x,t}}^2\|\nabla^2 d\|_{L^\infty}^2
        \int_0^t \|\nabla d\|_{\mathcal{H}^{m-2}}^2d\tau
       +\int_0^t \|\nabla d\|_{\mathcal{H}^{m}}^2 d\tau\\
&\le C_1(1+P(Q(t)))\int_0^t \Lambda_m(\tau)d\tau,
\end{aligned}
\end{equation}
where we have used the estimate \eqref{3330}.
Similarly, it is easy to deduce that
\begin{equation}\label{3337}
I\!I_3
\le C \|\nabla d\|_{L^\infty}^4 \int_0^t \|\nabla d\|_{\mathcal{H}^{m-1}}^2d\tau
+C\int_0^t \|\nabla d\|_{\mathcal{H}^{m}}^2 d\tau.
\end{equation}
Substituting \eqref{3335}-\eqref{3337} into \eqref{3334}, one arrives at
\begin{equation}\label{3338}
\int_0^t\int [\mathcal{Z}^\alpha, \nabla ](|\nabla d|^2 d)\cdot \mathcal{Z}^\alpha \nabla d ~dxd\tau
\le C_1(1+P(Q(t)))\int_0^t \Lambda_m(\tau)d\tau.
\end{equation}
Deal with the boundary term on the right hand side of \eqref{3323}.
If $|\alpha_0|=m$ or $|\alpha_{13}|\ge 1$, we obtain
\begin{equation}\label{3339}
\int_0^t\int_{\partial \Omega} \mathcal{Z}^\alpha (|\nabla d|^2 d)
\cdot \mathcal{Z}^\alpha \nabla d \cdot n~d\sigma d\tau
=0.
\end{equation}
On the other hand, it is easy to deduce that for $|\beta|=m-1-\alpha_0$
\begin{equation}\label{3340}
\int_0^t\int_{\partial \Omega} \mathcal{Z}^\alpha (|\nabla d|^2 d)
  \cdot \mathcal{Z}^\alpha \nabla d \cdot n~d\sigma d\tau
\le \int_0^t |\partial_t^{\alpha_0}Z_y^{\beta} (|\nabla d|^2 d)|_{L^2(\partial \Omega)}
    |\mathcal{Z}^\alpha \nabla d \cdot n|_{H^1(\partial \Omega)}d\tau.
\end{equation}
By virtue of the trace theorem in Proposition \ref{prop2.3},
we find for $|\beta|=m-1-\alpha_0$
\begin{equation}\label{3341}
\begin{aligned}
&|\partial_t^{\alpha_0}Z_y^{\beta} (|\nabla d|^2 d)|_{L^2(\partial \Omega)}^2\\
&\le C\|\nabla \partial_t^{\alpha_0}(|\nabla d|^2 d)\|_{m-1-\alpha_0}
       \|\partial_t^{\alpha_0}(|\nabla d|^2 d)\|_{m-1-\alpha_0}
     +C\|\partial_t^{\alpha_0}(|\nabla d|^2 d)\|_{m-1-\alpha_0}^2\\
&\le C\|\nabla(|\nabla d|^2 d)\|_{\mathcal{H}^{m-1}}
       \||\nabla d|^2 d\|_{\mathcal{H}^{m-1}}
    +C\||\nabla d|^2 d\|_{\mathcal{H}^{m-1}}^2,
\end{aligned}
\end{equation}
and
\begin{equation}\label{3342}
|\mathcal{Z}^\alpha \nabla d \cdot n|_{H^1(\partial \Omega)}^2
\le C_{m+2}(\|\nabla^2 d\|_{\mathcal{H}^{m}}^2+\|\nabla d\|_{\mathcal{H}^{m}}^2).
\end{equation}
On the other hand,  we obtain just following the idea as \eqref{3324}
and \eqref{3334} that
\begin{equation}\label{3343}
\int_0^t\||\nabla d|^2 d\|_{\mathcal{H}^{m-1}}^2 d\tau
\le C(1+P(Q(t)))\int_0^t \Lambda_m(\tau)d\tau
\end{equation}
and
\begin{equation}\label{3344}
\int_0^t\|\nabla(|\nabla d|^2 d)\|_{\mathcal{H}^{m-1}}^2 d\tau
\le C(1+P(Q(t)))\int_0^t \Lambda_m(\tau)d\tau.
\end{equation}
The combination of \eqref{3340}-\eqref{3344} gives directly
\begin{equation}\label{3345}
\begin{aligned}
&\int_0^t\int_{\partial \Omega} \mathcal{Z}^\alpha (|\nabla d|^2 d)
  \cdot \mathcal{Z}^\alpha \nabla d \cdot n~d\sigma d\tau\\
&\le \delta_1 \int_0^t\|\nabla^2 d\|_{\mathcal{H}^{m}}^2d\tau
     +C_{\delta_1}C_{m+2}(1+P(Q(t)))\int_0^t \Lambda_m(\tau)d\tau.
\end{aligned}
\end{equation}
Substituting \eqref{3333}, \eqref{3338} and \eqref{3345} into \eqref{3323}, we attains
\begin{equation}\label{3346}
|I_3|
\le \delta_1 \!\int_0^t \!\|{\rm div}(\mathcal{Z}^\alpha \nabla d)\|_{L^2}^2 d\tau
  +\delta_1 \!\int_0^t \!\|\nabla^2 d\|_{\mathcal{H}^{m}}^2d\tau
     +C_{\delta_1}C_{m+2}(1+P(Q(t)))\int_0^t \Lambda_m(\tau)d\tau.
\end{equation}
{\bf {Deal with the term $I_4$ and $I_5$}}.
In view of the Cauchy inequality, it is easy to deduce that
\begin{equation}\label{3347}
|I_4|
\le \delta_1 \int_0^t\|{\rm div}(\mathcal{Z}^\alpha \nabla d)\|_{L^2}^2d\tau
     +C_{\delta_1} \int_0^t\|\nabla^2 d\|^2_{\mathcal{H}^{m-1}}d\tau,
\end{equation}
and
\begin{equation}\label{3348}
|I_5|
\le \delta \int_0^t\|\nabla \Delta d\|_{\mathcal{H}^{m-1}}^2d\tau
    +C_\delta \int_0^t\|\nabla d\|_{\mathcal{H}^{m}}^2 d\tau.
\end{equation}
{\bf {Deal with the term $I_6$}}.
If $|\alpha_0|=m$ or $|\alpha_{13}|\ge 1$, it is easy to deduce that
\begin{equation}\label{3349}
\int_0^t \int_{\partial \Omega}\mathcal{Z}^\alpha \Delta d
  \cdot \mathcal{Z}^\alpha \nabla d\cdot n d\sigma d\tau
=0.
\end{equation}
For the case of $|\alpha_0|\le m-1$ or $|\alpha_{13}|=0$,
integrating by part along the boundary, we have for $|\beta|=m-1-\alpha_0$
\begin{equation}\label{3350}
\int_0^t \int_{\partial \Omega}\mathcal{Z}^\alpha \Delta d
  \cdot \mathcal{Z}^\alpha \nabla d\cdot n d\sigma d\tau
\le C\int_0^t |\partial_t^{\alpha_0}Z_y^\beta \Delta d|_{L^2(\partial \Omega)}
    |\mathcal{Z}^\alpha \nabla d\cdot n|_{H^1(\partial \Omega)}d\tau.
\end{equation}
By virtue of the trace theorem in Proposition \ref{prop2.3}, one arrives at
\begin{equation}\label{3351}
\begin{aligned}
|\partial_t^{\alpha_0}Z_y^\beta \Delta d|_{L^2(\partial \Omega)}
&\le C(\|\nabla \partial_t^{\alpha_0} Z_y^\beta \Delta d\|^{\frac{1}{2}}
      +\|\partial_t^{\alpha_0} Z_y^\beta \Delta d\|^{\frac{1}{2}})
      \| \partial_t^{\alpha_0} Z_y^\beta \Delta d\|^{\frac{1}{2}}\\
&\le C(\|\nabla \Delta d\|_{\mathcal{H}^{m-1}}^{\frac{1}{2}}
      +\|\Delta d\|_{\mathcal{H}^{m-1}}^{\frac{1}{2}})
      \|\Delta d\|_{\mathcal{H}^{m-1}}^{\frac{1}{2}}.
\end{aligned}
\end{equation}
Similarly, in view of boundary condition
\eqref{bc2} and trace theorem in Proposition \ref{prop2.3}, one attains
\begin{equation}\label{3352}
|\mathcal{Z}^\alpha \nabla d\cdot n|_{H^1(\partial \Omega)}
\le C_{m+2}(\|\nabla^2 d\|_{\mathcal{H}^{m}}^{\frac{1}{2}}
     +\|\nabla d\|_{\mathcal{H}^{m}}^{\frac{1}{2}})
     \|\nabla d\|_{\mathcal{H}^{m}}^{\frac{1}{2}}.
\end{equation}
Substituting  \eqref{3351} and \eqref{3352}  into \eqref{3350}
and applying the Cauchy inequality, we find
\begin{equation}\label{3353}
|I_6|
\le \!\delta \!\!\int_0^t \!\! \|\nabla \Delta d\|_{\mathcal{H}^{m-1}}^2 d\tau
     +\delta_1 \!\! \int_0^t\!\!  \|\nabla^2 d\|_{\mathcal{H}^{m}}^2 d\tau
     +C_{\delta, \delta_1}C_{m+2}
     \!\!\int_0^t\!\!(\|\nabla d\|_{\mathcal{H}^{m}}^2
      +\|\Delta d\|_{\mathcal{H}^{m-1}}^2)d\tau.
\end{equation}
Substituting \eqref{3322}, \eqref{3346}-\eqref{3348}
and \eqref{3353} into \eqref{3314}
and choosing $\delta_1$ small enough, we find
\begin{equation}\label{3354}
\begin{aligned}
&\frac{1}{2}\int |\mathcal{Z}^\alpha \nabla d(t)|^2 dx
+\frac{3}{4}\int_0^t \int |\mathcal{Z}^\alpha \Delta d|^2dxd\tau\\
&\le \frac{1}{2}\int |\mathcal{Z}^\alpha \nabla d_0|^2 dx
  +\delta_2 \!\int_0^t \!\|\nabla^2 d\|_{\mathcal{H}^{m}}^2d\tau
  +\delta \int_0^t \|\nabla \Delta d\|_{\mathcal{H}^{m-1}}^2 d\tau\\
&\quad +C_{\delta, \delta_2}C_{m+2}(1+P(Q(t)))\int_0^t \Lambda_m(\tau)d\tau.
\end{aligned}
\end{equation}
In view of the standard elliptic regularity results with Neumann boundary condition,
we get that
\begin{equation}\label{3355}
\begin{aligned}
&\|\nabla^2 d\|_{\mathcal{H}^{m}}^2
=\|\nabla^2 \partial_t^{\alpha_0}d\|_{m-\alpha_0}^2\\
&\le C_{m+2}(\|\nabla \partial_t^{\alpha_0}d\|_{L^2}^2
+\|\Delta\partial_t^{\alpha_0}d\|_{m-\alpha_0}^2)\\
&\le C_{m+2}(\|\nabla d\|_{\mathcal{H}^{m}}^2+\|\Delta d\|_{\mathcal{H}^{m}}^2).
\end{aligned}
\end{equation}
The combination of \eqref{3311}, \eqref{3354} and \eqref{3355} yields directly
\begin{equation*}
\begin{aligned}
&\underset{0\le \tau \le t}{\sup}\|(u, p, \nabla d)\|_{\mathcal{H}^{m}}^2
+C\varepsilon \int_0^t \|\nabla u\|_{\mathcal{H}^{m}}^2 d\tau
+\int_0^t \|\Delta d\|_{\mathcal{H}^{m}}^2 d\tau\\
&\le C_{m+2}\left\{\|(u_0, p_0, \nabla d_0)\|_{\mathcal{H}^{m}}^2
     +\delta \int_0^t \|\nabla \Delta d\|_{\mathcal{H}^{m-1}}^2d\tau
     +\delta \varepsilon^2 \int\|\nabla^2 u\|_{\mathcal{H}^{m-1}}^2 d\tau\right.\\
&\quad \quad \quad \quad \quad
\left. +\delta \int_0^t \|\nabla \partial_t^{m-1}p(\tau)\|_{L^2}^2 d\tau
+C_{\delta}[1+P(Q(t))]\int_0^t \Lambda_m(\tau)d\tau\right\}.
\end{aligned}
\end{equation*}
Therefore, we complete the proof of Lemma \ref{lemma3.3}.
\end{proof}

\subsection{Normal Derivatives Estimates}
\quad
In order to estimate $\|\nabla u\|_{\mathcal{H}^{m-1}}$, it remains to
estimate $\|\chi_j \partial_n u\|_{\mathcal{H}^{m-1}}$, where $\chi_j$
is supported compactly in one of the $\Omega_j$ and with value one in a neighborhood
of the boundary. Indeed, it follows from the definition of the norm that
$\|\chi \partial_{y_i}u\|_{\mathcal{H}^{m-1}}\le C\|u\|_{\mathcal{H}^m}, i=1,2$.
Then, it suffices to estimate $\|\chi \partial_n u\|_{\mathcal{H}^{m-1}}$.

Note that
\begin{equation}\label{32a}
{\rm div}u=\partial_n u\cdot n
+(\Pi \partial_{y_1}u)_1
+(\Pi \partial_{y_1}u)_2
\end{equation}
and
\begin{equation}\label{32b}
\partial_n u=(\partial_n u\cdot n)n+\Pi(\partial_n u).
\end{equation}
Then, it follows from \eqref{32a}  and  \eqref{32b} that
$$
\begin{aligned}
\|\chi \partial_n u\|_{\mathcal{H}^{m-1}}
&\le \|\chi \partial_n u\cdot n\|_{\mathcal{H}^{m-1}}
    +\|\chi \Pi(\partial_n u)\|_{\mathcal{H}^{m-1}}\\
&\le C_{m}\{\|\chi {\rm div}u\|_{\mathcal{H}^{m-1}}
      +\|\chi \Pi(\partial_n u)\|_{\mathcal{H}^{m-1}}
      +\|u\|_{\mathcal{H}^{m}}\}.
\end{aligned}
$$
Thus, it suffices to estimate $\|\chi \Pi(\partial_n u)\|_{\mathcal{H}^{m-1}}$
and $\|\chi {\rm div}u\|_{\mathcal{H}^{m-1}}$, since $\|u\|_{\mathcal{H}^{m}}$
has been estimated before(see Lemma \ref{lemma3.3}).
We extend the smooth symmetric matrix $A$ to be
$A(y, z)=A(y)$. Define
\begin{equation}\label{32c}
\eta=\chi(w\times n+\Pi(Bu))=\chi(\Pi(w\times n)+\Pi(Bu)).
\end{equation}
In view of the Navier-slip boundary condition \eqref{bc2}, $\eta$ satisfies
\begin{equation}\label{32d}
\eta|_{\partial \Omega}=0.
\end{equation}
Since $w\times n=(\nabla u-(\nabla u)^t)\cdot n$, then $\eta$ can be rewritten as
\begin{equation*}
\eta=\chi\left\{\Pi(\partial_n u)-\Pi(\nabla(u\cdot n))
+\Pi((\nabla n)^t \cdot u)+\Pi(Bu)\right\},
\end{equation*}
which, yields immediately that
\begin{equation}\label{32e}
\|\chi \Pi(\partial_n u)\|_{\mathcal{H}^{m-1}}
\le C_{m+1}(\|\eta\|_{\mathcal{H}^{m-1}}+\|u\|_{\mathcal{H}^m}).
\end{equation}
Hence, it remains to estimate $\|\eta\|_{\mathcal{H}^{m-1}}$.

\begin{lemm}\label{lemma3.4}
For $m\ge 1$, it holds that
\begin{equation}\label{341}
\begin{aligned}
&\underset{0\le \tau \le t}{\sup}\|\eta(\tau)\|_{L^2}^2
+\varepsilon\int_0^t \|\nabla \eta(\tau)\|_{L^2}^2d\tau\\
&\le CC_3\left\{\|u_0\|_{H^1}^2
+\delta \varepsilon^2 \int_0^t \|\nabla^2 u(\tau)\|_{L^2}^2d\tau
+\delta \int_0^t\|\nabla \Delta d\|_{L^2}^2d\tau\right\}\\
&\quad + C_3C_\delta [1+P(Q(t))]\int_0^t \Lambda_m(\tau)d\tau.
\end{aligned}
\end{equation}
\end{lemm}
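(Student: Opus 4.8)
The plan is to exploit the fact, recorded in \eqref{32d}, that the quantity $\eta$ satisfies a \emph{homogeneous Dirichlet} boundary condition, and to derive for it a convection--diffusion equation on which a weighted $L^2$ energy estimate closes; this is the scheme of Masmoudi--Rousset and Wang et al. \cite{Wang-Xin-Yong}, the only genuinely new point being the orientation forcing $\nabla d\cdot\Delta d$. \textbf{The equation for $\eta$.} Taking $\nabla\times$ of \eqref{eq1-1}, using $\nabla\times\nabla p=\nabla\times\nabla{\rm div}u=0$ and $\nabla\times\Delta u=\Delta w$ with $w=\nabla\times u$, one finds the vorticity equation
\begin{equation*}
\rho w_t+\rho u\cdot\nabla w-\mu\varepsilon\Delta w
=\rho\,w\cdot\nabla u-\rho\,w\,{\rm div}u-\nabla\rho\times u_t-\nabla\rho\times(u\cdot\nabla u)-\nabla\times(\nabla d\cdot\Delta d).
\end{equation*}
Applying $\chi(\,\cdot\,)\times n$ to this identity, applying $\chi\Pi(B\,\cdot\,)$ to \eqref{eq1-1} rewritten as $\rho u_t+\rho u\cdot\nabla u-\mu\varepsilon\Delta u=-\nabla p+(\mu+\lambda)\varepsilon\nabla{\rm div}u-\nabla d\cdot\Delta d$, adding the two, and commuting the time-independent multipliers $\chi,n,\Pi,B$ through the operator $\rho\partial_t+\rho u\cdot\nabla-\mu\varepsilon\Delta$, one obtains
\begin{equation*}
\rho\eta_t+\rho u\cdot\nabla\eta-\mu\varepsilon\Delta\eta=F,\qquad \eta|_{\partial\Omega}=0,
\end{equation*}
where $F$ is a finite sum of: the stretching terms $\chi(\rho\,w\cdot\nabla u-\rho\,w\,{\rm div}u)\times n$; the density-transport terms $-\chi(\nabla\rho\times u_t)\times n$ and $-\chi(\nabla\rho\times(u\cdot\nabla u))\times n$; the pressure term $-\chi\Pi(B\nabla p)$; the compressible viscous term $(\mu+\lambda)\varepsilon\,\chi\Pi(B\nabla{\rm div}u)$; the orientation forcing $-\chi(\nabla\times(\nabla d\cdot\Delta d))\times n-\chi\Pi(B(\nabla d\cdot\Delta d))$; and commutators of $\chi(\,\cdot\,)\times n$ and $\chi\Pi(B\,\cdot\,)$ with $\rho u\cdot\nabla$ and with $\mu\varepsilon\Delta$, each of which is a smooth matrix (controlled by $C_3$) times one of $u,w,u_t,\nabla u,\varepsilon\nabla^2 u$.

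\textbf{The energy identity.} Multiplying the $\eta$-equation by $\eta$ and integrating over $\Omega$: by $u\cdot n|_{\partial\Omega}=0$ and the continuity equation \eqref{eq1}$_1$ one has $\int\rho u\cdot\nabla\eta\cdot\eta\,dx=\frac12\int\rho_t|\eta|^2\,dx$, which together with $\int\rho\eta_t\cdot\eta\,dx=\frac12\frac{d}{dt}\int\rho|\eta|^2\,dx-\frac12\int\rho_t|\eta|^2\,dx$ leaves only $\frac12\frac{d}{dt}\int\rho|\eta|^2$; and by $\eta|_{\partial\Omega}=0$, integration by parts gives $-\mu\varepsilon\int\Delta\eta\cdot\eta\,dx=\mu\varepsilon\int|\nabla\eta|^2\,dx$ with no boundary contribution. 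Hence
\begin{equation*}
\frac12\frac{d}{dt}\int\rho|\eta|^2\,dx+\mu\varepsilon\int|\nabla\eta|^2\,dx=\int F\cdot\eta\,dx.
\end{equation*}

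\textbf{Estimating $\int F\cdot\eta\,dx$ and concluding.} The stretching, density-transport, pressure and non-viscous commutator terms are bounded by Cauchy--Schwarz using $\|\nabla\rho\|_{L^\infty}+\|u_t\|_{L^\infty}+\|\nabla u\|_{L^\infty}\le CP(Q(t))$ (see \eqref{def2}) and $\|w\|_{L^2}^2+\|\nabla\rho\|_{L^2}^2+\|\nabla p\|_{L^2}^2\le C\Lambda_m(t)$, giving a contribution $\le C[1+P(Q(t))](\|\eta\|_{L^2}^2+\Lambda_m(t))$. For the compressible viscous term, and for the $\varepsilon\nabla^2 u$-type commutators, one estimates directly, without integrating by parts, $(\mu+\lambda)\varepsilon\int\chi\Pi(B\nabla{\rm div}u)\cdot\eta\,dx\le\delta\varepsilon^2\|\nabla^2 u\|_{L^2}^2+C_\delta\|\eta\|_{L^2}^2$. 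For the orientation forcing one uses the cancellation $\nabla\times(\nabla d\cdot\Delta d)=\sum_k\nabla d_k\times\nabla\Delta d_k$ (the $\nabla^2 d\cdot\Delta d$ contribution drops because $\epsilon_{ijl}\partial_l\partial_j d_k=0$), so that
\begin{equation*}
\Big|\int\chi\big(\nabla\times(\nabla d\cdot\Delta d)\big)\times n\cdot\eta\,dx\Big|
\le\delta\|\nabla\Delta d\|_{L^2}^2+C_\delta\|\nabla d\|_{L^\infty}^2\|\eta\|_{L^2}^2,
\end{equation*}
and $|\int\chi\Pi(B(\nabla d\cdot\Delta d))\cdot\eta\,dx|\le C(1+\|\nabla d\|_{L^\infty}^2)(\|\eta\|_{L^2}^2+\|\Delta d\|_{L^2}^2)$. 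Collecting all terms, integrating over $[0,t]$, using \eqref{31b} to bound $\rho$ from below, the bound $\|\eta(0)\|_{L^2}\le CC_3\|u_0\|_{H^1}$ read off from \eqref{32c}, and $\int_0^t\|\eta\|_{L^2}^2\,d\tau\le CC_3\int_0^t\Lambda_m(\tau)\,d\tau$ (since $\|\eta\|_{L^2}^2\le CC_3(\|u\|_{\mathcal{H}^m}^2+\|\nabla u\|_{\mathcal{H}^{m-1}}^2)$), one arrives at \eqref{341}.

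\textbf{Main obstacle.} The lengthy but mechanical part is the bookkeeping of all the commutators entering $F$. The delicate part is twofold: first, deciding which terms in $\int F\cdot\eta$ must be controlled by the genuine dissipation $\mu\varepsilon\|\nabla\eta\|_{L^2}^2$ and which should instead be parked in the small buffers $\delta\varepsilon^2\|\nabla^2 u\|_{L^2}^2$ and $\delta\|\nabla\Delta d\|_{L^2}^2$ (so they can later be absorbed by the dissipative estimates for $u$ and for $d$); and second, the cancellation identity for $\nabla\times(\nabla d\cdot\Delta d)$, which is exactly what keeps the highest $d$-derivative entering $F$ at the level of $\nabla\Delta d$ rather than $\Delta^2 d$, and which is the new difficulty compared with the compressible Navier--Stokes case of \cite{Wang-Xin-Yong}.
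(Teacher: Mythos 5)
Your proposal is correct and follows essentially the same route as the paper: derive the vorticity equation, obtain the convection--diffusion equation satisfied by $\eta$ with its homogeneous Dirichlet condition, perform the $L^2$ energy estimate, and control the source terms by Cauchy--Schwarz against the quantities $Q(t)$ and $\Lambda_m(t)$, parking the $\varepsilon\nabla^2 u$ and $\nabla\Delta d$ contributions into $\delta$-weighted buffers. One minor point worth noting: you make explicit the cancellation
\[
\big(\nabla\times(\nabla d\cdot\Delta d)\big)_j=\epsilon_{jil}\,\partial_l d_k\,\partial_i\Delta d_k
=-\big(\nabla d_k\times\nabla\Delta d_k\big)_j
\]
(there is a sign discrepancy with what you wrote, harmless for the estimate), which the paper only uses implicitly when stating $\|\chi F_1\times n\|_{L^2}\le C_2\{[1+P(Q)](\|\nabla u\|+\|\nabla p\|)+\|\nabla d\|_{L^\infty}\|\nabla\Delta d\|_{L^2}\}$; making it explicit is a clarifying step. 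The only other detail you gloss over is the paper's separate treatment of the term $-\mu\varepsilon\,\chi\,\Delta(\Pi B)\cdot u$, which the paper integrates by parts (estimate (3.47)) in order to place a $\delta\varepsilon\|\nabla\eta\|_{L^2}^2$ on the right and keep the constant at $C_3$ rather than $C_4$; your direct bound is fine up to this constant, which is immaterial in the final theorem since everything is ultimately bounded by $C_{m+2}$ with $m\ge 6$.
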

\begin{proof}
Notice that
$$
\nabla \times ((u\cdot \nabla)u)
=(u\cdot \nabla)w-(w\cdot \nabla)u+w{\rm div}u,
$$
so $w$ satisfies the following equations:
\begin{equation}\label{342}
\rho w_t+\rho (u\cdot \nabla)w
=\mu \varepsilon \Delta w+F_1,
\end{equation}
where
\begin{equation*}
F_1\triangleq
-\nabla \rho \times u_t-\nabla \rho \times (u\cdot \nabla)u
+\rho (w \cdot \nabla)u-\rho w {\rm div}u
-\nabla \times (\nabla d\cdot \Delta d).
\end{equation*}
Consequently, the system for $\eta$ is
\begin{equation}\label{343}
\begin{aligned}
&\rho \eta_t+\rho u_1 \partial_{y_1}\eta+\rho u_2 \partial_{y_2}\eta
+\rho u\cdot N \partial_z \eta-\mu \varepsilon \Delta \eta\\
&=\chi [F_1 \times n+\Pi(BF_2)]+\chi F_3+F_4-\mu \varepsilon \chi \Delta (\Pi B)\cdot u,
\end{aligned}
\end{equation}
where
\begin{equation*}
\begin{aligned}
F_2=&(\mu+\lambda)\varepsilon \nabla {\rm div}u-\nabla p-\nabla d\cdot \Delta d,\\
F_3=&-2\mu \varepsilon\sum_{i=1}^2 \partial_j w\times \partial_j n
    -\mu \varepsilon w\times \Delta n+\sum_{j=1}^2 \rho u_i w \times \partial_i n\\
    &+\sum_{i=1}^2 \rho u_i \partial_i(\Pi B)u
     -2\mu \varepsilon \sum_{i=1}^2\partial_i(\Pi B)\partial_i u,\\
F_4=&\sum_{i=1}^2\rho u_i \partial_{y_i}\chi \cdot (w\times n+\Pi(Bu))
  +\rho u\cdot N \partial_z \chi \cdot(w\times n+\Pi(Bu))\\
&-2\mu \varepsilon \sum_{i=1}^3 \partial_i \chi \partial_i(w\times n+\Pi(Bu))
-\mu \varepsilon \Delta \chi \cdot (w\times n+\Pi(Bu)).
\end{aligned}
\end{equation*}
Multiplying \eqref{343} by $\eta$, it is easy to check that
\begin{equation}\label{344}
\frac{1}{2}\frac{d}{dt}\int |\eta|^2 dx+\varepsilon \int |\nabla \eta|^2 dx
=\int F\cdot \eta dx-\mu \varepsilon  \int \chi \Delta(\Pi B)\cdot u \cdot \eta dx,
\end{equation}
where $F\triangleq \chi [F_1 \times n+\Pi(BF_2)]+\chi F_3+F_4$.
It is easy to deduce that
\begin{equation}\label{345}
\begin{aligned}
&\|\chi F_1 \times n\|_{L^2}
\le C_2\left\{[1+P(Q(t))](\|\nabla u\|_{L^2} +\|\nabla p\|_{L^2})
       +\|\nabla d\|_{L^\infty}\|\nabla \Delta d\|_{L^2}\right\},\\
&\|\chi\Pi(BF_2)\|_{L^2}
\le C_2(\varepsilon \|\nabla^2 u\|_{L^2}
    +\|\nabla d\|_{L^\infty}\|\Delta d\|_{L^2}+\|\nabla p\|_{L^2}),\\
&\|\chi F_3\|_{L^2}
\le \varepsilon \|\nabla^2 u\|_{L^2}+C_3(1+\|u\|_{L^\infty})
(\|u\|_{L^2}+\|\nabla u\|_{L^2}).
\end{aligned}
\end{equation}
Notice that the term $F_4$ are supported away from the boundary, we can
control all the derivatives by the $\|\cdot\|_{\mathcal{H}^m}$. Hence, we find
\begin{equation}\label{346}
\|F_4\|_{L^2}
\le \varepsilon \|\nabla^2 u\|_{L^2}+C_3(1+\|u\|_{L^\infty})\|u\|_{\mathcal{H}^{1}}.
\end{equation}
Integrating by parts, it is easy to deduce that
\begin{equation}\label{347}
-\mu \varepsilon \int \chi \Delta(\Pi B)\cdot u \cdot \eta dx
\le \delta\varepsilon \int |\nabla \eta|^2  dx
+C_{\delta}C_{3}(\|\nabla u\|_{L^2}^2+\|u\|_{\mathcal{H}^{1}}^2).
\end{equation}
Substituting \eqref{345}-\eqref{347} into \eqref{344}
and integrating the resulting inequality over $[0, t]$, we have
\begin{equation*}
\begin{aligned}
&\frac{1}{2}\int |\eta|^2(t) dx+\varepsilon \int_0^t \int |\nabla \eta|^2 dxd\tau\\
&\le \frac{1}{2}\int |\eta_0|^2 dx
+\delta \varepsilon^2 \int_0^t \|\nabla^2 u\|_{L^2}^2d\tau
+\delta \int_0^t\|\nabla \Delta d\|_{L^2}^2d\tau
+C[1+P(Q(t))]\int_0^t\Lambda_1 (\tau) d\tau.
\end{aligned}
\end{equation*}
Therefore, we complete the proof of Lemma \ref{lemma3.4}.
\end{proof}

\begin{lemm}\label{lemma3.5}
For $m\ge 1$, it holds that
\begin{equation}\label{351}
\begin{aligned}
&\underset{0\le \tau \le t}{\sup}\|\eta(\tau)\|_{\mathcal{H}^{m-1}}^2
+\mu \varepsilon\int_0^t \|\nabla \eta(\tau)\|_{\mathcal{H}^{m-1}}^2d\tau\\
&\le CC_{m+2}\left\{\|(u_0, \nabla u_0)\|_{\mathcal{H}^{m-1}}^2
+\delta \int_0^t \|\nabla \partial_t^{m-1} p\|_{L^2}^2 d\tau
+\delta \varepsilon^2 \int_0^t \|\nabla^2 u(\tau)\|_{\mathcal{H}^{m-1}}^2d\tau
\right\}\\
&\quad +CC_{m+2}\left\{
\delta \int_0^t \|\nabla \Delta d\|_{\mathcal{H}^{m-1}}^2 d\tau
+C_\delta [1+P(Q(t))]\int_0^t \Lambda_m(\tau) d\tau\right\}.
\end{aligned}
\end{equation}
\end{lemm}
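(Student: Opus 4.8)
The plan is to run a conormal energy estimate directly on the equation \eqref{343}, exploiting the whole point of introducing $\eta$: it solves a convection--diffusion equation subject to the homogeneous Dirichlet condition \eqref{32d}, so integration by parts will produce no boundary terms. I would argue by induction on $m$, the base case $m=1$ being exactly Lemma \ref{lemma3.4}; assume \eqref{351} holds with $m$ replaced by $m-1$. For $|\alpha|\le m-1$, apply $\mathcal{Z}^\alpha$ to \eqref{343}, collecting into $\mathcal{C}^\alpha$ the commutators $-[\mathcal{Z}^\alpha,\rho]\partial_t\eta$, $-[\mathcal{Z}^\alpha,\rho u_i\partial_{y_i}]\eta$, $-[\mathcal{Z}^\alpha,\rho u\cdot N\partial_z]\eta$ and $-\mu\varepsilon[\mathcal{Z}^\alpha,\Delta]\eta$. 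Multiply by $\mathcal{Z}^\alpha\eta$ and integrate over $\Omega$. Since $Z_3=\varphi(z)\partial_z$ with $\varphi(0)=0$, the field $\mathcal{Z}^\alpha\eta$ still vanishes on $\partial\Omega$, so $\int\Delta\mathcal{Z}^\alpha\eta\cdot\mathcal{Z}^\alpha\eta\,dx=-\|\nabla\mathcal{Z}^\alpha\eta\|_{L^2}^2$ with no boundary contribution, and the transport terms integrate — after using ${\rm div}(\rho u)=-\rho_t$ and $u\cdot n=0$ (hence $u\cdot N=0$ on $\partial\Omega$) — to quantities controlled by $P(Q(t))$. This gives
\[
\tfrac12\tfrac{d}{dt}\!\int\!\rho|\mathcal{Z}^\alpha\eta|^2dx+\mu\varepsilon\|\nabla\mathcal{Z}^\alpha\eta\|_{L^2}^2\le\int(\mathcal{Z}^\alpha F+\mathcal{C}^\alpha)\cdot\mathcal{Z}^\alpha\eta\,dx-\mu\varepsilon\!\int\!\mathcal{Z}^\alpha(\chi\Delta(\Pi B)u)\cdot\mathcal{Z}^\alpha\eta\,dx+C_{m+2}P(Q(t))\Lambda_m(t).
\]

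Next I would estimate the right-hand side term by term. The degenerate transport commutator $[\mathcal{Z}^\alpha,\rho u\cdot N\partial_z]\eta$ is handled by the standard device: since $u\cdot N$ vanishes on $\{z=0\}$ and $\varphi(z)\sim z$ there, one writes $u\cdot N=\varphi(z)g$ with $g$ as regular as $u$, so every $\partial_z$ hitting $\eta$ is traded for $Z_3$ with bounded coefficients, reducing the whole thing to a conormal product estimated by Proposition \ref{prop2.2} against $[1+P(Q(t))]\Lambda_m$. The viscous commutator $-\mu\varepsilon[\mathcal{Z}^\alpha,\Delta]\eta$ produces terms of the form $\varepsilon\,\partial^2\mathcal{Z}^\beta\eta$ with $|\beta|\le m-2$; one integration by parts against $\mathcal{Z}^\alpha\eta$ plus Young's inequality splits them into $\delta\mu\varepsilon\|\nabla\mathcal{Z}^\alpha\eta\|_{L^2}^2$, absorbed on the left, and $C_\delta\varepsilon\|\nabla\eta\|_{\mathcal{H}^{m-2}}^2$-type terms which, by the inductive hypothesis at level $m-1$ together with $\eta\sim\chi\Pi(\partial_n u)$ and \eqref{32e}, feed back into $\delta\varepsilon^2\|\nabla^2u\|_{\mathcal{H}^{m-1}}^2$ and $[1+P(Q(t))]\Lambda_m$; the term $-\mu\varepsilon\mathcal{Z}^\alpha(\chi\Delta(\Pi B)u)$ is disposed of identically.

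Finally the forcing $\mathcal{Z}^\alpha F$, with $F=\chi[F_1\times n+\Pi(BF_2)]+\chi F_3+F_4$. In $F_1\times n$, the term $\nabla\times(\nabla d\cdot\Delta d)$ gives, via Proposition \ref{prop2.2}, a contribution bounded by $\delta\int_0^t\|\nabla\Delta d\|_{\mathcal{H}^{m-1}}^2d\tau+C_\delta[1+P(Q(t))]\int_0^t\Lambda_m\,d\tau$; the term $\nabla\rho\times u_t$ is rewritten through the momentum equation \eqref{eq1-1}, so that when $\alpha=\partial_t^{m-1}$ the pressure gradient $\partial_t^{m-1}\nabla p$ yields exactly the $\delta\int_0^t\|\nabla\partial_t^{m-1}p\|_{L^2}^2d\tau$ term (which is not controlled by $\Lambda_m$), its viscous part yielding $\delta\varepsilon^2\|\nabla^2u\|_{\mathcal{H}^{m-1}}^2$ and everything else landing in $[1+P(Q(t))]\Lambda_m$ (using \eqref{3330} for the $\|d_t\|_{\mathcal{H}^{m-1}}$-type pieces that appear through commutators with time derivatives). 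The remaining pieces of $\Pi(BF_2)$, $F_3$ and $F_4$ are $\varepsilon$ times second derivatives of $u$ plus conormally controlled quantities, handled the same way, with $F_4$ harmless because it is supported away from $\partial\Omega$. Integrating in time, summing over $|\alpha|\le m-1$, and choosing $\delta$ small to absorb the dissipative pieces produces \eqref{351}.

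I expect the main obstacle to be the bookkeeping of the powers of $\varepsilon$ in the viscous commutator and in the $\Pi(BF_2)$, $F_3$ and $F_4$ terms: one must verify that every contribution carrying $\varepsilon\nabla^2u$ or $\varepsilon\partial_z^2\eta$ is either absorbed into $\mu\varepsilon\|\nabla\mathcal{Z}^\alpha\eta\|_{L^2}^2$ or lands in the slot $\delta\varepsilon^2\|\nabla^2u\|_{\mathcal{H}^{m-1}}^2$ with the correct $\varepsilon^2$ weight, and never with an uncontrollable $\varepsilon^0$ or $\varepsilon^1$ power; the degenerate transport commutator, though conceptually routine given the $\varphi(z)\partial_z$ structure, still demands care in the boundary layer.
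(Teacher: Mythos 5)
Your proposal follows essentially the same roadmap as the paper's proof: induction on $m$ with Lemma~\ref{lemma3.4} as the base case, applying $\mathcal{Z}^\alpha$ to \eqref{343}, testing against $\mathcal{Z}^\alpha\eta$ with no boundary terms thanks to \eqref{32d}, expanding $\mathcal{Z}^\alpha\Delta\eta$ into $\Delta\mathcal{Z}^\alpha\eta$ plus $\varepsilon$-weighted lower-order commutators absorbed by Young's inequality, estimating $\mathcal{Z}^\alpha F$, $\mathcal{C}_3^\alpha$, $\mathcal{C}_4^\alpha$ via Proposition~\ref{prop2.2}, and finally eliminating $\varepsilon\int_0^t\|\nabla\eta\|_{\mathcal{H}^{m-2}}^2\,d\tau$ by the induction hypothesis. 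That is the structure of \eqref{352}--\eqref{3510}.

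The one substantive misstep is your attribution of the $\delta\int_0^t\|\nabla\partial_t^{m-1}p\|_{L^2}^2\,d\tau$ term. You propose to produce it by rewriting $\nabla\rho\times u_t$ inside $F_1$ through the momentum equation \eqref{eq1-1}; but since $p=p(\rho)$, the vectors $\nabla\rho$ and $\nabla p$ are parallel, so the substitution $u_t=-u\cdot\nabla u-\rho^{-1}\nabla p+\cdots$ makes the pressure contribution $\nabla\rho\times\rho^{-1}\nabla p$ vanish identically, and no $\nabla\partial_t^{m-1}p$ survives from this piece. The $\nabla\rho\times u_t$ term is in fact harmless as is (Proposition~\ref{prop2.2} with $\|u_t\|_{\mathcal{H}^{m-1}}\lesssim\|u\|_{\mathcal{H}^m}$). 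The $\|\nabla\partial_t^{m-1}p\|_{L^2}^2$ slot in \eqref{351} actually comes from the $-\nabla p$ summand of $F_2$ inside $\chi\Pi(BF_2)$, as recorded in \eqref{356-1}: taking $\mathcal{Z}^\alpha=\partial_t^{m-1}$ there produces $\partial_t^{m-1}\nabla p$, which cannot be folded into $\Lambda_m$ without the $\varepsilon$ weight and therefore must be carried separately. Correcting this attribution, your argument aligns with the paper's.
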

\begin{proof}
The case for $m=1$ is already proved in Lemma \ref{lemma3.4}. Assume that
\eqref{351} is proved for $k=m-2$. We shall prove that is holds for $k=m-1 \ge 1$.
For $|\alpha|=m-1$, applying the operator $\mathcal{Z}^\alpha$ to the
equation \eqref{343} , we find
\begin{equation}\label{352}
\rho \mathcal{Z}^\alpha \eta_t
+\rho(u\cdot \nabla)\mathcal{Z}^\alpha \eta
-\mu \varepsilon  \mathcal{Z}^\alpha \Delta \eta=\mathcal{Z}^\alpha F
-\mathcal{Z}^\alpha [\mu \varepsilon \chi (\Delta(\Pi B)\cdot u)]
+\mathcal{C}_3^{\alpha}
+\mathcal{C}_4^{\alpha},
\end{equation}
where
\begin{equation*}
\begin{aligned}
\mathcal{C}_3^{\alpha}=&-
\sum_{|\beta|\ge 1, \beta+\gamma=\alpha}C_{\alpha, \beta}
\mathcal{Z}^\beta \rho \mathcal{Z}^\gamma \eta_t,\\
\mathcal{C}_4^{\alpha}=&
-\sum_{|\beta|\ge 1, \beta+\gamma=\alpha}
\sum_{i=1}^2 C_{\alpha, \beta}\mathcal{Z}^\beta (\rho u_i)
\mathcal{Z}^\gamma \partial_{y_i}\eta,\\
&-\sum_{|\beta|\ge 1, \beta+\gamma=\alpha}
C_{\alpha, \beta}\mathcal{Z}^\beta (\rho u\cdot N)
\mathcal{Z}^\gamma \partial_z \eta,\\
&-\rho(u\cdot N)\sum_{|\beta|\ge m-2}C(\alpha, \beta, z)
\partial_z \mathcal{Z}^\beta \eta,
\end{aligned}
\end{equation*}
where $C(\alpha, \beta, z)$ is smooth function depending on $\alpha, \beta$
and $\varphi(z)$.
Multiplying \eqref{352} by $\mathcal{Z}^\alpha \eta$, it is easy to deduce that
\begin{equation}\label{353}
\begin{aligned}
&\frac{1}{2}\int \rho|\mathcal{Z}^\alpha \eta(t)|^2 dx
-\frac{1}{2}\int \rho_0|\mathcal{Z}^\alpha \eta_0|^2 dx\\
&=\mu \varepsilon \int_0^t \int \mathcal{Z}^\alpha \Delta \eta \cdot \mathcal{Z}^\alpha \eta~ dxd\tau
+\int_0^t \int \mathcal{Z}^\alpha F \cdot \mathcal{Z}^\alpha \eta ~dxd\tau\\
&\quad -\mu \varepsilon\int_0^t \int  \mathcal{Z}^\alpha (\chi \Delta(\Pi B)\cdot u)
\cdot \mathcal{Z}^\alpha \eta ~dxd\tau
+\int_0^t \int (\mathcal{C}_3^\alpha+\mathcal{C}_4^\alpha)
\cdot \mathcal{Z}^\alpha \eta ~dxd\tau.
\end{aligned}
\end{equation}
In the local basis, it holds that
\begin{equation*}
\partial_j=\beta_j^1 \partial_{y_1}+\beta_j^2 \partial_{y_2}+\beta_j^3 \partial_{z},
~~j=1,2,3,
\end{equation*}
for harmless functions $\beta_j^i, ~i,j=1,2,3$ depending on the boundary regularity and
weighted function $\varphi(z)$. Therefore, the following commutation expansion holds:
\begin{equation*}
\mathcal{Z}^\alpha \Delta \eta
=\Delta \mathcal{Z}^\alpha \eta
+\sum_{|\beta|\le m-2} C_{1\beta}\partial_{zz}\mathcal{Z}^\beta \eta
+\sum_{|\beta|\le m-1} (C_{2\beta}\partial_{z}\mathcal{Z}^\beta \eta
                        +C_{3\beta}Z_y\mathcal{Z}^\beta \eta).
\end{equation*}
Then integrating by part and applying the Cauchy inequality, we obtain
\begin{equation}\label{354}
\begin{aligned}
&\mu \varepsilon \int_0^t \int \mathcal{Z}^\alpha \Delta \eta \cdot \mathcal{Z}^\alpha \eta dxd\tau\\
&=\mu \varepsilon\int_0^t \int \Delta \mathcal{Z}^\alpha \eta \cdot \mathcal{Z}^\alpha \eta dxd\tau
  +\sum_{|\beta|\le m-2}\mu \varepsilon \int_0^t \int C_{1\beta}\partial_{zz} \mathcal{Z}^\beta \eta\cdot \mathcal{Z}^\alpha \eta dxd\tau\\
&\quad +\sum_{|\beta|\le m-1}\mu \varepsilon \int_0^t
   \int(C_{2\beta}\partial_{z}\mathcal{Z}^\beta \eta
  +C_{3\beta}Z_y\mathcal{Z}^\beta \eta)\cdot \mathcal{Z}^\alpha \eta dxd\tau\\
&\le -\frac{3}{4} \mu \varepsilon\int_0^t \|\nabla \mathcal{Z}^\alpha \eta\|_{L^2}^2 d\tau
     +C \mu \varepsilon\int_0^t \|\nabla \eta\|_{\mathcal{H}^{m-2}}^2 d\tau
     +C_{m+2} \mu \varepsilon \int_0^t  \|\eta\|_{\mathcal{H}^{m-1}}^2 d\tau.
\end{aligned}
\end{equation}
Note that there is no boundary term in the integrating by parts since $\mathcal{Z}^\alpha \eta$
vanishes one the boundary.
Substituting \eqref{354} into \eqref{353}, we find
\begin{equation}\label{355}
\begin{aligned}
&\frac{1}{2}\int |\mathcal{Z}^\alpha \eta(t)|^2 dx
+\frac{3}{4}\mu \varepsilon \int_0^t \|\nabla \mathcal{Z}^\alpha \eta\|_{L^2}^2 d\tau\\
&\le\frac{1}{2}\int |\mathcal{Z}^\alpha \eta_0|^2 dx
+C\mu \varepsilon \int_0^t \|\nabla \eta\|_{\mathcal{H}^{m-2}}^2 d\tau
+C_{m+2}\varepsilon\int_0^t  \|\eta\|_{\mathcal{H}^{m-1}}^2 d\tau\\
&\quad+\int_0^t \int \mathcal{Z}^\alpha F \cdot \mathcal{Z}^\alpha \eta dxd\tau
-\mu \varepsilon\int_0^t \int  \mathcal{Z}^\alpha (\chi \Delta(\Pi B)\cdot u)
\cdot \mathcal{Z}^\alpha \eta ~dxd\tau\\
&\quad  +\int_0^t \int (\mathcal{C}_3^{\alpha}+\mathcal{C}_4^{\alpha})
\cdot \mathcal{Z}^\alpha \eta ~dxd\tau.
\end{aligned}
\end{equation}
Similar to \eqref{345}-\eqref{346}, we apply the Proposition \ref{prop2.2} to deduce that
\begin{equation}\label{356}
\int_0^t \int \mathcal{Z}^\alpha(\chi F_1 \times n)\cdot \mathcal{Z}^\alpha \eta dxd\tau
\le  C_m\left\{ \delta \! \!\int_0^t  \! \!\|\nabla \Delta d\|_{\mathcal{H}^{m-1}}^2 d\tau
    +C_\delta(1+P(Q(t))) \! \!\int_0^t \! \! \Lambda_m(\tau)d\tau\right\},
\end{equation}
\begin{equation}\label{356-1}
\begin{aligned}
&\int_0^t \int \mathcal{Z}^\alpha(\chi \Pi(BF_2))\cdot \mathcal{Z}^\alpha \eta dxd\tau\\
&\le C_{m+1}\left\{\delta \int_0^t \|\nabla \partial_t^{m-1} p\|_{L^2}^2d\tau
+\delta \varepsilon^2 \int_0^t \|\nabla^2 u\|_{\mathcal{H}^{m-1}}^2d\tau
+(1+P(Q(t)))\int_0^t \Lambda_m (\tau)d\tau\right\},\\
\end{aligned}
\end{equation}
\begin{equation}\label{356-2}
\int_0^t \!\!\int \mathcal{Z}^\alpha(\chi F_3)\cdot \mathcal{Z}^\alpha \eta dxd\tau
\le C_{m+2}\left\{\delta \varepsilon^2\!\! \int_0^t \|\nabla^2 u\|_{\mathcal{H}^{m-1}}^2 d\tau+C_\delta(1+P(Q(t)))\!\!\int_0^t \Lambda_m(\tau)d\tau\right\},
\end{equation}
and
\begin{equation}\label{357}
\int_0^t \int \mathcal{Z}^\alpha F_4 \cdot \mathcal{Z}^\alpha \eta dxd\tau
\le C_{m+1}\left\{\delta \varepsilon^2 \int_0^t \|\nabla^2 u\|_{\mathcal{H}^{m-1}}^2 d\tau
     +C_\delta(1+P(Q(t)))\int_0^t \Lambda_m(\tau)d\tau\right\}.
\end{equation}
Then, the combination of \eqref{356}-\eqref{357} gives directly
\begin{equation}\label{358}
\begin{aligned}
\int_0^t \int \mathcal{Z}^\alpha F \cdot \mathcal{Z}^\alpha \eta dxd\tau
&\le C_{m+2}\left\{ \delta \int_0^t \|\nabla \Delta d\|_{\mathcal{H}^{m-1}}^2 d\tau
+\delta \varepsilon^2 \int \|\nabla^2 u\|_{\mathcal{H}^{m-1}}^2 d\tau\right\}\\
&\quad +C_{m+2}\left\{\delta\int_0^t \|\nabla \partial_t^{m-1}p\|_{L^2}^2 d\tau
     +C_\delta (1+P(Q(t)))\int_0^t \Lambda_m(\tau)d\tau\right\}.
\end{aligned}
\end{equation}
Integrating by parts, one arrives at directly
\begin{equation}\label{359}
\left|\mu \varepsilon\int_0^t \int  \mathcal{Z}^\alpha (\chi \Delta(\Pi B)\cdot u)
\cdot \mathcal{Z}^\alpha \eta ~dxd\tau\right|
\le \delta \mu \varepsilon^2 \int_0^t \|\nabla \eta\|_{\mathcal{H}^{m-1}}^2 d\tau
+C_\delta C_{m+2}\int_0^t \Lambda_m(\tau)d\tau.
\end{equation}
Using the same argument as Lemma 3.13 of \cite{Wang-Xin-Yong},
one can obtain the following estimates
\begin{equation}\label{3510}
\int_0^t \int (\mathcal{C}_3^{\alpha}+\mathcal{C}_4^{\alpha})
\cdot \mathcal{Z}^\alpha \eta dxd\tau
\le C_m(1+P(Q(t)))\int_0^t \Lambda_m(\tau)d\tau.
\end{equation}
Substituting \eqref{358}, \eqref{359} and \eqref{3510} into \eqref{355}, we find
\begin{equation*}
\begin{aligned}
&\frac{1}{2}\int |\mathcal{Z}^\alpha \eta(t)|^2 dx
+\frac{3\mu\varepsilon}{4} \int_0^t \|\nabla \mathcal{Z}^\alpha \eta\|_{L^2}^2 d\tau\\
&\le C_{m+2}\left\{\frac{1}{2}\int |\mathcal{Z}^\alpha \eta_0|^2 dx
+C\varepsilon \int_0^t \|\nabla \eta\|_{\mathcal{H}^{m-2}}^2 d\tau
+\delta \int_0^t \|\nabla \Delta d\|_{\mathcal{H}^{m-1}}^2d\tau\right\}\\
&\quad +C_{m+2}\left\{
     \delta \varepsilon^2 \int_0^t \|\nabla^2 u\|_{\mathcal{H}^{m-1}}^2 d\tau
     +\delta \int_0^t \|\nabla \partial_t^{m-1}p\|_{L^2}^2d\tau
     +C_\delta (1+P(Q(t)))\int_0^t \Lambda_m(t)d\tau\right\}.
\end{aligned}
\end{equation*}
By the induction assumption, one can eliminate the term $\varepsilon \int_0^t \|\nabla \eta\|_{\mathcal{H}^{m-2}}^2 d\tau$.
Therefore, we complete the proof of Lemma \ref{lemma3.5}.
\end{proof}

\subsection{Estimates for the $\Delta d$, {\rm div}$u$ and $\Delta p$ }
\quad
In this subsection, we shall get some uniform estimates for
$\Delta d$, {\rm div}$u$ and $\Delta p$
in conormal Sobolev space.

\begin{lemm}\label{lemma3.3-1}
For a smooth solution to \eqref{eq1} and \eqref{bc2}, it holds that
for $\varepsilon \in (0, 1]$
\begin{equation}\label{331-1}
\underset{0\le \tau \le t}{\sup}\|\Delta d(\tau)\|_{L^2}^2
+\int_0^t\|\nabla \Delta d\|_{L^2}^2 d\tau
\le \|\Delta d_0\|_{L^2}^2
+C(1+Q(t)^2)\int_0^t \Lambda_1(\tau)d\tau.
\end{equation}
\end{lemm}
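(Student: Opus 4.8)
The plan is to obtain \eqref{331-1} from a single energy estimate for the heat-type equation satisfied by $d$, differentiated once in time. Writing $\eqref{eq1}_3$ as $d_t-\Delta d=g$ with $g:=-u\cdot\nabla d+|\nabla d|^2 d$, the crucial preliminary observation is that, since the outward normal $n$ is independent of $t$, differentiating the boundary condition $n\cdot\nabla d=0$ on $\partial\Omega$ in time yields $n\cdot\nabla d_t=0$ on $\partial\Omega$ as well; hence both $d$ and $d_t$ satisfy a homogeneous Neumann condition. This is precisely what legitimizes using $-\Delta d_t$ as a test function: testing $\eqref{eq1}_3$ directly against $\Delta^2 d$ or $\Delta d_t$ would leave the uncontrolled boundary term $\int_{\partial\Omega}\partial_n(\Delta d)\,\Delta d\,d\sigma$.

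First I would multiply $d_t-\Delta d=g$ by $-\Delta d_t$, integrate over $\Omega$, and integrate by parts, discarding every boundary term thanks to $n\cdot\nabla d_t|_{\partial\Omega}=0$. This gives the identity
\[
\|\nabla d_t\|_{L^2}^2+\frac{1}{2}\frac{d}{dt}\|\Delta d\|_{L^2}^2
=\int_\Omega\nabla g:\nabla d_t\,dx
\le\frac{1}{2}\|\nabla d_t\|_{L^2}^2+\frac{1}{2}\|\nabla g\|_{L^2}^2,
\]
so that $\frac{d}{dt}\|\Delta d\|_{L^2}^2+\|\nabla d_t\|_{L^2}^2\le\|\nabla g\|_{L^2}^2$. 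Since $\nabla\Delta d=\nabla d_t-\nabla g$, the dissipative term converts via $\|\nabla d_t\|_{L^2}^2\ge\frac{1}{2}\|\nabla\Delta d\|_{L^2}^2-\|\nabla g\|_{L^2}^2$ into
\[
\frac{d}{dt}\|\Delta d\|_{L^2}^2+\frac{1}{2}\|\nabla\Delta d\|_{L^2}^2\le 2\|\nabla g\|_{L^2}^2.
\]

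It then remains to control $\|\nabla g\|_{L^2}$. Using $|d|=1$ one has $|\nabla(|\nabla d|^2d)|\lesssim|\nabla d|\,|\nabla^2 d|+|\nabla d|^3$, so H\"older's inequality gives
\[
\|\nabla g\|_{L^2}^2\lesssim\|\nabla u\|_{L^\infty}^2\|\nabla d\|_{L^2}^2
+\big(\|u\|_{L^\infty}^2+\|\nabla d\|_{L^\infty}^2\big)\|\nabla^2 d\|_{L^2}^2
+\|\nabla d\|_{L^\infty}^4\|\nabla d\|_{L^2}^2.
\]
Invoking elliptic regularity for the Neumann problem, $\|\nabla^2 d\|_{L^2}^2\le C(\|\nabla d\|_{L^2}^2+\|\Delta d\|_{L^2}^2)$, and using the definitions \eqref{def2}--\eqref{def3} — in particular $\|\nabla u\|_{L^\infty}^2+\|u\|_{L^\infty}^2+\|\nabla d\|_{L^\infty}^2\le CQ(t)$ and $\|\nabla u\|_{L^2}^2+\|\nabla d\|_{\mathcal{H}^1}^2+\|\Delta d\|_{L^2}^2\le\Lambda_1(t)$ — one obtains $\|\nabla g\|_{L^2}^2\le C(1+Q(t)^2)\Lambda_1(t)$. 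Integrating the differential inequality above over $[0,t]$ and taking the supremum then yields \eqref{331-1}. The main (and essentially only) obstacle is the boundary analysis in the first step; once the correct test function is identified, the rest is routine H\"older and elliptic estimates.
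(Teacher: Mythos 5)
Your argument is correct and reaches the stated bound, but you use a different test function than the paper, which leads through a slightly different intermediate quantity. The paper first applies $\nabla$ to $\eqref{eq1}_3$ and then multiplies by $-\nabla\Delta d$, so that the dissipation $\|\nabla\Delta d\|_{L^2}^2$ appears directly as one of the two positive terms; the boundary term that arises is $\int_{\partial\Omega}(n\cdot\nabla d_t)\Delta d\,d\sigma$, which vanishes by the Neumann condition on $d_t$. You instead multiply the original equation by $-\Delta d_t$, producing the dissipation $\|\nabla d_t\|_{L^2}^2$, and you then have to convert it to $\|\nabla\Delta d\|_{L^2}^2$ via $\nabla\Delta d=\nabla d_t-\nabla g$, paying an extra $\|\nabla g\|_{L^2}^2$ on the right. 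You also have to kill two boundary terms rather than one (from $-\int d_t\,\Delta d_t$ and from $-\int g\,\Delta d_t$), both again by the Neumann condition on $d_t$. Both routes rest on the same key observation, which you correctly single out, that $n$ is time-independent so $n\cdot\nabla d_t=0$ on $\partial\Omega$; and both close the same way, with the Neumann elliptic estimate $\|\nabla^2 d\|_{L^2}\lesssim\|\Delta d\|_{L^2}+\|\nabla d\|_{L^2}$, H\"older, and the definitions \eqref{def2}--\eqref{def3}. Your version is marginally longer because of the extra conversion step, but it yields $L^2_t$ control of $\nabla d_t$ as a byproduct; the paper's version reaches the dissipation in one step. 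Neither has any gap.
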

\begin{proof}
Taking $\nabla$ operator to the equation \eqref{eq1}$_2$, one arrives at
\begin{equation}\label{32222}
\nabla d_t-\nabla \Delta d=-\nabla(u\cdot \nabla d)+\nabla(|\nabla d|^2 d).
\end{equation}
Multiplying \eqref{32222} by $-\nabla \Delta d$,  we find
\begin{equation}\label{339}
\begin{aligned}
&-\int \nabla d_t\cdot \nabla \Delta d ~dx +\int |\nabla \Delta d|^2 dx\\
&=\int\nabla(u\cdot \nabla d)\cdot \nabla \Delta d~dx
-\int \nabla(|\nabla d|^2 d)\cdot \nabla \Delta d~dx.
\end{aligned}
\end{equation}
By integrating by parts and applying the Neumann boundary condition \eqref{bc2}, we get
\begin{equation}\label{3310}
\begin{aligned}
-\int \nabla d_t \cdot \nabla \Delta d ~dx
=-\int_{\partial \Omega}n\cdot \nabla d_t \cdot \Delta d~d\sigma
 +\int \Delta d_t \cdot \Delta d ~dx
=\frac{1}{2}\frac{d}{dt}\int |\Delta d|^2 dx.
\end{aligned}
\end{equation}
In view of the Cauchy inequality, we obtain
\begin{equation}\label{3311}
\begin{aligned}
&\int\nabla(u\cdot \nabla d)\cdot \nabla \Delta d~dx
\le \delta \|\nabla \Delta d\|_{L^2}^2
+ C_\delta \|u\|_{W^{1,\infty}}^2(\|\nabla d\|_{L^2}^2+\|\nabla^2 d\|_{L^2}^2),\\
&-\int \nabla(|\nabla d|^2 d)\cdot \nabla \Delta d~dx
\le \delta \|\nabla \Delta d\|_{L^2}^2
+\|\nabla d\|_{L^\infty}^4 \|\nabla d\|_{L^2}^2
+\|\nabla d\|_{L^2}^2 \|\nabla^2 d\|_{L^2}^2.
\end{aligned}
\end{equation}
Substituting \eqref{3311} into \eqref{3310}, choosing $\delta$ small enough
and integrating over $[0, t]$, one attains
\begin{equation*}
\begin{aligned}
&\frac{1}{2}\int |\Delta d|^2(t) dx
+\frac{3}{4}\int |\nabla \Delta d|^2 dx\\
&\le \int |\Delta d_0|^2 dx
+C(\|u\|_{W^{1,\infty}}^2+\|\nabla d\|_{L^\infty}^4)
\int_0^t(\|\nabla d\|_{L^2}^2+\|\nabla^2 d\|_{L^2}^2)d\tau.
\end{aligned}
\end{equation*}
Therefore, we complete the proof of Lemma \ref{lemma3.3-1}.
\end{proof}

Next, we can establish the following conormal estimates for the quantity $\Delta d$.

\begin{lemm}\label{lemma3.4-1}
For $m \ge  1$ and a smooth solution to \eqref{eq1} and \eqref{bc2}, it holds that
for $\varepsilon \in (0, 1]$
\begin{equation}\label{341-1}
\begin{aligned}
&\underset{0\le \tau \le t}{\sup}
\|\Delta d(\tau)\|_{\mathcal{H}^{m-1}}^2
+\int_0^t \|\nabla \Delta d\|_{\mathcal{H}^{m-1}}^2 d\tau\\
&\le\!\! C_{m}\left\{
      \|\Delta d_0\|_{\mathcal{H}^{m-1}}^2
      +\delta \!\!\int_0^t \!\!\| \Delta d\|_{\mathcal{H}^{m}}^2 d\tau
      +C_\delta\left(1+P(Q(t))\right)\int_0^t \Lambda_m(t)d\tau\right\}.
\end{aligned}
\end{equation}
\end{lemm}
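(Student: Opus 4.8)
The plan is to argue by induction on $m$, carrying out at the level of $m-1$ conormal derivatives the same energy computation used in Lemma~\ref{lemma3.3-1}. The case $m=1$ is exactly Lemma~\ref{lemma3.3-1}, since $\mathcal{H}^{0}=L^{2}$; so assume \eqref{341-1} holds with $m$ replaced by $m-1$ and fix a multi-index $\alpha$ with $|\alpha|=m-1$. Applying $\mathcal{Z}^{\alpha}$ to \eqref{32222} (equivalently, $\mathcal{Z}^{\alpha}\nabla$ to \eqref{eq1}$_3$) gives
\begin{equation*}
\mathcal{Z}^{\alpha}\nabla d_{t}-\mathcal{Z}^{\alpha}\nabla\Delta d
=-\mathcal{Z}^{\alpha}\nabla(u\cdot\nabla d)+\mathcal{Z}^{\alpha}\nabla(|\nabla d|^{2}d).
\end{equation*}
Multiplying by $-\mathcal{Z}^{\alpha}\nabla\Delta d$ and integrating over $\Omega\times[0,t]$ produces, on the left, the dissipation $\int_{0}^{t}\|\mathcal{Z}^{\alpha}\nabla\Delta d\|_{L^{2}}^{2}\,d\tau$ together with $-\int_{0}^{t}\!\!\int\mathcal{Z}^{\alpha}\nabla d_{t}\cdot\mathcal{Z}^{\alpha}\nabla\Delta d$. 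Since $\mathcal{Z}^{\alpha}$ commutes with $\partial_{t}$, I would rewrite the latter by splitting $\mathcal{Z}^{\alpha}\nabla\Delta d=\nabla\mathcal{Z}^{\alpha}\Delta d+[\mathcal{Z}^{\alpha},\nabla]\Delta d$, integrating by parts in $x$, and using ${\rm div}(\mathcal{Z}^{\alpha}\nabla d_{t})=\partial_{t}\mathcal{Z}^{\alpha}\Delta d+[{\rm div},\mathcal{Z}^{\alpha}]\nabla d_{t}$; this turns it into $\tfrac12\|\mathcal{Z}^{\alpha}\Delta d(t)\|_{L^{2}}^{2}-\tfrac12\|\mathcal{Z}^{\alpha}\Delta d_{0}\|_{L^{2}}^{2}$ plus a boundary integral and commutator integrals.

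I would first dispose of the boundary integral exactly as in the treatment of $I_{6}$ in Lemma~\ref{lemma3.3}: if $\alpha_{0}=m-1$ or $\alpha_{13}\ge1$ it vanishes (in the first case because $\partial_{t}^{m}\partial_{n}d=0$ on $\partial\Omega$ by \eqref{bc2}, in the second because \eqref{pc} together with $\varphi(0)=0$ forces $\mathcal{Z}^{\alpha}(\cdot)|_{\partial\Omega}=0$), and in the remaining case $\alpha_{13}=0$, $\alpha_{0}\le m-2$ one integrates by parts along $\partial\Omega$ and uses the anisotropic trace estimate in Proposition~\ref{prop2.3}; there the factor carrying $\mathcal{Z}^{\alpha}\nabla\Delta d$ is controlled by $\|\nabla^{2}d\|_{\mathcal{H}^{m}}^{2}$, hence through the elliptic regularity estimate \eqref{3355} by $\|\nabla d\|_{\mathcal{H}^{m}}^{2}+\|\Delta d\|_{\mathcal{H}^{m}}^{2}$, which after Cauchy's inequality is precisely where the slack term $\delta\int_{0}^{t}\|\Delta d\|_{\mathcal{H}^{m}}^{2}\,d\tau$ of \eqref{341-1} comes from. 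The commutator integrals are of lower conormal order: the worst piece of $[{\rm div},\mathcal{Z}^{\alpha}]\nabla d_{t}$ and of $\partial_{t}[\Delta,\mathcal{Z}^{\alpha}]d$ is $\nabla^{2}d_{t}$ with at most $m-2$ conormal derivatives, which elliptic regularity bounds by $\|\nabla d_{t}\|_{\mathcal{H}^{m-1}}+\|\partial_{t}\Delta d\|_{\mathcal{H}^{m-1}}$; writing $\nabla d_{t}=\nabla\Delta d+\nabla g$ from \eqref{eq1}$_3$ with $g=-u\cdot\nabla d+|\nabla d|^{2}d$ puts the first summand into the dissipation with a small constant, \eqref{3330} handles the remaining lower order pieces, and $\|\partial_{t}\Delta d\|_{\mathcal{H}^{m-1}}\le\|\Delta d\|_{\mathcal{H}^{m}}$ again feeds the slack term, while all other commutator contributions are controlled by the induction hypothesis.

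For the nonlinear right-hand side $\int_{0}^{t}\!\!\int\mathcal{Z}^{\alpha}\nabla(u\cdot\nabla d)\cdot\mathcal{Z}^{\alpha}\nabla\Delta d-\int_{0}^{t}\!\!\int\mathcal{Z}^{\alpha}\nabla(|\nabla d|^{2}d)\cdot\mathcal{Z}^{\alpha}\nabla\Delta d$, I would apply Cauchy's inequality to split off $\delta\int_{0}^{t}\|\mathcal{Z}^{\alpha}\nabla\Delta d\|_{L^{2}}^{2}$ (to be absorbed on the left) and then bound $\int_{0}^{t}\|\mathcal{Z}^{\alpha}\nabla(u\cdot\nabla d)\|_{L^{2}}^{2}$ and $\int_{0}^{t}\|\mathcal{Z}^{\alpha}\nabla(|\nabla d|^{2}d)\|_{L^{2}}^{2}$ by the Gagliardo--Nirenberg--Moser inequality of Proposition~\ref{prop2.2}; the $\nabla^{2}d$ conormal norms that appear are converted through \eqref{3355} into $\|\nabla d\|_{\mathcal{H}^{m}}^{2}+\|\Delta d\|_{\mathcal{H}^{m-1}}^{2}$ and the $\mathcal{Z}^{\beta}d$ factors carrying a time derivative are estimated by \eqref{3330}, so that in view of the definitions \eqref{def2} and \eqref{def3} everything is dominated by $C_{\delta}(1+P(Q(t)))\int_{0}^{t}\Lambda_{m}(\tau)\,d\tau$. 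Summing over $|\alpha|\le m-1$, choosing $\delta$ small to absorb the $\delta\int_{0}^{t}\|\nabla\Delta d\|_{\mathcal{H}^{m-1}}^{2}$ terms into the dissipation on the left, and invoking the induction hypothesis to swallow the $\mathcal{H}^{m-2}$ contributions then yields \eqref{341-1}.

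The main obstacle is the control of the boundary integral and of the top-order commutators created by pushing $\nabla$ and $\Delta$ past the conormal vector field $\mathcal{Z}^{\alpha}$: these are precisely the points at which one is forced to bring in the elliptic regularity estimate \eqref{3355}, the transport identity \eqref{3330} for $d_{t}$, and to accept the slack term $\delta\int_{0}^{t}\|\Delta d\|_{\mathcal{H}^{m}}^{2}\,d\tau$ on the right of \eqref{341-1}; this slack is harmless because, when all the estimates of the present section are combined with suitably small constants, it is absorbed by the coercive term $\int_{0}^{t}\|\Delta d\|_{\mathcal{H}^{m}}^{2}\,d\tau$ appearing in Lemma~\ref{lemma3.3}.
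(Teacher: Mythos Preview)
Your proposal is correct and follows essentially the paper's route: induction on $m$, the energy identity obtained by testing the $\mathcal{Z}^{\alpha}$-differentiated version of \eqref{32222} against (a variant of) $\nabla\mathcal{Z}^{\alpha}\Delta d$, the boundary term handled via the Neumann condition and Proposition~\ref{prop2.3}, the nonlinear terms by Proposition~\ref{prop2.2} together with \eqref{3330}, and the lower-order commutators absorbed by the induction hypothesis. The only cosmetic difference is that the paper multiplies by $-\nabla\mathcal{Z}^{\alpha}\Delta d$ rather than your $-\mathcal{Z}^{\alpha}\nabla\Delta d$, which merely shifts where the commutator $[\mathcal{Z}^{\alpha},\nabla]\Delta d$ shows up. One small imprecision worth noting: after your integration by parts the boundary integrand is $(n\cdot\mathcal{Z}^{\alpha}\nabla d_{t})\,\mathcal{Z}^{\alpha}\Delta d$ (not $\mathcal{Z}^{\alpha}\nabla\Delta d$), and the paper gains the extra derivative there not by integrating along $\partial\Omega$ as in $I_{6}$ but simply by using $\partial_{n}d_{t}|_{\partial\Omega}=0$ to reduce $n\cdot\mathcal{Z}^{\alpha}\nabla d_{t}$ to a commutator of order $m-2$ before applying the trace (see \eqref{3418}); either method works and produces exactly the slack term $\delta\int_{0}^{t}\|\Delta d\|_{\mathcal{H}^{m}}^{2}\,d\tau$ you identify.
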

\begin{proof}
The case for $m=1$ is already proved in Lemma \ref{lemma3.3-1}. Assume that
\eqref{341-1} is proved for $k=m-2$. We shall prove that is holds for $k=m-1 \ge 1$.
For $|\alpha|=m-1$,
multiplying \eqref{32222} by $-\nabla \mathcal{Z}^\alpha \Delta d$, we find
\begin{equation}\label{3412}
\begin{aligned}
&-\int \mathcal{Z}^\alpha \nabla d_t \cdot \nabla \mathcal{Z}^\alpha \Delta d ~dx
+\int \mathcal{Z}^\alpha \nabla \Delta d\cdot \nabla \mathcal{Z}^\alpha \Delta d ~dx\\
&=\int \mathcal{Z}^\alpha \nabla(u\cdot \nabla d)\cdot
  \nabla \mathcal{Z}^\alpha \Delta d ~dx
  +\int \mathcal{Z}^\alpha \nabla(|\nabla d|^2 d)\cdot
  \nabla \mathcal{Z}^\alpha \Delta d ~dx.
\end{aligned}
\end{equation}
Integrating by part, it is easy to deduce that
\begin{equation}\label{3413}
\begin{aligned}
&-\int \mathcal{Z}^\alpha \nabla d_t \cdot \nabla \mathcal{Z}^\alpha \Delta d ~dx\\
&=-\int_{\partial \Omega}n\cdot\mathcal{Z}^\alpha \nabla d_t\cdot \mathcal{Z}^\alpha \Delta d~d\sigma
+\int {\nabla \cdot}(\mathcal{Z}^\alpha \nabla d_t)\cdot \mathcal{Z}^\alpha \Delta d~dx\\
&=-\int_{\partial \Omega}n\cdot\mathcal{Z}^\alpha \nabla d_t\cdot \mathcal{Z}^\alpha \Delta d~d\sigma
+\frac{1}{2}\frac{d}{dt}\int |\mathcal{Z}^\alpha \Delta d|^2dx
-\int [\mathcal{Z}^\alpha, {\nabla \cdot}] \nabla d_t\cdot \mathcal{Z}^\alpha \Delta d~dx.
\end{aligned}
\end{equation}
It is easy to check that
\begin{equation}\label{3414}
\int \mathcal{Z}^\alpha \nabla \Delta d\cdot \nabla \mathcal{Z}^\alpha \Delta d ~dx
=\int |\nabla \mathcal{Z}^\alpha \Delta d|^2dx
+\int [\mathcal{Z}^\alpha, \nabla] \Delta d\cdot \nabla \mathcal{Z}^\alpha \Delta d ~dx.
\end{equation}
Substituting \eqref{3413} and \eqref{3414} into \eqref{3412} and
integrating over $[0, t]$,  we find
\begin{equation}\label{3415}
\begin{aligned}
&\frac{1}{2}\int |\mathcal{Z}^\alpha \Delta d(t)|^2dx
+\int_0^t \int |\nabla \mathcal{Z}^\alpha \Delta d|^2dx d\tau\\
&=
\frac{1}{2}\int |\mathcal{Z}^\alpha \Delta d_0|^2dx
+\int_0^t \int_{\partial \Omega}n\cdot\mathcal{Z}^\alpha \nabla d_t\cdot \mathcal{Z}^\alpha \Delta d~d\sigma d\tau
+\int_0^t \int [\mathcal{Z}^\alpha, {\nabla \cdot}] \nabla d_t\cdot \mathcal{Z}^\alpha \Delta d~dxd\tau\\
&\quad -\int_0^t \int [\mathcal{Z}^\alpha, \nabla] \Delta d\cdot \nabla \mathcal{Z}^\alpha \Delta d ~dxd\tau
+\int_0^t\int \mathcal{Z}^\alpha \nabla(u\cdot \nabla d)\cdot
  \nabla \mathcal{Z}^\alpha \Delta d ~dxd\tau\\
&\quad +\int_0^t \int \mathcal{Z}^\alpha \nabla(|\nabla d|^2 d)\cdot
  \nabla \mathcal{Z}^\alpha \Delta d ~dxd\tau\\
&:=\Rmnum{3}_1+\Rmnum{3}_2+\Rmnum{3}_3+\Rmnum{3}_4+\Rmnum{3}_5+\Rmnum{3}_6.
\end{aligned}
\end{equation}
To deal with the boundary term on the right hand side of \eqref{3415}.
If $|\alpha_0|=m-1$, then we have
\begin{equation}\label{3416-1}
\int_0^t \int_{\partial \Omega}n\cdot\mathcal{Z}^\alpha \nabla d_t\cdot \mathcal{Z}^\alpha \Delta d~d\sigma d\tau=0.
\end{equation}
On the other hand, it is easy to deduce that for $|\alpha_0|\le m-2$
\begin{equation}\label{3416}
\int_0^t \int_{\partial \Omega}n\cdot\mathcal{Z}^\alpha \nabla d_t\cdot \mathcal{Z}^\alpha \Delta d~d\sigma d\tau
\le \int_0^t|n\cdot\mathcal{Z}^\alpha \nabla d_t|_{L^2(\partial \Omega)}
    |\mathcal{Z}^{{\alpha}}\Delta d|_{L^2(\partial \Omega)}d\tau.
\end{equation}
The application of trace inequality in Proposition \ref{prop2.3}
and the boundary condition \eqref{bc2} implies
\begin{equation}\label{3417}
\begin{aligned}
&|\mathcal{Z}^{{{\alpha}}}\Delta d|_{L^2(\partial \Omega)}
=|\partial_t^{\alpha_0} \Delta d|_{H^{m-1-|\alpha_0|}(\partial \Omega)}\\
&\le C\|\nabla \partial_t^{\alpha_0} \Delta d\|_{m-1-|\alpha_0|}
     +C\|\partial_t^{\alpha_0} \Delta d\|_{m-|\alpha_0|}\\
&\le C\|\nabla \Delta d\|_{\mathcal{H}^{m-1}}
     + C\|\Delta d\|_{\mathcal{H}^{m}},
\end{aligned}
\end{equation}
and
\begin{equation}\label{3418}
\begin{aligned}
&|n\cdot\mathcal{Z}^\alpha \nabla d_t|_{L^2(\partial \Omega)}
\le C_{m}|\partial_t^{\alpha_0}\nabla d_t|_{H^{m-2-|\alpha_0|}(\partial \Omega)}\\
&\le C_{m}\|\partial_t^{\alpha_0}\nabla^2 d_t\|_{m-2-|\alpha_0|}
     +C_{m}\|\partial_t^{\alpha_0}\nabla d_t\|_{m-1-|\alpha_0|}\\
&\le C_{m}\|\nabla^2 d\|_{\mathcal{H}^{m-1}}
     +C_{m}\|\nabla d\|_{\mathcal{H}^{m}}.
\end{aligned}
\end{equation}
Substituting \eqref{3417} and \eqref{3418} into \eqref{3416} and applying
the Cauchy inequality, one attains
\begin{equation}\label{3419}
\begin{aligned}
III_2
&\le \delta_1 \int_0^t \|\nabla \Delta d\|_{\mathcal{H}^{m-1}}^2 d\tau
     +\delta \int_0^t \| \Delta d\|_{\mathcal{H}^{m}}^2 d\tau\\
&\quad +C_m\left\{C_{\delta,\delta_1} \int_0^t \|\nabla^2 d\|_{\mathcal{H}^{m-1}}^2d\tau
     +C_{\delta,\delta_1} \int_0^t\|\nabla d\|_{\mathcal{H}^{m}}^2d\tau\right\}.
\end{aligned}
\end{equation}
By virtue of the Cauchy inequality, one arrives at
\begin{equation}\label{3420}
\begin{aligned}
&III_3
\le C\int_0^t \|\Delta d_t\|_{\mathcal{H}^{m-2}}\|\Delta d\|_{\mathcal{H}^{m-1}}d\tau
\le C\int_0^t \|\Delta d\|_{\mathcal{H}^{m-1}}^2d\tau,\\
&III_4
\le \delta_1 \int_0^t\|\nabla \mathcal{Z}^\alpha \Delta d\|_{L^2}^2 d\tau
+C_{\delta_1} \int_0^t \|\nabla \Delta d\|_{\mathcal{H}^{m-2}}^2 d\tau.
\end{aligned}
\end{equation}
The application of Proposition \ref{prop2.2} yields directly
\begin{equation}\label{3421}
\begin{aligned}
III_5
&=\int_0^t\int \mathcal{Z}^\alpha (\nabla u\cdot \nabla d)\cdot
  \nabla \mathcal{Z}^\alpha \Delta d ~dxd\tau
  +\int_0^t\int \mathcal{Z}^\alpha (u\cdot \nabla^2 d)\cdot
  \nabla \mathcal{Z}^\alpha \Delta d ~dxd\tau\\
&\le \delta_1 \!\!\int_0^t\!\|\nabla \mathcal{Z}^\alpha \Delta d\|_{L^2}^2 d\tau
     +C_\delta\|\nabla u\|_{L^\infty_{x,t}}^2
     \!\!\int_0^t\|\nabla d\|_{\mathcal{H}^{m-1}}^2 d\tau
     +C_\delta\|\nabla d\|_{L^\infty_{x,t}}^2
     \!\!\int_0^t\|\nabla u\|_{\mathcal{H}^{m-1}}^2 d\tau\\
&\quad  +C_\delta\|u\|_{L^\infty_{x,t}}^2
     \int_0^t\|\nabla^2 d\|_{\mathcal{H}^{m-1}}^2 d\tau
     +C_\delta\|\nabla^2 d\|_{L^\infty_{x,t}}^2
     \int_0^t\|u\|_{\mathcal{H}^{m-1}}^2 d\tau\\
&\le \delta_1 \int_0^t\|\nabla \mathcal{Z}^\alpha \Delta d\|_{L^2}^2 d\tau
     +C_{\delta_1}(1+P(Q(t)))\int_0^t \Lambda_m(\tau)d\tau.
\end{aligned}
\end{equation}
It is easy to deduce that
\begin{equation}\label{3422}
\begin{aligned}
III_6
&=\sum_{|\beta|\ge 1}\int_0^t \int
   \mathcal{Z}^\gamma (\nabla d\cdot \nabla^2 d)\cdot\mathcal{Z}^\beta d
    \cdot \nabla \mathcal{Z}^\alpha \Delta d ~dxd\tau\\
&\quad  +\int_0^t \int\mathcal{Z}^\alpha (\nabla d\cdot \nabla^2 d)\cdot d \cdot
     \nabla \mathcal{Z}^\alpha \Delta d ~dxd\tau\\
&\quad +\sum_{|\beta|+|\gamma|=m-1}
     \int_0^t\int \mathcal{Z}^\gamma (|\nabla d|^2)\mathcal{Z}^\beta \nabla d
         \cdot \nabla \mathcal{Z}^\alpha \Delta d~dxd\tau\\
&=III_{61}+III_{62}+III_{63}.
\end{aligned}
\end{equation}
By virtue of the Proposition \ref{prop2.2} and Cauchy inequality, one arrives at
\begin{equation}\label{3423}
\begin{aligned}
III_{61}
&\le \delta \int_0^t \|\nabla \mathcal{Z}^\alpha \Delta d\|_{L^2}^2 d\tau
     +C_\delta\|\mathcal{Z}d\|_{L^{\infty}_{x,t}}^2
     \int_0^t \|\nabla d\cdot \nabla^2 d\|_{\mathcal{H}^{m-2}}^2 d\tau\\
&\quad +C_\delta\|\nabla d\cdot \nabla^2 d \|_{L^{\infty}_{x,t}}^2
     \int_0^t \|\mathcal{Z}d\|_{\mathcal{H}^{m-2}}^2 d\tau\\
&\le \delta_1 \int_0^t \|\nabla \mathcal{Z}^\alpha \Delta d\|_{L^2}^2 d\tau
     +C_1C_{\delta_1}(1+P(Q(t)))\int_0^t \Lambda_m(\tau)d\tau.
\end{aligned}
\end{equation}
Similarly, it is easy to deduce that
\begin{equation}\label{3424}
\begin{aligned}
&III_{62}\le \delta \int_0^t \|\nabla \mathcal{Z}^\alpha \Delta d\|_{L^2}^2 d\tau
     +C_\delta \|\nabla d\|_{W^{1,\infty}_{x,t}}^2
     \int_0^t (\|\nabla^2 d\|_{\mathcal{H}^{m-1}}^2+\|\nabla d\|_{\mathcal{H}^{m-1}}^2) d\tau,\\
&III_{63}\le \delta \int_0^t \|\nabla \mathcal{Z}^\alpha \Delta d\|_{L^2}^2 d\tau
     +C_\delta \|\nabla d\|_{L^{\infty}_{x,t}}^4
     \int_0^t (\|\nabla^2 d\|_{\mathcal{H}^{m-1}}^2+\|\nabla d\|_{\mathcal{H}^{m-1}}^2) d\tau.
\end{aligned}
\end{equation}
Substituting \eqref{3423} and \eqref{3424} into \eqref{3422}, we obtain
\begin{equation}\label{3425}
III_6
\le \delta \int_0^t \|\nabla \mathcal{Z}^\alpha \Delta d\|_{L^2}^2 d\tau
  +C_1C_\delta(1+P(Q(t)))\int_0^t \Lambda_m(\tau)d\tau.
\end{equation}
Substituting \eqref{3419}-\eqref{3421} and \eqref{3425} into \eqref{3415}
and choosing $\delta_1$ small enough, we find
\begin{equation*}
\begin{aligned}
&\frac{1}{2}\int |\mathcal{Z}^\alpha \Delta d(t)|^2dx
+\int_0^t \int |\nabla \mathcal{Z}^\alpha \Delta d|^2dx d\tau\\
&\le C_m\left\{\frac{1}{2}\int |\mathcal{Z}^\alpha \Delta d_0|^2dx
     +\delta \int_0^t \| \Delta d\|_{\mathcal{H}^{m}}^2 d\tau
     +C\int_0^t \|\nabla \Delta d\|_{\mathcal{H}^{m-2}}^2 d\tau\right\}\\
&\quad   +C_m C_\delta [1+P(Q(t))]\int_0^t \Lambda_m(\tau)d\tau.
\end{aligned}
\end{equation*}
By the induction assumption, one can eliminate the term
$\int_0^t \|\nabla \Delta d\|_{\mathcal{H}^{m-2}}^2d\tau$.
Therefore, we complete the proof of Lemma \ref{lemma3.4-1}.
\end{proof}

Next, we derive the following lower order estimates on $\|({\rm div}u, p)\|_{L^2}^2$.
\begin{lemm}\label{lemma3.8}
For every $m\in \mathbb{N}_+$, it holds that
\begin{equation}\label{381}
\begin{aligned}
&\underset{0\le \tau \le t}{\sup}\int\left(\frac{1}{2}
\rho|{\rm div}u|^2+\frac{1}{2\gamma p}|\nabla p(\tau)|^2\right)dx
+\varepsilon \int_0^t \|\nabla {\rm div}u(\tau)\|_{L^2}^2 d\tau\\
&\le \int(\frac{1}{2}
\rho_0|{\rm div}u_0|^2+\frac{1}{2\gamma p_0}|\nabla p_0|^2)dx
+C_3[1+P(Q(t))]\int_0^t(\Lambda_m(\tau)+\|\nabla \Delta d\|_{\mathcal{H}^{m-1}}^2)d\tau.
\end{aligned}
\end{equation}
\end{lemm}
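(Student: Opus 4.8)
The plan is to derive, for $E(t):=\int_\Omega\big(\tfrac12\rho|{\rm div}u|^2+\tfrac{1}{2\gamma p}|\nabla p|^2\big)dx$, the differential inequality
\[
\frac{d}{dt}E(t)+(2\mu+\lambda)\varepsilon\|\nabla{\rm div}u\|_{L^2}^2\le C_3[1+P(Q(t))]\big(\Lambda_m(t)+\|\nabla\Delta d\|_{\mathcal{H}^{m-1}}^2\big),
\]
and then integrate over $[0,t]$; the bounds $c_0\le\rho\le 1/c_0$ (hence $p$ bounded above and below) let one pass between $E$ and the norms in \eqref{381}. The inequality is assembled from an estimate for $\nabla p$ obtained from mass conservation and an estimate for ${\rm div}u$ obtained from the momentum equation, whose coupling terms cancel when the two are added. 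First, using $p=\rho^\gamma$, mass conservation gives the transport equation $p_t+u\cdot\nabla p+\gamma p\,{\rm div}u=0$; applying $\nabla$, testing against $\tfrac{1}{\gamma p}\nabla p$, and integrating by parts in the convective piece $\int\tfrac{1}{\gamma p}(u\cdot\nabla)\nabla p\cdot\nabla p\,dx$ (which kills the boundary term by $u\cdot n=0$ and, crucially, leaves no $\nabla^2 p$) gives
\[
\frac{d}{dt}\int\frac{|\nabla p|^2}{2\gamma p}\,dx+\int\nabla p\cdot\nabla{\rm div}u\,dx=\mathcal{R}_p,
\]
where $\mathcal{R}_p$ collects only terms quadratic or cubic in $(\nabla p,\nabla u,p_t)$ with smooth coefficients, hence $|\mathcal{R}_p|\le C_3[1+P(Q(t))]\|\nabla p\|_{L^2}^2\le C_3[1+P(Q(t))]\Lambda_m(t)$, using that $Q(t)$ dominates $\|p_t\|_{L^\infty},\|\nabla u\|_{L^\infty},\|\nabla p\|_{L^\infty}$.

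Second, apply ${\rm div}$ to the momentum equation \eqref{eq1-1}. Since ${\rm div}(\nabla\times(\nabla\times u))=0$, ${\rm div}(\nabla{\rm div}u)=\Delta{\rm div}u$ and ${\rm div}(\rho u_t)=\rho\,\partial_t{\rm div}u+\nabla\rho\cdot u_t$, testing the resulting scalar equation against ${\rm div}u$ and using $\int\Delta p\,{\rm div}u=-\int\nabla p\cdot\nabla{\rm div}u+\int_{\partial\Omega}\partial_n p\,{\rm div}u$ yields
\[
\frac12\frac{d}{dt}\int\rho|{\rm div}u|^2\,dx+(2\mu+\lambda)\varepsilon\|\nabla{\rm div}u\|_{L^2}^2-\int\nabla p\cdot\nabla{\rm div}u\,dx=\mathcal{R}_d+\mathcal{B},
\]
with $\mathcal{B}=(2\mu+\lambda)\varepsilon\int_{\partial\Omega}\partial_n{\rm div}u\,{\rm div}u-\int_{\partial\Omega}\partial_n p\,{\rm div}u$, and $\mathcal{R}_d$ gathering $\tfrac12\int\rho_t|{\rm div}u|^2$, $-\int(\nabla\rho\cdot u_t){\rm div}u$, $-\int{\rm div}(\rho u\cdot\nabla u){\rm div}u$ and $-\int{\rm div}(\nabla d\cdot\Delta d){\rm div}u$. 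In $\mathcal{R}_d$ the convective contribution is handled by writing $\int\rho\,u\cdot\nabla{\rm div}u\,{\rm div}u=\tfrac12\int\rho u\cdot\nabla|{\rm div}u|^2=\tfrac12\int\rho_t|{\rm div}u|^2$, so that no $\nabla^2u$ appears, and the director term uses ${\rm div}(\nabla d\cdot\Delta d)=|\Delta d|^2+\nabla d\cdot\nabla\Delta d$ — this is exactly where $\|\nabla\Delta d\|_{\mathcal{H}^{m-1}}^2$ enters; the remaining contributions are bounded by $C_3[1+P(Q(t))](\Lambda_m+\|\nabla\Delta d\|_{\mathcal{H}^{m-1}}^2)$ much as before, also using $\|\nabla\rho\|_{L^\infty},\|\rho_t\|_{L^\infty}\lesssim 1+Q(t)$ and $\|u_t\|_{L^2}^2\le\|u\|_{\mathcal{H}^m}^2\lesssim\Lambda_m$.

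Adding the last two displays, the coupling terms $\int\nabla p\cdot\nabla{\rm div}u$ cancel, and the only delicate point left (after integrating over $[0,t]$) is $\int_0^t\mathcal{B}\,d\tau$. The key is not to estimate the two boundary integrals separately — each carries an a priori uncontrolled normal derivative — but to combine them via the momentum equation: \eqref{eq1-1} gives $\mathcal{F}:=(2\mu+\lambda)\varepsilon\nabla{\rm div}u-\nabla p=\rho u_t+\rho u\cdot\nabla u+\mu\varepsilon\nabla\times(\nabla\times u)+\nabla d\cdot\Delta d$, so $\mathcal{B}=\int_{\partial\Omega}(\mathcal{F}\cdot n)\,{\rm div}u\,d\sigma$, the normal trace of the effective viscous flux. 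On $\partial\Omega$: $u_t\cdot n=0$ (differentiate $u\cdot n=0$ in $t$); $(\nabla d\cdot\Delta d)\cdot n=(\partial_n d)\cdot\Delta d=0$ by the Neumann condition; $(u\cdot\nabla u)\cdot n=-u^iu^j\partial_j n_i$ carries no derivative of $u$; and $(\nabla\times(\nabla\times u))\cdot n$ is a tangential (surface) divergence of $n\times(\nabla\times u)=[Bu]_\tau$ by the vorticity form \eqref{bc2}, hence involves only tangential first derivatives of $u$. Together with the bound for $|{\rm div}u|_{L^2(\partial\Omega)}$ coming from ${\rm div}u=-\tfrac{1}{\gamma p}(p_t+u\cdot\nabla p)$ and the trace inequality (Proposition \ref{prop2.3}), one gets $\int_0^t\mathcal{B}\,d\tau\le C_3[1+P(Q(t))]\int_0^t\Lambda_m(\tau)\,d\tau$, which closes the differential inequality and hence \eqref{381}. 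I expect this boundary analysis to be the main obstacle: producing the genuine dissipation $\varepsilon\|\nabla{\rm div}u\|_{L^2}^2$ forces the pairing of the degenerate viscous term $(2\mu+\lambda)\varepsilon\Delta{\rm div}u$ with ${\rm div}u$, and it is only the flux cancellation against the $\Delta p$ boundary term, together with all three boundary conditions in \eqref{bc2}, that renders the resulting boundary integral harmless.
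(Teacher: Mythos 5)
Your proposal is correct, but it takes a genuinely different route from the paper. The paper never applies $\operatorname{div}$ to the momentum equation: instead it pairs \eqref{eq1-1} directly with the vector field $\nabla\operatorname{div}u$, so the dissipation $(2\mu+\lambda)\varepsilon\|\nabla\operatorname{div}u\|_{L^2}^2$ appears verbatim as $\int (2\mu+\lambda)\varepsilon\,\nabla\operatorname{div}u\cdot\nabla\operatorname{div}u$ with no integration by parts, the curl term $-\mu\varepsilon\int\nabla\times w\cdot\nabla\operatorname{div}u$ collapses to a pure boundary integral because $\nabla\times\nabla f\equiv0$, and the only interior integrations by parts are on the inertial term $\rho u_t+\rho u\cdot\nabla u$ and on $\nabla d\cdot\Delta d$, whose normal traces vanish immediately ($u_t\cdot n=0$, $(u\cdot\nabla u)\cdot n=-u^iu^j\partial_jn_i$, $(\nabla d\cdot\Delta d)\cdot n=\partial_n d\cdot\Delta d=0$). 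The pressure term $\int\nabla p\cdot\nabla\operatorname{div}u$ is then converted into $\tfrac{d}{dt}\int\frac{|\nabla p|^2}{2\gamma p}$ using the transport equation, exactly as in your $\mathcal R_p$ step. You, on the other hand, apply $\operatorname{div}$ to the equation and pair with $\operatorname{div}u$; this puts the genuine Laplacians $\Delta p$ and $\Delta\operatorname{div}u$ into play and, after two integrations by parts, produces the two a priori dangerous boundary integrals $\int_{\partial\Omega}\partial_n p\,\operatorname{div}u$ and $\varepsilon\int_{\partial\Omega}\partial_n\operatorname{div}u\,\operatorname{div}u$, which you then neutralize by recognizing that their difference is the normal trace of the effective viscous flux $(2\mu+\lambda)\varepsilon\nabla\operatorname{div}u-\nabla p$ and substituting the momentum equation back in. That cancellation is correct and your boundary analysis goes through (each piece of $n\cdot\mathcal F$ involves only tangential or zeroth-order traces, and $|\operatorname{div}u|_{L^2(\partial\Omega)}$ can be absorbed into the dissipation by the $L^2$ trace inequality), but it is strictly harder than the paper's argument: pairing with $\nabla\operatorname{div}u$ keeps the dangerous normal derivatives from ever appearing, whereas pairing with $\operatorname{div}u$ creates them and then requires the flux identity plus all three boundary conditions in \eqref{bc2} to remove them. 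The effective-viscous-flux viewpoint is conceptually illuminating and standard in compressible Navier--Stokes, but for this particular energy estimate the paper's direct pairing is the shorter path.
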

\begin{proof}
Multiplying \eqref{eq1-1} by $\nabla {\rm div}u$ yields that
\begin{equation}\label{382}
\begin{aligned}
&\int_0^t \int (\rho u_t+\rho u\cdot \nabla u)\cdot \nabla {\rm div}u~dxd\tau
+\int_0 ^t \int \nabla p\cdot\nabla {\rm div}u~dxd\tau\\
&=-\mu \varepsilon \int_0^t \int \nabla \times w\cdot \nabla {\rm div}u~dxd\tau
+(2\mu+\lambda)\varepsilon\int_0^t \int |\nabla {\rm div}u|^2 dxd\tau\\
&\quad -\int_0^t \int (\nabla d \cdot \Delta d)\cdot \nabla {\rm div}u~dxd\tau
=IV_1+IV_2+IV_3+IV_4.
\end{aligned}
\end{equation}
Using the same argument as Lemma 3.5 of \cite{Wang-Xin-Yong},
one can obtain the following estimates
\begin{equation}\label{383}
IV_1\le -\frac{1}{2}\int \rho |{\rm div}u|^2 dx+\frac{1}{2}\int \rho_0 |{\rm div}u_0|^2 dx
       +C_2[1+P(Q(t))]\int_0^t \Lambda_m (\tau)d\tau,
\end{equation}
\begin{equation}\label{384}
IV_2\le -\int \frac{1}{2\gamma p} |\nabla p|^2 dx
       +\int\frac{1}{2\gamma p_0} |\nabla p_0|^2 dx
       +C_2[1+P(Q(t))]\int_0^t \|\nabla p\|_{L^2}^2 d\tau
\end{equation}
and
\begin{equation}\label{385}
IV_3\le \frac{\varepsilon}{4}\int_0^t \|\nabla {\rm div}u\|_{L^2}^2 d\tau
       +C_3 \varepsilon \int_0^t (\|u(\tau)\|_{L^2}^2+\|\nabla u(\tau)\|_{L^2}^2)d\tau.
\end{equation}
Integrating by part and applying the boundary condition \eqref{bc2}, we find
\begin{equation}\label{386}
\begin{aligned}
IV_4
&=-\int_0^t \int_{\partial \Omega} n\cdot (\nabla d \cdot \Delta d) {\rm div}u~d\sigma d\tau
+\int_0^t \int \nabla  (\nabla d \cdot \Delta d) {\rm div}u~dx d\tau\\
&\le C(\|\nabla d\|_{L^\infty}^2+\|\Delta d\|_{L^\infty})
     \int_0^t(\|\nabla u\|_{L^2}^2+\|\Delta d\|_{L^2}^2+\|\nabla \Delta d\|_{L^2}^2)d\tau.
\end{aligned}
\end{equation}
Substituting \eqref{383}-\eqref{386} into \eqref{382}, we find
\begin{equation*}
\begin{aligned}
&\int\left(\frac{1}{2}\rho |{\rm div}u|^2+\frac{1}{2\gamma p}|\nabla p|^2\right)dx
 +\varepsilon\int_0^t \|\nabla {\rm div}u(\tau)\|_{L^2}^2 d\tau\\
&\le \int\left(\frac{1}{2}\rho_0 |{\rm div}u_0|^2
      +\frac{1}{2\gamma p_0}|\nabla p_0|^2\right)dx
      +C_3[1+P(Q(t))] \int_0^t (\Lambda_m(\tau)+\|\nabla \Delta d\|_{L^2}^2)d\tau.
\end{aligned}
\end{equation*}
Therefore, we complete the proof of Lemma \ref{lemma3.8}.
\end{proof}

\begin{lemm}\label{lemma3.9}
For $m \ge 1$ and $|\alpha|\le m-1$ with $|\alpha_0|\le m-2$, it holds that
\begin{equation}\label{391}
\begin{aligned}
&\underset{0\le \tau \le t}{\sup}\int \left(\rho |\mathcal{Z}^\alpha {\rm div}u(\tau)|^2+\frac{1}{\gamma p}|\mathcal{Z}^\alpha \nabla p(\tau)|^2\right)dx
+\varepsilon\int_0^t \|\nabla \mathcal{Z}^\alpha {\rm div}u\|_{L^2}^2 d\tau\\
&\le C\int\left(\rho_0 |\mathcal{Z}^\alpha {\rm div}u_0|^2
+\frac{1}{\gamma p_0}|\mathcal{Z}^\alpha \nabla p_0|^2\right)dx
+CC_{m+2}\delta\int_0^t \|\nabla \partial_t^{m-1}p(\tau)\|_{L^2}^2 d\tau\\
&\quad \quad
+CC_{m+2}\left\{\varepsilon\int_0^t\|\nabla^2 u\|_{\mathcal{H}^{m-2}}^2 d\tau
+(\delta +\varepsilon)\int_0^t\ \|\nabla\mathcal{Z}^{m-2}{\rm div}u\|_{L^2}^2 d\tau\right\}.\\
&\quad \quad
+C_\delta C_{m+2}[1+P(Q(t))]\int_0^t (\Lambda_m+\|\nabla \Delta d\|_{\mathcal{H}^{m-1}}^2)d\tau.
\end{aligned}
\end{equation}
\end{lemm}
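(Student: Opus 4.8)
The plan is to repeat the weighted $L^2$ argument of Lemma~\ref{lemma3.8}, now applied to the conormally differentiated momentum equation. First I would apply $\mathcal{Z}^\alpha$, with $|\alpha|\le m-1$ and $|\alpha_0|\le m-2$, to \eqref{eq1-1}, which (exactly as in \eqref{332}) gives
\[
\rho\mathcal{Z}^\alpha u_t+\rho u\cdot\nabla\mathcal{Z}^\alpha u+\mathcal{Z}^\alpha\nabla p
=-\mu\varepsilon\mathcal{Z}^\alpha\nabla\times w+(2\mu+\lambda)\varepsilon\mathcal{Z}^\alpha\nabla{\rm div}u-\mathcal{Z}^\alpha(\nabla d\cdot\Delta d)+\mathcal{C}_1^\alpha+\mathcal{C}_2^\alpha,
\]
with the same commutators $\mathcal{C}_1^\alpha=-[\mathcal{Z}^\alpha,\rho]u_t$ and $\mathcal{C}_2^\alpha=-[\mathcal{Z}^\alpha,\rho u\cdot\nabla]u$, then multiply by $\nabla\mathcal{Z}^\alpha{\rm div}u$ and integrate over $\Omega\times[0,t]$.

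For the two structural terms: integrating $\int\rho\mathcal{Z}^\alpha u_t\cdot\nabla\mathcal{Z}^\alpha{\rm div}u$ by parts in space and using ${\rm div}\mathcal{Z}^\alpha u_t=\partial_t\mathcal{Z}^\alpha{\rm div}u+[{\rm div},\mathcal{Z}^\alpha]u_t$ produces $-\tfrac12\tfrac{d}{dt}\int\rho|\mathcal{Z}^\alpha{\rm div}u|^2$, together with the harmless term $\tfrac12\int\rho_t|\mathcal{Z}^\alpha{\rm div}u|^2$ (bounded using $Q(t)$) and commutators controlled by $\|\nabla u\|_{\mathcal{H}^{m-1}}^2+\|u\|_{\mathcal{H}^m}^2$ together with \eqref{eq1}; the boundary integral vanishes, since either $|\alpha_{13}|\ge1$, so $Z_3$ kills $\mathcal{Z}^\alpha{\rm div}u$ on $\partial\Omega$, or else one uses $u\cdot n=0$ on $\partial\Omega$ as in Lemma~3.5 of \cite{Wang-Xin-Yong}. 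For the pressure term I would write $\mathcal{Z}^\alpha\nabla p=\nabla\mathcal{Z}^\alpha p+[\mathcal{Z}^\alpha,\nabla]p$, integrate by parts (the boundary contribution again handled through $u\cdot n=0$ and the normal component of \eqref{eq1-1}), and substitute the $\mathcal{Z}^\alpha$-differentiated continuity equation $\gamma p\,\mathcal{Z}^\alpha{\rm div}u=-\mathcal{Z}^\alpha(p_t+u\cdot\nabla p)+[\gamma p,\mathcal{Z}^\alpha]{\rm div}u$, which generates $-\tfrac12\tfrac{d}{dt}\int\frac{1}{\gamma p}|\mathcal{Z}^\alpha\nabla p|^2$ plus commutator terms. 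This is the step I expect to be the main obstacle: in the worst case $|\alpha_0|=m-2$ the commutators carry a copy of $\partial_t^{m-1}\nabla p$ with no spare conormal derivative, so it can only be estimated by $\delta\int_0^t\|\nabla\partial_t^{m-1}p\|_{L^2}^2\,d\tau$ — this is precisely why the restriction $|\alpha_0|\le m-2$ is imposed here, the case $|\alpha_0|=m-1$ being postponed to the separate treatment of $\partial_t^{m-1}{\rm div}u$.

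The remaining terms are routine. The principal viscous contribution $(2\mu+\lambda)\varepsilon\int\mathcal{Z}^\alpha\nabla{\rm div}u\cdot\nabla\mathcal{Z}^\alpha{\rm div}u=(2\mu+\lambda)\varepsilon\|\nabla\mathcal{Z}^\alpha{\rm div}u\|_{L^2}^2$ plus commutators is retained on the left as the dissipation, while the commutators together with the curl term $-\mu\varepsilon\int\mathcal{Z}^\alpha\nabla\times w\cdot\nabla\mathcal{Z}^\alpha{\rm div}u$ (using ${\rm div}(\nabla\times w)=0$ after one integration by parts, then Cauchy's inequality) give, after absorbing a small multiple of the dissipation, the terms $C_{m+2}\varepsilon\|\nabla^2u\|_{\mathcal{H}^{m-2}}^2$ and $C_{m+2}(\delta+\varepsilon)\|\nabla\mathcal{Z}^{m-2}{\rm div}u\|_{L^2}^2$ appearing in \eqref{391}. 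The nematic term $-\int\mathcal{Z}^\alpha(\nabla d\cdot\Delta d)\cdot\nabla\mathcal{Z}^\alpha{\rm div}u$ is integrated by parts (boundary term zero by $u\cdot n=0$) and then estimated by Proposition~\ref{prop2.2}, contributing $C_\delta[1+P(Q(t))]\int_0^t(\Lambda_m+\|\nabla\Delta d\|_{\mathcal{H}^{m-1}}^2)\,d\tau$; the commutators $\mathcal{C}_1^\alpha,\mathcal{C}_2^\alpha$ tested against $\nabla\mathcal{Z}^\alpha{\rm div}u$ are handled exactly as in \cite{Wang-Xin-Yong}, again absorbing a small part of the dissipation into the left-hand side. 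Collecting all estimates, integrating in time and choosing the absorption parameter small yields \eqref{391}.
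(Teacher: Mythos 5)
Your overall strategy --- apply $\mathcal{Z}^\alpha$ to \eqref{eq1-1}, test against $\nabla\mathcal{Z}^\alpha\operatorname{div}u$, extract the weighted $\tfrac{d}{dt}\int(\rho|\mathcal{Z}^\alpha\operatorname{div}u|^2+\tfrac{1}{\gamma p}|\mathcal{Z}^\alpha\nabla p|^2)$ from the material-derivative and pressure terms, and absorb the remaining viscous/curl/commutator pieces into the dissipation and into $\Lambda_m$ --- is exactly the scheme the paper follows (the treatment of $V_1$--$V_4$, $V_6$ being deferred to Lemma 3.6 of \cite{Wang-Xin-Yong}). Your identification of $|\alpha_0|=m-2$ as the borderline case producing the $\delta\int_0^t\|\nabla\partial_t^{m-1}p\|_{L^2}^2$ contribution also matches.

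The gap is in your handling of the nematic term $V_5 = -\int_0^t\int\mathcal{Z}^\alpha(\nabla d\cdot\Delta d)\cdot\nabla\mathcal{Z}^\alpha\operatorname{div}u$. After integrating by parts once, the boundary term is $-\int_0^t\int_{\partial\Omega}n\cdot\mathcal{Z}^\alpha(\nabla d\cdot\Delta d)\,\mathcal{Z}^\alpha\operatorname{div}u\,d\sigma d\tau$, and you assert that this vanishes ``by $u\cdot n=0$''. That condition is irrelevant here: $\operatorname{div}u$ has no reason to vanish on $\partial\Omega$, and the normal component of $\mathcal{Z}^\alpha(\nabla d\cdot\Delta d)$ is not zero either. (The Neumann condition $\partial_n d=0$ does give $n\cdot(\nabla d\cdot\Delta d)=0$ on $\partial\Omega$, but after commuting $n\cdot$ past $\mathcal{Z}^\alpha$ one is left with $[\mathcal{Z}^\alpha,n\cdot](\nabla d\cdot\Delta d)$, which involves tangential derivatives $Z_y n$ and does not vanish.) In fact this boundary integral is the one genuinely new technical step of the lemma: the paper estimates it by first integrating by parts \emph{along the boundary} to move one tangential derivative off $\mathcal{Z}^\alpha\operatorname{div}u$ (so one tests $Z_y^{m-2-\alpha_0}\partial_t^{\alpha_0}\operatorname{div}u$ against $Z_y(n\cdot Z_y^{m-1-\alpha_0}\partial_t^{\alpha_0}(\nabla d\cdot\Delta d))$), and then invokes the anisotropic trace inequality \eqref{25} on both factors. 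This is precisely where the extra $\delta\int_0^t\|\nabla\mathcal{Z}^{m-2}\operatorname{div}u\|_{L^2}^2\,d\tau$ contribution in the stated bound arises, and where the $\|\nabla\Delta d\|_{\mathcal{H}^{m-1}}^2$ and $\|\Delta d\|_{\mathcal{H}^{m-1}}^2$ terms are generated. Dropping the boundary term as zero skips the core of the argument; you need the integration-by-parts-on-$\partial\Omega$ plus the two trace estimates to avoid losing a derivative.
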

\begin{proof}
The case for $|\alpha|=0$ is already proved in Lemma \ref{lemma3.8}. Assuming it is
proved for $|\alpha|\le m-2$, one needs to prove it for $|\alpha|=m-1$
with $|\alpha_0|\le m-2$. Multiplying \eqref{eq1-1} by
$\nabla \mathcal{Z}^\alpha {\rm div}u$ leads to
\begin{equation}\label{392}
\begin{aligned}
&\underset{V_1}{\underbrace{\int_0^t \int(\rho \mathcal{Z}^\alpha u_t+\rho u\cdot \nabla \mathcal{Z}^\alpha u)\cdot \nabla \mathcal{Z}^\alpha {\rm div}u~dxd\tau}}
+\underset{V_2}{\underbrace{\int_0^t\int \mathcal{Z}^\alpha \nabla p \cdot \nabla \mathcal{Z}^\alpha {\rm div}u~ dxd\tau}}\\
&=\underset{V_3}{\underbrace{-\mu \varepsilon \int_0^t \int \mathcal{Z}^\alpha \nabla \times w\cdot\nabla \mathcal{Z}^\alpha {\rm div}u~dxd\tau}}
+\underset{V_4}{\underbrace{(2\mu+\lambda)\varepsilon \int_0^t \int \mathcal{Z}^\alpha \nabla {\rm div}u\cdot\nabla \mathcal{Z}^\alpha {\rm div}u~dxd\tau}}\\
&\quad \underset{V_5}{\underbrace{-\int_0^t \int \mathcal{Z}^\alpha(\nabla d\cdot \Delta d)\cdot\nabla \mathcal{Z}^\alpha {\rm div}u ~dxd\tau}}
+\underset{V_6}{\underbrace{\int_0^t \int (\mathcal{C}_1^\alpha+\mathcal{C}_2^\alpha)\cdot
      \nabla \mathcal{Z}^\alpha {\rm div}u~dx\tau}}.
\end{aligned}
\end{equation}
Using the same argument as Lemma 3.6 of \cite{Wang-Xin-Yong},
one can obtain the following estimates
\begin{equation}\label{393}
\begin{aligned}
V_1&\le  -\int\frac{\rho}{2}|\mathcal{Z}^\alpha {\rm div}u|^2 dx
     +\int \frac{\rho_0}{2}|\mathcal{Z}^\alpha {\rm div}u_0|^2 dx
     +\delta \int_0^t\!\! \|\nabla \mathcal{Z}^{\alpha-2}{\rm div}u\|_{L^2}^2 d\tau\\
     &\quad \quad +C_\delta[1+P(Q(t))]\int_0^t \Lambda_m (\tau)d\tau,
\end{aligned}
\end{equation}
\begin{equation}\label{394}
V_4\ge \frac{3(2\mu+\lambda)\varepsilon}{4}\int_0^t
        \|\nabla\mathcal{Z}^\alpha{\rm div}u \|_{L^2}^2 d\tau
        -C\varepsilon\int_0^t  \|\nabla\mathcal{Z}^{m-2}{\rm div}u \|_{L^2}^2 d\tau
        -C\varepsilon \int_0^t \Lambda_m(\tau)d\tau,
\end{equation}
\begin{equation}\label{395}
V_3\ge -\frac{(2\mu+\lambda)\varepsilon}{4}\int_0^t
         \|\nabla\mathcal{Z}^\alpha{\rm div}u \|_{L^2}^2 d\tau
         -C\varepsilon\int_0^t \|\nabla^2 u\|_{\mathcal{H}^{m-2}}^2 d\tau
         -C_{m+2}\int_0^t P(\Lambda_m(\tau))d\tau,
\end{equation}
\begin{equation}\label{396}
\begin{aligned}
V_2\le &-\int \frac{1}{2\gamma p}|\mathcal{Z}^\alpha \nabla p|^2 dx
       +\int \frac{1}{2\gamma p_0}|\mathcal{Z}^\alpha \nabla p_0|^2 dx
       +C\delta \int_0^t \|\nabla \mathcal{Z}^{m-2}{\rm div}u\|_{L^2}^2 d\tau\\
       &+C\delta \int_0^t \|\nabla \partial_t^{m-1} p\|_{L^2}^2 d\tau
       +C_\delta C_{m+1} (1+P(Q(t)))\int_0^t \Lambda_m(\tau)d\tau,
\end{aligned}
\end{equation}
and
\begin{equation}\label{397}
V_6\le \delta \int_0^t \|\nabla \mathcal{Z}^{m-2}{\rm div}u\|_{L^2}^2 d\tau
       +C_\delta [1+P(Q(t))]\int_0^t \Lambda_m(\tau)d\tau.
\end{equation}
On the other hand, the integration by parts yields directly
\begin{equation}\label{398}
V_5
=-\int_0^t\int_{\partial \Omega}n\cdot \mathcal{Z}^{\alpha}
  (\nabla d\cdot \Delta d)\cdot \mathcal{Z}^{\alpha} {\rm div}u~d\sigma d\tau
  +\int_0^t \int {\rm div}\mathcal{Z}^{\alpha}(\nabla d\cdot \Delta d)
  \cdot \mathcal{Z}^{\alpha} {\rm div}u~ dxd\tau.
\end{equation}
In view of the trace inequality in Proposition \ref{prop2.3}, we find
\begin{equation}\label{399}
|Z_y^{m-2-\alpha_0}Z_t^{\alpha_0}{\rm div}u|_{L^2{(\partial \Omega)}}^2
\le  C\|\nabla Z_y^{m-2-\alpha_0}Z_t^{\alpha_0}{\rm div}u\|_{L^2}^2+
     C\|Z_y^{m-2-\alpha_0}Z_t^{\alpha_0}{\rm div}u\|_{L^2}^2,
\end{equation}
and
\begin{equation}\label{3910}
|Z_y(n\cdot Z_y^{m-1-\alpha_0}Z_t^{\alpha_0})(\nabla d\cdot \Delta d)
|_{L^2(\partial \Omega)}^2
\le C \|\nabla Z_t^{\alpha_0}(\nabla d\cdot \Delta d)\|_{H^{|\alpha_1|}_{co}}^2
       +C\|Z_t^{\alpha_0}(\nabla d\cdot \Delta d)\|_{H^{|\alpha_1|}_{co}}^2.
\end{equation}
Integrating by parts along the boundary and applying the estimates
\eqref{399} and \eqref{3910}, one attains
\begin{equation}\label{3911}
\begin{aligned}
&-\int_0^t\int_{\partial \Omega}n\cdot \mathcal{Z}^{\alpha}
  (\nabla d\cdot \Delta d)\cdot \mathcal{Z}^{\alpha} {\rm div}u~d\sigma d\tau\\
&\le \int_0^t |Z_y(n\cdot Z_y^{m-1-\alpha_0}Z_t^{\alpha_0})
       (\nabla d\cdot \Delta d)|_{L^2(\partial \Omega)}
       |Z_y^{m-2-\alpha_0}Z_t^{\alpha_0}{\rm div}u|_{L^2{(\partial \Omega)}}d\tau\\
&\le  \delta \int_0^t \|\nabla \mathcal{Z}^{m-2}{\rm div}u\|_{L^2}^2d\tau
     +C_\delta \int_0^t \|\mathcal{Z}^{m-2}{\rm div}u\|_{L^2}^2d\tau
     +C_\delta\|\nabla^2 d\|_{L^\infty}^2\int_0^t \|\Delta d\|_{\mathcal{H}^{m-1}}^2 d\tau\\
&\quad +C_\delta\|\nabla d\|_{L^\infty}^2\int_0^t \|\nabla \Delta d\|_{\mathcal{H}^{m-1}}^2 d\tau
       +C_\delta\|\nabla \Delta d\|_{L^\infty}^2\int_0^t \|\nabla d\|_{\mathcal{H}^{m-1}}^2 d\tau\\
&\quad +C_\delta\|\nabla d\|_{L^\infty}^2\int_0^t \|\Delta d\|_{\mathcal{H}^{m-1}}^2 d\tau
       +C_\delta\|\Delta d\|_{L^\infty}^2\int_0^t \|\nabla d\|_{\mathcal{H}^{m-1}}^2 d\tau\\
&\le \delta \int_0^t \|\nabla \mathcal{Z}^{m-2}{\rm div}u\|_{L^2}^2d\tau
     +C_\delta[1+P(Q(t))]\int_0^t (\Lambda_m+\|\nabla \Delta d\|_{\mathcal{H}^{m-1}}^2)d\tau.
\end{aligned}
\end{equation}
Applying the Proposition \ref{prop2.2}, it is easy to check that
\begin{equation}\label{3912}
\int_0^t \int {\rm div}\mathcal{Z}^{\alpha}(\nabla d\cdot \Delta d)
  \cdot \mathcal{Z}^{\alpha} {\rm div}u~ dxd\tau
\le C [1+P(Q(t))]\int_0^t (\Lambda_m+\|\nabla \Delta d\|_{\mathcal{H}^{m-1}}^2)d\tau.
\end{equation}
Substituting \eqref{3911} and \eqref{3912} into \eqref{3910}, we obtain
\begin{equation}\label{3913}
V_5\le \delta \int_0^t \|\nabla \mathcal{Z}^{m-2}{\rm div}u\|_{L^2}^2d\tau
     +C_\delta[1+P(Q(t))]\int_0^t(\Lambda_m+\|\nabla \Delta d\|_{\mathcal{H}^{m-1}}^2)d\tau
\end{equation}
Substituting \eqref{393}-\eqref{397} and \eqref{3913}
into \eqref{392}, we complete the proof of Lemma \ref{lemma3.9}.
\end{proof}

\begin{lemm}\label{lemma3.10}
For $m \ge 1$, it holds that
\begin{equation}\label{3101}
\begin{aligned}
&\underset{0\le \tau \le t}{\sup}\varepsilon \int
\left(\rho |\partial_t^{m-1}{\rm div}u(\tau)|^2
      +\frac{1}{\gamma p}|\partial_t^{m-1} \nabla p(\tau)|^2 \right)dx
+\varepsilon^2 \int_0^t \|\nabla \partial_t^{m-1} {\rm div}u(\tau)\|_{L^2}^2 d\tau\\
&\le C\varepsilon \int \left(\rho_0 |\partial_t^{m-1}{\rm div}u_0|^2
      +\frac{1}{\gamma p_0}|\partial_t^{m-1} \nabla p_0|^2 \right)dx\\
&\quad \quad
      +C_{m+1}[1+P(Q(t))]\int_0^t(\Lambda_m (\tau)
      + \|\nabla \Delta d(\tau)\|_{\mathcal{H}^{m-1}}^2)d\tau.
\end{aligned}
\end{equation}
\end{lemm}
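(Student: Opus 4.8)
The plan is to run a weighted conormal energy estimate on the momentum equation in vorticity form \eqref{eq1-1}, differentiated $m-1$ times in time and tested against $\varepsilon\nabla\partial_t^{m-1}{\rm div}u$. The key simplification relative to Lemma \ref{lemma3.9} is that $\partial_t^{m-1}$ is a pure time derivative, so it commutes with $\nabla$, with $\nabla\times(\nabla\times\cdot)$ and with $\nabla{\rm div}(\cdot)$; hence no conormal commutators are produced by the spatial operators, and the only commutator terms are $\mathcal{C}_1^\alpha=-[\partial_t^{m-1},\rho]u_t$ and $\mathcal{C}_2^\alpha=-[\partial_t^{m-1},\rho u\cdot\nabla]u$ (i.e. $\alpha_0=m-1$, $\alpha_1=0$). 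Applying $\partial_t^{m-1}$ to \eqref{eq1-1}, multiplying by $\varepsilon\nabla\partial_t^{m-1}{\rm div}u$ and integrating over $\Omega\times[0,t]$ gives, in complete analogy with \eqref{392}, an identity $V_1+V_2=V_3+V_4+V_5+V_6$, where $V_1=\varepsilon\int_0^t\!\!\int(\rho\partial_t^{m-1}u_t+\rho u\cdot\nabla\partial_t^{m-1}u)\cdot\nabla\partial_t^{m-1}{\rm div}u\,dx\,d\tau$, $V_2=\varepsilon\int_0^t\!\!\int\partial_t^{m-1}\nabla p\cdot\nabla\partial_t^{m-1}{\rm div}u\,dx\,d\tau$, $V_3=-\mu\varepsilon^2\int_0^t\!\!\int\nabla\times(\nabla\times\partial_t^{m-1}u)\cdot\nabla\partial_t^{m-1}{\rm div}u\,dx\,d\tau$, $V_4=(2\mu+\lambda)\varepsilon^2\int_0^t\|\nabla\partial_t^{m-1}{\rm div}u\|_{L^2}^2\,d\tau$, $V_5=-\varepsilon\int_0^t\!\!\int\partial_t^{m-1}(\nabla d\cdot\Delta d)\cdot\nabla\partial_t^{m-1}{\rm div}u\,dx\,d\tau$, and $V_6=\varepsilon\int_0^t\!\!\int(\mathcal{C}_1^\alpha+\mathcal{C}_2^\alpha)\cdot\nabla\partial_t^{m-1}{\rm div}u\,dx\,d\tau$. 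Here $V_4$ is the good dissipative term against which every contribution of the form $\delta\varepsilon^2\|\nabla\partial_t^{m-1}{\rm div}u\|_{L^2}^2$ will be absorbed.

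The terms $V_1$, $V_4$ and $V_6$ are treated exactly as in the corresponding step of \cite{Wang-Xin-Yong}: integrating the $\rho\partial_t^{m-1}u_t$ contribution in $V_1$ by parts and using the continuity equation \eqref{eq1}$_1$ produces $-\frac{\varepsilon}{2}\frac{d}{dt}\int\rho|\partial_t^{m-1}{\rm div}u|^2\,dx$ — this is the $\varepsilon\int\rho|\partial_t^{m-1}{\rm div}u(\tau)|^2$ part of the left side of \eqref{3101}, together with the corresponding initial term — up to errors bounded by $C_{m+1}[1+P(Q(t))]\int_0^t\Lambda_m$; and since $\mathcal{C}_1^\alpha,\mathcal{C}_2^\alpha$ involve time derivatives of $(p,u)$ of order at most $m-1$, Proposition \ref{prop2.2} with Young's inequality gives $V_6\le\delta\varepsilon^2\int_0^t\|\nabla\partial_t^{m-1}{\rm div}u\|_{L^2}^2\,d\tau+C_\delta C_{m+1}[1+P(Q(t))]\int_0^t\Lambda_m(\tau)\,d\tau$. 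For $V_2$ one substitutes, via \eqref{eq1}$_1$, $\nabla\partial_t^{m-1}{\rm div}u=-\nabla(\frac{1}{\gamma p}\partial_t^m p)+\nabla R$, with $R$ collecting the lower-order (in $(p,u)$) remainders; since $\nabla\partial_t^m p=\partial_t(\partial_t^{m-1}\nabla p)$, the leading part of $V_2$ gives $-\frac{\varepsilon}{2}\frac{d}{dt}\int\frac{1}{\gamma p}|\partial_t^{m-1}\nabla p|^2\,dx$ — the pressure part of the left side of \eqref{3101} and its initial datum — plus $\frac{\varepsilon}{2}\int\partial_t(\frac{1}{\gamma p})|\partial_t^{m-1}\nabla p|^2\le C(1+Q(t))\varepsilon\|\partial_t^{m-1}\nabla p\|_{L^2}^2\le C(1+Q(t))\Lambda_m$, while the transported piece contributes, after an integration by parts that uses $u\cdot n|_{\partial\Omega}=0$, only $-\frac12\int{\rm div}(\frac{u}{\gamma p})|\partial_t^{m-1}\nabla p|^2$; the remaining terms are bounded by $C_\delta C_{m+1}[1+P(Q(t))]\int_0^t\Lambda_m$ after re-expressing $\partial_t^m p$ through \eqref{eq1}$_1$ and using the $\varepsilon$-weighted pressure term already present in the definition \eqref{def3} of $\Lambda_m$.

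The genuinely new and most delicate term is $V_3$. Since $\nabla\times\nabla\phi=0$ and ${\rm div}(\nabla\times(\nabla\times v))=0$, integration by parts in space collapses $V_3$ to a pure boundary integral $-\mu\varepsilon^2\int_0^t\int_{\partial\Omega}(n\times\nabla\times\partial_t^{m-1}u)\cdot\nabla\partial_t^{m-1}{\rm div}u\,d\sigma\,d\tau$; by the vorticity form of the Navier-slip condition \eqref{bc2}, $n\times\nabla\times\partial_t^{m-1}u=[B\partial_t^{m-1}u]_\tau$, which is tangential, so only the tangential derivative of $\partial_t^{m-1}{\rm div}u$ survives on $\partial\Omega$ — in particular no normal derivative of ${\rm div}u$ appears on the boundary. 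Integrating by parts once more along $\partial\Omega$ transfers this tangential derivative onto $B\partial_t^{m-1}u$, and then the trace inequalities of Proposition \ref{prop2.3}, Young's inequality and the $\varepsilon^2$ weight bound $|V_3|$ by $\delta\varepsilon^2\int_0^t\|\nabla\partial_t^{m-1}{\rm div}u\|_{L^2}^2\,d\tau$ plus a small multiple of $\varepsilon\int_0^t\|\nabla u\|_{\mathcal{H}^{m}}^2\,d\tau$ (harmless, as it is absorbed by the conormal viscous dissipation produced in Lemma \ref{lemma3.3} once all the a priori estimates are combined) plus $C_\delta C_{m+2}[1+P(Q(t))]\int_0^t\Lambda_m(\tau)\,d\tau$. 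As for the nematic term $V_5$, Young's inequality against $V_4$ yields $|V_5|\le\delta\varepsilon^2\int_0^t\|\nabla\partial_t^{m-1}{\rm div}u\|_{L^2}^2\,d\tau+C_\delta\int_0^t\|\partial_t^{m-1}(\nabla d\cdot\Delta d)\|_{L^2}^2\,d\tau$, and Proposition \ref{prop2.2} bounds the last integral by $C[1+P(Q(t))]\int_0^t(\|\Delta d\|_{\mathcal{H}^{m-1}}^2+\|\nabla d\|_{\mathcal{H}^{m}}^2)\,d\tau\le C[1+P(Q(t))]\int_0^t\Lambda_m(\tau)\,d\tau$; alternatively, an integration by parts as for $V_5$ in Lemma \ref{lemma3.9}, together with the trace estimate, accounts for the $\|\nabla\Delta d\|_{\mathcal{H}^{m-1}}^2$ term on the right side of \eqref{3101}. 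Collecting $V_1$–$V_6$ and choosing $\delta$ small enough that all the $\delta\varepsilon^2\int_0^t\|\nabla\partial_t^{m-1}{\rm div}u\|_{L^2}^2$ contributions are absorbed into $V_4$ gives \eqref{3101}.

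I expect the main obstacle to be the boundary term coming from $V_3$: one must carry out the tangential integration by parts carefully and keep precise track of which conormal and trace norms appear, so that the non-absorbable remainders land inside $\int_0^t(\Lambda_m+\|\nabla\Delta d\|_{\mathcal{H}^{m-1}}^2)$ or inside the globally available viscous dissipation. The tangential structure $n\times\nabla\times u=[Bu]_\tau$ supplied by the Navier-slip condition \eqref{bc2} is exactly what removes the otherwise uncontrollable second normal derivative of ${\rm div}u$ on $\partial\Omega$. A secondary technical point is to check, in $V_2$, that the continuity-equation substitution never forces an uncontrolled $\|\nabla^2\partial_t^{m-1}p\|_{L^2}$; this is taken care of by the integration by parts using $u\cdot n|_{\partial\Omega}=0$ noted above.
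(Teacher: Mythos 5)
The decomposition you set up ($V_1$–$V_6$) and your treatment of $V_1$, $V_2$, $V_4$, $V_6$ coincide with the paper's $VI_1$–$VI_6$ (only the labels of the nematic and commutator terms are swapped). Your handling of the nematic term $V_5$ by a direct Cauchy–Schwarz/Young against $V_4$, bounding $\int_0^t\|\partial_t^{m-1}(\nabla d\cdot\Delta d)\|_{L^2}^2$ via Proposition \ref{prop2.2} by $\|\Delta d\|_{\mathcal H^{m-1}}^2$ and $\|\nabla d\|_{\mathcal H^{m-1}}^2$, is actually a bit cleaner than the paper's: the paper integrates by parts first (its $VI_6$, eq.~\eqref{3108}), which is why the $\|\nabla\Delta d\|_{\mathcal H^{m-1}}^2$ term appears on the right of \eqref{3101}; your direct bound does not need it, and you correctly note the alternative.

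The substantive issue is $V_3$. Your reduction to a pure boundary integral via $\nabla\times\nabla\phi=0$, the use of the vorticity form of the Navier-slip condition to replace $n\times(\nabla\times\partial_t^{m-1}u)$ by the tangential field $[B\partial_t^{m-1}u]_\tau$, and the tangential integration by parts are all sound. But the resulting boundary integral $\varepsilon^2\int_{\partial\Omega}\mathrm{div}_\tau\bigl([B\partial_t^{m-1}u]_\tau\bigr)\,\partial_t^{m-1}\mathrm{div}\,u\,d\sigma$ contains a trace of a \emph{conormal order $m$} derivative of $u$ (a tangential derivative hitting $\partial_t^{m-1}u$), and by Proposition \ref{prop2.3} controlling that trace inevitably costs a factor involving $\|\nabla u\|_{\mathcal H^m}$ (or $\|u\|_{\mathcal H^{m+1}}$), neither of which sits inside $\Lambda_m$. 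You are aware of this, and this is precisely why your final bound contains the extra term $\delta\,\varepsilon\int_0^t\|\nabla u\|_{\mathcal H^m}^2\,d\tau$. That term is \emph{not} on the right-hand side of \eqref{3101}, so as written your argument proves a strictly weaker inequality than Lemma \ref{lemma3.10}. The paper, citing Lemma 3.8 of \cite{Wang-Xin-Yong}, asserts in \eqref{3104} that $|VI_3|$ is controlled by a small multiple of $\varepsilon^2\int_0^t\|\nabla\partial_t^{m-1}\mathrm{div}\,u\|_{L^2}^2$ plus $C\int_0^t\Lambda_m$, with no such error term. You should either reproduce that argument so that the error term disappears, or explicitly restate the lemma you actually prove (with the extra $\delta\varepsilon\int_0^t\|\nabla u\|_{\mathcal H^m}^2$ on the right) and note, as you already do informally, that this weaker version still suffices because the term is absorbed when the lemmas are summed in \eqref{3a8}. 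As it stands, the claim ``\dots gives \eqref{3101}'' is not justified.

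A smaller point on $V_2$: the plan of substituting $\mathrm{div}\,\partial_t^{m-1}u$ from the continuity equation and integrating by parts using $u\cdot n|_{\partial\Omega}=0$ is the right one, but you should verify that the $\varepsilon$-weighted pressure term $\varepsilon\|\nabla\partial_t^{m-1}p\|_{L^2}^2$ (which is the only $m$-th order pressure quantity available in $\Lambda_m$) is actually enough to close the estimate once the $\frac{1}{\gamma p}$ weight is differentiated; this works but deserves a line or two rather than being waved through.
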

\begin{proof}
Applying $\partial_t^{m-1}$ to equation \eqref{eq1-1}, we find that
\begin{equation}\label{3102}
\begin{aligned}
&\rho \partial_t^{m-1} u_t+\rho u\cdot \nabla \partial_t^{m-1} u
+\mu \varepsilon \nabla \times \partial_t^{m-1}(\nabla \times u)
+ \partial_t^{m-1}\nabla p\\
&=(2\mu+\lambda)\varepsilon \nabla \partial_t^{m-1}{\rm div}u
-\partial_t^{m-1}(\nabla d\cdot \Delta d)
+\mathcal{C}_1^{m-1}+\mathcal{C}_2^{m-1},
\end{aligned}
\end{equation}
where
\begin{equation*}
\mathcal{C}_1^{m-1}\triangleq-[\partial_t^{m-1}, \rho]u_t,\quad
\mathcal{C}_2^{m-1}\triangleq-[\partial_t^{m-1}, \rho u\cdot \nabla]u.
\end{equation*}
Multiplying \eqref{3102} by $\varepsilon \nabla {\rm div}\partial_t^{m-1}u$,
it is easy to deduce that
\begin{equation}\label{3103}
\begin{aligned}
&\underset{VI_1}{\underbrace{\varepsilon\int_0^t\!\!\int \!\!(\rho \partial_t^{m-1} u_t+\rho u\cdot \nabla \partial_t^{m-1} u)
\cdot \nabla {\rm div}\partial_t^{m-1}u~dxd\tau}}
+\underset{VI_2}{\underbrace{\varepsilon\int_0^t \!\!\int\!\!  \partial_t^{m-1}\nabla p
\cdot \nabla {\rm div}\partial_t^{m-1}u~dxd\tau}}\\
&=\underset{VI_3}{\underbrace{-\mu \varepsilon^2
\int_0^t \!\! \int \nabla \times \partial_t^{m-1}(\nabla \times u)
\cdot \nabla {\rm div}\partial_t^{m-1}u~dxd\tau}}
+\underset{VI_4}{\underbrace{(2\mu+\lambda)\varepsilon^2 \int_0^t \!\!
\int |\nabla \partial_t^{m-1}{\rm div}u|^2 dxd\tau}}\\
&~
+\underset{VI_5}{\underbrace{\varepsilon \!\int_0^t \!\!\int (\mathcal{C}_1^{m-1}+\mathcal{C}_2^{m-1})
\cdot \nabla {\rm div}\partial_t^{m-1}u~dxd\tau}}
-\underset{VI_6}{\underbrace{\varepsilon \!\int_0^t \!\!
\int \partial_t^{m-1}(\nabla d\cdot \Delta d)
\cdot \nabla {\rm div}\partial_t^{m-1}u~dxd\tau}}.
\end{aligned}
\end{equation}
Using the same argument as Lemma 3.8 of \cite{Wang-Xin-Yong},
one can obtain the following estimates
\begin{equation}\label{3104}
|VI_3|\le \frac{2\mu+\lambda}{8} \varepsilon^4
\int_0^t \|\nabla \partial_t^{m-1}{\rm div}u\|_{L^2}^2 d\tau
+CC_3\int_0^t \Lambda_m(\tau)d\tau.
\end{equation}
\begin{equation}\label{3105}
\begin{aligned}
VI_1\le &-\varepsilon\int \rho |\partial_t^{m-1}{\rm div}u(t)|^2 dx
       +\varepsilon\int \rho_0 |\partial_t^{m-1}{\rm div}u_0|^2 dx\\
&+\frac{\varepsilon^2}{8}\int_0^t \|\nabla \partial_t^{m-1}{\rm div}u\|_{L^2}^2 d\tau
       +C[1+P(Q(t))]\int_0^t \Lambda_m(\tau)d\tau,
\end{aligned}
\end{equation}
\begin{equation}\label{3106}
|VI_5|\le \frac{2\mu+\lambda}{8} \varepsilon^2
\int_0^t \|\nabla \partial_t^{m-1}{\rm div}u\|_{L^2}^2 d\tau
+C[1+P(Q(t))]\int_0^t \Lambda_m(\tau)d\tau.
\end{equation}
and
\begin{equation}\label{3107}
VI_2\le -\varepsilon \int \frac{1}{2\gamma p}|\nabla \partial_t^{m-1} p|^2 dx
       +\varepsilon \int \frac{1}{2\gamma p_0}|\nabla \partial_t^{m-1} p_0|^2 dx
       +C_{m+1}[1+P(Q(t))]\int_0^t \Lambda_m(\tau)d\tau.
\end{equation}
On the other hand, integrating by parts and applying the boundary condition \eqref{bc2},
one attains
\begin{equation}\label{3108}
\begin{aligned}
VI_6
&=-\varepsilon\int_0^t \int n\cdot \partial_t^{m-1}(\nabla d\cdot \Delta d)
 \cdot {\rm div}\partial_t^{m-1}u~d\sigma d\tau\\
&\quad +\varepsilon \int_0^t \int {\rm div} \partial_t^{m-1}(\nabla d\cdot \Delta d)
   \cdot {\rm div}\partial_t^{m-1}u~dx d\tau\\
&\le C[1+P(Q(t))]\int_0^t (\Lambda_m(\tau)+\|\nabla \Delta d\|_{\mathcal{H}^{m-1}}^2)\tau.
\end{aligned}
\end{equation}
Substituting \eqref{3104}-\eqref{3108} into \eqref{3103},
we complete the proof of Lemma \ref{lemma3.10}.
\end{proof}

Next, we recall an important estimate that has been proved by Wang et al. \cite{Wang-Xin-Yong}.

\begin{lemm}\label{lemma3.11}
Define
\begin{equation}\label{31101}
\Lambda_{1m}(t)\triangleq\|(p, u, \nabla d)(t)\|_{\mathcal{H}^{m}}^2
             +\|\Delta d(t)\|_{\mathcal{H}^{m-1}}^2
             +\sum_{|\beta|\le m-2}\|\mathcal{Z}^\beta \nabla p(t)\|_1^2
             +\sum_{|\beta|\le m-2}\|\mathcal{Z}^\beta \nabla u(t)\|_1^2.
\end{equation}
Then, for every $m \ge 3$, it holds that
\begin{equation}\label{31102}
\|\partial_t^{m-1}{\rm div}u(t)\|_{L^2}^2
\le C_2\left\{P(\Lambda_{1m}(t))+P(Q(t))\right\}.
\end{equation}
\end{lemm}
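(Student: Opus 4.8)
The plan is to read $\partial_t^{m-1}{\rm div}u$ directly off the mass equation and to dispose of the single term that escapes $\Lambda_{1m}$ by invoking the momentum equation, exactly along the lines of Lemma~3.8 in \cite{Wang-Xin-Yong}; the only genuinely new contribution, the director coupling $\nabla d\cdot\Delta d$, will be absorbed by the information on $d$ already built into $\Lambda_{1m}$ and $Q$. Since $p=\rho^\gamma$, equation \eqref{eq1}$_1$ reads $p_t+u\cdot\nabla p+\gamma p\,{\rm div}u=0$, so applying $\partial_t^{m-1}$ and the Leibniz rule gives
\begin{equation*}
\gamma p\,\partial_t^{m-1}{\rm div}u=-\partial_t^{m}p-\partial_t^{m-1}(u\cdot\nabla p)-\gamma\sum_{k=1}^{m-1}\binom{m-1}{k}\partial_t^{k}p\,\partial_t^{m-1-k}{\rm div}u .
\end{equation*}
Because $\gamma p\ge\gamma c_0^{\gamma}>0$, it is enough to bound the $L^2$ norm of the right-hand side. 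The first term is harmless since $\|\partial_t^{m}p\|_{L^2}^2\le\|p\|_{\mathcal H^m}^2\le\Lambda_{1m}(t)$. In each product $\partial_t^{k}p\,\partial_t^{m-1-k}{\rm div}u$ with $k\ge1$ one has $m-1-k\le m-2$, so $\partial_t^{m-1-k}{\rm div}u$ is controlled in $L^2$ by $\sum_{|\beta|\le m-2}\|\mathcal Z^\beta\nabla u\|_1^2\le\Lambda_{1m}(t)$, while $\partial_t^{k}p$ is put in $L^\infty$ by the anisotropic Sobolev embedding of Proposition~\ref{prop2.3}, using $\|p\|_{\mathcal H^m}$ and the conormal bounds on $\nabla p$ in $\Lambda_{1m}$ (the condition $m\ge3$ leaving just enough room, the extreme values $k=1$ and $k=m-1$ being disposed of directly by the $L^\infty$ bounds on $p_t$ and on ${\rm div}u$ in $Q$). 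Expanding $\partial_t^{m-1}(u\cdot\nabla p)=\sum_k\binom{m-1}{k}\partial_t^{k}u\cdot\partial_t^{m-1-k}\nabla p$, the same bookkeeping controls every summand with $k\ge1$, since then $\partial_t^{m-1-k}\nabla p$ has time order $\le m-2$ and therefore enters $\sum_{|\beta|\le m-2}\|\mathcal Z^\beta\nabla p\|_1^2$.

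Hence the only term not immediately dominated by $\Lambda_{1m}$ and $Q$ is $u\cdot\partial_t^{m-1}\nabla p$, containing the pure time derivative $\partial_t^{m-1}\nabla p$. Here I would substitute the momentum equation \eqref{eq1-1}: applying $\partial_t^{m-1}$,
\begin{equation*}
\partial_t^{m-1}\nabla p=-\partial_t^{m-1}(\rho u_t)-\partial_t^{m-1}(\rho u\cdot\nabla u)-\mu\varepsilon\,\nabla\times(\nabla\times\partial_t^{m-1}u)+(2\mu+\lambda)\varepsilon\,\nabla\partial_t^{m-1}{\rm div}u-\partial_t^{m-1}(\nabla d\cdot\Delta d).
\end{equation*}
The inertial term produces $\rho\,\partial_t^{m}u$, bounded by $\|u\|_{\mathcal H^m}$, plus commutators with $\partial_t^{j}\rho$, $j\ge1$, which are expressed inductively through the mass equation and estimated by $Q$ and $\Lambda_{1m}$. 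In the convective term $\partial_t^{m-1}(\rho u\cdot\nabla u)$, the top-order piece $\rho\,u\cdot\nabla\partial_t^{m-1}u$ is handled by passing to the local frame near $\partial\Omega$, where the coefficient of $\partial_z$ in $u\cdot\nabla$ equals $u\cdot N$ and vanishes on the boundary, so that $u\cdot\nabla\partial_t^{m-1}u$ reduces to conormal derivatives of $\partial_t^{m-1}u$ controlled by $\|u\|_{\mathcal H^m}$; the remaining pieces are routine. The director forcing $\partial_t^{m-1}(\nabla d\cdot\Delta d)$ is the new ingredient, but it causes no trouble: its top-order factors $\partial_t^{m-1}\Delta d$ and $\partial_t^{m-1}\nabla d$ are bounded in $L^2$ by $\|\Delta d\|_{\mathcal H^{m-1}}^2$ and $\|\nabla d\|_{\mathcal H^m}^2\le\Lambda_{1m}(t)$, the accompanying factors by the $L^\infty$ bounds $\|\nabla d\|_{W^{1,\infty}}^2$ (hence $\Delta d\in L^\infty$) and $\|\nabla\Delta d\|_{L^\infty}^2$ in $Q$, and the intermediate products by Proposition~\ref{prop2.2} or Proposition~\ref{prop2.3}.

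The hard part will be the two $\varepsilon$-weighted viscous terms $\varepsilon\,\nabla\times(\nabla\times\partial_t^{m-1}u)$ and $\varepsilon\,\nabla\partial_t^{m-1}{\rm div}u$, which carry two spatial derivatives of $\partial_t^{m-1}u$ and are controlled neither by $\Lambda_{1m}$ nor by $Q$; they cannot be estimated pointwise. Following \cite{Wang-Xin-Yong}, I would not estimate $u\cdot\partial_t^{m-1}\nabla p$ in isolation but instead substitute the full identity for $\partial_t^{m-1}{\rm div}u$ into the quadratic functional $\int|\partial_t^{m-1}{\rm div}u|^2\,dx$ and integrate the viscous contributions --- terms of the shape $\varepsilon\int(\gamma p)^{-1}\,u\cdot\big(\nabla\times(\nabla\times\partial_t^{m-1}u)\big)\,\partial_t^{m-1}{\rm div}u\,dx$ and its divergence analogue --- by parts once: the boundary integrals vanish or are controlled thanks to $u\cdot n=0$ on $\partial\Omega$, after the integration by parts only first-order (conormal-type) derivatives of $\partial_t^{m-1}u$ survive, and these, multiplied by $\varepsilon\le1$ and by $L^\infty$ quantities from $Q$, are dominated by $\Lambda_{1m}(t)$, while the residual $\delta\|\partial_t^{m-1}{\rm div}u\|_{L^2}^2$ is absorbed into the left-hand side for $\delta$ small. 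Collecting everything and using Young's inequality to close yields \eqref{31102}. I expect this last manoeuvre --- ensuring that the viscous terms created by the momentum equation do not reintroduce $\varepsilon$-dependence --- to be the only real obstacle; the rest is a lengthy but routine Leibniz-plus-anisotropic-Sobolev computation.
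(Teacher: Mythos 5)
Your opening steps are on the right track: reading $\partial_t^{m-1}{\rm div}u$ off the mass equation $p_t+u\cdot\nabla p+\gamma p\,{\rm div}u=0$, dividing by $\gamma p\ge \gamma c_0^\gamma$, and disposing of $\partial_t^m p$, the Leibniz commutators, and all terms of time order $\le m-2$ via $\Lambda_{1m}$ and $Q$ is exactly the correct setup. The problem is what you do with the one remaining term $u\cdot\partial_t^{m-1}\nabla p$.

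You substitute the momentum equation for $\partial_t^{m-1}\nabla p$, which is both unnecessary and fatal. It introduces $\mu\varepsilon\,\nabla\times(\nabla\times\partial_t^{m-1}u)$ and $(2\mu+\lambda)\varepsilon\,\nabla\partial_t^{m-1}{\rm div}u$, i.e.\ \emph{two} spatial derivatives on $\partial_t^{m-1}u$. Your proposed remedy --- put the identity for $\partial_t^{m-1}{\rm div}u$ under $\int|\cdot|^2\,dx$, integrate the viscous pieces by parts once, and claim that only first-order conormal derivatives of $\partial_t^{m-1}u$ survive --- does not close. After integrating by parts in, say, $\varepsilon\int(\gamma p)^{-1}u\cdot\bigl(\nabla\times(\nabla\times\partial_t^{m-1}u)\bigr)\,\partial_t^{m-1}{\rm div}u\,dx$, the derivative lands on the test factor and produces $\varepsilon\int(\nabla\times\partial_t^{m-1}u)\cdot\nabla\bigl[(\gamma p)^{-1}(\partial_t^{m-1}{\rm div}u)\,u\bigr]\,dx$, which still contains $\nabla(\partial_t^{m-1}{\rm div}u)$, a genuine second normal derivative of $\partial_t^{m-1}u$. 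Neither $\Lambda_{1m}(t)$ (which stops at $\sum_{|\beta|\le m-2}\|\mathcal Z^\beta\nabla u\|_1^2$ and at $\|u\|_{\mathcal H^m}$, both purely conormal at top order) nor $Q(t)$ controls $\varepsilon\|\nabla\partial_t^{m-1}u\|_{L^2}$ or $\varepsilon\|\nabla\partial_t^{m-1}{\rm div}u\|_{L^2}$ pointwise in time, and no further integration by parts will convert a normal derivative on $\partial_t^{m-1}u$ into a conormal one. So the argument cannot be closed along this route.

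The fix is the one you already use one step too late. Observe $u\cdot\partial_t^{m-1}\nabla p=u\cdot\nabla(\partial_t^{m-1}p)$, and near the boundary write the transport operator in the local frame:
$$u\cdot\nabla=u^1\,\partial_{y_1}+u^2\,\partial_{y_2}+(u\cdot N)\,\partial_z
=u^1\,Z_1+u^2\,Z_2+\frac{u\cdot N}{\varphi(z)}\,Z_3,$$
where $u\cdot N$ vanishes on $\partial\Omega$ because $u\cdot n|_{\partial\Omega}=0$, so $\bigl\|\tfrac{u\cdot N}{\varphi(z)}\bigr\|_{L^\infty}\lesssim\|\nabla u\|_{L^\infty}\le Q(t)^{1/2}$. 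Hence $u\cdot\nabla\partial_t^{m-1}p$ is a finite combination of $\mathcal Z^\alpha p$ with $|\alpha|=m$ and $L^\infty$ coefficients controlled by $Q$, giving $\|u\cdot\nabla\partial_t^{m-1}p\|_{L^2}^2\le C\,Q(t)\,\|p\|_{\mathcal H^m}^2\le C\,Q(t)\,\Lambda_{1m}(t)$, which closes the estimate directly --- no momentum equation, no viscous terms, no integration by parts. This is exactly the mechanism you invoke later for $\rho u\cdot\nabla\partial_t^{m-1}u$; apply it here and the detour (and the whole difficulty you flagged as ``the hard part'') disappears.
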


\begin{lemm}\label{lemma3.12}
For every $m \ge 1$, it holds that
\begin{equation}\label{31201}
\int_0^t \|\nabla \partial_t^{m-1}p(\tau)\|_{L^2}^2 d\tau
\le C\varepsilon^2 \int_0^t \|\nabla^2 \partial_t^{m-1}u(\tau)\|_{L^2}^2 d\tau
+C[1+P(Q(t))]\int_0^t \Lambda_m(\tau)d\tau.
\end{equation}
\end{lemm}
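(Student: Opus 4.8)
The plan is to read $\nabla p$ off the momentum equation. Rewriting \eqref{eq1-1} as
\begin{equation*}
\nabla p=-\rho u_t-\rho u\cdot\nabla u-\mu\varepsilon\nabla\times(\nabla\times u)+(2\mu+\lambda)\varepsilon\nabla{\rm div}u-\nabla d\cdot\Delta d ,
\end{equation*}
I would apply $\partial_t^{m-1}$, take the $L^2$ norm, square, and integrate over $[0,t]$. Since $\nabla\times(\nabla\times v)=\nabla{\rm div}v-\Delta v$, both viscous contributions are pointwise dominated by $\varepsilon|\nabla^2\partial_t^{m-1}u|$, so after squaring and integrating they produce exactly the term $C\varepsilon^2\int_0^t\|\nabla^2\partial_t^{m-1}u\|_{L^2}^2\,d\tau$. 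It then remains to bound
\begin{equation*}
\int_0^t\big(\|\partial_t^{m-1}(\rho u_t)\|_{L^2}^2+\|\partial_t^{m-1}(\rho u\cdot\nabla u)\|_{L^2}^2+\|\partial_t^{m-1}(\nabla d\cdot\Delta d)\|_{L^2}^2\big)\,d\tau
\end{equation*}
by $C[1+P(Q(t))]\int_0^t\Lambda_m(\tau)\,d\tau$.

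For the inertial term I would expand $\partial_t^{m-1}(\rho u_t)=\rho\,\partial_t^m u+[\partial_t^{m-1},\rho]u_t$. The leading piece is harmless because $\|\rho\,\partial_t^m u\|_{L^2}\le\|\rho\|_{L^\infty}\|u\|_{\mathcal{H}^m}$ and $\|u\|_{\mathcal{H}^m}^2\le\Lambda_m$. For the commutator $[\partial_t^{m-1},\rho]u_t=\sum_{k=1}^{m-1}\binom{m-1}{k}\partial_t^k\rho\,\partial_t^{m-k}u$ I would use the continuity equation $\rho_t=-\rho\,{\rm div}u-u\cdot\nabla\rho$ recursively to trade each $\partial_t^k\rho$ for spatial conormal derivatives of $(\rho,u)$ of matching order, then apply the Gagliardo-Nirenberg-Moser inequality (Proposition \ref{prop2.2}): the lower-order factor goes into $L^\infty$ and is absorbed into $P(Q(t))$ through the quantities $\|p_t\|_{L^\infty}$, $\|u\|_{L^\infty}$, $\|\nabla p\|_{\mathcal{H}^{1,\infty}}$, $\|\nabla u\|_{\mathcal{H}^{1,\infty}}$ occurring in $Q(t)$, while the top-order factor goes into $L^2$ and is controlled by $\|u\|_{\mathcal{H}^m}$, $\|\nabla u\|_{\mathcal{H}^{m-1}}$, $\sum_{k\le m-2}\|\partial_t^k\nabla p\|_{m-1-k}$ and the $\varepsilon$-weighted term $\varepsilon\|\nabla\partial_t^{m-1}p\|_{L^2}^2$ present in $\Lambda_m$.

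The convective term $\partial_t^{m-1}(\rho u\cdot\nabla u)$ is handled identically by Leibniz and Proposition \ref{prop2.2}: its only genuinely top-order contribution is $\rho\,u\cdot\nabla\partial_t^{m-1}u$, bounded by $\|\rho\|_{L^\infty}\|u\|_{L^\infty}\|\nabla u\|_{\mathcal{H}^{m-1}}$, and every remaining product splits into an $L^\infty$ factor in $Q(t)$ and an $L^2$ factor in $\Lambda_m$. For $\partial_t^{m-1}(\nabla d\cdot\Delta d)$ I would first use the director equation \eqref{eq1}$_3$ to rewrite time derivatives of $d$ in terms of $u$, $\nabla d$, $\Delta d$ and their conormal derivatives, and then Proposition \ref{prop2.2} gives
\begin{equation*}
\int_0^t\|\partial_t^{m-1}(\nabla d\cdot\Delta d)\|_{L^2}^2\,d\tau\lesssim\|\nabla d\|_{L^\infty_{x,t}}^2\!\int_0^t\!\|\Delta d\|_{\mathcal{H}^{m-1}}^2\,d\tau+\|\Delta d\|_{L^\infty_{x,t}}^2\!\int_0^t\!\|\nabla d\|_{\mathcal{H}^{m}}^2\,d\tau ,
\end{equation*}
which is $\le C[1+P(Q(t))]\int_0^t\Lambda_m(\tau)\,d\tau$ since $\|\Delta d\|_{L^\infty}\le\|\nabla d\|_{W^{1,\infty}}$ is part of $Q(t)$. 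Collecting the three estimates together with the viscous contribution yields the lemma.

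The step I expect to be the main obstacle is the bookkeeping in $[\partial_t^{m-1},\rho]u_t$ and, more broadly, making sure that every mixed space-time derivative produced by the Leibniz expansions is actually dominated by $\Lambda_m$ rather than by a quantity such as $\|\nabla^2\partial_t^{m-1}u\|_{L^2}$ that must be kept on the right-hand side; this is precisely what forces the repeated use of the continuity equation to lower the time-derivative order on $\rho$ and the careful tracking of the $\varepsilon$-weight carried by $\|\nabla\partial_t^{m-1}p\|_{L^2}$ inside $\Lambda_m$.
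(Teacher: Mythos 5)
Your proposal matches the paper's proof exactly: the paper also reads $\partial_t^{m-1}\nabla p$ off the momentum equation \eqref{eq1-1} and bounds the resulting products via Proposition \ref{prop2.2}, with the viscous terms producing the $\varepsilon^2\int_0^t\|\nabla^2\partial_t^{m-1}u\|_{L^2}^2\,d\tau$ contribution. The paper dispatches this in one sentence, whereas you spell out the commutator bookkeeping; the underlying argument is the same.
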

\begin{proof}
Applying $\partial_t^{m-1}$ to \eqref{eq1-1}, we find
\begin{equation*}
\partial_t^{m-1}\nabla p
=\partial_t^{m-1}(-\rho u_t-\rho u\cdot \nabla u
-\mu \varepsilon \nabla \times (\nabla \times u)
+(2\mu+\lambda)\varepsilon \nabla {\rm div}u-\nabla d\cdot \Delta d).
\end{equation*}
By using the Proposition \ref{prop2.2}, it is easy to deduce the estimate
\eqref{31201}. Hence, we complete the proof of Lemma \ref{lemma3.12}.
\end{proof}

Next, we recall an important estimate that has been proved by Wang et al. \cite{Wang-Xin-Yong}.

\begin{lemm}\label{lemma3.13}
For every $m \ge 1$, it holds that
\begin{equation}\label{31301}
\int_0^t \|\nabla \mathcal{Z}^{m-2}{\rm div}u(\tau)\|_{L^2}^2 d\tau
\le C\int_0^t \|\nabla \partial_t^{m-1}p(\tau)\|_{L^2}^2 d\tau
+C_m[1+P(Q(t))]\int_0^t \Lambda_m(\tau)d\tau.
\end{equation}
\end{lemm}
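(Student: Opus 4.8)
\noindent The plan is to eliminate $\mathrm{div}\,u$ in favour of pressure derivatives by means of the mass equation: dividing \eqref{eq1}$_1$ by $\rho$ and using $p=\rho^{\gamma}$ gives the pointwise identity $\mathrm{div}\,u=-\frac{1}{\gamma p}(\partial_t p+u\cdot\nabla p)$. The structural point that makes the estimate close is that $u\cdot\nabla p$ is, near $\partial\Omega$, a \emph{conormal} quantity: in the local coordinates $u\cdot\nabla=u_{1}Z_{1}+u_{2}Z_{2}+a^{3}Z_{3}$ with $a^{3}=(u\cdot N)/\varphi(z)$, and since the boundary condition $u\cdot n|_{\partial\Omega}=0$ forces $u\cdot N$ to vanish linearly at $z=0$, exactly as $\varphi(z)$ does, the coefficient $a^{3}$ extends to a bounded smooth function up to the boundary, with all its conormal derivatives controlled by $Q(t)$ in \eqref{def2} and the lower bound $p\ge c_{0}$. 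Hence the transport term never charges a genuine normal derivative to $p$.

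Next I would apply $\nabla\mathcal{Z}^{\alpha}$, $|\alpha|\le m-2$ (only $|\alpha|=m-2$ being nontrivial), to this identity and expand by Leibniz, distributing the derivatives through $\frac{1}{\gamma p}$ and, in the transport term, through $u_{1},u_{2},a^{3}$. Every term in which at least one derivative hits a coefficient becomes a product of a factor bounded in $L^{\infty}_{t,x}$ --- namely $\frac{1}{\gamma p}$, the coefficients $u_{1},u_{2},a^{3}$, and their conormal derivatives, all estimated through $Q(t)$ and $p\ge c_{0}$ --- with a factor carrying at most $m-1$ conormal/time derivatives, at most one of them a full gradient, of $p$ or $u$; integrating in time and applying the Gagliardo--Nirenberg--Moser inequality of Proposition \ref{prop2.2} bounds the sum of these by $C_{m}[1+P(Q(t))]\int_{0}^{t}\Lambda_{m}(\tau)\,d\tau$. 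The purely tangential components $Z_{i}\mathcal{Z}^{\alpha}\mathrm{div}\,u$, $i=1,2$, are anyway dominated by $\|\mathrm{div}\,u\|_{\mathcal{H}^{m-1}}\le\|\nabla u\|_{\mathcal{H}^{m-1}}\le\Lambda_{m}^{1/2}$, so only the normal part of $\nabla\mathcal{Z}^{\alpha}\mathrm{div}\,u$ needs further work.

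This leaves the two ``fully top-order'' contributions $-\frac{1}{\gamma p}\nabla\mathcal{Z}^{\alpha}\partial_t p$ and $-\frac{a^{i}}{\gamma p}\nabla\mathcal{Z}^{\alpha}Z_{i}p$ ($i=1,2,3$, writing $a^{1}=u_{1},a^{2}=u_{2}$), and the device is to commute the single remaining $\nabla$ \emph{into} the conormal operator: $\nabla\mathcal{Z}^{\alpha}Z_{i}p=(\mathcal{Z}^{\alpha}Z_{i})\nabla p+[\nabla,\mathcal{Z}^{\alpha}Z_{i}]p$. The operator $\mathcal{Z}^{\alpha}Z_{i}$ is conormal of order $m-1$ but contains at most $m-2$ time derivatives, the spatial field $Z_{i}$ occupying one slot; hence $(\mathcal{Z}^{\alpha}Z_{i})\nabla p$ is controlled by $\sum_{k=0}^{m-2}\|\partial_t^{k}\nabla p\|_{m-1-k}\le\Lambda_{m}^{1/2}$, while the commutator has strictly lower conormal order and at most one gradient, so it is $\le\|\nabla p\|_{\mathcal{H}^{m-2}}+\|p\|_{\mathcal{H}^{m-1}}\le\Lambda_{m}^{1/2}$ --- the term $\nabla\partial_t^{m-1}p$ does not appear here. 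For the $\partial_t p$ contribution, write $\mathcal{Z}^{\alpha}=\partial_t^{\alpha_{0}}Z^{\alpha_{1}}$ with $\alpha_{0}+|\alpha_{1}|=m-2$; if $\alpha_{1}\neq0$ then $\alpha_{0}+1\le m-2$ and $\nabla\mathcal{Z}^{\alpha}\partial_t p$ is again bounded by $\sum_{k=0}^{m-2}\|\partial_t^{k}\nabla p\|_{m-1-k}\le\Lambda_{m}^{1/2}$, whereas the single case $\alpha=(m-2,0)$ produces $-\frac{1}{\gamma p}\nabla\partial_t^{m-1}p$, which (using $p\ge c_{0}$) contributes exactly $C\int_{0}^{t}\|\nabla\partial_t^{m-1}p\|_{L^{2}}^{2}\,d\tau$. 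Collecting the estimates yields \eqref{31301}.

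The step I expect to be the main obstacle is precisely this isolation of $\nabla\partial_t^{m-1}p$ as the \emph{only} irreducible term. The naive route --- expanding the normal derivatives of $\mathrm{div}\,u$ directly --- immediately produces $\nabla^{2}p$, a quantity deliberately absent from $\Lambda_{m}$ because conormal estimates do not control full normal derivatives; what avoids it is the combination of replacing $\mathrm{div}\,u$ by pressure derivatives (a quantity governed by a transport rather than a degenerate equation) and exploiting $u\cdot n|_{\partial\Omega}=0$ to keep $u\cdot\nabla$ conormal, so that $\nabla p$ is only ever hit by conormal operators and stays inside $\|\nabla p\|_{\mathcal{H}^{m-1}}$ --- of whose components only the purely-time $\nabla\partial_t^{m-1}p$ escapes $\Lambda_{m}$, and that one arises solely from the $\partial_t p$ term. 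Keeping the time-derivative bookkeeping airtight, and dispatching the numerous coefficient-commutator terms by Proposition \ref{prop2.2}, is the technical heart of the argument; it parallels Lemma~3.13 of Wang et al.\ \cite{Wang-Xin-Yong}.
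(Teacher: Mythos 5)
Your proof is correct and reconstructs precisely the argument the paper defers to: the paper itself provides no proof of Lemma~\ref{lemma3.13} but cites \cite{Wang-Xin-Yong} (Lemma~3.13 there), whose strategy — replace ${\rm div}\,u$ by $-\frac{1}{\gamma p}(\partial_t p+u\cdot\nabla p)$ via the mass equation, use $u\cdot n|_{\partial\Omega}=0$ to make $u\cdot\nabla$ a conormal operator, and then isolate $\nabla\partial_t^{m-1}p$ as the sole term escaping $\sum_{k=0}^{m-2}\|\partial_t^k\nabla p\|_{m-1-k}\subset\Lambda_m^{1/2}$ — is exactly what you carry out.
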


Finally, we estimate the  estimate for the quantity $\nabla \Delta d$.

\begin{lemm}\label{lemma3.14}
For every $m \ge 1$, it holds that
\begin{equation}\label{31401}
\int_0^t \|\nabla \Delta d\|_{\mathcal{H}^{m-1}}^2 d\tau
\le C[1+P(Q(t))]\int_0^t \Lambda_m(\tau)d\tau.
\end{equation}
\end{lemm}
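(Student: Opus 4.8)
The plan is to use the director equation~$\eqref{eq1}_3$ to trade $\nabla\Delta d$ for quantities already controlled by $\Lambda_m$ and $Q(t)$. Rewriting $\eqref{eq1}_3$ as $\Delta d = d_t + u\cdot\nabla d - |\nabla d|^2 d$ and applying $\nabla$ yields the pointwise identity
\[
\nabla\Delta d = \partial_t\nabla d + \nabla(u\cdot\nabla d) - \nabla(|\nabla d|^2 d),
\]
so that, by the triangle inequality in $\mathcal{H}^{m-1}$, it suffices to bound $\int_0^t\|\partial_t\nabla d\|_{\mathcal{H}^{m-1}}^2\,d\tau$, $\int_0^t\|\nabla(u\cdot\nabla d)\|_{\mathcal{H}^{m-1}}^2\,d\tau$ and $\int_0^t\|\nabla(|\nabla d|^2 d)\|_{\mathcal{H}^{m-1}}^2\,d\tau$ separately. (For $m=1$ this reduces to $\mathcal{H}^0=L^2$ and the argument below specializes to the proof of Lemma~\ref{lemma3.3-1}.)

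For the first term I would use that each conormal field $Z_i$ commutes with $\partial_t$, so that $\mathcal{Z}^\alpha\partial_t=\mathcal{Z}^{\alpha'}$ with $|\alpha'|=|\alpha|+1$; hence $\|\partial_t\nabla d\|_{\mathcal{H}^{m-1}}\le\|\nabla d\|_{\mathcal{H}^{m}}$, and this term is bounded by $\int_0^t\Lambda_m(\tau)\,d\tau$. For the third term I would simply invoke \eqref{3344}, already established in the proof of Lemma~\ref{lemma3.3}, which gives exactly $\int_0^t\|\nabla(|\nabla d|^2 d)\|_{\mathcal{H}^{m-1}}^2\,d\tau\le C(1+P(Q(t)))\int_0^t\Lambda_m(\tau)\,d\tau$.

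The only term requiring a short computation is the second one. Expanding $\nabla(u\cdot\nabla d)=\nabla u\cdot\nabla d+u\cdot\nabla^2 d$ and applying the Gagliardo--Nirenberg--Moser inequality (Proposition~\ref{prop2.2}), one dominates $\int_0^t\|\nabla(u\cdot\nabla d)\|_{\mathcal{H}^{m-1}}^2\,d\tau$ by
\[
\|\nabla u\|_{L^\infty_{x,t}}^2\!\int_0^t\!\|\nabla d\|_{\mathcal{H}^{m-1}}^2 d\tau+\|\nabla d\|_{L^\infty_{x,t}}^2\!\int_0^t\!\|\nabla u\|_{\mathcal{H}^{m-1}}^2 d\tau+\|u\|_{L^\infty_{x,t}}^2\!\int_0^t\!\|\nabla^2 d\|_{\mathcal{H}^{m-1}}^2 d\tau+\|\nabla^2 d\|_{L^\infty_{x,t}}^2\!\int_0^t\!\|u\|_{\mathcal{H}^{m-1}}^2 d\tau,
\]
all of whose $L^\infty$ prefactors belong to $Q(t)$ (which controls $\|u\|_{L^\infty}$, $\|\nabla u\|_{\mathcal{H}^{1,\infty}}$ and $\|\nabla d\|_{W^{1,\infty}}$). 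To absorb $\int_0^t\|\nabla^2 d\|_{\mathcal{H}^{m-1}}^2\,d\tau$ I would apply the Neumann elliptic regularity estimate exactly as in \eqref{3355} to each $\partial_t^{\alpha_0}d$, which solves $\Delta(\partial_t^{\alpha_0}d)=\partial_t^{\alpha_0}\Delta d$ with $\partial_n(\partial_t^{\alpha_0}d)=0$, obtaining $\|\nabla^2 d\|_{\mathcal{H}^{m-1}}^2\le C_{m+1}(\|\nabla d\|_{\mathcal{H}^{m-1}}^2+\|\Delta d\|_{\mathcal{H}^{m-1}}^2)\le C_{m+1}\Lambda_m$. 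Collecting the three bounds gives \eqref{31401}.

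I do not expect a genuine obstacle here: the content of the lemma is precisely the observation that, via the director equation, one conormal derivative of $\nabla\Delta d$ can be exchanged for one more conormal derivative of $\nabla d$ (which fits into $\|\nabla d\|_{\mathcal{H}^{m}}$) together with nonlinear terms that have already been handled. The only mild technical point is converting $\Delta d$ into $\nabla^2 d$ in conormal norms, which is the standard Neumann elliptic estimate recalled above.
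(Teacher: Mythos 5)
Your proof is correct and takes the same route as the paper: use $\eqref{eq1}_3$ to write $\nabla\Delta d = \nabla d_t + \nabla(u\cdot\nabla d) - \nabla(|\nabla d|^2 d)$, then bound each piece via Proposition~\ref{prop2.2} (the paper's own proof is a one-line invocation of exactly this). Your fleshed-out treatment — using commutation of $\partial_t$ with the $Z_i$'s, invoking \eqref{3344} for the cubic term, and converting $\|\nabla^2 d\|_{\mathcal{H}^{m-1}}$ to $\|\Delta d\|_{\mathcal{H}^{m-1}}$ via Neumann elliptic regularity — is precisely what the terse "it is easy to deduce" in the paper implicitly relies on.
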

\begin{proof}
By virtue of the equation \eqref{eq1}$_3$, it is easy to deduce that
\begin{equation*}
\nabla \Delta d=\nabla d_t+\nabla(u\cdot \nabla d)-\nabla(|\nabla d|^2 d).
\end{equation*}
By using the Proposition \ref{prop2.2}, it is easy to deduce the estimate
\eqref{31401}. Then, we complete the proof of Lemma \ref{lemma3.14}.
\end{proof}

Substituting \eqref{31201}, \eqref{31301} and \eqref{31401}  
into \eqref{391}, it is easy to deduce that
\begin{equation}\label{3a1}
\begin{aligned}
&\underset{0\le \tau \le t}{\sup}\sum_{k=0}^{m-2}
\|(\partial_t^k \nabla p, \partial_t^k {\rm div}u)(\tau)\|_{m-1-k}^2
+\varepsilon \int_0^t \sum_{k=0}^{m-2}\|\partial_t^k \nabla {\rm div}u(\tau)\|_{m-1-k}^2 d\tau\\
&\le CC_{m+2}\left\{\Lambda_m(0)
+C\delta \varepsilon^2 \int_0^t \|\nabla^2 \partial_t^{m-1}u(\tau)\|_{L^2}^2d\tau
+\varepsilon\int_0^t \|\nabla^2 u(\tau)\|_{\mathcal{H}^{m-2}}^2d\tau\right\}\\
&\quad \quad +C_\delta C_{m+2}[1+P(Q(t))]\int_0^t \Lambda_m(\tau)d\tau.
\end{aligned}
\end{equation}
Substituting \eqref{31401} into \eqref{3101}, we obtain
\begin{equation}\label{3a2}
\begin{aligned}
&\underset{0\le \tau \le t}{\sup}\varepsilon
\left(\|\partial_t^{m-1}{\rm div}u(\tau)\|_{L^2}^2
      +\|\partial_t^{m-1} \nabla p(\tau)\|_{L^2}^2 \right)
+\varepsilon^2 \int_0^t \|\nabla \partial_t^{m-1} {\rm div}u(\tau)\|_{L^2}^2 d\tau\\
&\le C\varepsilon \Lambda_m(0)+C C_{m+1}[1+P(Q(t))]\int_0^t \Lambda_m (\tau)d\tau.
\end{aligned}
\end{equation}
Then, the combination of \eqref{3a1} and \eqref{3a2} yields directly
\begin{equation}\label{3a3}
\begin{aligned}
&\underset{0\le \tau \le t}{\sup}\sum_{k=0}^{m-2}
\left\{\|(\partial_t^k \nabla p, \partial_t^k {\rm div}u)(\tau)\|_{m-1-k}^2
+\varepsilon\|(\partial_t^{m-1}{\rm div}u,\partial_t^{m-1} \nabla p)(\tau)\|_{L^2}^2
\right\}\\
&+\varepsilon \int_0^t \sum_{k=0}^{m-2}\|\partial_t^k \nabla {\rm div}u(\tau)\|_{m-1-k}^2 d\tau
+\varepsilon^2 \int_0^t \|\nabla \partial_t^{m-1} {\rm div}u(\tau)\|_{L^2}^2 d\tau\\
&\le CC_{m+2}\left\{\Lambda_m(0)
+C\delta \varepsilon^2 \int_0^t \|\nabla^2 \partial_t^{m-1}u(\tau)\|_{L^2}^2d\tau
+\varepsilon\int_0^t \|\nabla^2 u(\tau)\|_{\mathcal{H}^{m-2}}^2d\tau\right\}\\
&\quad \quad +C_\delta C_{m+2}[1+P(Q(t))]\int_0^t \Lambda_m(\tau)d\tau.
\end{aligned}
\end{equation}
On the other hand, it is easy to check that
\begin{equation}\label{3a4}
\sum_{|\beta|\le m-2}\|\mathcal{Z}^\beta \nabla u\|_{1}^2
\le C_{m+1}(\|u\|_{\mathcal{H}^m}^2+\|\eta\|_{\mathcal{H}^{m-1}}^2
+\sum_{k=0}^{m-2}\|\partial_t^k {\rm div}u\|_{m-1-k}^2),
\end{equation}
\begin{equation}\label{3a5}
\int_0^t \|\nabla^2 u\|_{\mathcal{H}^{m-1}}^2 d\tau
\le C_{m+2}\int_0^t (\|\nabla u\|_{\mathcal{H}^m}^2
+\|\nabla \eta\|_{\mathcal{H}^{m-1}}^2
+\|\nabla {\rm div}u\|_{\mathcal{H}^{m-1}}^2+\Lambda_m)d\tau,
\end{equation}
\begin{equation}\label{3a6}
\varepsilon \int_0^t \|\nabla^2 \mathcal{Z}^{m-2}u\|_{L^2}^2 d\tau
\le C_{m+1}\varepsilon\int_0^t \|\nabla \eta\|_{\mathcal{H}^{m-2}}^2d\tau
+C_{m+1}\int_0^t \Lambda_m(\tau)d\tau,
\end{equation}
and
\begin{equation}\label{3a7}
\begin{aligned}
&\sum_{k=0}^{m-2}\int_0^t \|\nabla^2 \partial_t^k u\|_{m-1-k}^2d\tau\\
&\le C_{m+2}\int_0^t (\|\nabla u\|_{\mathcal{H}^m}^2
+\|\nabla \eta\|_{\mathcal{H}^{m-1}}^2)d\tau
+C_{m+2}\int_0^t \Lambda_m(\tau)d\tau\\
&\quad +C_{m+2}\sum_{k=0}^{m-2}\int_0^t \|\partial_t^k \nabla {\rm div}u\|_{m-1-k}^2d\tau.\\
\end{aligned}
\end{equation}
The combination of \eqref{3a3}-\eqref{3a7} yields directly
\begin{equation}\label{3a8}
\begin{aligned}
&\underset{0\le \tau \le t}{\sup}
\left\{\Lambda_{1m}(\tau)+\|(\eta, \Delta d)(\tau)\|_{\mathcal{H}^{m-1}}^2
+\varepsilon \|(\partial_t^{m-1} {\rm div}u, \partial_t^{m-1} \nabla p)(\tau)\|_{L^2}^2\right\}\\
&+\varepsilon \int_0^t(\|\nabla u\|_{\mathcal{H}^{m}}^2+
\|\nabla \eta\|_{\mathcal{H}^{m-1}}^2) d\tau
+\int_0^t (\|\Delta d\|_{\mathcal{H}^{m}}^2
+\|\nabla \Delta d\|_{\mathcal{H}^{m-1}}^2) d\tau\\
&+\varepsilon \int_0^t \sum_{k=0}^{m-2}\|\partial_t^k \nabla {\rm div}u\|_{m-1-k}^2d\tau
+\varepsilon^2 \int_0^t \|\partial_t^{m-1}\nabla {\rm div}u\|_{L^2}^2 d\tau\\
&+\varepsilon \int_0^t \sum_{k=0}^{m-2}\|\partial_t^k \nabla^2 u\|_{m-1-k}^2d\tau
+\varepsilon^2 \int_0^t \|\partial_t^{m-1}\nabla^2 u\|_{L^2}^2 d\tau
+\int_0^t \|\partial_t^{m-1}\nabla p\|_{L^2}^2 d\tau\\
&\le CC_{m+2}\left\{\Lambda_m(0)+[1+P(Q(t))]\int_0^t P(\Lambda_m(\tau))d\tau\right\}.
\end{aligned}
\end{equation}
\subsection{$L^\infty-estimates$}

\quad In this subsection, we shall provide the $L^\infty-$estimates
of $(p, u, d)$ which are needed to estimate on the right-hand side of
the estimate \eqref{3a8}.
\begin{lemm}\label{lemma3.15}
For a smooth solution $(p, u, d)$ to \eqref{eq1} and \eqref{bc2}, it holds that
\begin{equation}\label{31501}
\|\mathcal{Z}^\alpha ({\rm ln}\rho, p, u)\|_{L^\infty}^2
\le CP(\Lambda_{1m}(t)),\quad m \ge 2+|\alpha|,
\end{equation}
\begin{equation}\label{31502}
\|\nabla({\rm ln}\rho, p)\|_{\mathcal{H}^{1,\infty}}^2
\le C_3 \left(P(\|\Delta p\|_{\mathcal{H}^1}^2)
+P(\Lambda_{1m}(t))\right),\quad m \ge 5,
\end{equation}
\begin{equation}\label{31503}
\|{\rm div}u(t)\|_{\mathcal{H}^{1,\infty}}^2
\le C_3 \left(P(\|\Delta p\|_{\mathcal{H}^1}^2)
+P(\Lambda_{1m}(t))\right),\quad m \ge 5,
\end{equation}
\begin{equation}\label{31504}
\|\nabla {\rm div}u(t)\|_{L^\infty}^2\le C_3 P(Q(t)),
\end{equation}
\begin{equation}\label{31505}
\|\nabla {\rm div}u(t)\|_{\mathcal{H}^{1,\infty}}^2
\le C_4 [1+P(Q(t))]\left(\delta \|\Delta p\|_{\mathcal{H}^2}^2
+C_\delta P(\Lambda_{1m})\right),
\end{equation}
%\begin{equation}\label{31506}
%Q(t)\le C_3 \underset{0\le \tau \le t}{\sup}
%\left\{\|\nabla u\|_{\mathcal{H}^{1,\infty}}^2
%+P(\Lambda_{1m})+P(\|\Delta p\|_{\mathcal{H}^1}^2)\right\},
%\end{equation}
\begin{equation}\label{31507}
\|d_t\|_{W^{1,\infty}}^2
+\|\nabla d\|_{\mathcal{H}^{1,\infty}}^2
+\|\nabla^2 d\|_{\mathcal{H}^{1,\infty}}^2
+\|\nabla \Delta d\|_{\mathcal{H}^{1,\infty}}^2
\le C_{m+2} P (\Lambda_{m}(t)),\quad m\ge 3.
\end{equation}
\end{lemm}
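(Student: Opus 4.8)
The plan is to obtain each of the six bounds by feeding the relevant quantity into the anisotropic Sobolev embedding of Proposition~\ref{prop2.3}(1), after first rewriting it through the equations \eqref{eq1}, \eqref{eq1-1} and through the algebraic decomposition of $\Delta$ in the adapted frame $(e_{y^1},e_{y^2},e_z)$, and then absorbing the products by the Moser-type inequality of Proposition~\ref{prop2.2}. The reductions to keep in mind are: since $p=\rho^\gamma$ and $p$ is bounded above and below on $[0,T]$, every conormal norm of $\ln\rho=\gamma^{-1}\ln p$ is dominated by a polynomial in the corresponding conormal norms of $p$, so the $\ln\rho$ bounds reduce to $p$ bounds; the continuity equation gives $\mathrm{div}\,u=-(\gamma p)^{-1}(p_t+u\cdot\nabla p)$, so $\mathrm{div}\,u$ and its derivatives are read off from $p_t$, $\nabla p$ and products; the momentum equation \eqref{eq1-1} expresses $\nabla p$ through $\rho u_t$, $\rho u\cdot\nabla u$, $\varepsilon\Delta u$, $\varepsilon\nabla\mathrm{div}\,u$ and $\nabla d\cdot\Delta d$; and \eqref{eq1}$_3$ gives $d_t=\Delta d-u\cdot\nabla d+|\nabla d|^2 d$ together with $\nabla\Delta d=\nabla d_t+\nabla(u\cdot\nabla d)-\nabla(|\nabla d|^2 d)$. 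Finally, whenever a normal second derivative of $p$ is required it is recovered from the identity $\partial_n^2 p=\Delta p-(\text{tangential part of }\Delta)p+(\text{lower order})$, which converts control of $\|\Delta p\|_{\mathcal{H}^1}$, resp.\ $\|\Delta p\|_{\mathcal{H}^2}$, into control of the missing normal derivative; normal derivatives of $d$ are instead recovered from the Neumann elliptic estimate $\|\nabla^2 g\|_{\mathcal{H}^{k}}\le C_{m+2}(\|\nabla g\|_{L^2}+\|\Delta g\|_{\mathcal{H}^{k}})$ used already in \eqref{3355}, applied with $g=d$ and, after one more differentiation, with $g=\Delta d$.

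For \eqref{31501} I would apply Proposition~\ref{prop2.3}(1) to $f=\mathcal{Z}^\alpha p$ (and likewise to $\mathcal{Z}^\alpha\ln\rho$ and $\mathcal{Z}^\alpha u$) with $m_2=1$ and $m_1=2$: since $m\ge|\alpha|+2$, all the conormal norms that appear — $\|\nabla\mathcal{Z}^\alpha p\|_{H^1_{co}}$, $\|\mathcal{Z}^\alpha p\|_{H^1_{co}}$, $\|\mathcal{Z}^\alpha p\|_{H^2_{co}}$ and their $u$-analogues — are dominated by $P(\Lambda_{1m}(t))$ (using that $\Lambda_{1m}$ carries $\|p\|_{\mathcal{H}^m}$, $\|u\|_{\mathcal{H}^m}$ and $\sum_{|\beta|\le m-2}\|\mathcal{Z}^\beta\nabla p\|_1^2$), and Moser handles the product structure introduced by $\ln\rho$. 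For \eqref{31502} and \eqref{31503} (here $|\alpha|\le1$) the embedding is applied to $f=\mathcal{Z}^\alpha\nabla p$, resp.\ $f=\mathcal{Z}^\alpha\mathrm{div}\,u$; the only factor not already contained in $\Lambda_{1m}$ is the normal second derivative of $p$ hidden in $\|\nabla\mathcal{Z}^\alpha\nabla p\|_{H^1_{co}}$, which by the $\Delta$-decomposition above costs exactly $\|\Delta p\|_{\mathcal{H}^1}$ (hence the term $P(\|\Delta p\|_{\mathcal{H}^1}^2)$ on the right), while for $\mathrm{div}\,u$ one first substitutes the continuity relation, reducing $\mathrm{div}\,u$ to $p_t,\nabla p$ and products, and then repeats the argument. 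The hypothesis $m\ge5$ is precisely what makes the remaining conormal norms fit inside $\Lambda_{1m}$, and the constants are of type $C_3$ because the frame and the partition of unity carry three boundary derivatives.

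For \eqref{31504} I would differentiate $\mathrm{div}\,u=-(\gamma p)^{-1}(p_t+u\cdot\nabla p)$ once in $x$; every resulting factor ($\nabla p_t$, $\nabla^2 p$, $\nabla u$, $\nabla p$, $p_t$, \dots) is a component of $Q(t)$ — recall $Q(t)$ already carries $\|\nabla p\|_{\mathcal{H}^{1,\infty}}$, $\|(p,u,p_t,u_t)\|_{L^\infty}$ and $\|\nabla\Delta d\|_{L^\infty}$ — so Moser closes the bound by $C_3 P(Q(t))$. For \eqref{31505} one peels one further conormal derivative off $\nabla\mathrm{div}\,u$ via Proposition~\ref{prop2.3}(1); this produces a third-order normal derivative of $p$, recovered from $\|\Delta p\|_{\mathcal{H}^2}$, and, being the single dangerous term, it is kept with the small factor $\delta$ while everything else is thrown into $C_\delta[1+P(Q(t))]P(\Lambda_{1m})$; again $m\ge5$. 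Finally \eqref{31507} is the mildest: after replacing $d_t$ and $\nabla\Delta d$ by the expressions above, one applies Proposition~\ref{prop2.3}(1) to $\nabla d,\nabla^2 d,\nabla\Delta d,d_t$ with $m_2$ small and $m_1$ chosen so all conormal norms land inside $\Lambda_m(t)$ (which works for $m\ge3$), recovering $\nabla^2 d$ and $\nabla^3 d$ from the Neumann elliptic estimate and exploiting that $d_t-\Delta d=-u\cdot\nabla d+|\nabla d|^2 d$ is of lower order, so that $\|\nabla d_t\|$ and $\|\nabla\Delta d\|$ are interchangeable modulo $\Lambda_m$; no pressure term enters, and the bound is $C_{m+2}P(\Lambda_m(t))$.

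The main obstacle is not any single computation but the bookkeeping of normal derivatives: for each quantity one must determine exactly how many genuine normal derivatives the anisotropic embedding demands and check that the regularity budget ($m\ge3$ for the director estimate, $m\ge5$ for the pressure and divergence estimates) is enough; and — for the pressure estimates \eqref{31502}, \eqref{31503}, \eqref{31505} only — one must arrange that the one uncontrolled top normal derivative of $p$ enters in precisely the form $\|\Delta p\|_{\mathcal{H}^1}$, resp.\ with a free small constant $\delta$ in front of $\|\Delta p\|_{\mathcal{H}^2}$, since only then are these estimates closable in the bootstrap leading to Theorem~\ref{Theoream3.1}. The director field is comparatively routine because its parabolic dissipation, through the Neumann elliptic estimate, furnishes full conormal control of all the normal derivatives that occur.
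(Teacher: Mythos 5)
Your sketch is, in broad strokes, the same route the paper uses.  For the fluid estimates \eqref{31501}--\eqref{31505} the paper gives no argument at all — it simply cites Wang--Xin--Yong, Lemma~3.14 — so your reconstruction (continuity equation for $\mathrm{div}\,u$, anisotropic embedding of Proposition~\ref{prop2.3}, the coordinate decomposition of $\Delta$ to recover the missing normal second derivative of $p$ at the cost of $\|\Delta p\|_{\mathcal{H}^1}$, resp.\ $\delta\|\Delta p\|_{\mathcal{H}^2}$) is a plausible stand-in for the cited source.  For \eqref{31507}, which the paper does prove, your account matches its proof exactly: apply Proposition~\ref{prop2.3}(1) to $\nabla d$, $\mathcal{Z}\nabla d$, $d_t$, $\nabla d_t$, $\nabla^2 d$ and $\nabla\Delta d$, use the Neumann elliptic estimate $\|\nabla^2 d\|_m^2 \le C_{m+2}(\|\Delta d\|_m^2 + \|\nabla d\|_{L^2}^2)$ (the paper's \eqref{31509}), the explicit decomposition of $\Delta$ and $\nabla^2$ in the adapted frame (the paper's \eqref{315016}--\eqref{315017}), and the transport equation $\nabla\Delta d=\nabla d_t+\nabla(u\cdot\nabla d)-\nabla(|\nabla d|^2 d)$ — all for $m\ge3$.

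There is one step in your sketch that, as literally written, is wrong: in \eqref{31504} you claim that every factor produced by differentiating $\mathrm{div}\,u=-(\gamma p)^{-1}(p_t+u\cdot\nabla p)$, including $\nabla^2 p$, "is a component of $Q(t)$".  That is not so: $Q(t)$ carries only $\|\nabla p\|_{\mathcal{H}^{1,\infty}}$, which controls conormal derivatives of $\nabla p$ but not the bare second normal derivative $\partial_n^2 p$, and $\|\nabla^2 p\|_{L^\infty}$ is \emph{not} dominated by $Q(t)$ (nor by $\Lambda_{1m}$ without reaching for $\|\Delta p\|_{\mathcal{H}^1}$).  What makes \eqref{31504} close in terms of $Q(t)$ alone is that the dangerous factor occurs only inside the combination $u\cdot\nabla^2 p$, and $u$ is tangent to the boundary: writing $u=\Pi u + (u\cdot n)n$ with $u\cdot n|_{\partial\Omega}=0$, the normal piece $(u\cdot n)\partial_n$ behaves like $Z_3$ near $\partial\Omega$ (the coefficient $(u\cdot n)/\varphi(z)$ is bounded by $\|u\|_{W^{1,\infty}}$), so $u\cdot\nabla(\nabla p)$ is in fact a linear combination of $\mathcal{Z}\nabla p$ and $\nabla p$ with bounded coefficients, hence is controlled by $\|\nabla p\|_{\mathcal{H}^{1,\infty}}\bigl(\|u\|_{L^\infty}+\|\nabla u\|_{L^\infty}\bigr)\le P(Q(t))$.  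Without invoking this tangency structure the bound $\|\nabla\mathrm{div}\,u\|_{L^\infty}^2\le C_3 P(Q(t))$ does not follow from your argument; the same device should be kept in mind for the third normal derivative appearing in \eqref{31505}, which you treat rather quickly.  Since this is precisely the "bookkeeping of normal derivatives" you yourself flag as the crux, the omission is worth repairing explicitly.
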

\begin{proof}
The estimates \eqref{31501}-\eqref{31505} have been
proven by Wang et al.\cite{Wang-Xin-Yong}(see Lemma 3.14).
Hence, we give the proof for the estimate \eqref{31507}.
By virtue of the Sobolev inequality in Proposition \ref{prop2.3}, one arrives at
\begin{equation}\label{31508}
\|\nabla d\|_{L^\infty}^2\le C(\|\nabla^2 d\|_{1}^2+\|\nabla d\|_2^2).
\end{equation}
In view of the standard elliptic regularity results with Neumann boundary condition,
we get that
\begin{equation}\label{31509}
\|\nabla^2 d\|_{m}^2\le C_{m+2}(\|\Delta d\|_m^2+\|\nabla d\|_{L^2}^2).
\end{equation}
Then, the combination of \eqref{31508} and \eqref{31509} yields directly
\begin{equation}\label{315010}
\|\nabla d\|_{L^\infty}^2\le C_3(\|\Delta d\|_{1}^2+\|\nabla d\|_2^2).
\end{equation}
For $|\alpha|=1$, the application of Proposition \ref{prop2.3} gives
for $m \ge 3$
\begin{equation*}
\|\mathcal{Z}^\alpha \nabla d\|_{L^\infty}^2
\le C(\|\nabla(\mathcal{Z}^\alpha \nabla d)\|_1
+\|\mathcal{Z}^\alpha \nabla d\|_1)\|\mathcal{Z}^\alpha \nabla d\|_{2}
\le C_{m+2} P (\Lambda_{m}(t)),
\end{equation*}
which, together with \eqref{315010}, yields
\begin{equation}\label{315010-1}
\|\nabla d\|_{\mathcal{H}^{1,\infty}}^2
\le C_{m+2} P (\Lambda_{m}(t)),\quad {\rm for~} m \ge 3.
\end{equation}
By virtue of the equation \eqref{eq1}$_3$, we find
\begin{equation}\label{315011}
\begin{aligned}
\|d_t\|_{L^\infty}^2
&\le C(\|\nabla d_t\|_1^2+\|d_t\|_2^2)\\
&\le C(\|\nabla d_t\|_1^2+\|\Delta d\|_2^2+\|u\cdot \nabla d\|_2^2+\||\nabla d|^2 d\|_2^2).
\end{aligned}
\end{equation}
By Proposition \ref{prop2.2}, \eqref{31501} and \eqref{315010}, one attains
\begin{equation}\label{315012}
\|u\cdot \nabla d\|_2^2
\le C(\|u\|_{L^\infty}^2 \|\nabla d\|_2^2+\|\nabla d\|_{L^\infty}^2 \|u\|_2^2)
\le C_3 \Lambda_{m}(t), ~{\rm for}~m \ge 2;
\end{equation}
and
\begin{equation}\label{315013}
\begin{aligned}
\||\nabla d|^2 d\|_2^2
&\le \sum_{|\gamma|\ge 1,|\beta|+|\gamma|\le 2}
    \int |{Z}^{\beta}(|\nabla d|^2){Z}^\gamma d|^2 dx
     +\||\nabla d|^2\|_2^2\\
&\le \|Z d\|_{L^\infty}^2 \||\nabla d|^2\|_1^2
     +\||\nabla d|^2\|_{L^\infty}^2\|Z d\|_1^2
     +\|\nabla d\|_{L^\infty}^2\|\nabla d\|_2^2\\
&\le C_3 \Lambda_{m}^3(t),~{\rm for}~m \ge 3.
\end{aligned}
\end{equation}
Then the combination of \eqref{315012} and \eqref{315013} gives directly
\begin{equation}\label{315014}
\|d_t\|_{L^\infty}^2 \le C_3 P(\Lambda_{m}(t)),~{\rm for}~m \ge 3.
\end{equation}
By virtue of Proposition \ref{prop2.3}, we obtain for $m \ge 3$
\begin{equation*}
\|\nabla d_t\|_{L^\infty}^2
\le C(\|\nabla^2 d_t\|_1^2+\|\nabla d_t\|_2^2)
\le C(\|\Delta d_t\|_1^2+\|\nabla d_t\|_2^2)
\le C(\|\Delta d\|_{\mathcal{H}^{2}}^2+\|\nabla d\|_{\mathcal{H}^{3}}^2),
\end{equation*}
which, together with \eqref{315014}, yields immediately
\begin{equation}\label{315015}
\|d_t\|_{W^{1,\infty}}^2\le C_3 P(\Lambda_{m}(t)),~{\rm for}~m \ge 3.
\end{equation}
On the other hand, it is easy to check that
\begin{equation*}
\begin{aligned}
&\partial_{ii}=\partial_{y_i}^2-\partial_{y_i}(\partial_i \psi \partial_z)
                -\partial_i \psi \partial_z \partial_{y_i}
                +(\partial_i \psi)^2 \partial_z^2, \quad i=1,2,\\
&\partial_1 \partial_2=\partial_{y_1}\partial_{y_2}
               -\partial_{y_2}(\partial_1 \psi \partial_z)
               -\partial_2 \psi \partial_{y_1}\partial_z
               +\partial_2 \psi \partial_1 \psi \partial_z^2,\\
&\partial_i \partial_3=\partial_{y_i}\partial_z-\partial_i \psi \partial_z^2,\quad i=1,2.
\end{aligned}
\end{equation*}
Then, we find that
\begin{equation}\label{315016}
\Delta=(1+|\nabla \psi|^2)\partial_{z}^2
        +\sum_{i=1,2}(\partial_{y_i}^2
        -\partial_{y_i}(\partial_i \psi \partial_z)
        -\partial_i \psi \partial_z \partial_{y_i}).
\end{equation}
and
\begin{equation}\label{315017}
\begin{aligned}
\nabla^2
&=[(1+|\nabla \psi|^2)+\partial_2 \psi \partial_1 \psi
        -\partial_1 \psi-\partial_2 \psi]\partial_{z}^2
        +\partial_{y_1}\partial_{y_2}\\
&\quad  +\sum_{i=1,2}(\partial_{y_i}^2
        -\partial_{y_i}(\partial_i \psi \partial_z)
        -\partial_i \psi \partial_z \partial_{y_i})
        -\partial_{y_2}(\partial_1 \psi \partial_z)\\
&\quad  -\partial_2 \psi\partial_{y_1}\partial_z
        +\partial_{y_1}\partial_z
        +\partial_{y_2}\partial_z.
\end{aligned}
\end{equation}
The combination of \eqref{315016}-\eqref{315017}
and Proposition \ref{prop2.3} yield that
\begin{equation}\label{315018}
\begin{aligned}
\|\nabla^2 d\|_{L^\infty}^2
&\le C_1(\|\Delta d\|_{L^\infty}^2+\|\partial_z \partial_{y_i} d\|_{L^\infty}^2
      +\|\partial_{y_i}\partial_{y_j}d\|_{L^\infty}^2)\\
&\le C_1(\|\nabla \Delta d\|_1^2+\|\Delta d\|_2^2
      +\|\nabla \partial_z \partial_{y_i} d\|_1^2
       +\|\partial_z \partial_{y_i} d\|_2^2)\\
&\quad   +C(\|\nabla \partial_{y_i}\partial_{y_j}d\|_1^2
       +\|\partial_{y_i}\partial_{y_j}d\|_2^2)\\
&\le C_1(\|\nabla \Delta d\|_1^2+\|\Delta d\|_2^2
      +\|\nabla^2 d\|_2^2+\|\nabla d\|_3^2)\\
&\le C_4(\|\nabla \Delta d\|_1^2+\|\Delta d\|_2^2+\|\nabla d\|_3^2),
\end{aligned}
\end{equation}
where we have used the estimate \eqref{31509} in the last inequality.
In order to deal with the first term on the right hand side of \eqref{315018},
we apply the equation \eqref{eq1}$_3$ to attain that
\begin{equation}\label{315019}
\begin{aligned}
\|\nabla \Delta d\|_1^2
&\le \|\nabla(d_t+u\cdot \nabla d-|\nabla d|^2 d)\|_1^2\\
&\le \|\nabla d_t\|_1^2
     +\|\nabla u\cdot \nabla d\|_1^2
     +\|u\cdot \nabla^2 d\|_1^2
     +\|\nabla(|\nabla d|^2 d)\|_1^2.
\end{aligned}
\end{equation}
It is easy to check that
\begin{equation}\label{315020}
\|\nabla d_t\|_1^2 \le \|\nabla d\|_{\mathcal{H}^{2}}^2
\le \Lambda_{m}(t), ~\text{for}~m\ge2,
\end{equation}
\begin{equation}\label{315021}
\|\nabla u\cdot \nabla d\|_1^2
\le \|(\nabla u, \nabla d)\|_{L^\infty}^2 \|(\nabla u,\nabla d)\|_1^2
\le C_3 \Lambda_{m}^2(t), ~\text{for}~m\ge2,
\end{equation}
and
\begin{equation}\label{315022}
\|u\cdot \nabla^2 d\|_1^2
\le \|u\|_{L^\infty}^2\|\nabla^2 d\|^2
     +\|Z u\|_{L^\infty}^2\|\nabla^2 d\|^2
     +\|u\|_{L^\infty}^2\|\nabla^2 d\|_1^2
\le C\Lambda_{m}^2(t), ~\text{for}~m\ge3.
\end{equation}
In view of the basic fact $|d|=1$, one arrives at
\begin{equation}\label{315023}
\|\nabla(|\nabla d|^2 d)\|_1^2
\le C \Lambda_{m}^3(t), ~m\ge3.
\end{equation}
Substituting \eqref{315020}-\eqref{315023} into \eqref{315019}, we find
\begin{equation*}
\|\nabla \Delta d\|_1^2 \le C_3 \Lambda_{m}^3(t), ~~m\ge3,
\end{equation*}
which, together with \eqref{315018}, yields immediately
\begin{equation}\label{315024}
\|\nabla^2 d\|_{L^\infty}^2 \le C_4 P(\Lambda_{m}(t)), ~~m\ge3.
\end{equation}
Similarly, it is easy to check that for $|\alpha|=1$
\begin{equation}\label{315024-a}
\|\mathcal{Z}^\alpha \nabla^2 d\|_{L^\infty}^2
\le C_3 P(\Lambda_{m}(t)).
\end{equation}
By virtue of the \eqref{eq1}$_3$, \eqref{315014} and \eqref{315024},
one attains for $m \ge 3$
\begin{equation}\label{315025}
\begin{aligned}
\|\nabla \Delta d\|_{L^\infty}^2
&\le \|\nabla(d_t+u\cdot \nabla d-|\nabla d|^2 d)\|_{L^\infty}^2\\
&\le \|\nabla d_t\|_{L^\infty}^2
      +\|\nabla u\|_{L^\infty}^2\|\nabla d\|_{L^\infty}^2
      +\|u\|_{L^\infty}^2\|\nabla^2 d\|_{L^\infty}^2\\
&\quad  +\|\nabla d\|_{L^\infty}^2\|\nabla^2 d\|_{L^\infty}^2
      +\|\nabla d\|_{L^\infty}^4\|\nabla d\|_{L^\infty}^2\\
&\le  C_4 \Lambda_{m}^3(t).
\end{aligned}
\end{equation}
Similarly, it is easy to check that for $|\alpha|=1$
\begin{equation}\label{315026}
\|\mathcal{Z}^\alpha\nabla \Delta d\|_{L^\infty}^2
\le C_{m+2} P (\Lambda_{m}(t)),\quad m\ge 3.
\end{equation}
The combination of \eqref{315010-1}, \eqref{315015} and \eqref{315024}-\eqref{315026}
yields the estimate \eqref{31507}.
Therefore, we complete the proof of Lemma \ref{lemma3.15}.
\end{proof}

In order to give the estimate for $\|\nabla u\|_{\mathcal{H}^{1,\infty}}$,
we need the lemma as follows, refer to \cite{Wang-Xin-Yong}.
\begin{lemm}\label{lemma3.16}
Let $h$ be a smooth solution to
\begin{equation}\label{3.1601}
a(t,y)[\partial_t h+b_1(t,y)\partial_{y_1}h+b_2(t,y)\partial_{y_2}h
+z b_3(t, y)\partial_z h]-\varepsilon \partial_{zz}h=G,
~z>0, \quad h(t,y,0)=0,
\end{equation}
for some smooth function $d(t,y)=\frac{1}{a(t,y)}$ and vector field
$b=(b_1, b_2, b_3)^{tr}(t,y)$ satisfying \eqref{3.1601}.
Assume that $h$ and $G$ are compactly supported in $z$.
Then, it holds that
\begin{equation}\label{3.1602}
\begin{aligned}
\|h\|_{\mathcal{H}^{1,\infty}}
&\le C\|h_0\|_{\mathcal{H}^{1,\infty}}
+C\int_0^t \left\|\frac{1}{a}\right\|_{L^\infty} \|G\|_{\mathcal{H}^{1,\infty}}d\tau\\
&\quad +C\int_0^t \left(1+\left\|\frac{1}{a}\right\|_{L^\infty}\right)
\left(1+\|b\|_{L^\infty}^2+\sum_{i=0}^2 \|Z_i(a,b)\|_{L^\infty}^2\right)
\|h\|_{\mathcal{H}^{1,\infty}} d\tau.
\end{aligned}
\end{equation}
\end{lemm}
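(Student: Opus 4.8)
The plan is to prove \eqref{3.1602} by establishing $L^\infty$ bounds, one at a time, for $h$ itself and for each first-order conormal derivative $\mathcal{Z}^\alpha h$ with $|\alpha|=1$ -- namely $\partial_t h,\ Z_1 h,\ Z_2 h$ and $Z_3 h$ -- each obtained from a comparison (maximum) principle for the degenerate-parabolic operator $\mathcal{P}:=a(\partial_t+b_1\partial_{y_1}+b_2\partial_{y_2}+zb_3\partial_z)-\varepsilon\partial_{zz}$, and then summing. Write $\mathcal{L}:=\partial_t+b_1\partial_{y_1}+b_2\partial_{y_2}+zb_3\partial_z$, so that \eqref{3.1601} reads $\varepsilon\partial_{zz}h=a\mathcal{L}h-G$. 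The key preliminary observation is that $\mathcal{L}$ is, up to coefficients bounded on the ($z$-compact) support of $h$, a conormal operator: since $\partial_{y_i}=Z_i$ $(i=1,2)$ and $z\partial_z=(1+z)Z_3$, one has $\|\mathcal{L}h\|_{L^\infty}\le C(1+\|b\|_{L^\infty})\|h\|_{\mathcal{H}^{1,\infty}}$; hence $\|\varepsilon\partial_{zz}h\|_{L^\infty}$ is controlled, and, integrating $\varepsilon\partial_{zz}h$ from $z$ to $+\infty$ and using the compact support in $z$, $\varepsilon\partial_z h(t,y,z)=-\int_z^{+\infty}(a\mathcal{L}h-G)(t,y,z')\,dz'$, so that (as $a=a(t,y)$ is independent of $z$ and factors out) $\|\frac1a\varepsilon\partial_z h\|_{L^\infty}$ and $\|\frac1a\varepsilon\partial_{zz}h\|_{L^\infty}$ are bounded by $C(1+\|b\|_{L^\infty})\|h\|_{\mathcal{H}^{1,\infty}}+C\|\frac1a\|_{L^\infty}\|G\|_{\mathcal{H}^{1,\infty}}$, with no loss of powers of $\varepsilon$.

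For $h$ itself, since $h|_{z=0}=0$ and the drift coefficient $zb_3$ vanishes on $\{z=0\}$, the standard parabolic comparison principle for $\mathcal{P}$ gives $\|h(t)\|_{L^\infty}\le\|h_0\|_{L^\infty}+\int_0^t\|\frac1a G\|_{L^\infty}\,d\tau$. For $Z_i h$ with $i\in\{0,1,2\}$ (writing $Z_0:=\partial_t$), these vector fields commute with $\partial_z$ and $\partial_{zz}$ and preserve the homogeneous Dirichlet condition at $z=0$, so $Z_i h$ solves $\mathcal{P}(Z_ih)=Z_iG-[Z_i,a\mathcal{L}]h$ with $(Z_ih)|_{z=0}=0$; writing $[Z_i,a\mathcal{L}]h=(Z_ia)\mathcal{L}h+a[Z_i,\mathcal{L}]h$, dividing by $a$, and using the bound on $\mathcal{L}h$ above (one never substitutes the equation into $\mathcal{L}h$, so no term $\varepsilon\partial_{zz}h$ enters the source), the comparison principle yields
\begin{equation*}
\|Z_ih(t)\|_{L^\infty}\le\|Z_ih_0\|_{L^\infty}+\int_0^t\Big\|\tfrac1a\Big\|_{L^\infty}\|Z_iG\|_{L^\infty}\,d\tau+C\int_0^t\Big(1+\Big\|\tfrac1a\Big\|_{L^\infty}\Big)\Big(1+\|b\|_{L^\infty}^2+\sum_{j=0}^2\|Z_j(a,b)\|_{L^\infty}^2\Big)\|h\|_{\mathcal{H}^{1,\infty}}\,d\tau.
\end{equation*}

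The delicate step is the normal conormal derivative $\theta:=Z_3h=\varphi(z)\partial_z h$. Because $\varphi(0)=0$ one still has $\theta|_{z=0}=0$; a direct computation, using the identity $\varphi-z\varphi'=\varphi^2$ for $\varphi(z)=z/(1+z)$, gives $[Z_3,\mathcal{L}]h=b_3\varphi\,\theta$ and $[Z_3,\partial_{zz}]h=-\varphi''\partial_z h-2\varphi'\partial_{zz}h$ (and $Z_3a=Z_3b=0$), whence
\begin{equation*}
\mathcal{P}\theta=Z_3G-ab_3\varphi\,\theta-\varepsilon\varphi''\partial_z h-2\varepsilon\varphi'\partial_{zz}h .
\end{equation*}
The two terms $\varepsilon\varphi''\partial_z h$ and $\varepsilon\varphi'\partial_{zz}h$ are precisely where the difficulty lies, since neither $\partial_z h$ nor $\partial_{zz}h$ is bounded uniformly in $\varepsilon$ near the boundary layer; but $\varphi',\varphi''$ are fixed bounded smooth functions, and by the preliminary reduction $\varepsilon\partial_{zz}h=a\mathcal{L}h-G$ and $\varepsilon\partial_z h=-\int_z^\infty(a\mathcal{L}h-G)\,dz'$ are each controlled pointwise with no $\varepsilon$-loss -- the $z$-integral being harmless exactly because $h$ is compactly supported in $z$, and $a$ cancelling out being $z$-independent. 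Consequently $\|\frac1a\mathcal{P}\theta\|_{L^\infty}$ is bounded by $\|\frac1a\|_{L^\infty}\|Z_3G\|_{L^\infty}+C\|\frac1a\|_{L^\infty}\|G\|_{\mathcal{H}^{1,\infty}}+C(1+\|b\|_{L^\infty}^2)\|h\|_{\mathcal{H}^{1,\infty}}$, and, since $\theta|_{z=0}=0$ and the drift still vanishes at $z=0$, the comparison principle gives the analogue of the displayed bound above for $\|Z_3h(t)\|_{L^\infty}$. Summing the five $L^\infty$ bounds produces exactly \eqref{3.1602} (note that $\|a\|_{L^\infty}$ never enters the constants, since $a$ is $z$-independent and cancels in the estimates of $\frac1a\varepsilon\partial_z h$, $\frac1a\varepsilon\partial_{zz}h$ and $[Z_i,\mathcal{L}]h$). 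The main obstacle is this normal-derivative step: recognizing that the bad commutator terms produced by $[Z_3,\partial_{zz}]$ are in fact $\varepsilon$-uniformly bounded once one exploits both the equation (to rewrite $\varepsilon\partial_{zz}h$) and the compact support in $z$ (to rewrite $\varepsilon\partial_z h$), and verifying that $\theta$ still satisfies a homogeneous Dirichlet condition so the same comparison argument applies.
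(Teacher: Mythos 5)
The paper does not actually give a proof of Lemma \ref{lemma3.16}; it cites it verbatim from Wang--Xin--Yong (which in turn descends from a lemma of Masmoudi--Rousset), so there is no in-paper argument to compare against. Your maximum-principle proof is correct and is the standard route used in those references: the decisive steps are exactly the ones you isolate, namely that $Z_3h=\varphi\partial_z h$ still satisfies the homogeneous Dirichlet condition because $\varphi(0)=0$, that $[Z_3,\mathcal L]h=b_3\varphi\,Z_3h$ via the identity $\varphi-z\varphi'=\varphi^2$, and that the two a priori $\varepsilon$-singular terms $\varepsilon\varphi'\partial_{zz}h$ and $\varepsilon\varphi''\partial_z h$ coming from $[Z_3,\partial_{zz}]$ are in fact $\varepsilon$-uniformly bounded by one's solution norm, the first by substituting $\varepsilon\partial_{zz}h=a\mathcal Lh-G$ and the second by integrating that identity in $z$ over the compact support (with $a=a(t,y)$ factoring out of the $z$-integral so that only $\|1/a\|_{L^\infty}$ appears).
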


Finally, one gives the estimate for the quantity $\|\nabla u\|_{\mathcal{H}^{1,\infty}}$.
\begin{lemm}\label{lemma3.17}
For $m \ge 6$, we have the estimate
\begin{equation}\label{31701}
\begin{aligned}
\|\nabla u\|_{\mathcal{H}^{1,\infty}}^2
&\le CC_{m+2}\left\{\|(u_0, \nabla u_0)\|_{\mathcal{H}^{1,\infty}}^2
\!+P(\Lambda_{1m}(t))\!+P(\|\Delta p(t)\|_{\mathcal{H}^1}^2)
\!+\varepsilon^2 t \!\!\int_0^t \!\!\|\nabla^2 u\|_{\mathcal{H}^4}^2d\tau\right\}\\
&\quad +CC_{m+2}~ t\int_0^t(1+P(\Lambda_{m}(\tau))+P(Q(\tau)))
(1+\varepsilon^2 \|\Delta p\|_{\mathcal{H}^2}^2)d\tau.
\end{aligned}
\end{equation}
\end{lemm}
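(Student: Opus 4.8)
The plan is to reduce the $\mathcal{H}^{1,\infty}$--estimate of $\nabla u$, through the geometric decompositions \eqref{32a}--\eqref{32e}, to the corresponding estimate for the tangential quantity $\eta$ of \eqref{32c}, and then to apply the anisotropic transport--diffusion estimate of Lemma \ref{lemma3.16} to the equation \eqref{343} satisfied by $\eta$. First I would split $\nabla u$ in the local frame $(e_{y^1},e_{y^2},e_z)$ near each boundary patch: the purely tangential derivatives $\chi\partial_{y_i}u$ are bounded in $\mathcal{H}^{1,\infty}$ by $\|u\|_{\mathcal{H}^{2,\infty}}$, hence by $P(\Lambda_{1m}(t))$ via the anisotropic Sobolev embedding of Proposition \ref{prop2.3} (here $m\ge 6$ supplies the conormal regularity needed), and the same holds on the interior patch $\Omega_0$; the component $\partial_n u\cdot n$ equals, by \eqref{32a}, ${\rm div}u$ minus tangential derivatives of $u$, hence is controlled by \eqref{31503} together with Proposition \ref{prop2.3}; and the tangential part $\Pi(\partial_n u)$ is controlled, via \eqref{32e} and the Sobolev bounds on $\Pi(\nabla(u\cdot n))$, $\Pi((\nabla n)^tu)$ and $\Pi(Bu)$ in $\mathcal{H}^{1,\infty}$, by $C_{m+1}(\|\eta\|_{\mathcal{H}^{1,\infty}}+P(\Lambda_{1m}(t))^{1/2})$. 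This leaves the task of bounding $\|\eta\|_{\mathcal{H}^{1,\infty}}$.

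Next I would recast the equation \eqref{343} for $\eta$ into the form \eqref{3.1601}. Using \eqref{315016} I would write $\mu\varepsilon\Delta\eta=\mu\varepsilon(1+|\nabla\psi|^2)\partial_{zz}\eta$ plus terms carrying at most one $\partial_z$; and since $u\cdot n|_{\partial\Omega}=0$, the coefficient $u\cdot N$ vanishes on $\{z=0\}$, so $u\cdot N=z\,b_3(t,y)+z^2\widetilde b(t,y,z)$ with $b_3=\partial_z(u\cdot N)|_{z=0}$ and the remainder satisfying $z^2\partial_z\eta=z(1+z)Z_3\eta$. After dividing by $\mu(1+|\nabla\psi|^2)$, $\eta$ solves an equation of type \eqref{3.1601} with $a=\rho/(\mu(1+|\nabla\psi|^2))$, $b=(u_1,u_2,b_3)$, and forcing $G$ collecting: the source terms $\chi[F_1\times n+\Pi(BF_2)]+\chi F_3+F_4-\mu\varepsilon\chi\Delta(\Pi B)u$; the metric corrections, i.e.\ $\mu\varepsilon$ times one $\partial_z$-- or tangential--derivative of $\eta$; and the transport remainder $\rho z(1+z)\widetilde b\,Z_3\eta$. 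The coefficient quantities $\|1/a\|_{L^\infty}$, $\|b\|_{L^\infty}$ and $\sum_i\|Z_i(a,b)\|_{L^\infty}$ entering Lemma \ref{lemma3.16} are then bounded by $C_{m+2}(1+P(Q(t)))$ by means of $c_0\le\rho\le 1/c_0$ and \eqref{def2}.

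In the third step I would estimate $\|G\|_{\mathcal{H}^{1,\infty}}$ by separating its $\varepsilon$--free part — whose worst contributions $\nabla\times(\nabla d\cdot\Delta d)$, $\nabla p$ and $\nabla d\cdot\Delta d$, together with lower-order commutator/transport terms, are controlled in $\mathcal{H}^{1,\infty}$ by \eqref{31502}, \eqref{31507} and Proposition \ref{prop2.3}, so that this part is $\le C_{m+2}(P(\Lambda_{1m})^{1/2}+P(\|\Delta p\|_{\mathcal{H}^1}^2)^{1/2}+\|\eta\|_{\mathcal{H}^{1,\infty}})$ — from its $\varepsilon$--part, consisting of $\varepsilon\nabla^2u$--type terms (from $F_3$, $F_4$, $\Delta(\Pi B)u$ and the metric corrections), bounded by $\varepsilon\|\nabla^2u\|_{\mathcal{H}^{1,\infty}}\le \varepsilon C_{m+2}^{1/2}(\|\nabla^2u\|_{\mathcal{H}^4}+\|\nabla u\|_{\mathcal{H}^4})$ via Proposition \ref{prop2.3}, together with the single $(\mu+\lambda)\varepsilon\nabla{\rm div}u$ piece of $F_2$, bounded in $\mathcal{H}^{1,\infty}$ by \eqref{31505}. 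Inserting these into Lemma \ref{lemma3.16}, returning to $\|\nabla u\|_{\mathcal{H}^{1,\infty}}$ by the reduction of the first step, squaring, and applying the Cauchy--Schwarz inequality in time — so that $\bigl(\int_0^t\varepsilon\|\nabla^2u\|_{\mathcal{H}^4}d\tau\bigr)^2\le\varepsilon^2 t\int_0^t\|\nabla^2u\|_{\mathcal{H}^4}^2d\tau$ and $\bigl(\int_0^t(1+P(Q))(\Lambda_m+\varepsilon^2\|\Delta p\|_{\mathcal{H}^2}^2)^{1/2}d\tau\bigr)^2\le t\int_0^t(1+P(\Lambda_m)+P(Q))(1+\varepsilon^2\|\Delta p\|_{\mathcal{H}^2}^2)d\tau$ — yields \eqref{31701}; no separate Gr\"onwall step is needed, since the $\|\eta\|_{\mathcal{H}^{1,\infty}}$ term on the right of Lemma \ref{lemma3.16} is simply absorbed, after squaring, into the $\Lambda_m$--time integral (recall $\Lambda_m$ already contains $\|\nabla u\|_{\mathcal{H}^{1,\infty}}^2$).

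I expect the main obstacle to be the $\varepsilon$--bookkeeping in the third step: one must check that \emph{every} $\varepsilon$--carrying term in $G$ — in particular the tangential and mixed parts of the Laplacian extracted in \eqref{315016} and the $\partial_i(\Pi B)\partial_i u$, $w\times\partial_i n$ and $\partial_i\chi\,\partial_i(\cdot)$ terms hidden in $F_3$ and $F_4$ — is at most second order in the derivatives of $u$, so that the prefactor $\varepsilon$ can be traded, through Proposition \ref{prop2.3} and Cauchy--Schwarz in time, for the benign quantity $\varepsilon^2 t\int_0^t\|\nabla^2u\|_{\mathcal{H}^4}^2d\tau$, the only admissible exception being the $\varepsilon\nabla{\rm div}u$ term of $F_2$, which produces the $\varepsilon^2\|\Delta p\|_{\mathcal{H}^2}^2$ contribution. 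Were any such term third order in $u$ without an accompanying power of $\varepsilon$, the estimate would not close.
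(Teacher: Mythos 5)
Your proposal follows the paper's overall strategy (reduce to $\eta$ via the geometric decompositions, invoke Lemma \ref{lemma3.16} for the convection--diffusion equation, estimate the source in $\mathcal{H}^{1,\infty}$) but diverges in a way that creates a genuine gap: you work with the original parametrization $\Psi:(y,z)\mapsto(y,\psi(y)+z)$ and use \eqref{315016} for $\Delta$, whereas the paper deliberately switches to the \emph{normal geodesic} coordinates $\Psi^n$ and uses \eqref{31703}. This is not cosmetic. In the original chart, \eqref{315016} produces, besides the leading $(1+|\nabla\psi|^2)\partial_{zz}$ and the benign pieces $\partial_{y_i}^2$ and $(\partial_{y_i}\partial_i\psi)\partial_z$, the \emph{cross-derivative} terms $-2\partial_i\psi\,\partial_{y_i}\partial_z$. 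You describe the resulting metric corrections as ``$\mu\varepsilon$ times one $\partial_z$-- or tangential--derivative of $\eta$'' and later lump them among ``$\varepsilon\nabla^2u$--type terms''; neither description is accurate for $\varepsilon\partial_i\psi\,\partial_{y_i}\partial_z\eta$.

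Here is the concrete failure of the count. The unknown $\eta\sim\chi(w\times n+\Pi(Bu))$ is already first order in $u$ through $w=\nabla\times u$, so $\partial_{y_i}\partial_z\eta$ carries up to two normal derivatives of $u$. Measuring this contribution in $\mathcal{H}^{1,\infty}$ forces one more conormal derivative, and taking $Z=Z_3=\varphi(z)\partial_z$ gives, using $\partial_zZ_i=Z_i\partial_z$ and $Z_3Z_i=Z_iZ_3$, the quantity $\varphi\,Z_i\partial_{zz}\eta$, hence up to three normal derivatives of $u$ in $L^\infty$; applying Proposition \ref{prop2.3} then requires a fourth normal derivative inside a conormal norm. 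In other words the $\varepsilon$--prefactor buys you $\varepsilon^2\|\nabla^4 u\|_{\mathcal{H}^{\cdot}}^2$--type control, which is \emph{not} part of the functional framework — the conclusion \eqref{31701} only allows $\varepsilon^2 t\int_0^t\|\nabla^2u\|_{\mathcal{H}^4}^2\,d\tau$. The paper avoids this precisely because in $\Psi^n$ the Laplacian \eqref{31703} contains no mixed $\partial_z\partial_{y_i}$ term: its non-leading pieces are the purely tangential $\Delta_{\widetilde g}$ (whose contribution $\mu\varepsilon\Delta_{\widetilde g}\widetilde w\sim\varepsilon Z^2\nabla u$ carries at most one normal derivative of $u$ and, after $\mathcal{H}^{1,\infty}$ and Proposition \ref{prop2.3}, yields exactly the $\varepsilon^2\|\nabla^2u\|_{\mathcal{H}^4}^2$ quantity) and the single first-order $\frac{1}{2}\partial_z(\ln|g|)\partial_z$, which is removed by the conjugation $\widetilde\eta=|g|^{-1/4}\overline\eta$. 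So the switch of parametrization, and the subsequent $\overline\gamma$--trick, are essential: your plan would need a device to either suppress or reinterpret the cross terms, and as written it does not close.

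A secondary remark: in matching \eqref{3.1601}, the coefficients $a,b_1,b_2,b_3$ must be independent of $z$. You spell this out only for $b_3=\partial_z(u\cdot N)|_{z=0}$; the same freezing at $z=0$ is needed for $\rho$, $u_1$, $u_2$, with the differences placed in the source (the paper's $\mathcal{S}_2$). That part is recoverable, but should be stated.
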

\begin{proof}
Away from the boundary, we clearly have by the classical isotropic Sobolev
embedding that
\begin{equation}\label{31702}
\|\chi \nabla u\|_{L^\infty}^2
+
\|\chi \mathcal{Z}^\alpha \nabla u\|_{L^\infty}^2
\lesssim \|u\|_{\mathcal{H}^{m}}^2,
\quad m \ge 4,\quad |\alpha|=1,
\end{equation}
where the support of $\chi$ is away from the boundary.
Consequently, by using a partition of unity subordinated to the covering
we only have to estimate $\|\chi_j \nabla u\|_{L^\infty}+
\|\chi_j \mathcal{Z}^\alpha \nabla u\|_{L^\infty}, ~j\ge 1, ~|\alpha|=1$.
For notational convenience, we shall denote $\chi_j$ by $\chi$.
Similar to \cite{Masmoudi-Rousset} or \cite{Wang-Xin-Yong},
we use the local parametrization in the
neighborhood of the boundary given by a normal geodesic system in which the
Laplacian takes a convenient form. Denote
\begin{equation*}
\Psi^n(y, z)=
\left(
\begin{array}{c}
y\\
\psi(y)
\end{array}
\right)
-zn(y)=x,
\end{equation*}
where
\begin{equation*}
n(y)=\frac{1}{\sqrt{1+|\nabla \psi(y)|^2}}
\left(
\begin{array}{c}
\partial_1 \psi(y)\\
\partial_2 \psi(y)\\
-1
\end{array}
\right)
\end{equation*}
is the unit outward normal. As before, one can extend $n$ and $\Pi$
in the interior by setting
\begin{equation*}
n(\Psi^n(y,z))=n(y), \quad \Pi(\Psi^n(y,z))=\Pi(y)=I-n\otimes n,
\end{equation*}
where $I$ is the unit matrix.
Note that $n(y,z)$ and $\Pi(y,z)$ have different definitions from
the ones used before.
The advantages of this parametrization is that in the associated local
basis $(e_{y_1}, e_{y_2}, e_z)$ of $\mathbb{R}^3$, it holds that
$\partial_z=\partial_n$ and
\begin{equation*}
\left.(e_{y_i})\right|_{\Psi^n(y,z)}
\cdot \left.(e_z)\right|_{\Psi^n(y,z)}=0,\quad i=1,2.
\end{equation*}
The scalar product on $\mathbb{R}^3$ induces in this coordinate system
the Riemannian metric $g$ with the norm
\begin{equation*}
g(y,z)=
\left(
\begin{array}{cc}
\widetilde{g}(y,z)& 0\\
0 & 1
\end{array}
\right).
\end{equation*}
Therefore, the Laplacian in this coordinate system has the form
\begin{equation}\label{31703}
\Delta f=\partial_{zz}f+\frac{1}{2}\partial_z(\ln |g|)\partial_z f+\Delta_{\widetilde{g}}f,
\end{equation}
where $|g|$ denotes the determinant of the matrix $g$,
and $\Delta_{\widetilde{g}}$ is defined by
\begin{equation*}
\Delta_{\widetilde{g}}f
=\frac{1}{\sqrt{|\widetilde{g}|}}\sum_{i,j=1,2}
\partial_{y_i}(\widetilde{g}^{ij}|\widetilde{g}|^{\frac{1}{2}}\partial_{y_j}f),
\end{equation*}
which  only involves the tangential derivatives and
$\{\widetilde{g}^{ij}\}$ is the inverse matrix to $g$.

Next, thanks to \eqref{32a}(in the coordinate system that
we have just defined) and Lemma \ref{lemma3.15},
we have for $m \ge 5,~|\alpha|=1$
\begin{equation}\label{31704}
\begin{aligned}
&\|\chi \nabla u\|_{L^\infty}^2+
\|\chi \mathcal{Z}^\alpha \nabla u\|_{L^\infty}^2\\
&\le C_2(\|\chi \Pi (\partial_n u)\|_{L^\infty}^2+
\|\chi {\rm div}u\|_{L^\infty}^2+
\|\chi Z_y u\|_{L^\infty}^2)\\
&\quad +C_2(\|\chi \mathcal{Z}^\alpha\Pi (\partial_n u)\|_{L^\infty}^2+
\|\chi \mathcal{Z}^\alpha{\rm div}u\|_{L^\infty}^2+
\|\mathcal{Z}^\alpha(\chi  Z_y u\|_{L^\infty}^2))\\
&\le C_3\left\{\|\chi \Pi\partial_n u\|_{L^\infty}^2
+\|\mathcal{Z}^\alpha(\chi \Pi \partial_n u)\|_{L^\infty}^2
+P(\Lambda_{1m})+P(\|\Delta p\|_{\mathcal{H}^1}^2)\right\}.
\end{aligned}
\end{equation}
Consequently, it suffices to estimate $\|\chi \Pi \partial_n u\|_{\mathcal{H}^{1,\infty}}$.
To this end, it is useful to use the vorticity $w=\nabla \times u$,
see \cite{{Xiao-Xin},{Wang-Xin-Yong},{Masmoudi-Rousset}}.
Indeed, it is easy to deduce that
\begin{equation*}
\Pi(w\times n)=\Pi((\nabla u-\nabla u^t)\cdot n)
=\Pi(\partial_n u-\nabla(u\cdot n)+\nabla n^t \cdot u),
\end{equation*}
which implies
\begin{equation}\label{31705}
\|\chi \Pi \partial_n u\|_{\mathcal{H}^{1,\infty}}^2
\le C_3(\|\chi \Pi(w\times n)\|_{\mathcal{H}^{1,\infty}}^2
+P(\Lambda_{1m}(t))),
\end{equation}
where we have used the Lemma \ref{lemma3.15}.
In other words, we only need to estimate
$\|\chi \Pi(w\times n)\|_{\mathcal{H}^{1,\infty}}$.
It is easy to see that $w$ solves the vorticity equation
\begin{equation}\label{31706}
\rho w_t+\rho (u\cdot \nabla)w
=\mu \varepsilon \Delta w+F_1,
\end{equation}
where
\begin{equation*}
F_1\triangleq
-\nabla \rho \times u_t-\nabla \rho \times (u\cdot \nabla)u
+\rho (w \cdot \nabla)u-\rho w {\rm div}u
-\nabla \times (\nabla d\cdot \Delta d).
\end{equation*}
In the support of $\chi$, let
\begin{equation*}
\widetilde{w}(y, z)=w(\Psi^n(y,z)),
\quad (\widetilde{\rho}, \widetilde{u}, \widetilde{d})(y, z)
=(\rho, u, d)(\Psi^n(y,z)),
\end{equation*}
The combination of \eqref{342} and \eqref{31703} yields directly
\begin{equation}\label{31707}
\widetilde{\rho} ~\partial_t {\widetilde{w}}
+\widetilde{\rho}~ \widetilde{u}^1\partial_{y_1}{\widetilde{w}}
+\widetilde{\rho}~ \widetilde{u}^2\partial_{y_2}{\widetilde{w}}
+\widetilde{\rho}~ \widetilde{u}\cdot n \partial_z {\widetilde{w}}
=\mu \varepsilon(\partial_{zz}{\widetilde{w}}
+\frac{1}{2}\partial_z (\ln |g|)\partial_z {\widetilde{w}}
+\Delta_{\widetilde{g}} {\widetilde{w}})
+\widetilde{F}_1
\end{equation}
and
\begin{equation}\label{31708}
\begin{aligned}
\widetilde{\rho} ~ \partial_t {\widetilde{u}}
+\widetilde{\rho} ~\widetilde{u}^1\partial_{y_1}{\widetilde{u}}
+\widetilde{\rho} ~\widetilde{u}^2\partial_{y_2}{\widetilde{u}}
+\widetilde{\rho} ~\widetilde{u}\cdot n \partial_z {\widetilde{u}}
=\mu \varepsilon(\partial_{zz}{\widetilde{u}}
+\frac{1}{2}\partial_z (\ln |g|)\partial_z {\widetilde{u}}
+\Delta_{\widetilde{g}} {\widetilde{u}})
+\widetilde{F}_2,
\end{aligned}
\end{equation}
where $\widetilde{F}_2 =F_2(\Psi^n(y,z))$
and $F_2=(\mu+\lambda)\varepsilon \nabla {\rm div}u-\nabla p-\nabla d\cdot \Delta d$.
Similar to \eqref{32c} , we define
\begin{equation*}
\widetilde{\eta}=\chi(\widetilde{w}\times n+\Pi(B\widetilde{u})).
\end{equation*}
It is easy to deduce taht $\widetilde{\eta}$ satisfies
\begin{equation*}
\widetilde{\eta}(y, 0)=0.
\end{equation*}
and solves the equation
\begin{equation}\label{31709}
\begin{aligned}
&\widetilde{\rho}~\partial_t \widetilde{\eta}
+\widetilde{\rho}~\widetilde{u}^1\partial_{y_1}\widetilde{\eta}
+\widetilde{\rho}~\widetilde{u}^2\partial_{y_2}\widetilde{\eta}
+\widetilde{\rho}~\widetilde{u}\cdot n \partial_z \widetilde{\eta}\\
&=\mu\varepsilon\left(\partial_{zz}\widetilde{\eta}
+\frac{1}{2}\partial_z(\ln |g|)\partial_z \widetilde{\eta}\right)
+\chi (\widetilde{F}_1 \times n)
+\chi \Pi(B \widetilde{F}_2)+F^\chi+\chi F^{\kappa},
\end{aligned}
\end{equation}
where the source terms are given by
\begin{equation}\label{31710}
\begin{aligned}
F^\chi
&=\left[(\widetilde{\rho}~\widetilde{u}^1\partial_{y_1}
+\widetilde{\rho}~\widetilde{u}^2\partial_{y_2}
+\widetilde{\rho}~\widetilde{u}\cdot n \partial_z)\chi\right]
(\widetilde{w}\times n+\Pi(B\widetilde{u}))\\
&-\mu\varepsilon\left(\partial_{zz}\chi+2 \partial_z \chi \partial_z
      +\frac{1}{2}\partial_z(\ln |g|)\partial_z \chi\right)
      (\widetilde{w}\times n+\Pi(B\widetilde{u})),
\end{aligned}
\end{equation}
and
\begin{equation}\label{31711}
\begin{aligned}
F^\kappa
=&(\widetilde{\rho}~\widetilde{u}^1 \partial_{y_1}\Pi
+\widetilde{\rho}~\widetilde{u}^2 \partial_{y_2}\Pi)\cdot (B\widetilde{u})
+w\times (\widetilde{\rho}~\widetilde{u}^1 \partial_{y_1}n
+\widetilde{\rho}~\widetilde{u}^2 \partial_{y_2}n)\\
&+\Pi\left[(\widetilde{\rho}~\widetilde{u}^1 \partial_{y_1}
+\widetilde{\rho}~\widetilde{u}^2 \partial_{y_2}
            +\widetilde{\rho}~\widetilde{u}\cdot n \partial_{z})B\cdot \widetilde{u}\right]
            +\mu\varepsilon \Delta_{\widetilde{g}}\widetilde{w} \times n
            +\mu\varepsilon \Pi (B \Delta_{\widetilde{g}}\widetilde{u}).
\end{aligned}
\end{equation}
Note that in the derivation of the source terms above, in particular,
$F^\kappa$, which contains all the commutators coming from the fact that
$n$ and $\Pi$ are not constant, we have used the fact that in the coordinate system
just defined, $n$ and $\Pi$ do not depend on the normal variable.
Since $\Delta_{\widetilde{g}}$ involves only the tangential derivatives,
and the derivatives of $\chi$ are compactly supported away from the boundary,
the following estimates hold for $m \ge 6$
\begin{equation}\label{31712}
\| F^\chi\|_{\mathcal{H}^{1,\infty}}^2
\le C_3(\|\rho u\|_{\mathcal{H}^{1,\infty}}^2\|u\|_{\mathcal{H}^{2,\infty}}^2
+\varepsilon^2 \|u\|_{\mathcal{H}^{3,\infty}}^2)
\le C_3 \left\{P(Q(t))+P(\Lambda_{1m})\right\},
\end{equation}
\begin{equation}\label{31713}
\|\chi (\widetilde{F}^1 \times n)\|_{\mathcal{H}^{1,\infty}}^2
\le C_2(P(Q(t))
+\|\nabla d\|_{\mathcal{H}^{1,\infty}}^2\|\nabla \Delta d\|_{\mathcal{H}^{1,\infty}}^2)
\le C_2(P(Q(t))+P(\Lambda_{m})),
\end{equation}
\begin{equation}\label{31714}
\begin{aligned}
\|\chi \Pi(B \widetilde{F}_2)\|_{\mathcal{H}^{1,\infty}}^2
&\le C_3(\varepsilon^2\|\nabla {\rm div}u\|_{\mathcal{H}^{1,\infty}}^2
+\|\nabla p\|_{\mathcal{H}^{1,\infty}}^2
+\|\nabla d\|_{\mathcal{H}^{1,\infty}}^2\|\Delta d\|_{\mathcal{H}^{1,\infty}}^2)\\
&\le C_4\left\{P(Q(t))+P(\Lambda_{m})
+C\varepsilon^2[1+P(Q(t))]\|\Delta p\|_{\mathcal{H}^2}^2\right\},
\end{aligned}
\end{equation}
and
\begin{equation}\label{31715}
\begin{aligned}
\|\chi F^\kappa\|_{\mathcal{H}^{1,\infty}}^2
&\le C_4\left\{\|u\|_{\mathcal{H}^{1,\infty}}^8
+\|u\|_{\mathcal{H}^{1,\infty}}^4\|\nabla u\|_{\mathcal{H}^{1,\infty}}^4
+\|\rho\|_{\mathcal{H}^{1,\infty}}^2\right.\\
&\quad \quad \quad \left.
+\varepsilon^2(\|\nabla u\|_{\mathcal{H}^{3,\infty}}^2
+\|u\|_{\mathcal{H}^{3,\infty}}^2)\right\}\\
&\le C_4\left\{ \varepsilon^2 \|\nabla^2 u\|_{\mathcal{H}^4}^2
+P(\Lambda_{1m})+P(Q(t))\right\}.
\end{aligned}
\end{equation}
It follows from \eqref{31712}-\eqref{31715} that
\begin{equation}\label{31716}
\|F\|_{\mathcal{H}^{1,\infty}}^2
\le C_4\left\{\varepsilon^2 \|\nabla^2 u\|_{\mathcal{H}^4}^2
+P(Q(t))+P(\Lambda_{m})
+\varepsilon^2 [1+P(Q(t))]\|\Delta p\|_{\mathcal{H}^2}^2\right\},
\end{equation}
where $\widetilde{F}=\chi (\widetilde{F}_1 \times n)
+\chi \Pi(B \widetilde{F}_2)+F^\chi+\chi F^{\kappa}$.
In order to be able to use Lemma \ref{lemma3.16}, we shall perform last change of
unknown in order to eliminate the term
$\partial_z(\ln |\widetilde{g}|)\partial_z \widetilde{\eta}$.
We set
$$
\widetilde{\eta}=\frac{1}{|g|^{\frac{1}{4}}}\overline{\eta}=\overline{\gamma} ~\overline{\eta}.
$$
Note that we have
\begin{equation}\label{31717}
\|\widetilde{\eta}\|_{\mathcal{H}^{1,\infty}}
\le C_3 \|\overline{\eta}\|_{\mathcal{H}^{1,\infty}},
\quad \|\overline{\eta}\|_{\mathcal{H}^{1,\infty}}
\le C_3 \|\widetilde{\eta}\|_{\mathcal{H}^{1,\infty}}
\end{equation}
and that, $\overline{\eta}$ solves the equation
\begin{equation*}
\begin{aligned}
&\widetilde{\rho}~\partial_t \overline{\eta}
+\widetilde{\rho}~\widetilde{u}^1\partial_{y_1}\overline{\eta}
+\widetilde{\rho}~\widetilde{u}^2\partial_{y_2}\overline{\eta}
+\widetilde{\rho}~\widetilde{u}\cdot n \partial_z \overline{\eta}
-\mu\varepsilon \partial_{zz}\overline{\eta}\\
&=\frac{1}{\overline{\gamma}}\left(\widetilde{F}
+\mu\varepsilon\partial_{zz}\overline{\gamma}\cdot \overline{\eta}
+\frac{\mu\varepsilon }{2}\partial_z(\ln |g|)\partial_z \overline{\gamma}\cdot \overline{\eta}
-\widetilde{\rho}(\widetilde{u}\cdot \nabla \overline{\gamma} )\overline{\eta}\right)
:=\mathcal{S}_1.
\end{aligned}
\end{equation*}
In the spirit of Wang et al. \cite{Wang-Xin-Yong}, we rewrite the equation
as follows
\begin{equation}\label{31718}
\widetilde{\rho}(t, y, 0)\left[\overline{\eta}_t+\widetilde{u}^1(t, y, 0)\partial_{y^1}
+\widetilde{u}^2(t, y, 0)\partial_{y^2}\overline{\eta}
+z\partial_z(\widetilde{u}\cdot n)(t, y, 0)\partial_z \overline{\eta}\right]
-\mu \varepsilon \partial_{zz}\overline{\eta}
=\mathcal{S}_1+\mathcal{S}_2,
\end{equation}
where $\mathcal{S}_2$ is defined as
\begin{equation*}\label{31719}
\begin{aligned}
\mathcal{S}_2\triangleq
&[\widetilde{\rho} (t, y, 0)-\widetilde{\rho}(t, y, z)]\eta_t
+\sum_{i=1,2}[(\widetilde{\rho} ~\widetilde{u}^i)(t, y, 0)
-(\widetilde{\rho} ~\widetilde{u}^i)(t, y, z)]\partial_{y_i}\overline{\eta}\\
&-\widetilde{\rho}(t, y, z)[(\widetilde{u}\cdot n)(t, y, z)
-z \partial_z(\widetilde{u}\cdot n)(t, y, 0)]\partial_z \overline{\eta}\\
&-[\widetilde{\rho}(t, y, z)-\widetilde{\rho}(t, y, 0)]
z\partial_z (\widetilde{u}\cdot n)(t, y, 0)\partial_z \overline{\eta}.
\end{aligned}
\end{equation*}
Consequently, by using Lemma \ref{lemma3.16}, we get from \eqref{31718} that for $m \ge 6$
\begin{equation}\label{31720}
\begin{aligned}
\|\overline{\eta}\|_{\mathcal{H}^{1,\infty}}
&\lesssim
C\|\overline{\eta}_0\|_{\mathcal{H}^{1,\infty}}
+C\int_0^t \|\widetilde{\rho}^{~-1}\|_{L^\infty}
\|(\mathcal{S}_1+\mathcal{S}_2)\|_{\mathcal{H}^{1,\infty}}d\tau\\
&\quad +C\int_0^t (1+\|\widetilde{\rho}^{~-1}\|_{L^\infty})
(1+\|(\rho, u, \nabla u)\|_{\mathcal{H}^{1,\infty}}^2)
\|\overline{\eta}\|_{\mathcal{H}^{1,\infty}}d\tau\\
&\lesssim
C\|\overline{\eta}_0\|_{\mathcal{H}^{1,\infty}}
+C\int_0^t \|(\mathcal{S}_1+\mathcal{S}_2)\|_{\mathcal{H}^{1,\infty}}d\tau\\
&\quad +C\int_0^t (1+P(\Lambda_{1m})+\|\mathcal{Z}\nabla u\|_{L^\infty}^2)
\|\overline{\eta}\|_{\mathcal{H}^{1,\infty}}d\tau.
\end{aligned}
\end{equation}
On the other hand, following the same  argument as \cite{Wang-Xin-Yong},
we have the following estimate
\begin{equation}\label{31721}
\|(\mathcal{S}_1+\mathcal{S}_2)\|_{\mathcal{H}^{1,\infty}}^2
\le C_4\left\{\varepsilon^2\|\nabla^2 u\|_{\mathcal{H}^4}^2
+\varepsilon^2[1+P(Q(t))]\|\Delta p\|_{\mathcal{H}^2}^2
+P(Q(t))+P(\Lambda_{m})\right\}.
\end{equation}
Then, we deduce from  \eqref{31720}-\eqref{31721} that
\begin{equation*}
\begin{aligned}
\|\overline{\eta}(t)\|^2_{\mathcal{H}^{1,\infty}}
\le &\|\overline{\eta}_0\|^2_{\mathcal{H}^{1,\infty}}
+C_4 t\int_0^t(1+P(Q(\tau))+P(\Lambda_{m}))d\tau\\
&+C_4 t \varepsilon^2 \int_0^t([1+P(Q(t))]\|\Delta p\|_{\mathcal{H}^2}^2
+\|\nabla^2 u\|_{\mathcal{H}^4}^2)d\tau.
\end{aligned}
\end{equation*}
which, together with  \eqref{31702}, \eqref{31717}, \eqref{31704},
\eqref{31705}, completes the proof of Lemma \ref{lemma3.17}.
\end{proof}

\subsection{Uniform estimate for $\Delta p$}

\quad In this subsection, we shall estimate $\Delta p$ to
complete the $L^\infty-$estimates and prove that the boundary
layers for the density is weaker that the one for the velocity.
Taking divergence operator to the equation \eqref{eq1-1}, it is easy to deduce that
\begin{equation}\label{35a}
-\varepsilon \Delta {\rm div}u+\frac{1}{2\mu+\lambda}\Delta p
=-\frac{1}{2\mu+\lambda}{\rm div}(\rho u_t+\rho u\cdot \nabla u)
-\frac{1}{2\mu+\lambda}{\rm div}(\nabla d\cdot \Delta d).
\end{equation}
On the other hand, it follows from the equation \eqref{eq1}$_1$ that
\begin{equation}\label{35b}
{\rm div}u=-({\rm ln}~\rho)_t-u\cdot \nabla {\rm ln}~\rho
=-\frac{p_t}{\gamma p}-\frac{u\cdot \nabla p}{\gamma p}.
\end{equation}
Then the combination of \eqref{35a} and  \eqref{35b} yields that
\begin{equation}\label{35c}
\begin{aligned}
&\varepsilon \Delta ({\rm ln}\rho)_t
+\varepsilon u\cdot \nabla \Delta {\rm ln}\rho
+\varepsilon \Delta u\cdot \nabla {\rm ln}\rho
+2 \varepsilon \sum_{k=1}^3\partial_k u \cdot \nabla \partial_k {\rm ln}\rho
+\frac{1}{2\mu+\lambda}\Delta p\\
&=
-\frac{1}{2\mu+\lambda}{\rm div}(\rho u_t+\rho u\cdot \nabla u)
-\frac{1}{2\mu+\lambda}{\rm div}(\nabla d\cdot \Delta d).
\end{aligned}
\end{equation}

\begin{lemm}\label{lemma3.18}
For $m \ge 6$, it holds that
\begin{equation}\label{31801}
\begin{aligned}
&\underset{0\le \tau \le t}{\sup}\left(\|\Delta p(\tau)\|_{\mathcal{H}^1}^2
+\varepsilon\|\Delta p(\tau)\|_{\mathcal{H}^2}^2\right)
+\int_0^t \|\Delta p(\tau)\|_{\mathcal{H}^2}^2 d\tau\\
&\le CC_{m+2}\left\{P(N_m(0))+[1+P(Q(t))] \int_0^t P(N_m(\tau))d\tau\right\}.
\end{aligned}
\end{equation}
\end{lemm}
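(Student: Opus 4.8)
The plan is to exploit the identity \eqref{35c}, which one should read as a damped transport equation for $\Delta\ln\rho$: since $p=\rho^\gamma$ one has the pointwise relation $\Delta p=\gamma p\,\Delta\ln\rho+\gamma^2\rho^{\gamma-2}|\nabla\rho|^2$, so that the term $\frac{1}{2\mu+\lambda}\Delta p$ on the left of \eqref{35c} furnishes, modulo genuinely lower order quantities of the form $|\nabla p|^2/p$, the coercive zeroth order contribution $\frac{\gamma}{2\mu+\lambda}\int p\,|\mathcal Z^\alpha\Delta\ln\rho|^2\,dx$ when the equation is tested against $\mathcal Z^\alpha\Delta\ln\rho$. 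First I would apply $\mathcal Z^\alpha$ to \eqref{35c} for $|\alpha|\le 1$ (and afterwards for $|\alpha|\le 2$, carrying an extra factor $\varepsilon$), multiply by $\mathcal Z^\alpha\Delta\ln\rho$ and integrate over $\Omega$. The viscous transport part $\varepsilon(\partial_t+u\cdot\nabla)\Delta\ln\rho$ then produces, after integration by parts, the term $\frac{\varepsilon}{2}\frac{d}{dt}\int|\mathcal Z^\alpha\Delta\ln\rho|^2\,dx$ together with a harmless commutator dominated by $\varepsilon\|\mathrm{div}\,u\|_{L^\infty}\|\Delta\ln\rho\|^2_{\mathcal H^{|\alpha|}}$; crucially \emph{no boundary term appears}, because $u\cdot n=0$ on $\partial\Omega$ by \eqref{bc2} and the generator $Z_3=\varphi(z)\partial_z$ vanishes on $\partial\Omega$.

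The remaining viscous terms $\varepsilon\,\Delta u\cdot\nabla\ln\rho$ and $2\varepsilon\sum_k\partial_k u\cdot\nabla\partial_k\ln\rho$ carry a prefactor $\varepsilon$; after a Cauchy--Schwarz split one absorbs a small multiple of $\|\mathcal Z^\alpha\Delta\ln\rho\|^2$ into the coercive term and is left with a quantity of order $\varepsilon^2\|\nabla^2u\|^2_{\mathcal H^{|\alpha|}}$ times a lower order factor, which is controlled through the a priori estimate \eqref{3a8} (using $\varepsilon^2\le\varepsilon$) and the $L^\infty$ bounds of Lemma \ref{lemma3.15}. The source $-\frac{1}{2\mu+\lambda}\mathrm{div}(\rho u_t+\rho u\cdot\nabla u)$ is estimated by Proposition \ref{prop2.2} and the definition of $N_m$ in \eqref{def1} (note that $\mathrm{div}(\rho u_t)$ only involves $\nabla u_t,\,u_t,\,\nabla p$, all part of $N_m$), giving a bound of the form $[1+P(Q(t))]P(N_m(t))$.

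The genuinely new contribution, absent in the compressible Navier--Stokes analysis of Wang et al., is the liquid--crystal forcing $-\frac{1}{2\mu+\lambda}\mathrm{div}(\nabla d\cdot\Delta d)=-\frac{1}{2\mu+\lambda}(\nabla^2 d{:}\nabla d+\nabla d\cdot\nabla\Delta d)$; here I would use Proposition \ref{prop2.2} together with the already established conormal bounds for $\Delta d$ and $\nabla\Delta d$ (Lemmas \ref{lemma3.4-1} and \ref{lemma3.14}, entering \eqref{3a8} through $\int_0^t\|\Delta d\|^2_{\mathcal H^m}d\tau$ and $\int_0^t\|\nabla\Delta d\|^2_{\mathcal H^{m-1}}d\tau$) and the $L^\infty$ control of $\nabla d$ in $\mathcal H^{1,\infty}$ from Lemma \ref{lemma3.15}, so that this term is also dominated by $[1+P(Q(t))]P(N_m(\tau))$. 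Collecting all of the above and integrating in time yields $\int_0^t\|\Delta p\|^2_{\mathcal H^2}d\tau$ from the coercive term at level $|\alpha|\le 2$, $\sup_\tau\varepsilon\|\Delta p\|^2_{\mathcal H^2}$ from the $\varepsilon$--weighted time--derivative at the same level, and at level $|\alpha|\le 1$ one upgrades the differential inequality $\frac{\varepsilon}{2}\frac{d}{dt}\int|\mathcal Z^\alpha\Delta\ln\rho|^2\,dx+c\,\|\Delta\ln\rho\|^2_{\mathcal H^1}\le [1+P(Q(t))]P(N_m(t))$ into a bound for $\sup_\tau\|\Delta p\|^2_{\mathcal H^1}$ by evaluating at the time where the supremum is attained and invoking the coercivity together with the pointwise estimate of the right side by $P(N_m)$ (where $p_{tt}$, $\nabla^2 d$, etc. are controlled pointwise by the $X^\varepsilon_m$--norm).

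The main obstacle I anticipate is the tension between the weights: the coercive term $\frac{1}{2\mu+\lambda}\Delta p$ carries no $\varepsilon$, whereas the structurally favourable term $\varepsilon\partial_t\Delta\ln\rho$ does, so to reach the \emph{un}--weighted $\sup_\tau\|\Delta p\|^2_{\mathcal H^1}$ one must re-express, via \eqref{35a} and \eqref{35b}, the dangerous third order quantity $\varepsilon\Delta\mathrm{div}\,u$ hidden inside $\varepsilon\partial_t\Delta\ln\rho=-\varepsilon\Delta\mathrm{div}\,u-\varepsilon\Delta(u\cdot\nabla\ln\rho)$ in terms of $\Delta p$ itself plus quantities bounded by $N_m$, and then close by a careful absorption of the resulting $\delta$--terms against the coercive contribution and the estimate \eqref{3a8}. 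The remaining points are routine: tracking the commutators $[\mathcal Z^\alpha,\cdot]$ generated by the non-commuting generator $Z_3$ (Proposition \ref{prop2.2}) and reconciling the $\delta$--terms with \eqref{3a8}. Once this lemma is available, combining it with \eqref{3a8}, Lemma \ref{lemma3.15} and Lemma \ref{lemma3.17} and applying Gr\"onwall closes the a priori estimate \eqref{31c}.
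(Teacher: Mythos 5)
Your strategy — apply $\mathcal{Z}^\alpha$ with $|\alpha|\le 2$ to the elliptic--transport identity \eqref{35c}, test against $\mathcal{Z}^\alpha\Delta\ln\rho$, and extract coercivity from the $\frac{1}{2\mu+\lambda}\Delta p$ term — is exactly the paper's, including the observation that the boundary integral from the $\varepsilon\,u\cdot\nabla\Delta\ln\rho$ term vanishes because $u\cdot n=0$ on $\partial\Omega$, and the treatment of the extra liquid--crystal source $-\frac{1}{2\mu+\lambda}\mathrm{div}(\nabla d\cdot\Delta d)$ (the paper's term $\Rmnum{7}_6$) by Proposition \ref{prop2.2}. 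The recovery of $\int_0^t\|\Delta p\|^2_{\mathcal H^2}\,d\tau$ and $\sup_\tau\varepsilon\|\Delta p\|^2_{\mathcal H^2}$ by integrating the $|\alpha|\le 2$ identity in time is also the paper's step leading to \eqref{31808}.

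The gap is in your route to $\sup_\tau\|\Delta p\|^2_{\mathcal H^1}$. You claim to obtain at level $|\alpha|\le 1$ a \emph{pointwise} differential inequality $\frac{\varepsilon}{2}\frac{d}{dt}\int|\mathcal{Z}^\alpha\Delta\ln\rho|^2+c\|\Delta\ln\rho\|^2_{\mathcal H^1}\le[1+P(Q(t))]P(N_m(t))$ and then evaluate at a maximizing time; this would require the right-hand side to be bounded \emph{instantaneously} by quantities in $N_m$. It is not. After the Cauchy--Schwarz split of the viscous pieces $\varepsilon\,\Delta u\cdot\nabla\ln\rho$, $\varepsilon\sum_k\partial_k u\cdot\nabla\partial_k\ln\rho$ and the commutator from $\varepsilon\,u\cdot\nabla\Delta\ln\rho$, you are left with $\varepsilon^2\|\nabla^2 u\|^2_{\mathcal H^1}$--type quantities. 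These appear in \eqref{3a8} only under a time integral ($\varepsilon\int_0^t\sum_k\|\nabla^2\partial_t^k u\|^2_{m-1-k}\,d\tau$); $N_m$ contains no pointwise control of $\varepsilon\|\nabla^2 u\|^2$, and none is available (the equation gives $\varepsilon\Delta u$ pointwise, not $\varepsilon\nabla^2 u$). So the differential inequality you invoke does not hold with a pointwise-controlled right side, and the ``evaluate at the supremum'' step cannot be carried out. The paper avoids this entirely with a fundamental-theorem-of-calculus observation: since $\partial_t\Delta\ln\rho$ sits in $\mathcal{H}^{1}$ whenever $\Delta\ln\rho\in\mathcal{H}^2$, one simply writes
\begin{equation*}
\|\Delta\ln\rho(t)\|_{\mathcal H^1}^2\le\|\Delta\ln\rho_0\|_{\mathcal H^1}^2+\int_0^t\|\Delta\ln\rho(\tau)\|_{\mathcal H^2}^2\,d\tau,
\end{equation*}
and then feeds in the already-proved bound on the time integral of $\|\Delta\ln\rho\|^2_{\mathcal H^2}$. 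Replace your final step by this observation and the proof closes as in the paper.
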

\begin{proof}
Applying $\mathcal{Z}^\alpha(|\alpha|\le 2)$ to \eqref{35c} and multiplying by $\mathcal{Z}^\alpha \Delta {\rm ln}\rho$, it is easy to deduce that
\begin{equation}\label{31802}
\begin{aligned}
&\varepsilon \|\mathcal{Z}^\alpha \Delta {\rm ln}\rho\|^2
-\varepsilon\|\mathcal{Z}^\alpha \Delta {\rm ln}\rho_0\|^2
+\underset{\Rmnum{7}_1}{\underbrace{\frac{2}{2\mu+\lambda}\int_0^t \int \mathcal{Z}^\alpha \Delta p\cdot
\mathcal{Z}^\alpha \Delta {\rm ln}\rho dxd\tau}}\\
&=\underset{\Rmnum{7}_2}{\underbrace{-2\varepsilon\!\!\int_0^t \int \!\!\mathcal{Z}^\alpha (\Delta u\cdot \nabla {\rm ln}\rho)
\mathcal{Z}^\alpha \Delta {\rm ln}\rho~ dxd\tau}}
\quad \underset{\Rmnum{7}_3}{\underbrace{-4\varepsilon \sum_{k=1}^3 \int_0^t \int \mathcal{Z}^\alpha (\partial_k u\cdot \nabla \partial_k {\rm ln}\rho)\mathcal{Z}^\alpha \Delta {\rm ln}\rho ~dxd\tau}}\\
&\quad\underset{\Rmnum{7}_4}{\underbrace{ -2\varepsilon\!\!\int_0^t \!\!\int \mathcal{Z}^\alpha(u\cdot \nabla \Delta {\rm ln}\rho )
\mathcal{Z}^\alpha \Delta {\rm ln}\rho dxd\tau}}
+\underset{\Rmnum{7}_5}{\underbrace{\frac{-2}{2\mu+\lambda}\!\!\int_0^t \!\!\int \mathcal{Z}^\alpha {\rm div}
(\rho u_t+\rho u\cdot \nabla u)\mathcal{Z}^\alpha \Delta {\rm ln}\rho dxd\tau}}\\
&\quad \underset{\Rmnum{7}_6}{\underbrace{-\frac{2}{2\mu+\lambda}\int_0^t \int \mathcal{Z}^\alpha {\rm div}(\nabla d\cdot \Delta d)\mathcal{Z}^\alpha \Delta {\rm ln}\rho ~dxd\tau}}.
\end{aligned}
\end{equation}
Using the same argument as Lemma 3.16 of \cite{Wang-Xin-Yong},
one can obtain the following estimates
\begin{equation}\label{31803}
VII_5 \le C_{m+2}[1+P(Q(t))]\int_0^t P(\Lambda_m)\tau,
\end{equation}
\begin{equation}
VII_2 \le \delta \int_0^t \|\mathcal{Z}^\alpha \Delta {\ln}\rho\|_{L^2}^2 d\tau
+C_\delta[1+P(Q(t))]\int_0^t P(\Lambda_m)d\tau,
\end{equation}
\begin{equation}\label{31804}
\begin{aligned}
VII_3 &\le \delta \int_0^t \|\mathcal{Z}^\alpha \Delta {\ln}\rho\|_{L^2}^2 d\tau
+C_\delta \varepsilon^2 [1+P(Q(t))]
\int_0^t (P(\Lambda_m)+\|\Delta {\ln}\rho\|_{\mathcal{H}^2}^2)d\tau\\
&\quad +\varepsilon^2 \int_0^t \|\nabla^2 u\|_{\mathcal{H}^3}^2 d\tau
+C_\delta \varepsilon^2 \int_0^t \|\nabla u\|_{\mathcal{H}^{4}}^2
(\|\Delta {\ln}\rho\|_{L^2}^4+P(\Lambda_m))d\tau,
\end{aligned}
\end{equation}
\begin{equation}\label{31805}
VII_1 \ge \frac{\gamma}{2}p(c_0)\int_0^t \|\mathcal{Z}^\alpha \Delta {\ln}\rho\|_{L^2}^2 d\tau
-C[1+P(Q(t))]\int_0^t (P(\Lambda_m)+\|\Delta {\ln}\rho\|_{\mathcal{H}^1}^2)d\tau,
\end{equation}
\begin{equation}\label{31806}
VII_4
\le \varepsilon^2 \!\!\int_0^t\! \|\nabla^2 u\|_{\mathcal{H}^3}^2d\tau
+\delta \!\!\int_0^t\! \|\mathcal{Z}^\alpha \Delta {\ln}\rho\|_{L^2}^2 d\tau
+C_\delta C_2 [1+P(Q(t))]\!\!\int_0^t\! (\varepsilon^2 \|\Delta {\ln}\rho\|_{\mathcal{H}^2}^2
+P(\Lambda_m))d\tau.
\end{equation}
On the other hand, by using the Proposition \ref{prop2.2},
it is easy to check that
\begin{equation}\label{31807}
\begin{aligned}
VII_6&\le
\delta \int_0^t \|\Delta {\rm ln}\rho\|_{\mathcal{H}^2}^2 \tau
+C_\delta \|\Delta d\|_{L^\infty}^2\int_0^t \|\Delta d\|_{\mathcal{H}^2}^2d\tau\\
&\quad
+C_\delta \|\nabla d\|_{L^\infty}^2\int_0^t \|\nabla \Delta d\|_{\mathcal{H}^2}^2d\tau
+C_\delta\|\nabla \Delta d\|_{L^\infty}^2\int_0^t \|\nabla d\|_{\mathcal{H}^2}^2d\tau\\
&\le \delta \int_0^t \|\Delta {\rm ln}\rho\|_{\mathcal{H}^2}^2 d\tau
+C_\delta[1+P(Q(t))]\int_0^t \Lambda_m(\tau)d\tau.
\end{aligned}
\end{equation}
Hence, the combination of \eqref{31803}-\eqref{31807} gives directly
\begin{equation}\label{31808}
\begin{aligned}
&\varepsilon \|\Delta {\rm ln}\rho\|_{\mathcal{H}^2}^2
+\int_0^t \|\Delta {\rm ln}\rho\|_{\mathcal{H}^2}^2 d\tau\\
&\le C\varepsilon \|\Delta {\rm ln}\rho_0\|_{\mathcal{H}^2}^2
+C\varepsilon^2 \int_0^t \|\nabla^2 u\|_{\mathcal{H}^3}^2 d\tau\\
&\quad +C_\delta C_{m+2}[1+P(Q(t))]\int_0^t
(\varepsilon \|\Delta {\rm ln}\rho\|_{\mathcal{H}^2}^2
+\|\Delta {\rm ln}\rho\|_{\mathcal{H}^1}^4
+P(\Lambda_m))d\tau.
\end{aligned}
\end{equation}
On the other hand, it is easy to obtain that
\begin{equation*}
\begin{aligned}
\|\Delta {\rm ln}\rho(t)\|_{\mathcal{H}^1}^2
%&\le \|\Delta {\rm ln}\rho_0\|_{\mathcal{H}^1}^2
%+\int_0^t \|\partial_t \Delta {\rm ln}\rho(t)\|_{\mathcal{H}^1}^2d\tau\\
&\le \|\Delta {\rm ln}\rho_0\|_{\mathcal{H}^1}^2
+\int_0^t \| \Delta {\rm ln}\rho(t)\|_{\mathcal{H}^2}^2d\tau,
\end{aligned}
\end{equation*}
which, together with \eqref{31808}, yields directly
\begin{equation*}
\begin{aligned}
&\underset{0\le \tau \le t}{\sup}(\|\Delta {\rm ln}\rho(\tau)\|_{\mathcal{H}^1}^2
+\varepsilon\|\Delta {\rm ln}\rho(\tau)\|_{\mathcal{H}^2}^2)
+\int_0^t \|\Delta {\rm ln}\rho(\tau)\|_{\mathcal{H}^2}^2 d\tau\\
&\le  CC_{m+2}\left\{N_m(0)+[1+P(Q(t))]\int_0^t (1
+\varepsilon\|\Delta {\rm ln}\rho(\tau)\|_{\mathcal{H}^2}^2
+\|\Delta {\rm ln}\rho(\tau)\|_{\mathcal{H}^1}^4+P(\Lambda_m))d\tau\right\}.
\end{aligned}
\end{equation*}
Therefore, we complete the proof of Lemma \ref{lemma3.18}.
\end{proof}

\subsection{Proof of Theorem \ref{Theoream3.1}}
\quad By virtue of \eqref{31501}, \eqref{31502} and \eqref{31507},
it is easy to deduce that
\begin{equation}\label{36a}
\begin{aligned}
Q(t)
&\le C_3 \underset{0\le \tau \le t}{\sup} \left\{\|\nabla u(\tau)\|_{\mathcal{H}^{1,\infty}}^2+P(\Lambda_m(\tau))
+P(\|\Delta p(\tau)\|_{\mathcal{H}^1}^2)\right\}\\
&\le CC_{m+2}\left\{P(N_m(0))+P(N_m(t))\int_0^t P(N_m(\tau))d\tau \right\}.
\end{aligned}
\end{equation}
In order to close the a priori estimates, one still need to get the uniform
estimate for $\|\nabla \partial_t^{m-1} u\|_{L^2}^2$.
To this end, we combine \eqref{3a8}, \eqref{36a} and Lemma \ref{lemma3.11} to deduce that
\begin{equation}\label{36b}
\begin{aligned}
\|\nabla \partial_t^{m-1} u\|_{L^2}^2
&\le CC_{m+2}\left\{\|u(t)\|_{\mathcal{H}^m}^2+\|\eta(t)\|_{\mathcal{H}^{m-1}}^2
+\|\partial_t^{m-1} {\rm div}u\|_{L^2}^2\right\}\\
&\le CC_{m+2}\left\{P(\Lambda_{1m})
+\|\eta(t)\|_{\mathcal{H}^{m-1}}^2+P(Q(t))\right\}\\
&\le CC_{m+2}\left\{P(N_m(0))+P(N_m(t))\int_0^t P(N_m(\tau)) d\tau\right\}.
\end{aligned}
\end{equation}
Hence, the combination of \eqref{3a8}, \eqref{31801}, \eqref{36a}
and \eqref{36b} yields for $m \ge 6$ that
\begin{equation*}
\begin{aligned}
&N_m(t)
+\int_0^t(\|\nabla \partial_t^{m-1}p(\tau)\|_{L^2}^2
+\|\Delta p(\tau)\|_{\mathcal{H}^2}^2)d\tau
+\varepsilon \int_0^t \|\nabla u(\tau)\|_{\mathcal{H}^{m}}^2 d\tau\\
&\quad \quad
+\varepsilon \sum_{k=0}^{m-2}\int_0^t \|\nabla^2 \partial_t^k u(\tau)\|_{m-1-k}^2d\tau
+\varepsilon^2 \int_0^t \|\nabla^2 \partial_t^{m-1}u(\tau)\|_{L^2}^2 d\tau\\
&\quad \quad
+\int_0^t \|\Delta d(\tau)\|_{\mathcal{H}^{m}}^2d\tau
+\int_0^t\|\nabla \Delta d(\tau)\|_{\mathcal{H}^{m-1}}^2d\tau\\
&
\le \widetilde{C}_2C_{m+2}\left\{P(N_m(0))+P(N_m(t))\int_0^t P(N_m(\tau)) d\tau\right\},
\quad \forall t\in [0, T],
\end{aligned}
\end{equation*}
which completes the proof of \eqref{31c}.
Furthermore, the \eqref{eq1}$_3$ implies that
\begin{equation*}
|\rho(x, 0)|{\rm exp}\left(-\int_0^t \|{\rm div}u(\tau)\|_{L^\infty}d\tau\right)
\le \rho(x,t) \le |\rho(x, 0)|{\rm exp}\left(\int_0^t \|{\rm div}u(\tau)\|_{L^\infty}d\tau\right),
\end{equation*}
which proves \eqref{31a}.
Therefore, we complete the proof of Theorem \ref{Theoream3.1}.

\section{Proof of Theorem \ref{Theorem1.1} (Uniform Regularity)}\label{Proof1}
\quad
In this section, we will give the proof for the Theorem \ref{Theorem1.1}.
Indeed, we shall indicate how to combine the a priori estimates obtained
so far to prove the uniform existence result.
Fixing $m \ge 6$, we consider the initial data
$(p_0^\varepsilon, u_0^\varepsilon, d_0^\varepsilon)\in X_{n,m}^\varepsilon$ such that
\begin{equation}
\mathcal{I}_m(0)=\underset{0< \varepsilon \le 1}{\sup}
                 \|(p_0^\varepsilon, u_0^\varepsilon, d_0^\varepsilon)\|_{X_{n,m}^\varepsilon}
                 \le \widetilde{C}_0
\quad {\rm and} \quad 0<\widehat{C}_0^{-1}\le \rho_0^\varepsilon \le \widehat{C}_0.
\end{equation}
For such initial data, since we are not aware of a local existence
result for \eqref{eq1} and \eqref{bc1}(or \eqref{bc2}),
we first establish the local existence of solution for \eqref{eq1} and \eqref{bc1}
with initial data $(p_0^\varepsilon, u_0^\varepsilon, d_0^\varepsilon)\in X_{n,m}^\varepsilon$.
For such initial data $(p_0^\varepsilon, u_0^\varepsilon, d_0^\varepsilon)$,
it is easy to see that there exists a sequence of smooth approximate initial data
$(p_0^{\varepsilon,\delta},u_0^{\varepsilon, \delta}, d_0^{\varepsilon,\delta})
\in X^{\varepsilon, ap}_{n,m}$
($\delta$ being a regularity parameter), which have enough space regularity
so that the time derivatives at the initial time can be defined by the equations
\eqref{eq1} and the boundary compatibility conditions are satisfied.
Fixed $\varepsilon \in (0, 1]$, one constructs the approximate solutions as follows:\\
(1)Define $u^0=u_0^{\varepsilon, \delta}$
and $d^0=d_0^{\varepsilon,\delta}$.\\
(2)Assume that $(u^{k-1}, d^{k-1})$  has been defined for $k \ge 1$.
Let $(\rho^k, u^k, d^k)$ be the unique solution to the following linearized
initial data boundary value problem:
\begin{equation}\label{4eq1}
\left\{
\begin{aligned}
&\rho^k_t+{\rm div}(\rho^k u^{k-1})=0,
&{\rm in}~ \Omega \times (0, T),\\
&\rho^k u^k_t+\rho^k u^{k-1}\cdot \nabla u^k+\nabla p^k
=\varepsilon \mu \Delta u^k+\varepsilon (\mu+\lambda)\nabla {\rm div}u^k
-\nabla d^k \cdot \Delta d^k,
& {\rm in}~ \Omega \times (0, T),\\
&d^k_t-\Delta d^k=|\nabla d^{k-1}|^2 d^{k-1}-u^{k-1}\cdot \nabla d^{k-1},
& {\rm in}~ \Omega \times (0, T),\\
\end{aligned}
\right.
\end{equation}
with initial data
\begin{equation}\label{4id}
(\rho^k, u^k, d^k)|_{t=0}=(\rho_0^{\varepsilon,\delta},
u_0^{\varepsilon,\delta}, d_0^{\varepsilon,\delta}),
\end{equation}
and Navier-type and Neumann boundary condition
\begin{equation}\label{4bc}
u^k\cdot n=0,
\quad n\times (\nabla \times u^k)=[Bu^k]_\tau,
\quad {\rm and}\quad
\frac{\partial d^k}{\partial n}=0,
\quad {\rm on}~\partial \Omega.
\end{equation}
Since $\rho^k, u^k$ and $d^k$ are decoupled, the existence of global unique smooth
solution $(\rho^k, u^k, d^k)(t)$ of \eqref{4eq1}-\eqref{4bc} can be obtained by using
classical methods, for example, the similar argument in Cho et al. \cite{Cho-Choe-Kim}.
On the other hand, by virtue of
$(\rho_0^{\varepsilon, \delta}, u_0^{\varepsilon, \delta}, d_0^{\varepsilon, \delta})
\in  H^{4m}\times H^{4m}\times H^{4(m+1)}$,
one proves that there exists a positive time $\widetilde{T}_1=\widetilde{T}_1(\varepsilon)$
(depending on $\varepsilon$,
$\|(\rho_0^{\varepsilon, \delta}, u_0^{\varepsilon, \delta})\|_{H^{4m}}$
and
$\| d_0^{\varepsilon, \delta}\|_{H^{4(m+1)}}$) such that
\begin{equation}\label{41}
\|(\rho^k, u^k)(t)\|_{H^{4m}}^2
+\|d^k(t)\|_{H^{4(m+1)}}^2 \le \widetilde{C}_1
\quad {\rm and}\quad
\frac{\widehat{C}_0}{2}\le \rho^k(t)\le 2\widehat{C}_0
\quad {\rm for}~0\le t \le \widetilde{T}_1,
\end{equation}
where the constant $\widetilde{C}_1$ depends on $\widetilde{C}_0, \varepsilon^{-1}$,
$\|(\rho_0^{\varepsilon, \delta}, u_0^{\varepsilon,\delta})\|_{H^{4m}}$
and
$\|d_0^{\varepsilon, \delta}\|_{H^{4(m+1)}}$.
Based on the above uniform time $\widehat{T}_1(\le \widetilde{T}_1)$(independent of $k$)
such that $(\rho^k, u^k, d^k)$ converges to a limit
$(\rho^{\varepsilon, \delta}, u^{\varepsilon, \delta}, d^{\varepsilon, \delta})$
as $k \rightarrow +\infty$ in the following strong sense:
$$
(\rho^k, u^k) \rightarrow (\rho^{\varepsilon, \delta}, u^{\varepsilon, \delta})
~{\rm in}~L^\infty(0, \widehat{T}_1; L^2)
\quad {\rm and}\quad \nabla u^k \rightarrow \nabla u^{\varepsilon, \delta}
~{\rm in}~L^2(0, \widehat{T}_1; L^2),
$$
and
$$
d^k \rightarrow d^{\varepsilon, \delta} ~{\rm in}~L^\infty(0, \widehat{T}_1; H^1)
\quad {\rm and}\quad \Delta d^k \rightarrow \Delta d^{\varepsilon, \delta}
~{\rm in}~L^2(0, \widehat{T}_1; L^2).
$$
It is easy to check that
$(\rho^{\varepsilon, \delta}, u^{\varepsilon, \delta}, d^{\varepsilon, \delta})$
is a classical solution to the problem \eqref{eq1} and \eqref{bc1}
with initial data
$(\rho_0^{\varepsilon, \delta}, u_0^{\varepsilon, \delta}, d_0^{\varepsilon, \delta})$.
In view of the lower semicontinuity  of norms, one can deduce from
the uniform bounds  \eqref{41} that
\begin{equation}\label{41}
\|(\rho^{\varepsilon, \delta}, u^{\varepsilon, \delta})(t)\|_{H^{4m}}^2
+
\|d^{\varepsilon, \delta}(t)\|_{H^{4(m+1)}}^2
\le \widetilde{C}_1 \quad {\rm and}\quad
\frac{\widehat{C}_0}{2}\le \rho^k(t) \le 2C_0
\quad{\rm for}~0\le t \le \widetilde{T}_1.
\end{equation}
Applying the a priori estimates given in Theorem \ref{Theoream3.1} to
the solution
$(\rho^{\varepsilon, \delta}, u^{\varepsilon, \delta}, d^{\varepsilon, \delta})$,
one can obtain a uniform
time $T_0$ and constant $C_3$(independent of $\varepsilon$ and $\delta$) such that
\begin{equation}\label{42}
\begin{aligned}
&
N_m(p^{\varepsilon,\delta}, u^{\varepsilon,\delta}, d^{\varepsilon,\delta})(t)
+\int_0^t(\|\nabla \partial_t^{m-1}p^{\varepsilon,\delta}(\tau)\|_{L^2}^2
+\|\Delta p^{\varepsilon,\delta}(\tau)\|_{\mathcal{H}^2}^2)d\tau\\
&
+\varepsilon \int_0^t \|\nabla u^{\varepsilon,\delta}(\tau)\|_{\mathcal{H}^{m}}^2 d\tau
+\varepsilon \sum_{k=0}^{m-2}\int_0^t
\|\nabla^2 \partial_t^k u^{\varepsilon,\delta}(\tau)\|_{m-1-k}^2d\tau
\\
&
+\varepsilon^2 \int_0^t \|\nabla^2 \partial_t^{m-1}u^{\varepsilon,\delta}(\tau)\|_{L^2}^2 d\tau
+\int_0^t \|\Delta d^{\varepsilon,\delta}(\tau)\|_{\mathcal{H}^{m}}^2d\tau
+\int_0^t\|\nabla \Delta d^{\varepsilon,\delta}(\tau)\|_{\mathcal{H}^{m-1}}^2d\tau\\
&\le \widetilde{C}_3,~\forall t \in [0, \min\{T_0, \widehat{T}_1\}],
\end{aligned}
\end{equation}
and
\begin{equation}\label{42-1}
\frac{1}{2\widehat{C}_0}\le \rho^{\varepsilon, \delta}(t)
\le 2\widehat{C}_0, \quad t\in [0, {\rm min}\{T_0, \widehat{T}_1\}].
\end{equation}
where $T_0$ and $\widetilde{C}_3$ depend only on $\widehat{C}_0$
and $\mathcal{I}_m(0)$. Based on the uniform estimate \eqref{42}
and \eqref{42-1} for
$(\rho^{\varepsilon, \delta}, u^{\varepsilon, \delta}, d^{\varepsilon, \delta})$,
one can pass the limit $\delta \rightarrow 0$ to get a strong solution
$(\rho^\varepsilon, u^{\varepsilon}, d^{\varepsilon})$ of \eqref{eq1} and \eqref{bc1}
with initial data $(\rho_0^\varepsilon, u_0^\varepsilon, d_0^\varepsilon)$ satisfying
\eqref{4eq1} by using a strong compactness arguments(see \cite{Simon}).
Indeed, it follows from
\eqref{42} that $(p^{\varepsilon,\delta}, u^{\varepsilon, \delta},
\nabla d^{\varepsilon, \delta})$
is bounded uniformly in $L^\infty(0, \widetilde{T}_2; H_{co}^m)$,
where $\widetilde{T}_2=\min\{T_0, \widetilde{T}_1\}$, while
$(\nabla p^{\varepsilon, \delta}, \nabla u^{\varepsilon, \delta},
\Delta d^{\varepsilon, \delta})$
is bounded uniformly in $L^\infty(0, \widetilde{T}_2; H_{co}^{m-1})$,
and $(\partial_t p^{\varepsilon, \delta}, \partial_t u^{\varepsilon, \delta},
\partial_t \nabla d^{\varepsilon, \delta})$
is bounded uniformly in $L^\infty(0, \widetilde{T}_2; H_{co}^{m-1})$.
Then, the strong compactness argument implies that
$(p^{\varepsilon, \delta}, u^{\varepsilon, \delta}, \nabla d^{\varepsilon, \delta})$
is compact in $\mathcal{C}([0, \widetilde{T}_2]; H_{co}^{m-1})$.
In particular, there exists a sequence $\delta_n \rightarrow 0^+$
and $(p^\varepsilon, u^\varepsilon, \nabla d^\varepsilon)
\in \mathcal{C}([0, \widetilde{T}_2]; H_{co}^{m-1})$
such that
\begin{equation*}
(p^{\varepsilon, \delta_n}, u^{\varepsilon, \delta_n}, \nabla d^{\varepsilon, \delta_n})
\rightarrow  (p^\varepsilon, u^{\varepsilon}, \nabla d^{\varepsilon})
~{\rm in}~\mathcal{C}([0, \widetilde{T}_2]; H_{co}^{m-1})~{\rm as}~
\delta_n \rightarrow 0^+.
\end{equation*}
Moreover, applying the lower semicontinuity of norms to the bounds
\eqref{42}, one obtains the bounds \eqref{42} and \eqref{42-1}
for $(p^\varepsilon, u^\varepsilon, d^\varepsilon)$.
It follows from the bounds of \eqref{42} and \eqref{42-1} for
$(p^\varepsilon, u^\varepsilon, d^\varepsilon)$,
and the anisotropic Sobolev inequality \eqref{24} that
$$
\begin{aligned}
&\underset{0\le t\le \widetilde{T}_2}{\sup}
\|(\rho^{\varepsilon,\delta_n}-\rho^{\varepsilon},
u^{\varepsilon,\delta_n}-u^{\varepsilon}, d^{\varepsilon,\delta_n}-d^{\varepsilon})(t)\|_{L^\infty}^2\\
&\le C\underset{0\le t\le \widetilde{T}_2}{\sup}
  \|\nabla(\rho^{\varepsilon,\delta_n}-\rho^{\varepsilon},
  u^{\varepsilon,\delta_n}-u^{\varepsilon}, d^{\varepsilon,\delta_n}-d^{\varepsilon})\|_{H^1_{co}}
  \|(\rho^{\varepsilon,\delta_n}-\rho^{\varepsilon},
     u^{\varepsilon,\delta_n}-u^{\varepsilon},
     d^{\varepsilon,\delta_n}-d^{\varepsilon})\|_{H^2_{co}}
  \rightarrow 0,
\end{aligned}
$$
and
$$
\underset{0\le t\le \widetilde{T}_2}{\sup}
\|\nabla(d^{\varepsilon,\delta_n}-d^{\varepsilon})\|_{L^\infty}^2
\le C\underset{0\le t\le \widetilde{T}_2}{\sup}
  \|\Delta(d^{\varepsilon,\delta_n}-d^{\varepsilon})\|_{H^1_{co}}
  \|\nabla(d^{\varepsilon,\delta_n}-d^{\varepsilon})\|_{H^2_{co}}
  \rightarrow 0,
$$
Hence, it is easy to check that $(\rho^\varepsilon, u^\varepsilon, d^\varepsilon)$
is a weak solution of the nematic liquid crystal flows \eqref{eq1}.
The uniqueness of the solution $(\rho^\varepsilon, u^\varepsilon, d^\varepsilon)$
comes directly from the Lipschitz regularity of solution.
Thus, the whole family $(\rho^{\varepsilon, \delta},
u^{\varepsilon, \delta}, d^{\varepsilon, \delta})$
converge to $(\rho^{\varepsilon}, u^{\varepsilon}, d^{\varepsilon})$.
Therefore,
we have established the local solution of equation \eqref{eq1}
and \eqref{bc1} with initial data
$(p_0^\varepsilon, u_0^{\varepsilon}, d_0^{\varepsilon})
\in X_{n,m}^\varepsilon,~t\in [0, T_2]$.

We shall use the local existence results to prove Theorem \ref{Theorem1.1}.
If $T_0 \le \widetilde{T}$, then Theorem \ref{Theorem1.1} follows
from \eqref{42} and \eqref{42-1} with $\widetilde{C}_1=\widetilde{C}_3$.
On the other hand, for the case $\widetilde{T} \le  T_0$,
based on the uniform estimate \eqref{42} and \eqref{42-1}, we can use the local existence
results established above to extend our solution step by step to the
uniform time interval $t\in [0, T_0]$.
Therefore, we complete the proof of Theorem \ref{Theorem1.1}.

\section{Proof of Theorem \ref{Theorem1.2} (Inviscid Limit)}\label{Proof2}

\quad In this section, we study the vanishing viscosity of solutions
for the equation \eqref{eq1} to the solution for the equation \eqref{eq2}
with a rate of convergence. It is easy to see that the solution
$(\rho, u, d)\in H^3 \times H^3 \times H^4$ of equation \eqref{eq1} and \eqref{bc1}
with initial data $(\rho_0, u_0, d_0)\in H^3 \times H^3 \times H^4$ satisfies
$$
\sum_{k=0}^3\|(\rho, u)\|_{C^k([0, T_1]; H^{3-k})}
+\sum_{k=0}^2\|d\|_{C^k([0, T_1]; H^{4-2k})}
\le \widetilde{C}_4
$$
where $\widetilde{C}_4$ depends only on $\|(\rho_0, u_0, d_0)\|_{H^3 \times H^3 \times H^4}$.
On the other hand, it follows from the Theorem \ref{Theorem1.1}
that the solution $(\rho^\varepsilon, u^\varepsilon, d^\varepsilon)$ of equation
\eqref{eq1} and \eqref{bc1} with initial data $(\rho_0, u_0, d_0)$
satisfies
$$
\|(p(\rho^\varepsilon), u^\varepsilon, d^\varepsilon)\|_{X_m^\varepsilon}
\le \widetilde{C}_1,
\quad
\frac{1}{2\widehat{C}_0}\le \rho^\varepsilon(t) \le 2\widehat{C}_0
\quad \forall t\in [0, T_0],
$$
where $T_0$ and $\widetilde{C}_1$ are defined in Theorem \ref{Theorem1.1}.
In particular, this uniform regularity implies the bound
$$
\|(\rho^\varepsilon, u^\varepsilon)\|_{W^{1,\infty}}
+\|d^\varepsilon\|_{W^{2,\infty}}
+\|\partial_t(\rho^\varepsilon, u^\varepsilon)\|_{L^\infty}
+\|d_t^\varepsilon\|_{W^{1,\infty}}\le \widetilde{C}_1,
$$
which plays an important role in the proof of Theorem \ref{Theorem1.2}.

Let us define
\begin{equation*}
\phi^\varepsilon=\rho^\varepsilon-\rho,\quad
v^\varepsilon=u^\varepsilon-u, \quad \varphi^\varepsilon=d^\varepsilon-d.
\end{equation*}
It then follows from \eqref{eq1} that
\begin{equation}\label{eq3}
\left\{
\begin{aligned}
&\partial_t \phi^\varepsilon+\rho{\rm div}v^\varepsilon
+u\cdot \nabla \phi^\varepsilon=R_1^\varepsilon,\\
&\rho \partial_t v^\varepsilon+\rho u\cdot \nabla v^\varepsilon
+\nabla(p^\varepsilon-p)
+\Phi^\varepsilon
=-\mu \varepsilon \nabla \times (\nabla \times v^\varepsilon)
+(2\mu+\lambda)\varepsilon \nabla {\rm div}v^\varepsilon
+R_2^\varepsilon+R_3^\varepsilon,\\
&\partial_t \varphi^\varepsilon
-\Delta \varphi^\varepsilon=R_4^\varepsilon,\\
\end{aligned}
\right.
\end{equation}
where
\begin{equation*}
\begin{aligned}
&R_1^\varepsilon=-\phi^\varepsilon {\rm div}v^\varepsilon
-v^\varepsilon \cdot \nabla \phi^\varepsilon
-\phi^\varepsilon{\rm div}u-\nabla \rho \cdot v^\varepsilon,\\
&R_2^\varepsilon=-\phi^\varepsilon v^\varepsilon_t-\phi^\varepsilon u_t
+\mu \varepsilon \Delta u+(\mu+\lambda)\varepsilon \nabla {\rm div}u,\\
&R_3^\varepsilon=-\nabla d^\varepsilon \cdot \Delta \varphi^\varepsilon
-\nabla \varphi^\varepsilon \cdot \Delta d,\\
&R_4^\varepsilon=-u\cdot \nabla \varphi^\varepsilon
                 -v^\varepsilon\cdot \nabla d^\varepsilon
                 +(\nabla \varphi^\varepsilon : \nabla (d^\varepsilon+d))d^\varepsilon
                 +|\nabla d|^2 \varphi^\varepsilon,\\
&\Phi^\varepsilon=(\rho^\varepsilon u^\varepsilon-\rho u)\cdot \nabla u^\varepsilon.
\end{aligned}
\end{equation*}
The boundary conditions to \eqref{eq3} are given as follows
\begin{equation}\label{bc3}
\left\{
\begin{aligned}
& v^\varepsilon \cdot n=0,
\quad n\times (\nabla \times v^\varepsilon)
=[B v^\varepsilon]_\tau+[Bu]_\tau-n\times w, \quad x\in \partial \Omega,\\
&\frac{\partial \varphi^\varepsilon}{\partial n}=0, \quad x\in \partial \Omega.
\end{aligned}
\right.
\end{equation}

\begin{lemm}\label{lemma5.1}
For $t \in [0, \min\{T_0, T_1\}]$, it holds that
\begin{equation}\label{511}
\underset{0\le \tau \le t}{\sup}(\|(\phi^\varepsilon,v^\varepsilon)(\tau)\|_{L^2}^2
+\|\varphi^\varepsilon(\tau)\|_{H^1}^2)
+\mu \varepsilon \int_0^t \|v^\varepsilon\|_{H^1}^2 d\tau
+\int_0^t (\|\nabla \varphi^\varepsilon\|_{L^2}^2
+\|\Delta \varphi^\varepsilon\|_{L^2}^2) d\tau
\le C \varepsilon^{\frac{3}{2}}.
\end{equation}
where $C>0$ depend only on $\widetilde{C}_0, \widetilde{C}_1$ and $\widetilde{C}_4$.
\end{lemm}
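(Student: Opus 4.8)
\textbf{Proof proposal for Lemma \ref{lemma5.1}.} The plan is to run a standard relative-energy (weak--strong) estimate for the difference system \eqref{eq3}--\eqref{bc3}, using the uniform bounds from Theorem \ref{Theorem1.1} for $(\rho^\varepsilon,u^\varepsilon,d^\varepsilon)$ and the fixed $H^3\times H^3\times H^4$ regularity of $(\rho,u,d)$ to close a Gr\"onwall argument, with the $\varepsilon^{3/2}$ rate coming entirely from the viscous remainder terms. First I would test \eqref{eq3}$_2$ with $v^\varepsilon$ and \eqref{eq3}$_1$ with $\frac{1}{\gamma?}$-type pressure weight (more precisely, combine it with $(p^\varepsilon-p)$ through the identity ${\rm div}u=-\frac{p_t}{\gamma p}-\frac{u\cdot\nabla p}{\gamma p}$ already used in \eqref{35b}, so that $\int\nabla(p^\varepsilon-p)\cdot v^\varepsilon$ pairs with the density equation to produce $\frac{d}{dt}\int\frac{1}{2\gamma p}|p^\varepsilon-p|^2$ up to lower-order commutators), obtaining
\begin{equation*}
\frac{1}{2}\frac{d}{dt}\int\Big(\rho|v^\varepsilon|^2+\frac{1}{\gamma p}|\phi^\varepsilon|^2\Big)dx
+c_1\varepsilon\|\nabla v^\varepsilon\|_{L^2}^2
\le \Big|\int(\Phi^\varepsilon+R_2^\varepsilon+R_3^\varepsilon)\cdot v^\varepsilon\,dx\Big|
+\Big|\int R_1^\varepsilon\,\phi^\varepsilon\,dx\Big|+\text{(b.t.)},
\end{equation*}
where the boundary term from $\nabla\times(\nabla\times v^\varepsilon)$ is handled exactly as in Lemma \ref{lemma3.2} via \eqref{bc3}$_2$: the extra inhomogeneity $[Bu]_\tau-n\times w$ on the boundary contributes $\varepsilon\int_{\partial\Omega}(\cdots)\cdot v^\varepsilon_\tau\,d\sigma$, which by the trace inequality \eqref{317} and Proposition \ref{prop2.1} is absorbed into $\frac{c_1}{2}\varepsilon\|\nabla v^\varepsilon\|_{L^2}^2+C\varepsilon\|v^\varepsilon\|_{L^2}^2+C\varepsilon\|u\|_{H^1}^2$; the term $C\varepsilon\|u\|_{H^1}^2\cdot t$ is $O(\varepsilon)$, not yet $O(\varepsilon^{3/2})$, so this needs a finer treatment --- see below.

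Next I would treat the $\varphi^\varepsilon$-equation: testing \eqref{eq3}$_3$ with $\varphi^\varepsilon$ and with $-\Delta\varphi^\varepsilon$ (using the Neumann condition \eqref{bc3}$_2$ as in \eqref{315}) gives
\begin{equation*}
\frac{1}{2}\frac{d}{dt}\big(\|\varphi^\varepsilon\|_{L^2}^2+\|\nabla\varphi^\varepsilon\|_{L^2}^2\big)
+\|\nabla\varphi^\varepsilon\|_{L^2}^2+\|\Delta\varphi^\varepsilon\|_{L^2}^2
\le \Big|\int R_4^\varepsilon(\varphi^\varepsilon-\Delta\varphi^\varepsilon)\,dx\Big|.
\end{equation*}
The source $R_4^\varepsilon$ is, term by term, bilinear in $(v^\varepsilon,\nabla\varphi^\varepsilon,\varphi^\varepsilon)$ with coefficients controlled by $\|u\|_{W^{1,\infty}}$, $\|d^\varepsilon\|_{W^{2,\infty}}$, $\|d\|_{W^{2,\infty}}$, all uniformly bounded; hence $|\int R_4^\varepsilon(\varphi^\varepsilon-\Delta\varphi^\varepsilon)|\le \delta\|\Delta\varphi^\varepsilon\|_{L^2}^2+C(\|v^\varepsilon\|_{L^2}^2+\|\varphi^\varepsilon\|_{L^2}^2+\|\nabla\varphi^\varepsilon\|_{L^2}^2)$. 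Similarly $R_3^\varepsilon=-\nabla d^\varepsilon\cdot\Delta\varphi^\varepsilon-\nabla\varphi^\varepsilon\cdot\Delta d$, so $|\int R_3^\varepsilon\cdot v^\varepsilon|\le\delta\|\Delta\varphi^\varepsilon\|_{L^2}^2+C(\|v^\varepsilon\|_{L^2}^2+\|\nabla\varphi^\varepsilon\|_{L^2}^2)$, absorbing the $\delta$-terms into the parabolic dissipation of $\varphi^\varepsilon$. The convection/pressure remainders $R_1^\varepsilon$, $\Phi^\varepsilon=(\rho^\varepsilon u^\varepsilon-\rho u)\cdot\nabla u^\varepsilon=(\phi^\varepsilon u^\varepsilon+\rho v^\varepsilon)\cdot\nabla u^\varepsilon$ are all bilinear in $(\phi^\varepsilon,v^\varepsilon)$ with $W^{1,\infty}$ coefficients, giving $\le C(\|\phi^\varepsilon\|_{L^2}^2+\|v^\varepsilon\|_{L^2}^2)$.

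The only genuinely ``large'' contribution is the viscous forcing in $R_2^\varepsilon$, namely $\mu\varepsilon\Delta u+(\mu+\lambda)\varepsilon\nabla{\rm div}u$ and the boundary remainder $\varepsilon\int_{\partial\Omega}([Bu]_\tau-n\times w)\cdot v^\varepsilon_\tau$; also $-\phi^\varepsilon(v^\varepsilon_t+u_t)$, whose $u_t$-part is fine but whose $v^\varepsilon_t$-part must be rewritten using \eqref{eq3}$_2$ (no $\partial_t$ of the difference left standing). \textbf{The main obstacle is extracting the rate $\varepsilon^{3/2}$ rather than the naive $\varepsilon$.} The trick, following Xiao--Xin \cite{Xiao-Xin2} and Wang et al. \cite{Wang-Xin-Yong}, is that these viscous terms are concentrated in a boundary layer of width $O(\sqrt\varepsilon)$: one writes $\varepsilon\int_\Omega\Delta u\cdot v^\varepsilon\le \varepsilon\|\Delta u\|_{L^2(\Omega_{b})}\|v^\varepsilon\|_{L^2}+\varepsilon\|\Delta u\|_{L^2}\|v^\varepsilon\|_{L^2}$ and, more importantly, integrates $\varepsilon\nabla{\rm div}u\cdot v^\varepsilon$ and $\varepsilon\Delta u\cdot v^\varepsilon$ by parts to move a derivative onto $v^\varepsilon$, paying $\delta\varepsilon\|\nabla v^\varepsilon\|_{L^2}^2$ (absorbed) plus $C_\delta\varepsilon\|\nabla u\|_{L^2}^2$ and a boundary term $\varepsilon\int_{\partial\Omega}\partial_n u\cdot v^\varepsilon$; the boundary integrals are then estimated by $\varepsilon|v^\varepsilon|_{L^2(\partial\Omega)}|\nabla u|_{L^2(\partial\Omega)}\le\varepsilon^{3/2}C+\varepsilon^{1/2}\cdot\varepsilon\|\nabla v^\varepsilon\|_{L^2}^2$ using $|v^\varepsilon|_{L^2(\partial\Omega)}^2\le C\|v^\varepsilon\|_{L^2}\|\nabla v^\varepsilon\|_{L^2}\le C\varepsilon^{-1/2}\|v^\varepsilon\|_{L^2}^2+C\varepsilon^{1/2}\|\nabla v^\varepsilon\|_{L^2}^2$ (interpolation trace inequality), so that the ``bad'' power of $\varepsilon$ is balanced against the available $\varepsilon\|\nabla v^\varepsilon\|_{L^2}^2$ dissipation and what survives is exactly $C\varepsilon^{3/2}$. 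Collecting all estimates, setting
\begin{equation*}
E(t):=\|(\phi^\varepsilon,v^\varepsilon)(t)\|_{L^2}^2+\|\varphi^\varepsilon(t)\|_{H^1}^2,
\end{equation*}
one arrives at $\frac{d}{dt}E(t)+c\big(\varepsilon\|\nabla v^\varepsilon\|_{L^2}^2+\|\nabla\varphi^\varepsilon\|_{L^2}^2+\|\Delta\varphi^\varepsilon\|_{L^2}^2\big)\le C E(t)+C\varepsilon^{3/2}$ with $E(0)=0$ (same initial data), and Gr\"onwall on $[0,\min\{T_0,T_1\}]$ yields $E(t)\le C\varepsilon^{3/2}(e^{Ct}-1)\le C\varepsilon^{3/2}$ together with the stated time-integral bounds, which is precisely \eqref{511}. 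All constants depend only on $\widetilde C_0,\widetilde C_1,\widetilde C_4$ through the uniform $W^{1,\infty}$/$W^{2,\infty}$ norms and the fixed-regularity bounds on $(\rho,u,d)$.
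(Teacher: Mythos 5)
Your overall route coincides with the paper's: test \eqref{eq3}$_2$ with $v^\varepsilon$, turn the pressure pairing $\int\nabla(p^\varepsilon-p)\cdot v^\varepsilon$ into $\frac{d}{dt}\int\frac{p'(\rho)}{2\rho}|\phi^\varepsilon|^2$ via the continuity equation, test \eqref{eq3}$_3$ with $\varphi^\varepsilon$ and $-\Delta\varphi^\varepsilon$, absorb $R_1^\varepsilon$, $R_3^\varepsilon$, $R_4^\varepsilon$, $\Phi^\varepsilon$ as bilinear terms with uniformly $W^{1,\infty}$/$W^{2,\infty}$ coefficients, and close with Gr\"onwall. You also correctly identify the actual source of the $\varepsilon^{3/2}$ rate: the inhomogeneous boundary integral $\varepsilon\int_{\partial\Omega}([Bu]_\tau-n\times w)\cdot v^\varepsilon_\tau\,d\sigma$ coming from $-\mu\varepsilon\int\nabla\times(\nabla\times v^\varepsilon)\cdot v^\varepsilon$, and the trace--interpolation $|v^\varepsilon|_{L^2(\partial\Omega)}^2\lesssim\|v^\varepsilon\|_{L^2}\|\nabla v^\varepsilon\|_{L^2}$ followed by a Young inequality calibrated against the $\varepsilon\|\nabla v^\varepsilon\|_{L^2}^2$ dissipation is exactly the paper's key step \eqref{5112}.

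However, there is a genuine gap in your ``finer treatment'' of the bulk viscous remainders $\mu\varepsilon\Delta u$ and $(\mu+\lambda)\varepsilon\nabla{\rm div}u$ in $R_2^\varepsilon$. You propose to integrate $\varepsilon\int\Delta u\cdot v^\varepsilon$ by parts, accepting a remainder $C_\delta\varepsilon\|\nabla u\|_{L^2}^2$. That quantity is uniformly of order $\varepsilon$ (since $\|\nabla u\|_{L^2}\sim\widetilde C_4$ is a fixed constant), and upon integration over $[0,t]$ it stays $O(\varepsilon)$ --- it cannot be promoted to $O(\varepsilon^{3/2})$ by any balancing against the $\varepsilon\|\nabla v^\varepsilon\|_{L^2}^2$ dissipation, because it contains no factor of $v^\varepsilon$. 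The boundary-layer heuristic you invoke (``$\Delta u$ is concentrated in a layer of width $O(\sqrt\varepsilon)$'') also does not apply here: $u$ is the fixed inviscid limit, with uniform $H^3$ regularity and no boundary layer. The integration by parts is therefore not merely unnecessary but actively breaks the rate. The correct treatment (the one used in \eqref{517}) is the opposite: \emph{do not} integrate by parts; simply apply Cauchy--Schwarz, $\varepsilon\int\Delta u\cdot v^\varepsilon\le C\varepsilon^2\|\Delta u\|_{L^2}^2+C\|v^\varepsilon\|_{L^2}^2\le C\varepsilon^2+C\|v^\varepsilon\|_{L^2}^2$, which is already $O(\varepsilon^2)$ and thus better than the target. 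The \emph{only} term that needs the trace/Young refinement is the boundary integral from the inhomogeneous Navier-slip data; once you drop the unnecessary integration by parts and apply your trace argument to that single term, the proof closes with rate $\varepsilon^{3/2}$ exactly as in the paper.
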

\begin{proof}
Multiplying \eqref{eq3}$_1$ by $v^\varepsilon$, it is easy to deduce that
\begin{equation}\label{512}
\begin{aligned}
&\frac{d}{dt}\frac{1}{2}\int \rho |v^\varepsilon|^2 dx
+\int \Phi^\varepsilon \cdot v^\varepsilon dx
+\int \nabla (p^\varepsilon-p)\cdot v^\varepsilon dx\\
&=-\mu \varepsilon \int \nabla \times (\nabla \times v^\varepsilon)\cdot v^\varepsilon dx
+(2\mu+\lambda)\varepsilon \int \nabla {\rm div}v^\varepsilon \cdot v^\varepsilon dx
+\int R_2^\varepsilon \cdot v^\varepsilon dx
+\int R_3^\varepsilon \cdot v^\varepsilon dx.
\end{aligned}
\end{equation}
It is easy to check that
\begin{equation}\label{513}
\int \Phi^\varepsilon \cdot v^\varepsilon dx
\le C \|(\rho, u^\varepsilon, \nabla u^\varepsilon)\|_{L^\infty}
(\|\phi^\varepsilon\|_{L^2}^2+\|v^\varepsilon\|_{L^2}^2).
\end{equation}
Integrating by part and applying the equation \eqref{eq1}$_1$,
we find
\begin{equation}\label{514}
\begin{aligned}
&\int \nabla (p^\varepsilon-p)\cdot v^\varepsilon dx
=-\int (p^\varepsilon-p){\rm div}v^\varepsilon dx\\
&\ge \int \frac{p'(\rho)}{\rho}\phi^\varepsilon
(\phi^\varepsilon_t+u\cdot \nabla \phi^\varepsilon-R_1^\varepsilon)dx
-C(1+\|\nabla u^\varepsilon\|_{L^\infty})\|\phi^\varepsilon\|_{L^2}^2\\
&\ge \frac{d}{dt}\int \frac{p'(\rho)}{2\rho}|\phi^\varepsilon|^2dx
-C(1+\|(\rho, u, \rho^\varepsilon, u^\varepsilon)\|_{W^{1,\infty}})
(\|\phi^\varepsilon\|_{L^2}^2+\|v^\varepsilon\|_{L^2}^2)\\
&\ge \frac{d}{dt}\int \frac{p'(\rho)}{2\rho}|\phi^\varepsilon|^2dx
-C(\|\phi^\varepsilon\|_{L^2}^2+\|v^\varepsilon\|_{L^2}^2).
\end{aligned}
\end{equation}
Integrating by part and applying the boundary condition \eqref{bc3},
one arrives at directly
\begin{equation}\label{515}
\begin{aligned}
&-\mu \varepsilon \int \nabla \times (\nabla \times v^\varepsilon)\cdot v^\varepsilon dx\\
&=-\mu \varepsilon\int_{\partial \Omega} n\times (\nabla \times v^\varepsilon)\cdot v^\varepsilon dx-\mu \varepsilon\int |\nabla \times v^\varepsilon|^2 dx\\
&=-\mu \varepsilon\int_{\partial \Omega}
 ([B v^\varepsilon]_\tau+[B u]_\tau-n\times w)\cdot v^\varepsilon d\sigma
  -\mu \varepsilon\int |\nabla \times v^\varepsilon|^2 dx\\
&\le -\mu \varepsilon \|\nabla \times v^\varepsilon\|_{L^2}^2
     +C\varepsilon(|v^\varepsilon|_{L^2(\partial \Omega)}^2
     +|v^\varepsilon|_{L^2(\partial \Omega)}),
\end{aligned}
\end{equation}
and
\begin{equation}\label{516}
(2\mu+\lambda)\varepsilon\int \nabla {\rm div}v^\varepsilon \cdot v^\varepsilon dx
=(2\mu+\lambda)\varepsilon \int |{\rm div}v^\varepsilon|^2 dx.
\end{equation}
On the other hand, by virtue of the H\"{o}lder and Young inequalities, one attains
\begin{equation}\label{517}
\int R_2^\varepsilon \cdot v^\varepsilon dx
\le C\|(\phi^\varepsilon, v^\varepsilon)\|_{L^2}^2+C\varepsilon^2,
\end{equation}
and
\begin{equation}\label{518}
\int R_3^\varepsilon \cdot v^\varepsilon dx
\le \delta \|\Delta \varphi^\varepsilon\|_{L^2}^2
+C_\delta(\|v^\varepsilon\|_{L^2}^2+\|\nabla \varphi^\varepsilon\|_{L^2}^2).
\end{equation}
Substituting \eqref{513}-\eqref{518} into \eqref{512}, we obtain
\begin{equation}\label{519}
\begin{aligned}
&\frac{d}{dt}\int \left(\frac{p'(\rho)}{\rho}|\phi^\varepsilon|^2
+\frac{\rho}{2}|v^\varepsilon|^2\right)dx
+\mu \varepsilon \|\nabla \times v^\varepsilon\|_{L^2}^2
+(2\mu+\lambda)\varepsilon \|{\rm div}v^\varepsilon\|_{L^2}^2\\
&\le C_\delta\|(\phi^\varepsilon, v^\varepsilon, \nabla \varphi^\varepsilon)\|_{L^2}^2
+C\varepsilon(|v^\varepsilon|_{L^2(\partial \Omega)}^2
+|v^\varepsilon|_{L^2(\partial \Omega)})+C\varepsilon^2
+\delta \|\Delta\varphi^\varepsilon\|_{L^2}^2.
\end{aligned}
\end{equation}
The application of Proposition \ref{prop2.1} gives directly
\begin{equation}\label{5110}
\|\nabla v^\varepsilon\|_{H^1}^2
\le C(\|\nabla \times v^\varepsilon\|_{L^2}^2
+\|{\rm div}v^\varepsilon\|_{L^2}^2+\|v^\varepsilon\|_{L^2}^2).
\end{equation}
By virtue of the trace theorem in Proposition \ref{prop2.3}
and Cauchy inequality, one deduces that
\begin{equation}\label{5111}
|v^\varepsilon|^2_{L^2(\partial \Omega)}
\le \delta \|\nabla v^\varepsilon\|_{L^2}^2+C_\delta \|v^\varepsilon\|_{L^2}^2,
\end{equation}
and
\begin{equation}\label{5112}
\begin{aligned}
\varepsilon |v^\varepsilon|_{L^2(\partial \Omega)}
&\le \varepsilon \|v^\varepsilon\|_{L^2}^{\frac{1}{2}}
          \|\nabla v^\varepsilon\|_{L^2}^{\frac{1}{2}}
\le \delta \varepsilon \|\nabla v^\varepsilon\|_{L^2}^2
    +C_\delta \varepsilon \|v^\varepsilon\|_{L^2}^{\frac{2}{3}}\\
&\le \delta \varepsilon \|\nabla v^\varepsilon\|_{L^2}^2
    +C_\delta  \|v^\varepsilon\|_{L^2}^2+\varepsilon^{\frac{3}{2}}.
\end{aligned}
\end{equation}
Then, the combination of \eqref{519}-\eqref{5112} yields that
\begin{equation}\label{5113}
\frac{d}{dt}\int \left(\frac{p'(\rho)}{\rho}|\phi^\varepsilon|^2
+\frac{\rho}{2}|v^\varepsilon|^2\right)dx
+\mu \varepsilon \|v^\varepsilon\|_{H^1}^2
\le C\|(\phi^\varepsilon, v^\varepsilon, \nabla \varphi^\varepsilon)\|_{L^2}^2
+C\varepsilon^{\frac{3}{2}}+\delta \|\Delta\varphi^\varepsilon\|_{L^2}^2.
\end{equation}
Multiplying \eqref{eq3} by $-\Delta \varphi^\varepsilon$ and integrating over $\Omega$, we find
\begin{equation}\label{5114}
\begin{aligned}
-\int \partial_t \varphi^\varepsilon \cdot \Delta \varphi^\varepsilon dx
+\int |\Delta \varphi^\varepsilon|^2 dx
=
-\int R_3^\varepsilon \cdot \Delta \varphi^\varepsilon dx.
\end{aligned}
\end{equation}
Integrating by part and applying the boundary condition \eqref{bc3}, it holds that
\begin{equation}\label{5115}
\begin{aligned}
-\int \partial_t \varphi^\varepsilon \cdot \Delta \varphi^\varepsilon dx
&=-\int_{\partial \Omega} \partial_t \varphi^\varepsilon \cdot (n\cdot \nabla \varphi^\varepsilon) d\sigma
+\frac{1}{2}\frac{d}{dt}\int |\nabla \varphi^\varepsilon|^2 dx
=\frac{1}{2}\frac{d}{dt}\int |\nabla \varphi^\varepsilon|^2 dx.
\end{aligned}
\end{equation}
Applying the Cauchy inequality, it is easy to deduce that
\begin{equation}\label{5116}
\begin{aligned}
-\int R_2^\varepsilon \cdot \Delta \varphi^\varepsilon dx
&\le \delta \|\Delta \varphi^\varepsilon\|_{L^2}^2
     +C_\delta \|u\|_{L^\infty}^2\|\nabla \varphi^\varepsilon\|_{L^2}^2
     +C_\delta \|\nabla d^\varepsilon\|_{L^\infty}^2
     (\|\nabla \varphi^\varepsilon\|_{L^2}^2+\|v^\varepsilon\|_{L^2}^2)\\
&\quad   +C_\delta \|\nabla d\|_{L^\infty}^2
     (\|\nabla \varphi^\varepsilon\|_{L^2}^2+\|\varphi^\varepsilon\|_{L^2}^2)\\
&\le \delta \|\Delta \varphi^\varepsilon\|_{L^2}^2
     +C_\delta(\|v^\varepsilon\|_{L^2}^2+
          \|\varphi^\varepsilon\|_{L^2}^2+\|\nabla \varphi^\varepsilon\|_{L^2}^2).
\end{aligned}
\end{equation}
Substituting \eqref{5115}-\eqref{5116} into \eqref{5114}, we obtain
\begin{equation}\label{5117}
\begin{aligned}
\frac{1}{2}\frac{d}{dt}\int |\nabla \varphi^\varepsilon|^2 dx
+\frac{3}{4}\int |\Delta \varphi^\varepsilon|^2 dx
\le
C(\|v^\varepsilon\|_{L^2}^2+
          \|\varphi^\varepsilon\|_{L^2}^2+\|\nabla \varphi^\varepsilon\|_{L^2}^2).
\end{aligned}
\end{equation}
In order to control the term $\int |\varphi^\varepsilon|^2 dx$ on the right hand side of
\eqref{5117}, we multiply the equation \eqref{eq3}$_3$ by
$\varphi^\varepsilon$ and integrating by part to get that
\begin{equation}\label{5118}
\frac{1}{2}\frac{d}{dt}\int |\varphi^\varepsilon|^2 dx
+\int |\nabla \varphi^\varepsilon|^2 dx
=\int R_4^\varepsilon \cdot \varphi^\varepsilon dx.
\end{equation}
In view of the H\"{o}lder inequality, one arrives at
\begin{equation*}
\begin{aligned}
\int R_4^\varepsilon \cdot \varphi^\varepsilon dx
&\le \|u\|_{L^\infty}\|\varphi^\varepsilon\|_{L^2}\|\nabla \varphi^\varepsilon\|_{L^2}
+\|\nabla d^\varepsilon\|_{L^\infty}
     (\|v^\varepsilon\|_{L^2}\|\varphi^\varepsilon\|_{L^2}
      +\|\varphi^\varepsilon\|_{L^2}\|\nabla \varphi^\varepsilon\|_{L^2})\\
&\quad  + \|\nabla d\|_{L^\infty}
     \|\nabla \varphi^\varepsilon\|_{L^2}\|\varphi^\varepsilon\|_{L^2}
      +\|\nabla d\|_{L^\infty}^2\|\varphi^\varepsilon\|_{L^2}^2\\
&\le
C(\|v^\varepsilon\|_{L^2}^2+
          \|\varphi^\varepsilon\|_{L^2}^2+\|\nabla \varphi^\varepsilon\|_{L^2}^2),
\end{aligned}
\end{equation*}
which, together with \eqref{5118}, yields directly
\begin{equation}\label{5119}
\frac{1}{2}\frac{d}{dt}\int |\varphi^\varepsilon|^2 dx
+\int |\nabla \varphi^\varepsilon|^2 dx
\le C(\|v^\varepsilon\|_{L^2}^2
+\|\varphi^\varepsilon\|_{L^2}^2
+\|\nabla \varphi^\varepsilon\|_{L^2}^2).
\end{equation}
Then the combination of \eqref{5113}, \eqref{5117} and \eqref{5119} yields immediately
\begin{equation*}
\begin{aligned}
&\frac{d}{dt}\!\!\int \left(\frac{p'(\rho)}{\rho}|\phi^\varepsilon|^2
+\frac{\rho}{2}|v^\varepsilon|^2+\frac{1}{2}|\varphi^\varepsilon|^2
+\frac{1}{2}|\nabla  \varphi^\varepsilon|^2\right)dx
+\mu \varepsilon \|v^\varepsilon\|_{H^1}^2
+\frac{3}{4}\int (|\nabla \varphi^\varepsilon|^2+|\Delta \varphi^\varepsilon|^2) dx\\
&\le C(\|\phi^\varepsilon\|_{L^2}^2+\|v^\varepsilon\|_{L^2}^2
+\|\varphi^\varepsilon\|_{L^2}^2+\|\nabla \varphi^\varepsilon\|_{L^2}^2)
+C\varepsilon^{\frac{3}{2}}.
\end{aligned}
\end{equation*}
which, together with the Gr\"{o}nwall inequality,
completes the proof of Lemma \ref{lemma5.1}.
\end{proof}

\begin{lemm}\label{lemma5.2}
For $t \in [0, \min\{T_0, T_1\}]$, it holds that
\begin{equation}\label{521}
\underset{0\le \tau \le t}{\sup}\|\Delta \varphi^\varepsilon(\tau)\|_{L^2}^2
+\int_0^t \|\nabla \Delta \varphi^\varepsilon\|_{L^2}^2d\tau
\le C\varepsilon^{\frac{1}{2}}.
\end{equation}
\end{lemm}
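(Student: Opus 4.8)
The plan is to run an $L^2$ energy estimate on $\Delta\varphi^\varepsilon$ in the spirit of Lemma~\ref{lemma3.3-1}, exploiting the parabolic structure of $\eqref{eq3}_3$ together with the homogeneous Neumann boundary condition $\partial\varphi^\varepsilon/\partial n=0$. First I would apply $\nabla$ to the equation $\partial_t\varphi^\varepsilon-\Delta\varphi^\varepsilon=R_4^\varepsilon$, multiply the result by $-\nabla\Delta\varphi^\varepsilon$ and integrate over $\Omega$. Integrating by parts in the time-derivative term produces a boundary contribution $-\int_{\partial\Omega}(n\cdot\partial_t\nabla\varphi^\varepsilon)\,\Delta\varphi^\varepsilon\,d\sigma$, which vanishes because $n\cdot\nabla\varphi^\varepsilon=0$ on $\partial\Omega$ holds for every $t$ and $n$ does not depend on $t$; hence the interior part equals $\tfrac12\frac{d}{dt}\|\Delta\varphi^\varepsilon\|_{L^2}^2$. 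This yields the identity $\tfrac12\frac{d}{dt}\|\Delta\varphi^\varepsilon\|_{L^2}^2+\|\nabla\Delta\varphi^\varepsilon\|_{L^2}^2=-\int_\Omega\nabla R_4^\varepsilon\cdot\nabla\Delta\varphi^\varepsilon\,dx$, and Cauchy's inequality controls the right-hand side by $\delta\|\nabla\Delta\varphi^\varepsilon\|_{L^2}^2+C_\delta\|\nabla R_4^\varepsilon\|_{L^2}^2$.

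The next step is to bound $\|\nabla R_4^\varepsilon\|_{L^2}$. Differentiating each term of $R_4^\varepsilon=-u\cdot\nabla\varphi^\varepsilon-v^\varepsilon\cdot\nabla d^\varepsilon+(\nabla\varphi^\varepsilon:\nabla(d^\varepsilon+d))d^\varepsilon+|\nabla d|^2\varphi^\varepsilon$ produces only the factors $\nabla^2\varphi^\varepsilon,\nabla\varphi^\varepsilon,\varphi^\varepsilon,\nabla v^\varepsilon,v^\varepsilon$ multiplied by quantities that are uniformly bounded in $L^\infty$: the limit solution satisfies $(\rho,u,d)\in L^\infty(0,T_1;H^3\times H^3\times H^4)\subset L^\infty(0,T_1;W^{2,\infty})$, and by Theorem~\ref{Theorem1.1} $d^\varepsilon$ is uniformly bounded in $W^{2,\infty}$. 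Using the standard elliptic regularity bound with Neumann condition $\|\nabla^2\varphi^\varepsilon\|_{L^2}\le C(\|\Delta\varphi^\varepsilon\|_{L^2}+\|\nabla\varphi^\varepsilon\|_{L^2})$ (cf. \eqref{31509} with $m=0$), I would obtain $\|\nabla R_4^\varepsilon\|_{L^2}^2\le C(\|\varphi^\varepsilon\|_{H^1}^2+\|\Delta\varphi^\varepsilon\|_{L^2}^2+\|v^\varepsilon\|_{H^1}^2)$. Choosing $\delta$ small enough to absorb the $\|\nabla\Delta\varphi^\varepsilon\|_{L^2}^2$ term then gives $\frac{d}{dt}\|\Delta\varphi^\varepsilon\|_{L^2}^2+\|\nabla\Delta\varphi^\varepsilon\|_{L^2}^2\le C(\|\varphi^\varepsilon\|_{H^1}^2+\|\Delta\varphi^\varepsilon\|_{L^2}^2+\|v^\varepsilon\|_{H^1}^2)$.

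Finally I would integrate over $[0,t]$ and apply Grönwall's inequality. Since $(\rho^\varepsilon,u^\varepsilon,d^\varepsilon)$ and $(\rho,u,d)$ share the initial data $(\rho_0,u_0,d_0)$, one has $\varphi^\varepsilon(0)=0$, so the initial contribution $\|\Delta\varphi^\varepsilon(0)\|_{L^2}^2$ vanishes. It then suffices to feed in Lemma~\ref{lemma5.1}: that estimate gives $\int_0^t\|\varphi^\varepsilon\|_{H^1}^2\,d\tau\le C\varepsilon^{3/2}$, while the $\varepsilon$-degenerate viscous bound $\mu\varepsilon\int_0^t\|v^\varepsilon\|_{H^1}^2\,d\tau\le C\varepsilon^{3/2}$ yields only $\int_0^t\|v^\varepsilon\|_{H^1}^2\,d\tau\le C\varepsilon^{1/2}$. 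Hence $\sup_{[0,t]}\|\Delta\varphi^\varepsilon\|_{L^2}^2+\int_0^t\|\nabla\Delta\varphi^\varepsilon\|_{L^2}^2\,d\tau\le C\varepsilon^{1/2}$, which is the assertion. The only point requiring care is the bookkeeping of the $R_4^\varepsilon$ nonlinearities so that no uncontrolled derivative of $\varphi^\varepsilon$ or $v^\varepsilon$ enters; and one should note that the exponent $\tfrac12$ (rather than $\tfrac32$) is forced precisely by the $O(\varepsilon^{1/2})$ control of $\int_0^t\|v^\varepsilon\|_{H^1}^2\,d\tau$ inherited from the vanishing-viscosity term.
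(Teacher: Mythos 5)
Your proposal is correct and follows essentially the same route as the paper: apply $\nabla$ to $\eqref{eq3}_3$, test against $-\nabla\Delta\varphi^\varepsilon$, use the Neumann condition to eliminate the boundary term and obtain $\tfrac12\frac{d}{dt}\|\Delta\varphi^\varepsilon\|_{L^2}^2+\|\nabla\Delta\varphi^\varepsilon\|_{L^2}^2$, bound $\|\nabla R_4^\varepsilon\|_{L^2}^2$ via the elliptic estimate $\|\nabla^2\varphi^\varepsilon\|_{L^2}\lesssim\|\Delta\varphi^\varepsilon\|_{L^2}+\|\nabla\varphi^\varepsilon\|_{L^2}$, and close with Lemma~\ref{lemma5.1}, the rate $\varepsilon^{1/2}$ being dictated by the bound $\int_0^t\|\nabla v^\varepsilon\|_{L^2}^2\,d\tau\le C\varepsilon^{1/2}$. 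The only cosmetic difference is that you invoke Gr\"onwall for the $\int_0^t\|\Delta\varphi^\varepsilon\|_{L^2}^2\,d\tau$ term, whereas the paper can bypass it since that integral is already controlled by $C\varepsilon^{3/2}$ from \eqref{511}; both variants are valid.
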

\begin{proof}
Taking $\nabla$ operator to \eqref{eq3}$_3$, we find
\begin{equation*}
\nabla \varphi^\varepsilon-\nabla \Delta \varphi^\varepsilon
=\nabla R_4^\varepsilon,
\end{equation*}
which, multiplying by $-\nabla \varphi^\varepsilon$, reads
\begin{equation}\label{522}
-\int \partial_t \nabla \varphi^\varepsilon\cdot \nabla \Delta \varphi^\varepsilon dx
+\int |\nabla \Delta \varphi^\varepsilon|^2 dx
=-\int \nabla R_4^\varepsilon \cdot \nabla \Delta \varphi^\varepsilon dx.
\end{equation}
Integrating by part and applying the boundary condition \eqref{bc3}, 
it is easy to deduce
\begin{equation}\label{523}
-\int \partial_t \nabla \varphi^\varepsilon\cdot \nabla \Delta \varphi^\varepsilon dx
=-\int_{\partial \Omega}
n\cdot \nabla \varphi^\varepsilon\cdot \nabla \Delta \varphi^\varepsilon d\sigma
+\frac{1}{2}\frac{d}{dt}\int |\Delta \varphi^\varepsilon|^2 dx
=\frac{1}{2}\frac{d}{dt}\int |\Delta \varphi^\varepsilon|^2 dx.
\end{equation}
On the other hand, it is easy to check that
\begin{equation}\label{524}
\begin{aligned}
\| \nabla R_4^\varepsilon\|_{L^2}^2
&\le C(\|v^\varepsilon\|_{L^2}^2+\|\varphi^\varepsilon\|_{H^1}^2)
    +C(\|\nabla^2 \varphi^\varepsilon\|_{L^2}^2+\|\nabla v^\varepsilon\|_{L^2}^2)\\
&\le C(\|v^\varepsilon\|_{L^2}^2+\|\varphi^\varepsilon\|_{H^1}^2)
    +C(\|\Delta \varphi^\varepsilon\|_{L^2}^2+\|\nabla v^\varepsilon\|_{L^2}^2),
\end{aligned}
\end{equation}
where we have used the standard elliptic estimates in the last inequality.
Hence, by virtue of the Cauchy inequality, \eqref{523} and \eqref{524}, we obtain
\begin{equation}\label{525}
\begin{aligned}
&\frac{1}{2}\frac{d}{dt}\int |\Delta \varphi^\varepsilon|^2 dx
+\int |\nabla \Delta \varphi^\varepsilon|^2 dx\\
&\le \delta \|\nabla \Delta \varphi^\varepsilon\|_{L^2}^2
   +C(\|v^\varepsilon\|_{L^2}^2+\|\varphi^\varepsilon\|_{H^1}^2)
    +C(\|\Delta \varphi^\varepsilon\|_{L^2}^2+\|\nabla v^\varepsilon\|_{L^2}^2).
\end{aligned}
\end{equation}
Choosing $\delta$ small enough in \eqref{525} and integrating over $[0, t]$, one attains
\begin{equation*}
\begin{aligned}
&\int |\Delta \varphi^\varepsilon(t)|^2 dx
+\int_0^t \|\nabla \Delta \varphi^\varepsilon\|_{L^2}^2 d\tau\\
&\le C(\|v^\varepsilon\|_{L^2}^2+\|\varphi^\varepsilon\|_{H^1}^2)
    +C\int_0^t(\|\Delta \varphi^\varepsilon\|_{L^2}^2+\|\nabla v^\varepsilon\|_{L^2}^2)d\tau
\le C \varepsilon^{\frac{1}{2}},
\end{aligned}
\end{equation*}
where we have used the estimate \eqref{511} in the last inequality.
Therefore, we complete the proof of Lemma \ref{lemma5.2}.
\end{proof}

\begin{lemm}\label{lemma5.3}
For $t \in [0, \min\{T_0, T_1\}]$, it holds that
\begin{equation}\label{531}
\begin{aligned}
&\|({\rm div}v^\varepsilon, \nabla (p^\varepsilon-p))\|_{L^2}^2
+(2\mu+\lambda)\varepsilon\int_0^t \|\nabla {\rm div}v^\varepsilon(\tau)\|_{L^2}^2 d\tau\\
&\le \delta \int_0^t \|v_t^\varepsilon\|_{L^2}^2 d\tau
+C_\delta \int_0^t \|(\varphi^\varepsilon, v^\varepsilon)\|_{H^1}^2 d\tau
+C_\delta \varepsilon^{\frac{1}{2}}.
\end{aligned}
\end{equation}
\end{lemm}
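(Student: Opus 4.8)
The plan is to obtain \eqref{531} by testing the momentum difference equation \eqref{eq3}$_2$ against $\nabla{\rm div}v^\varepsilon$ and integrating over $\Omega\times[0,t]$, following closely the scheme of Lemma \ref{lemma3.8} and of Lemma 3.6 of \cite{Wang-Xin-Yong}. This yields
\begin{equation*}
\int_0^t\!\!\int(\rho\partial_t v^\varepsilon+\rho u\cdot\nabla v^\varepsilon)\cdot\nabla{\rm div}v^\varepsilon\,dxd\tau
+\int_0^t\!\!\int\nabla(p^\varepsilon-p)\cdot\nabla{\rm div}v^\varepsilon\,dxd\tau
+\int_0^t\!\!\int\Phi^\varepsilon\cdot\nabla{\rm div}v^\varepsilon\,dxd\tau
\end{equation*}
on the left, balanced on the right by the positive term $(2\mu+\lambda)\varepsilon\int_0^t\|\nabla{\rm div}v^\varepsilon\|_{L^2}^2d\tau$, the curl term $-\mu\varepsilon\int_0^t\int\nabla\times(\nabla\times v^\varepsilon)\cdot\nabla{\rm div}v^\varepsilon$ (handled by integration by parts, producing $\frac{\varepsilon}{4}\|\nabla{\rm div}v^\varepsilon\|_{L^2}^2$-type pieces absorbable into the preceding term and $C\varepsilon\|v^\varepsilon\|_{H^1}^2$-type pieces that are admissible), and $\int_0^t\int(R_2^\varepsilon+R_3^\varepsilon)\cdot\nabla{\rm div}v^\varepsilon$. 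Since the two solutions share initial data, $(\phi^\varepsilon,v^\varepsilon,\varphi^\varepsilon)|_{t=0}=0$, so all contributions evaluated at $\tau=0$ vanish.

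For the pressure term I would use the transport identity obtained by subtracting the pressure equations for $p^\varepsilon$ and $p$, namely $\gamma p\,{\rm div}v^\varepsilon=-\partial_t(p^\varepsilon-p)-u\cdot\nabla(p^\varepsilon-p)-v^\varepsilon\cdot\nabla p^\varepsilon-\gamma(p^\varepsilon-p){\rm div}u^\varepsilon$; substituting $\nabla{\rm div}v^\varepsilon$ and integrating by parts in space, the leading contribution becomes $\frac12\frac{d}{dt}\int\frac{|\nabla(p^\varepsilon-p)|^2}{\gamma p}\,dx$, the remainder being dominated by $\|\nabla(p^\varepsilon-p)\|_{L^2}^2$, $\|{\rm div}v^\varepsilon\|_{L^2}^2$, $\|v^\varepsilon\|_{L^2}^2$ and $\|p^\varepsilon-p\|_{L^2}^2$ using the $W^{1,\infty}$ bounds of $(\rho,u)$ and of $\nabla{\rm div}u^\varepsilon$ from \eqref{31504}. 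For $\int\rho\partial_t v^\varepsilon\cdot\nabla{\rm div}v^\varepsilon$, integration by parts in space has no boundary term since $\partial_t v^\varepsilon\cdot n=0$ on $\partial\Omega$, and it produces $-\frac12\frac{d}{dt}\int\rho|{\rm div}v^\varepsilon|^2\,dx$ together with the commutator $-\int(\nabla\rho\cdot\partial_t v^\varepsilon){\rm div}v^\varepsilon$, bounded by $\delta\|v_t^\varepsilon\|_{L^2}^2+C_\delta\|{\rm div}v^\varepsilon\|_{L^2}^2$ — this is exactly the source of the $\delta\int_0^t\|v_t^\varepsilon\|_{L^2}^2$ on the right of \eqref{531}. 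The term $\int R_3^\varepsilon\cdot\nabla{\rm div}v^\varepsilon$ has no boundary contribution, because $R_3^\varepsilon\cdot n=-(\partial_n d^\varepsilon)\cdot\Delta\varphi^\varepsilon-(\partial_n\varphi^\varepsilon)\cdot\Delta d=0$ on $\partial\Omega$ by the Neumann conditions, and after integration by parts it is controlled by $\|{\rm div}R_3^\varepsilon\|_{L^2}\|{\rm div}v^\varepsilon\|_{L^2}$ with $\|{\rm div}R_3^\varepsilon\|_{L^2}\lesssim\|\nabla\Delta\varphi^\varepsilon\|_{L^2}+\|\Delta\varphi^\varepsilon\|_{L^2}+\|\nabla\varphi^\varepsilon\|_{L^2}$, whose time integral is $O(\varepsilon^{1/2})$ by Lemmas \ref{lemma5.1}--\ref{lemma5.2}; likewise $\int R_2^\varepsilon\cdot\nabla{\rm div}v^\varepsilon$ contributes $O(\varepsilon^2)$ from the explicit $\varepsilon\Delta u$, $\varepsilon\nabla{\rm div}u$ with $u$ smooth, plus $\phi^\varepsilon$-weighted terms absorbed using $\|\phi^\varepsilon\|_{L^\infty}\to0$ (obtained by interpolating the smallness of $\|\phi^\varepsilon\|_{L^2}$ from Lemma \ref{lemma5.1} against the uniform bounds of Theorem \ref{Theorem1.1}); this produces the $C_\delta\varepsilon^{1/2}$. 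Finally $\int\Phi^\varepsilon\cdot\nabla{\rm div}v^\varepsilon$, with $\Phi^\varepsilon=(\phi^\varepsilon u^\varepsilon+\rho v^\varepsilon)\cdot\nabla u^\varepsilon$, is integrated by parts, ${\rm div}\Phi^\varepsilon$ being estimated by $\|\nabla(p^\varepsilon-p)\|_{L^2}+\|\nabla v^\varepsilon\|_{L^2}+\|(\phi^\varepsilon,v^\varepsilon)\|_{L^2}$ thanks to $\nabla{\rm div}u^\varepsilon\in L^\infty$.

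Collecting these contributions gives a closed differential inequality for the coercive quantity $\frac12\int\rho|{\rm div}v^\varepsilon|^2+\frac12\int\frac{|\nabla(p^\varepsilon-p)|^2}{\gamma p}$, whose right-hand side consists of a multiple of that same quantity, of $\delta\|v_t^\varepsilon\|_{L^2}^2$, of $C_\delta\|(\varphi^\varepsilon,v^\varepsilon)\|_{H^1}^2$ (note $\|{\rm div}v^\varepsilon\|_{L^2}\le C\|v^\varepsilon\|_{H^1}$ is already admissible), and of terms integrating to $O(\varepsilon^{1/2})$; Grönwall's inequality then yields \eqref{531}. The step I expect to be the main obstacle is the control of the boundary terms generated when the convection term $\rho u\cdot\nabla v^\varepsilon$ and the coupling term $\Phi^\varepsilon$ are integrated by parts against $\nabla{\rm div}v^\varepsilon$: on $\partial\Omega$ one has $(\rho u\cdot\nabla v^\varepsilon)\cdot n=-\rho\,v^\varepsilon\cdot(u\cdot\nabla n)$ and $\Phi^\varepsilon\cdot n=-u^\varepsilon\cdot((\phi^\varepsilon u^\varepsilon+\rho v^\varepsilon)\cdot\nabla n)$ (using $v^\varepsilon\cdot n=u\cdot n=u^\varepsilon\cdot n=0$ there), so these boundary integrals only pair tangentially bounded multiples of $v^\varepsilon$ (and $\phi^\varepsilon$) with ${\rm div}v^\varepsilon$; they must be reduced — by integration by parts along the boundary and the anisotropic trace inequality of Proposition \ref{prop2.3} — to quantities of the form $\|(\varphi^\varepsilon,v^\varepsilon)\|_{H^1}$, $\|{\rm div}v^\varepsilon\|_{L^2}$ and $\varepsilon^{1/2}$-small terms, exactly as in Lemma 3.6 of \cite{Wang-Xin-Yong}, with the density difference allowed to enter only through $\nabla(p^\varepsilon-p)$, which is kept on the left and absorbed by Grönwall.
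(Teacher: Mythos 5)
Your plan coincides with the paper's: multiply \eqref{eq3}$_2$ by $\nabla{\rm div}v^\varepsilon$, pull out $\frac{d}{dt}\int(\frac{\rho}{2}|{\rm div}v^\varepsilon|^2+\frac{1}{2\gamma p^\varepsilon}|\nabla(p^\varepsilon-p)|^2)$ and the $(2\mu+\lambda)\varepsilon\|\nabla{\rm div}v^\varepsilon\|_{L^2}^2$ dissipation, treat $\Rmnum{8}_1$--$\Rmnum{8}_6$ as in Lemma~6.2 of \cite{Wang-Xin-Yong}, and for $\Rmnum{8}_7$ use the Neumann conditions to get $n\cdot R_3^\varepsilon=(\partial_n d^\varepsilon)\cdot\Delta\varphi^\varepsilon+(\partial_n\varphi^\varepsilon)\cdot\Delta d=0$ so that the boundary integral drops out, then control ${\rm div}R_3^\varepsilon$ by $\|\nabla\Delta\varphi^\varepsilon\|_{L^2}+\|\Delta\varphi^\varepsilon\|_{L^2}+\|\nabla\varphi^\varepsilon\|_{L^2}$ and invoke Lemmas~\ref{lemma5.1}--\ref{lemma5.2} for the $O(\varepsilon^{1/2})$ rate. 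All of that is exactly what the paper does, and the identification of the $R_3^\varepsilon$-boundary cancellation as the new ingredient relative to \cite{Wang-Xin-Yong} is the right reading.

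The one place where your write-up as stated would not go through is the $-\phi^\varepsilon v_t^\varepsilon$ piece of $R_2^\varepsilon$. You propose to ``absorb'' its contribution to $\int R_2^\varepsilon\cdot\nabla{\rm div}v^\varepsilon$ using the smallness $\|\phi^\varepsilon\|_{L^\infty}\to0$. But the interpolation $\|\phi^\varepsilon\|_{L^\infty}\lesssim\|\phi^\varepsilon\|_{L^2}^{2/5}\|\phi^\varepsilon\|_{W^{1,\infty}}^{3/5}\lesssim\varepsilon^{3/10}$ is not small enough: pairing $\phi^\varepsilon v_t^\varepsilon$ directly with $\nabla{\rm div}v^\varepsilon$ and using Cauchy against the only available dissipation (which carries an $\varepsilon$ weight) leaves a coefficient $\varepsilon^{-1}\|\phi^\varepsilon\|_{L^\infty}^2\sim\varepsilon^{-2/5}$ in front of $\|v_t^\varepsilon\|_{L^2}^2$, which diverges. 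The fix (and what underlies the paper's estimate for $\Rmnum{8}_6$) is an extra integration by parts — the boundary term vanishes since $v_t^\varepsilon\cdot n=0$ on $\partial\Omega$ — giving $-\int{\rm div}(\phi^\varepsilon v_t^\varepsilon){\rm div}v^\varepsilon=-\int(\nabla\phi^\varepsilon\cdot v_t^\varepsilon){\rm div}v^\varepsilon-\tfrac12\int\phi^\varepsilon\partial_t|{\rm div}v^\varepsilon|^2$; the first gives $\delta\|v_t^\varepsilon\|_{L^2}^2+C_\delta\|{\rm div}v^\varepsilon\|_{L^2}^2$, and the second contributes $-\tfrac12\frac{d}{dt}\int\phi^\varepsilon|{\rm div}v^\varepsilon|^2+\tfrac12\int\phi_t^\varepsilon|{\rm div}v^\varepsilon|^2$, a harmless perturbation of the coercive quantity since $|\phi^\varepsilon|$ is uniformly small. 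So this sub-term is, on a par with the commutator $(\nabla\rho\cdot v_t^\varepsilon){\rm div}v^\varepsilon$ you flag, a genuine source of the $\delta\int_0^t\|v_t^\varepsilon\|_{L^2}^2$ in \eqref{531}, not merely an $O(\varepsilon^{1/2})$ remainder.
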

\begin{proof}
Multiplying \eqref{eq3}$_2$ by $\nabla {\rm div}v^\varepsilon$, it is easy to deduce that
\begin{equation}\label{532}
\begin{aligned}
&\underset{\Rmnum{8}_1}{\underbrace{\int(\rho v_t^\varepsilon+\rho u\cdot \nabla v^\varepsilon)dx}}
+\underset{\Rmnum{8}_2}{\underbrace{\int \nabla (p^\varepsilon-p)\cdot \nabla {\rm div}v^\varepsilon dx}}
+\underset{\Rmnum{8}_3}{\underbrace{\int \Phi^\varepsilon\cdot \nabla {\rm div}v^\varepsilon dx}}\\
&=\underset{\Rmnum{8}_4}{\underbrace{-\mu \varepsilon \int \nabla \times (\nabla \times v^\varepsilon)\cdot
  \nabla {\rm div}v^\varepsilon dx}}
   +\underset{\Rmnum{8}_5}{\underbrace{(2\mu+\lambda)\varepsilon\int|\nabla {\rm div}v^\varepsilon|^2 dx}}\\
&\quad +\underset{\Rmnum{8}_6}{\underbrace{\int R_2^\varepsilon
    \cdot \nabla {\rm div}v^\varepsilon dx}}
   +\underset{\Rmnum{8}_7}{\underbrace{\int R_3^\varepsilon \cdot \nabla {\rm div}v^\varepsilon dx}}.
\end{aligned}
\end{equation}
Following the same argument as Lemma 6.2 of \cite{Wang-Xin-Yong}, it is easy to obtain
the following estimates
\begin{equation}\label{533}
\begin{aligned}
&\Rmnum{8}_1\le -\frac{d}{dt}\int \frac{\rho}{2}|{\rm div}v^\varepsilon|^2 dx
+\delta \|v_t^\varepsilon\|_{L^2}^2
+C_\delta \|\nabla v^\varepsilon\|_{L^2}^2+C|v^\varepsilon|_{L^2(\partial \Omega)},\\
&\Rmnum{8}_2 \le -\frac{d}{dt}\int \frac{1}{2\gamma p^\varepsilon}|\nabla (p^\varepsilon-p)|^2dx
+C(1+\|(u^\varepsilon, p^\varepsilon)\|_{W^{1,\infty}})
\|(p^\varepsilon-p, v^\varepsilon)\|_{H^1}^2,\\
&\Rmnum{8}_3 \le C(1+\|(\rho^\varepsilon, u^\varepsilon)\|_{W^{1,\infty}})
(\|(\varphi^\varepsilon, v^\varepsilon)\|_{H^1}^2
+|(\varphi^\varepsilon, v^\varepsilon)|_{L^2(\partial \Omega)}),\\
&\Rmnum{8}_4\le \delta \varepsilon \|\nabla {\rm div}v^\varepsilon\|_{L^2}^2
+C_\delta \varepsilon (1+\|v^\varepsilon\|_{H^1}^2),\\
&\Rmnum{8}_6\le \frac{(2\mu+\lambda)\varepsilon}{8}\|\nabla {\rm div}v^\varepsilon\|_{L^2}^2
+\delta \|v_t^\varepsilon\|_{L^2}^2
+C_\delta(\|(\varphi^\varepsilon, v^\varepsilon)\|_{H^1}^2+\varepsilon^{\frac{3}{2}}).
\end{aligned}
\end{equation}
On the other hand, integrating by parts and applying the Cauchy inequality, one
arrives at directly
\begin{equation}\label{534}
\begin{aligned}
\Rmnum{8}_7
&=-\int_{\partial \Omega} n\cdot (\nabla d^\varepsilon \cdot \Delta \varphi^\varepsilon
+\nabla \varphi^\varepsilon \cdot \Delta d){\rm div}v^\varepsilon d\sigma\\
&\quad +\int {\rm div}(\nabla d^\varepsilon \cdot \Delta \varphi^\varepsilon
+\nabla \varphi^\varepsilon \cdot \Delta d){\rm div}v^\varepsilon dx\\
&=\int {\rm div}(\nabla d^\varepsilon \cdot \Delta \varphi^\varepsilon
+\nabla \varphi^\varepsilon \cdot \Delta d){\rm div}v^\varepsilon dx\\
&\le C(1+\|(\nabla d^\varepsilon, \Delta d^\varepsilon)\|_{L^\infty})
(\|\nabla(\varphi^\varepsilon, v^\varepsilon)\|_{L^2}^2
+\|\Delta\varphi^\varepsilon\|_{L^2}^2
+\|\nabla \Delta \varphi^\varepsilon\|_{L^2}^2).
\end{aligned}
\end{equation}
Substituting \eqref{533} and \eqref{534} into \eqref{532}, we find
\begin{equation}\label{535}
\begin{aligned}
&\frac{d}{dt}\int \left(\frac{\rho}{2}|{\rm div}v^\varepsilon|^2+
\frac{1}{2\gamma p^\varepsilon}|\nabla (p^\varepsilon-p)|^2\right)
+(2\mu+\lambda)\varepsilon \int |\nabla {\rm div}v^\varepsilon|^2 dx\\
&\le \delta \|v_t^\varepsilon\|_{L^2}^2
+C\|(\varphi^\varepsilon, v^\varepsilon)\|_{H^1}^2
+C|(\varphi^\varepsilon, v^\varepsilon)|_{L^2(\partial \Omega)}
+C\varepsilon^{\frac{3}{2}}\\
&\quad +C(\|\nabla \varphi^\varepsilon\|_{L^2}^2
+\|\Delta\varphi^\varepsilon\|_{L^2}^2
+\|\nabla \Delta \varphi^\varepsilon\|_{L^2}^2).
\end{aligned}
\end{equation}
By virtue of the trace theorem in Proposition \ref{prop2.3},  we obtain
\begin{equation}\label{536}
|(\varphi^\varepsilon, v^\varepsilon)|_{L^2}
\le C(\|(\varphi^\varepsilon, v^\varepsilon)\|_{H^1}
+\|(\varphi^\varepsilon, v^\varepsilon)\|_{L^2}^{\frac{2}{3}})
\le (\|(\varphi^\varepsilon, v^\varepsilon)\|_{H^1}+\varepsilon^{\frac{1}{2}}).
\end{equation}
Integrating \eqref{535} over $[0, t]$ and substituting \eqref{536}
into the resulting inequality, we find
\begin{equation*}
\begin{aligned}
&\|({\rm div}v^\varepsilon, \nabla (p^\varepsilon-p))\|_{L^2}^2
+(2\mu+\lambda)\varepsilon\int_0^t \|\nabla {\rm div}v^\varepsilon(\tau)\|_{L^2}^2 d\tau\\
&\le \delta \int_0^t \|v_t^\varepsilon\|_{L^2}^2 d\tau
+C_\delta \int_0^t \|(\phi^\varepsilon, v^\varepsilon)\|_{H^1}^2 d\tau
+C_\delta \varepsilon^{\frac{1}{2}}.
\end{aligned}
\end{equation*}
Therefore, we complete the proof of Lemma \ref{lemma5.3}.
\end{proof}

\begin{lemm}\label{lemma5.4}
For $t \in [0, \min\{T_0, T_1\}]$, it holds that
\begin{equation}\label{541}
\begin{aligned}
&\|\nabla \times v^\varepsilon\|_{L^2}^2
+\varepsilon\int_0^t \|(\nabla \times v^\varepsilon)(\tau)\|_{H^1}^2 d\tau\\
&\le \delta \|\nabla (\phi^\varepsilon, v^\varepsilon)\|_{L^2}^2
+C\delta \int_0^t(\|v_t^\varepsilon\|_{L^2}^2
+\varepsilon\|\nabla^2 v^\varepsilon\|_{L^2}^2)d\tau
+C_\delta \int_0^t \|(\phi^\varepsilon, v^\varepsilon)\|_{H^1}^2d\tau
+C_\delta \varepsilon^{\frac{1}{6}}.
\end{aligned}
\end{equation}
\end{lemm}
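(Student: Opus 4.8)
To prove Lemma \ref{lemma5.4}, the plan is to derive an energy estimate for the vorticity $\nabla\times v^\varepsilon$ by applying the curl operator to \eqref{eq3}$_2$ and testing against $\nabla\times v^\varepsilon$. Taking the curl of the velocity equation kills the pressure difference $\nabla(p^\varepsilon-p)$ and produces the vorticity equation
\begin{equation*}
\rho\,\partial_t(\nabla\times v^\varepsilon)+\rho\,u\cdot\nabla(\nabla\times v^\varepsilon)
=\mu\varepsilon\,\Delta(\nabla\times v^\varepsilon)+\nabla\times\bigl(R_2^\varepsilon+R_3^\varepsilon-\Phi^\varepsilon\bigr)+\mathcal{R},
\end{equation*}
where $\mathcal{R}$ collects the commutator terms coming from $[\nabla\times,\rho\,\partial_t]$, $[\nabla\times,\rho u\cdot\nabla]$ and the interchange of $\nabla\times$ with the viscous operators; all these are lower order and controlled by $\|\nabla(\phi^\varepsilon,v^\varepsilon)\|_{L^2}$ together with $\|(\phi^\varepsilon,v^\varepsilon)\|_{H^1}$ using the uniform bounds on $(\rho^\varepsilon,u^\varepsilon)$ in $W^{1,\infty}$ from Theorem \ref{Theorem1.1}. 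First I would multiply by $\nabla\times v^\varepsilon$ and integrate; the parabolic term $\mu\varepsilon\int\Delta(\nabla\times v^\varepsilon)\cdot(\nabla\times v^\varepsilon)\,dx$ gives $-\mu\varepsilon\|\nabla(\nabla\times v^\varepsilon)\|_{L^2}^2$ plus a boundary term on $\partial\Omega$, which must be handled via the boundary condition \eqref{bc3} for $n\times(\nabla\times v^\varepsilon)$; this boundary term is of the form $\varepsilon\int_{\partial\Omega}(\cdots)$ and is estimated by the trace theorem Proposition \ref{prop2.3} and the $L^2(\partial\Omega)$-bound on $v^\varepsilon$ and $u$, contributing ultimately the $C_\delta\varepsilon^{1/6}$-type term.

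The key structural point is the treatment of $\int\nabla\times(R_2^\varepsilon)\cdot(\nabla\times v^\varepsilon)\,dx$. Recall $R_2^\varepsilon=-\phi^\varepsilon v^\varepsilon_t-\phi^\varepsilon u_t+\mu\varepsilon\Delta u+(\mu+\lambda)\varepsilon\nabla{\rm div}u$; the term $\nabla\times(\phi^\varepsilon v^\varepsilon_t)$ produces $\nabla\phi^\varepsilon\times v^\varepsilon_t+\phi^\varepsilon\,\nabla\times v^\varepsilon_t$, and since we do not have a good bound on $\nabla\times v^\varepsilon_t$, I would instead integrate by parts in time or move the derivative onto $\nabla\times v^\varepsilon$ so that only $v^\varepsilon_t$ (not its curl) appears, yielding a term bounded by $\delta\int_0^t\|v_t^\varepsilon\|_{L^2}^2\,d\tau$ plus $\delta\|\nabla(\phi^\varepsilon,v^\varepsilon)\|_{L^2}^2$ after using $\|\phi^\varepsilon\|_{L^\infty}$ or $\|\phi^\varepsilon\|_{L^2}$ controlled by $C\varepsilon^{3/4}$ from Lemma \ref{lemma5.1}. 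The viscous contributions $\mu\varepsilon\nabla\times\Delta u$ and $\varepsilon\nabla\times\nabla{\rm div}u$ are $O(\varepsilon)$ times quantities bounded by $\widetilde{C}_4$, hence harmless. For $\nabla\times R_3^\varepsilon$ with $R_3^\varepsilon=-\nabla d^\varepsilon\cdot\Delta\varphi^\varepsilon-\nabla\varphi^\varepsilon\cdot\Delta d$, I would integrate by parts to avoid $\nabla\Delta\varphi^\varepsilon$ falling on nothing, bound the resulting terms using $\|\nabla d^\varepsilon\|_{W^{1,\infty}}$, $\|\Delta d\|_{W^{1,\infty}}$ (uniform by Theorem \ref{Theorem1.1}) and the parabolic gains $\int_0^t(\|\Delta\varphi^\varepsilon\|_{L^2}^2+\|\nabla\Delta\varphi^\varepsilon\|_{L^2}^2)\,d\tau\le C\varepsilon^{1/2}$ from Lemmas \ref{lemma5.1}--\ref{lemma5.2}. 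The term $\nabla\times\Phi^\varepsilon$ with $\Phi^\varepsilon=(\rho^\varepsilon u^\varepsilon-\rho u)\cdot\nabla u^\varepsilon$ is handled similarly, writing $\rho^\varepsilon u^\varepsilon-\rho u=\phi^\varepsilon u^\varepsilon+\rho v^\varepsilon$ and using the uniform $W^{1,\infty}$ bounds, so that it is dominated by $C\|(\phi^\varepsilon,v^\varepsilon)\|_{H^1}^2$.

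For the dissipative gain on the left-hand side, $\varepsilon\int_0^t\|(\nabla\times v^\varepsilon)\|_{H^1}^2\,d\tau$, I would use Proposition \ref{prop2.1} applied to $\nabla\times v^\varepsilon$: since ${\rm div}(\nabla\times v^\varepsilon)=0$ and we control $\|\nabla\times(\nabla\times v^\varepsilon)\|_{L^2}$ (equivalently $\|\Delta v^\varepsilon-\nabla{\rm div}v^\varepsilon\|_{L^2}$, hence tied to $\|\nabla^2 v^\varepsilon\|_{L^2}$) together with the boundary trace of $(\nabla\times v^\varepsilon)\cdot n$, we recover the full $H^1$-norm modulo lower-order and the $\varepsilon\|\nabla^2 v^\varepsilon\|_{L^2}^2$ term appearing on the right. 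I expect the \textbf{main obstacle} to be bookkeeping the boundary terms correctly: both the viscous boundary integral from testing $\Delta(\nabla\times v^\varepsilon)$ and the boundary terms from integrating $R_3^\varepsilon$ by parts involve traces that are not directly controlled by the energy, and one must carefully exploit \eqref{bc3}, the trace estimate $|f|_{L^2(\partial\Omega)}\le\delta\|\nabla f\|_{L^2}+C_\delta\|f\|_{L^2}$, and the interpolation inequality $\varepsilon|v^\varepsilon|_{L^2(\partial\Omega)}\le\delta\varepsilon\|\nabla v^\varepsilon\|_{L^2}^2+C_\delta\|v^\varepsilon\|_{L^2}^2+\varepsilon^{3/2}$ (as in \eqref{5112}), combined with the $L^2$-bounds $\|(\phi^\varepsilon,v^\varepsilon)\|_{L^2}^2\le C\varepsilon^{3/2}$ and $\|\Delta\varphi^\varepsilon\|_{L^2}^2\le C\varepsilon^{1/2}$, to extract precisely the advertised $\varepsilon^{1/6}$ rate; the exponent $\tfrac16$ should emerge from a Young-inequality split of a $\varepsilon\|v^\varepsilon\|_{L^2}^{2/3}$-type term against the available $\varepsilon^{1/2}$ bounds. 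After assembling all these bounds I would integrate in time over $[0,t]$, absorb the $\delta$-terms into the left side, and conclude \eqref{541}; a Grönwall step is not needed here since the right-hand side already contains only time integrals of quantities estimated in the previous lemmas.
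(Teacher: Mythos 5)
Your proposal takes a genuinely different route from the paper, and the difference matters: the paper does \emph{not} take the curl of \eqref{eq3}$_2$. Instead it multiplies the original momentum equation (whose viscous term is already written in the form $-\mu\varepsilon\nabla\times(\nabla\times v^\varepsilon)+(2\mu+\lambda)\varepsilon\nabla\operatorname{div}v^\varepsilon$) by the test function $\nabla\times(\nabla\times v^\varepsilon)$. This choice is not cosmetic. When the viscous term pairs with that test function it yields $-\mu\varepsilon\|\nabla\times(\nabla\times v^\varepsilon)\|_{L^2}^2$ \emph{with no boundary contribution whatsoever}, and the only boundary terms in the resulting identity come from $\rho^\varepsilon v_t^\varepsilon\cdot\nabla\times(\nabla\times v^\varepsilon)$, from $\nabla(p^\varepsilon-p)\cdot\nabla\times(\nabla\times v^\varepsilon)$, and from $R_3^\varepsilon\cdot\nabla\times(\nabla\times v^\varepsilon)$; after one integration by parts each of these brings out only the first-order trace $n\times(\nabla\times v^\varepsilon)$, which is exactly the quantity prescribed by the Navier-slip condition \eqref{bc3}.

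The gap in your proposal is at the viscous boundary term. If you take the curl of \eqref{eq3}$_2$ and test the resulting vorticity equation against $w^\varepsilon:=\nabla\times v^\varepsilon$, the Laplacian produces
\[
\mu\varepsilon\int\Delta w^\varepsilon\cdot w^\varepsilon\,dx
=-\mu\varepsilon\|\nabla w^\varepsilon\|_{L^2}^2+\mu\varepsilon\int_{\partial\Omega}\partial_n w^\varepsilon\cdot w^\varepsilon\,d\sigma,
\]
and the trace appearing is the \emph{normal derivative} of the vorticity (equivalently, a second-order quantity of $v^\varepsilon$). The Navier-slip condition \eqref{bc3} gives you $n\times(\nabla\times v^\varepsilon)=[Bv^\varepsilon]_\tau+[Bu]_\tau-n\times w$ on $\partial\Omega$, which is a first-order, tangential trace; it says nothing about $\partial_n(\nabla\times v^\varepsilon)$. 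Your sketch asserts that this boundary integral "is of the form $\varepsilon\int_{\partial\Omega}(\cdots)$ and is estimated by the trace theorem ... and the $L^2(\partial\Omega)$-bound on $v^\varepsilon$ and $u$," but that is not true: an $L^2(\partial\Omega)$-bound on $v^\varepsilon$ does not control $\partial_n(\nabla\times v^\varepsilon)|_{\partial\Omega}$, and the standard trace inequality would require one more derivative than you have uniformly in $\varepsilon$. Without an additional device (e.g.\ decomposing $\partial_n w^\varepsilon$ via $\operatorname{div}w^\varepsilon=0$ together with the boundary vorticity equation, which is precisely the extra machinery the paper's multiplier is designed to avoid), the estimate does not close. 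The rest of your sketch — the interpolation/trace bookkeeping to produce the $\varepsilon^{1/6}$ rate, the splitting of $R_2^\varepsilon$ so that only $\|v_t^\varepsilon\|_{L^2}$ appears, the treatment of $R_3^\varepsilon$ using the parabolic gains from Lemmas \ref{lemma5.1}--\ref{lemma5.2}, and the recovery of the full $H^1$-norm via Proposition \ref{prop2.1} — is aligned with the paper, but it all rides on first being able to absorb the boundary term from the viscous operator, and that is where the argument breaks.
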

\begin{proof}
Multiplying by \eqref{eq3}$_2$ by $\nabla \times (\nabla \times v^\varepsilon)$
yields immediately
\begin{equation}\label{542}
\begin{aligned}
&\underset{\Rmnum{9}_1}{\underbrace{\int \rho^\varepsilon v_t^\varepsilon\cdot \nabla \times (\nabla \times v^\varepsilon)dx}}
+\underset{\Rmnum{9}_2}{\underbrace{\int \nabla (p^\varepsilon-p)\cdot \nabla \times (\nabla \times v^\varepsilon)dx}}\\
&=-\mu \varepsilon \|\nabla \times (\nabla \times v^\varepsilon)\|_{L^2}^2
+(2\mu+\lambda)\varepsilon \int \nabla {\rm div}v^\varepsilon
\cdot \nabla \times (\nabla \times v^\varepsilon) dx\\
&\quad \underset{\Rmnum{9}_3}{\underbrace{-\int \widetilde{\Phi}^\varepsilon
\cdot \nabla \times (\nabla \times v^\varepsilon) dx}}
+\underset{\Rmnum{9}_4}{\underbrace{\int \widetilde{R}_2^\varepsilon \cdot \nabla \times (\nabla \times v^\varepsilon) dx}}
+\underset{\Rmnum{9}_5}{\underbrace{\int {R}_3^\varepsilon \cdot \nabla \times (\nabla \times v^\varepsilon) dx}},
\end{aligned}
\end{equation}
where
\begin{equation*}
\begin{aligned}
&\widetilde{\Phi}^\varepsilon
=\rho^\varepsilon u^\varepsilon \cdot \nabla v^\varepsilon
+(\rho^\varepsilon u^\varepsilon-\rho u)\cdot \nabla u,\\
&\widetilde{R}_2^\varepsilon=
-\phi^\varepsilon u_t+\mu\varepsilon \Delta u
+(\mu+\lambda)\varepsilon \nabla {\rm div}u.
\end{aligned}
\end{equation*}
Following the same argument as Lemma 6.3  of \cite{Wang-Xin-Yong}, it is easy to obtain
the following estimates
\begin{equation}\label{543}
\begin{aligned}
&\Rmnum{9}_1 \ge \frac{d}{dt}\left\{\int\frac{\rho^\varepsilon}{2}
|\nabla \times v^\varepsilon|^2dx+\int_{\partial \Omega}
\left(\frac{\rho^\varepsilon}{2}v^\varepsilon (B v^\varepsilon)
+\rho^\varepsilon v^\varepsilon \cdot(Bu-n\times w)\right)d\sigma\right\}\\
&\quad \quad -\delta \|v_t^\varepsilon\|_{L^2}^2
-C_\delta(\|v^\varepsilon\|_{H^1}^2+|v^\varepsilon|_{L^2}),\\
&|\Rmnum{9}_2|\le C(\|p^\varepsilon-p\|_{H^1}^2+\|v^\varepsilon\|_{H^1}^2
+|p^\varepsilon-p|_{L^2}),\\
&|\Rmnum{9}_3|\le C(\|(\phi^\varepsilon, v^\varepsilon)\|_{H^1}^2
+|(\phi^\varepsilon, v^\varepsilon)|_{L^2}),\\
&|\Rmnum{9}_4|\le \delta \varepsilon \|\nabla \times (\nabla \times v^\varepsilon)\|_{L^2}^2
+C_\delta(\|(\phi^\varepsilon, v^\varepsilon)\|_{H^1}^2
+|(\phi^\varepsilon, v^\varepsilon)|_{L^2}+\varepsilon^{\frac{3}{2}}).
\end{aligned}
\end{equation}
On the other hand, integrating by part and applying the boundary condition \eqref{bc3},
we find
\begin{equation}\label{544}
\begin{aligned}
\Rmnum{9}_5
&=\int_{\partial \Omega}R_3^\varepsilon
\cdot (n\times(\nabla \times v^\varepsilon))d\sigma
+\int \nabla \times R_3^\varepsilon  \cdot \nabla \times v^\varepsilon dx\\
&=\int_{\partial \Omega}R_3^\varepsilon \cdot [B v^\varepsilon]_\tau d\sigma
+\int_{\partial \Omega}R_3^\varepsilon \cdot ([B u]_\tau-n \times w)d\sigma\\
&\quad +\int \nabla \times R_3^\varepsilon  \cdot \nabla \times v^\varepsilon dx\\
&=\Rmnum{9}_{51}+\Rmnum{9}_{52}+\Rmnum{9}_{53}.
\end{aligned}
\end{equation}
Integrating by part and applying the H\"{o}lder inequality, we obtain
\begin{equation}\label{545}
\begin{aligned}
\Rmnum{9}_{51}
&=\int_{\partial \Omega}(n\times R_3^\varepsilon)
\cdot (n\times [B v^\varepsilon]_\tau) d\sigma\\
&=\int_{\partial \Omega}(n\times R_3^\varepsilon)
\cdot (n\times (B v^\varepsilon)) d\sigma\\
&=\int (\nabla \times R_3^\varepsilon)
\cdot (n\times (B v^\varepsilon)) dx
+\int R_3^\varepsilon \cdot \nabla \times(n\times (B v^\varepsilon)) d\sigma\\
&\le C(\|v^\varepsilon\|_{H^1}^2+\|\nabla \varphi^\varepsilon\|_{L^2}^2
+\|\nabla \Delta \varphi^\varepsilon\|_{L^2}^2).
\end{aligned}
\end{equation}
It is easy to check that
\begin{equation}\label{546}
|\Rmnum{9}_{52}|\le C(|\nabla \varphi^\varepsilon|_{L^2(\partial \Omega)}
+|\Delta \varphi^\varepsilon|_{L^2(\partial \Omega)}),
\end{equation}
and
\begin{equation}\label{547}
|\Rmnum{9}_{53}|\le C(\|\nabla \times v^\varepsilon\|_{L^2}^2
+\|\nabla \varphi^\varepsilon\|_{L^2}^2
+\|\nabla \Delta \varphi^\varepsilon\|_{L^2}^2).
\end{equation}
Then, substituting \eqref{545}-\eqref{547} into \eqref{544} yields
\begin{equation}\label{548}
\begin{aligned}
\Rmnum{9}_{5}
\le C(\|v^\varepsilon\|_{H^1}^2+\|\nabla \varphi^\varepsilon\|_{L^2}^2
+\|\nabla \Delta \varphi^\varepsilon\|_{L^2}^2)
+C(|\nabla \varphi^\varepsilon|_{L^2(\partial \Omega)}
+|\Delta \varphi^\varepsilon|_{L^2(\partial \Omega)}).
\end{aligned}
\end{equation}
The application of the trace theorem in Proposition \ref{prop2.3} yields that
\begin{equation}\label{549}
\begin{aligned}
&|(\phi^\varepsilon, v^\varepsilon)|_{L^2}
\le C\|(\phi^\varepsilon, v^\varepsilon)\|_{H^1}^{\frac{1}{2}}
\|(\phi^\varepsilon, v^\varepsilon)\|_{L^2}^{\frac{1}{2}}
\le C\|(\phi^\varepsilon, v^\varepsilon)\|_{H^1}^2+C\varepsilon^{\frac{1}{2}},\\
&|\nabla \varphi^\varepsilon|_{L^2(\partial \Omega)}
\le C\|\nabla \varphi^\varepsilon\|_{H^1}\le C\varepsilon^{\frac{1}{2}},\\
&|\Delta \varphi^\varepsilon|_{L^2(\partial \Omega)}
\le \|\Delta \varphi^\varepsilon\|_{H^1}^{\frac{1}{2}}
\|\Delta \varphi^\varepsilon\|_{L^2}^{\frac{1}{2}}
\le C\|\nabla \Delta \varphi^\varepsilon\|_{L^2}^2
+C\varepsilon^{\frac{1}{6}}.
\end{aligned}
\end{equation}
Substituting \eqref{543}, \eqref{548} and \eqref{549} into \eqref{542} reads immediately
\begin{equation}\label{5410}
\begin{aligned}
&\frac{d}{dt}\left\{\int\frac{\rho^\varepsilon}{2}
|\nabla \times v^\varepsilon|^2dx+\int_{\partial \Omega}
\left(\frac{\rho^\varepsilon}{2}v^\varepsilon (B v^\varepsilon)
+\rho^\varepsilon v^\varepsilon \cdot(Bu-n\times w)\right)d\sigma\right\}\\
&\quad+\frac{\mu \varepsilon}{2}\int |\nabla \times (\nabla \times v^\varepsilon)|^2 dx\\
&\le C\delta \|v_t^\varepsilon\|_{L^2}^2
+C\delta \varepsilon \|\nabla^2 v^\varepsilon\|_{L^2}^2
+C_\delta(\|(\phi^\varepsilon, v^\varepsilon)\|_{H^1}^2+\varepsilon^{\frac{1}{6}}).
\end{aligned}
\end{equation}
In view of the Proposition \ref{prop2.1}, one arrives at
\begin{equation}\label{5411}
\begin{aligned}
\|\nabla \times v^\varepsilon\|_{H^1}^2
&\le C_1(\|\nabla \times (\nabla \times v^\varepsilon)\|_{L^2}^2
+\|{\rm div}(\nabla \times v^\varepsilon)\|_{L^2}^2
+\|\nabla \times v^\varepsilon\|_{L^2}^2
+|n\times (\nabla \times v^\varepsilon)|_{H^{\frac{1}{2}}}^2)\\
&\le C_1(\|\nabla \times (\nabla \times v^\varepsilon)\|_{L^2}^2
+\|\nabla \times v^\varepsilon\|_{L^2}^2
+|B v^\varepsilon|_{H^{\frac{1}{2}}}^2
+|(Bu)_\tau-n\times w|_{H^{\frac{1}{2}}}^2).
\end{aligned}
\end{equation}
By virtue of the trace inequality in Proposition \ref{prop2.3}, we have
\begin{equation}\label{5412}
\|(\phi^\varepsilon, v^\varepsilon)\|_{L^2}^2
\le C\|(\phi^\varepsilon, v^\varepsilon)\|_{H^1}
\|(\phi^\varepsilon, v^\varepsilon)\|_{L^2}
\le \delta \|\nabla (\phi^\varepsilon, v^\varepsilon)\|_{L^2}^2
+C_\delta \varepsilon^{\frac{3}{2}}.
\end{equation}
Substituting \eqref{5411} and \eqref{5412} into \eqref{5410}
and integrating the resulting inequality over $[0, t]$
yield the estimate \eqref{541}.
Therefore, we complete the proof of Lemma \ref{lemma5.4}.
\end{proof}

\emph{\bf{Proof for Theorem \ref{Theorem1.2}:}}
By virtue of Proposition \ref{prop2.1}, we have
\begin{equation}\label{5a}
\begin{aligned}
\|v^\varepsilon\|_{H^1}^2
&\le C(\|\nabla \times v^\varepsilon\|_{L^2}^2
+\|{\rm div}v^\varepsilon\|_{L^2}^2
+\|v^\varepsilon\|_{L^2}^2+\|v^\varepsilon\cdot n\|_{H^{\frac{1}{2}}}^2)\\
&\le C(\|\nabla \times v^\varepsilon\|_{L^2}^2
+\|{\rm div}v^\varepsilon\|_{L^2}^2
+\|v^\varepsilon\|_{L^2}^2),
\end{aligned}
\end{equation}
and
\begin{equation}\label{5b}
\begin{aligned}
\|v^\varepsilon\|_{H^2}^2
&\le C(\|\nabla \times v^\varepsilon\|_{L^2}^2
+\|{\rm div}v^\varepsilon\|_{H^1}^2
+\|v^\varepsilon\|_{H^1}^2+\|v^\varepsilon\cdot n\|_{H^{\frac{3}{2}}}^2)\\
&\le C(\|\nabla \times v^\varepsilon\|_{H^2}^2
+\|{\rm div}v^\varepsilon\|_{H^1}^2
+\|v^\varepsilon\|_{H^1}^2).
\end{aligned}
\end{equation}
On the other hand, it follows from the equation \eqref{eq3}$_2$ that
\begin{equation}\label{5c}
\|v_t^\varepsilon\|_{L^2}^2
\le C(\|(\phi^\varepsilon, v^\varepsilon)\|_{H^1}^2
+\varepsilon^2 \|\nabla^2 v^\varepsilon\|_{L^2}^2+\varepsilon^{\frac{1}{2}}).
\end{equation}
The combination  of \eqref{531}, \eqref{541}, \eqref{5a}-\eqref{5c} and
choosing $\delta$ small enough, one obtains that
\begin{equation*}
\|\nabla(v^\varepsilon, p^\varepsilon-p)\|_{L^2}^2
+\varepsilon\int_0^t \|v^\varepsilon(\tau)\|_{H^2}^2 d\tau
\le C\int_0^t \|\nabla(v^\varepsilon, p^\varepsilon-p)\|_{L^2}^2 d\tau
+C\varepsilon^{\frac{1}{6}},
\end{equation*}
which, together with the Gr\"{o}nwall inequality, gives
\begin{equation}\label{5d}
\|\nabla(v^\varepsilon, p^\varepsilon-p)\|_{L^2}^2
+\varepsilon\int_0^t \|v^\varepsilon(\tau)\|_{H^2}^2 d\tau
\le C\varepsilon^{\frac{1}{6}}.
\end{equation}
On the other hand, by virtue of Sobolev inequality, uniform estimate \eqref{111}
and convergence rate \eqref{511}, it is easy to deduce
\begin{equation}\label{5e}
\|(\rho^\varepsilon-\rho, u^\varepsilon-u)\|_{L^\infty(0, T_2; L^\infty(\Omega))}
\le C\|(\rho^\varepsilon-\rho, u^\varepsilon-u)\|_{L^2}^{\frac{2}{5}}
\|(\rho^\varepsilon-\rho, u^\varepsilon-u)\|_{W^{1,\infty}}^{\frac{3}{5}}\le C\varepsilon^{\frac{3}{10}},
\end{equation}
and
\begin{equation}\label{5f}
\|d^\varepsilon-d\|_{L^\infty(0,T^2; W^{1,\infty}(\Omega))}
\le C\|d^\varepsilon-d\|_{H^1}^{\frac{2}{5}}
\|d^\varepsilon-d\|_{W^{2,\infty}}^{\frac{3}{5}}\le C\varepsilon^{\frac{3}{10}},
\end{equation}
The combination of \eqref{511}, \eqref{521} and \eqref{5d}-\eqref{5f}
completes the proof of Theorem \ref{Theorem1.2}

\section*{Acknowledgements}
The author Jincheng Gao would like to thank Yong Wang for fruitful discussion
and suggestion.

\end{document}